\documentclass[12pt, twoside, reqno]{amsart}
\calclayout
\usepackage[utf8]{inputenc}
\usepackage[T1]{fontenc}
\usepackage{amsmath,amssymb,amscd,amsxtra,latexsym,bbm,graphicx,epsfig,epic,
eepic,oldgerm,bm,psfrag,amsthm,mathrsfs,bm,bbm, verbatim}
\usepackage{caption}
\usepackage{subcaption}
\usepackage{tikz-cd}
\usepackage{mathtools}
\usepackage{enumitem}
\usepackage[pdftex]{hyperref}
\usepackage[noabbrev,capitalize]{cleveref}
\usetikzlibrary{decorations.pathreplacing}
\usepackage{tikz}
\usepackage{amsmath, amssymb}
\usetikzlibrary{decorations.markings}
\usetikzlibrary{shapes.misc}
\tikzset{cross/.style={cross out, draw=black, minimum size=2*(#1-\pgflinewidth), inner sep=0pt, outer sep=0pt},
cross/.default={3pt}}
\usepackage[dvipsnames]{xcolor}

\input xy
\xyoption{all}
\CompileMatrices

\theoremstyle{plain}
\newtheorem{theorem}{Theorem}[section]
\newtheorem*{theorem*}{Theorem}
\newtheorem{lemma}[theorem]{Lemma}
\newtheorem{proposition}[theorem]{Proposition}
\newtheorem{corollary}[theorem]{Corollary}
\newtheorem*{corollary*}{Corollary}
\newtheorem{definition}[theorem]{Definition}
\newtheorem*{definition*}{Definition}

\newtheorem{question}[theorem]{Question}

\newtheoremstyle{claim}% name
  {3pt}% space above
  {3pt}% space below
  {}% body font
  {0pt}% indent amount
  {\itshape}% theorem head font
  {.}% punctuation after theorem head
  {.5em}% space after theorem head
  {\thmname{#1}\thmnumber{ #2}\thmnote{ (#3)}}% theorem head spec
\theoremstyle{claim}
\newtheorem*{remark*}{Remark}
\newtheorem*{remarks*}{Remarks}
\newtheorem{remark}[theorem]{Remark}

\newtheorem*{example*}{Example}
\newtheorem*{examples*}{Examples}

\theoremstyle{plain}
\newtheorem{TheoremA}{Theorem}

\newcommand{\proofend}{\hspace*{\fill}~$\Box$\\}

\newcommand{\ign}[1]{}

\def\1{\:\!}
\def\2{\;\!}

\def\im{\operatorname {im}}

\def\Diff{\operatorname{Diff}}

\def\Diffc0{\operatorname{Diff^c_0}}

\def\Sympc0{\operatorname{Symp^c_0}}
\def\Iso{\operatorname{Iso}}
\def\Spin{\operatorname{Spin}}
\def\Int{\operatorname{int}}

\def\GL{\operatorname{GL}}

\def\diag{\operatorname{diag}}

\def\Fix{\operatorname{Fix}}
\def\Hom{\operatorname{Hom}}

\def\CP{\operatorname{CP}}

\def\cl{{\mathcal L}}

\def\cp{{\mathcal P}}

\def\C{\mathbb{C}}

\def\N{\mathbb{N}}

\def\Q{\mathbb{Q}}
\def\R{\mathbb{R}}

\def\Z{\mathbb{Z}}

\def\RP{\R P}
\def\CP{\C P}

\def\Diff{\operatorname{Diff}}
\def\SO{\operatorname{SO}}

\def\pp{\partial}

\def\ddt0{\left. \frac{d}{dt} \right\vert_{t=0}}
\def\dds0{\left. \frac{d}{ds} \right\vert_{s=0}}
\def\ddt{\frac{d}{dt} }
\def\dds{\frac{d}{ds} }

\def\Ch{\rm{Ch}}

\def\sth{\,\vert\,}

\def\pkm{\cp_{k,m}}
\def\pkmch{\cp^{\Ch}_{k,m}}

\def\ni{\noindent}

\def\.{\mskip1mu}
\def\?{\mskip-1mu}

\newcommand{\sslash}{\mathord{/\mkern-6mu/}}

\def\id{\operatorname{id}}

\def\proof{\noindent {\it Proof. \;}}
\newcommand{\proofof}[1]{\ni {\it Proof of #1. }}

\usetikzlibrary{calc}
\usepackage{xcolor}
\usepackage{enumitem}

\begin{document}

\title[]{On Lagrangians of Oakley--Usher type}

\author{Jo\'e Brendel}  

\address{
D-MATH,
ETH Zürich, 
Rämistrasse 101,
8092 Zürich,
Switzerland }
\email{joe.brendel@math.ethz.ch}

\author{Andrea Piccirilli}

\address{
\parbox[t]{\linewidth}{
Institut de math\'ematiques,
Universit\'e de Neuchâtel,
Rue Emile-Argand 11,
\newline
\qquad 2000 \mbox{Neuchâtel}, Switzerland
\vspace{0.2 cm}}}
\email{andrea.piccirilli@unine.ch}

\date{\today}

\begin{abstract}
The first main theme of this paper is to study symplectic (un-)knottedness of Lagrangian submanifolds which are not tori. Our construction of such Lagrangians is inspired by the generalized Polterovich Lagrangians of Oakley--Usher and relies on an adaption of symplectic reduction methods going back to Chekanov--Schlenk and McDuff's probes to the case of non-abelian group actions. We compare Oakley--Usher Lagrangians to their (un-)knotted cousins to find that, depending on the situation, they are:
\begin{enumerate}
    \item not diffeomorphic, 
    \item diffeomorphic, but not Lagrangian isotopic;
    \item Lagrangian isotopic, but not Hamiltonian isotopic,
    \item Hamiltonian isotopic.
\end{enumerate}
This relationship between them frequently changes when the ambient space is compactified.

The second main theme is an in-depth study of monotone Lagrangian tori obtained from appyling our constructions to the three-dimensional quadric $Q^3$. Computing the versal deformation of the $\Psi$-invariant of Shelukhin--Tonkonog--Vianna, we find a torus which is knotted in a strong sense: it is not Hamiltonian isotopic to the Biran-lift of any Vianna torus in the two-dimensional quadric. Furthermore, we investigate enumerative properties of the Oakley--Usher torus in $Q^3$ and prove its non-displaceability, settling an open question asked by Oakley--Usher. We relate this to mirror symmetry by proving a split-generation result for the monotone Fukaya category of $Q^3$, which is inspired by, and can be compared to a similar result of Abouzaid--Diogo for cotangent bundles of the sphere.
\end{abstract}

\maketitle

\section{Introduction}

The study of knottedness and unknottedness of Lagrangian submanifolds is a central theme in symplectic topology. Since the foundational work by Chekanov \cite{Che96} and Eliashberg--Polterovich \cite{EliPol97}, it was known that there are Lagrangian tori which have the same soft invariants, but are distinct up to symplectomorphisms of the ambient space. On the other hand, there is no local knotting in dimension four, see again \cite{EliPol97}. In the meantime there has been some significant progress: Auroux \cite{Aur15} has proved that there are infinitely many monotone tori in $\R^6$ which are symplectically distinct. Vianna \cite{Via16} has proved that the same holds in some compact four-dimensional symplectic manifolds. In \cite{Bre23b}, it was proved that knottedness of certain small Lagrangian tori occurs in \emph{every} geometrically bounded symplectic manifold of dimension greater than or equal to six. 

\subsection{New Lagrangian submanifolds}

All of the knottedness results above are for Lagrangian tori. The first main theme of this paper is an exploration of knottedness of Lagrangian submanifolds which are not tori. Generalizing work of Albers--Frauenfelder~\cite{AlbFra08} on the so-called \emph{Polterovich torus} in $T^*S^2$, Oakley--Usher~\cite{OakUsh16} have studied certain monotone Lagrangians $\cp_{k,m}$ diffeomorphic to discrete quotients of $S^1 \times S^k \times S^m$ in the cotangent bundles of $S^n, \RP^n$ and their compactifications. Inspired by their construction, we define a new set of monotone Lagrangians $\cp_{k,m}^{\rm Ch}$ in the same ambient spaces and compare these to the Oakley--Usher Lagrangians. The comparison is as follows.

\begin{TheoremA}
    \label{thm:compopen}
    The Lagrangians~$\mathcal{P}_{k,m}, \mathcal{P}^{\Ch}_{k,m} \subset T^*S^n$ are 
    \begin{itemize}
       \item[\textcolor{red}{\labelitemi}]  not homologous if~$k \neq 0$ and at least one of the~$k,m$ is even, 
        \item[\textcolor{blue}{\labelitemi}] diffeomorphic but not Lagrangian isotopic if either~$k,m$ both odd and~$(k,m) \neq (1,1)$ or~$k=0$ and~$m \neq 1$;
        \item[\textcolor{ForestGreen}{\labelitemi}] Lagrangian isotopic but not Hamiltonian isotopic if~$(k,m) \in \{(0,1),(1,1)\}$.
    \end{itemize}
    The Lagrangians~$\mathcal{P}_{k,m}, \mathcal{P}^{\Ch}_{k,m} \subset T^*\RP^n$ are 
    \begin{itemize}
        \item[\textcolor{blue}{\labelitemi}] diffeomorphic but not Lagrangian isotopic if~$(k,m) \notin \{(0,1),(1,1)\}$;
       \item[\textcolor{ForestGreen}{\labelitemi}] Lagrangian isotopic but not Hamiltonian isotopic if~$(k,m) \in \{(0,1),(1,1)\}$.
    \end{itemize}
    \end{TheoremA}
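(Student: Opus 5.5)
We set up both families through one reduction picture and then separate them using classical topology, soft invariants, and finally a rigid count. Both $\pkm$ and $\pkmch$ arise from a $\SO(k+1)\times\SO(m+1)$-type symmetry of $T^*S^n$ with $n=k+m+1$, via the Hamiltonian action whose reduced space is two-dimensional. Under this reduction, each Lagrangian is the preimage of a closed curve in the reduced space, thickened by the orbits $S^k\times S^m$.
\begin{itemize}
\item For $\pkm$, following Oakley--Usher, the curve is a circle that winds once around the singular stratum.
\item For $\pkmch$, in the spirit of Chekanov--Schlenk and McDuff's probes, the curve is a Chekanov-type curve that avoids that stratum.
\end{itemize}
The reduction description makes the diffeomorphism types explicit. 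Each Lagrangian is a mapping torus of $S^k\times S^m$. For $\pkm$ the monodromy comes from going around the singular stratum and is induced by antipodal-type maps. For $\pkmch$ the monodromy is trivial, or is the corresponding map, depending on how the curve sits relative to the fixed loci.

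The first step is to compute homology classes in $H_{n}(T^*S^n)\cong\Z$. This class is governed by the degree of the projection $L\to S^n$, or equivalently by orientability and the intersection number with a cotangent fibre. The plan is:
\begin{itemize}
\item Count preimages of a generic point of $S^n$ with signs, using the reduction picture; the sign contributions are controlled by the degrees $(-1)^{k+1}$ and $(-1)^{m+1}$ of the antipodal maps.
\item Show that when $k\neq0$ and $k$ or $m$ is even, one Lagrangian is non-orientable or has a mod 2 class different from the other, giving the first bullet.
\item In the remaining cases, exhibit an explicit diffeomorphism by matching the mapping tori. Here $\RP^n$ is handled as the quotient by the free $\Z/2$, which makes the monodromy issue disappear.
\end{itemize}

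To rule out Lagrangian isotopy, we use the fact that Lagrangian isotopy preserves the Maslov class and the symplectic area class up to scaling, and in particular preserves their ratio on $H_1$ (monotonicity constant and minimal Maslov number). We compute the Maslov indices of the generator loops and of the $S^1$-direction loop using the reduced-curve model:
\begin{itemize}
\item A curve winding around the singular stratum picks up a Maslov contribution proportional to $k$ and $m$.
\item The Chekanov curve contributes only $2$.
\end{itemize}
Hence the minimal Maslov numbers differ exactly outside the listed exceptional cases. For $(0,1)$ and $(1,1)$ the minimal Maslov numbers agree, and we construct an explicit Lagrangian isotopy in the reduced space: a non-area-preserving deformation of the curve across the stratum, which lifts because the fibres are low-dimensional tori or circles.

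The main obstacle is the non-Hamiltonian-isotopy in the cases $(0,1)$ and $(1,1)$, where the soft invariants agree and these Lagrangians are essentially tori of Clifford/Chekanov type. Here the plan is a displacement-energy or Floer-theoretic distinction:
\begin{itemize}
\item Compute the count of Maslov-2 discs, that is the disc potential, via the reduction, for instance by lifting discs from the reduced space in the manner of Chekanov--Schlenk.
\item Show that for $\pkm$ the potential is nonzero in both $T^*S^n$ and $T^*\RP^n$, or has different critical behaviour, while for $\pkmch$ the count differs.
\end{itemize}
Alternatively, one can show that $\pkmch$ is displaceable, since the Chekanov-type curve can be moved off the stratum, whereas $\pkm$ has nonvanishing Floer homology. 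Either discrepancy rules out a Hamiltonian isotopy.
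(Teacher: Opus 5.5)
Your fallback for non-Hamiltonian-isotopy is exactly the paper's argument and suffices: $\pkmch$ is displaceable by displacing its curve in the reduced space, while $\pkm$ is non-displaceable by Oakley--Usher. So the real obstacle is the Lagrangian isotopy for $(k,m)\in\{(0,1),(1,1)\}$, and there your mechanism fails.

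\textbf{The isotopy in the exceptional cases.} A curve in the level-zero reduced space cannot be pushed ``across the stratum''. The preimage of the distinguished point is the whole zero section $S^n$, so the lift of a curve through it is not a manifold. Moreover, for lifts of embedded curves in the punctured plane $Y$, the winding number about the puncture ($1$ for $\pkm$, $0$ for $\pkmch$) is an isotopy invariant.

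The missing idea is to leave the zero level. $G_{k,m}$ is abelian exactly when $k,m\leqslant 1$, so the nearby levels $\overline{\mu}_{k,m}=\varepsilon$ (all components nonzero) are $G_{k,m}$-invariant regular levels. Their reduced spaces are smooth planes with no distinguished point. One deforms $\pkm$ and $\pkmch$ to lifts of curves at such a level by varying $\varepsilon$, isotopes the two curves there, and lifts.

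Be aware that for $(0,1)$ in $T^*\RP^2$ no construction can succeed. For $k=0$ the deck involution $I=\phi^H_{1/2}$ acts on $Y$ by the free rotation $\alpha\mapsto\alpha+\frac12$. For $\Pi(\pkmch)$ to be embedded, $\Gamma$ must be disjoint from $I(\Gamma)$, since a free involution cannot preserve the disk $\Gamma$ bounds. Hence $\Pi(\pkmch)$ has trivial image in $\pi_1(T^*\RP^2)=\Z_2$, whereas $\Pi(\pkm)$, whose preimage $\pkm$ is connected, surjects onto it. These tori are therefore not even smoothly isotopic. So the $(0,1)$ case of the last bullet, which the paper settles with ``the same symplectic reduction argument'', appears to be false as stated. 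For $(1,1)$ it is fine, because there $I$ is the action of $(-\id,-\id)\in G_{1,1}$ and acts trivially on every reduced space.

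\textbf{The first bullet.} In the paper ``not homologous'' is intrinsic: it is the ``not diffeomorphic'' case of the abstract. Your ambient-class reading neither proves this nor works on its own terms. The paper argues as follows.
\begin{itemize}
\item $\pkmch=\pi^{-1}(\Gamma)$ is \emph{always} the trivial bundle $S^1\times S^k\times S^m$, because $\Gamma$ bounds a disk in the smooth part $Y$. Your hedge about its monodromy is wrong.
\item $\pkm$ is the mapping torus of $\tau=(-\id)\times(-\id)$.
\item The Wang sequence, with $\tau_*\neq\id$ on $H_k$ or $H_m$ (whichever index is even), shows the homologies differ.
\end{itemize}
In $H_n(T^*S^n;\Z_2)$, by contrast, both classes vanish: $\pkmch$ bounds $\pi^{-1}$ of the disk bounded by $\Gamma$, and $\pkm\to S^n$ is generically two-to-one. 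So for mixed parity only the intrinsic non-orientability of $\pkm$ separates them. For $k,m$ both even the integral classes $\pm2[0_{S^n}]$ and $0$ agree mod $2$, contrary to your second sub-bullet.

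\textbf{Smaller points.}
\begin{itemize}
\item A Lagrangian isotopy preserves the Maslov class but not the area class or the monotonicity constant; already $r\mapsto\pkm(r)$ changes the latter. So only $N_L$ may be used, and its values must be supplied: $n-1$, resp.\ $2(n-1)$ if $k=0$, for $\pkm$ by Oakley--Usher, and $2$ for $\pkmch$ via the Maslov reduction lemma.
\item For $T^*\RP^n$ you give no argument. There the Maslov class lives only on $\ker(\pi_1(L)\to\Z_2)$. For $k\neq 0$ with $k$ or $m$ even, the separation in $T^*S^n$ was topological, not via Maslov numbers. The paper instead lifts a putative isotopy through the double cover $\Pi$ back to $T^*S^n$.
\end{itemize}
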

    
\begin{figure}[h!]
\centering
\begin{tikzpicture}[scale=0.9, line cap=round, line join=round]

% size
\def\n{6}

%first coloumn
\fill[ForestGreen](1,1) rectangle (2,5);
\fill[blue](1,1) rectangle (2,4);
\fill[blue](1,1) rectangle (2,3);
\fill[blue](1,1) rectangle (2,2);
\fill[blue](1,0) rectangle (2,1);

%second coloumn
\fill[ForestGreen](2,1) rectangle (3,5);
\fill[red](2,1) rectangle (3,4);
\fill[blue](2,1) rectangle (3,3);
\fill[red](2,1) rectangle (3,2);
\fill[blue](2,0) rectangle (3,1);

%third coloumn

\fill[red](3,1) rectangle (4,4);
\fill[red](3,1) rectangle (4,3);
\fill[red](3,1) rectangle (4,2);
\fill[red](3,0) rectangle (4,1);

%fourth coloumn

\fill[blue](4,1) rectangle (5,3);
\fill[red](4,1) rectangle (5,2);
\fill[blue](4,0) rectangle (5,1);

%fith coloumn

\fill[red](5,1) rectangle (6,2);
\fill[red](5,0) rectangle (6,1);

%sixth coloumn
\fill[blue](6,0) rectangle (7,1);

% grid (only triangular part visually emphasized)
\foreach \x in {1,...,6}
  \draw[gray!70, very thin] (\x,0) -- (\x,6);

\foreach \y in {1,...,5}
  \draw[gray!70, very thin] (0,\y) -- (7,\y);

% labels
\draw[gray!70, very thin] (0,6) -- (1,5);
\node at (0.75,5.75) {$k$};
\node at (0.25,5.25) {$m$};

\foreach \c in {0,...,5}
  \node at (\c+1.5,5.5) {\c};

\foreach \r in {1,...,5}
  \node at (0.5,5-\r+0.5) {\r};

\end{tikzpicture}
\hspace{1 cm}
\begin{tikzpicture}[scale=0.9, line cap=round, line join=round]

% size
\def\n{6}

%first coloumn
\fill[ForestGreen](1,1) rectangle (2,5);
\fill[blue](1,1) rectangle (2,4);
\fill[blue](1,1) rectangle (2,3);
\fill[blue](1,1) rectangle (2,2);
\fill[blue](1,0) rectangle (2,1);

%second coloumn
\fill[ForestGreen](2,1) rectangle (3,5);
\fill[blue](2,1) rectangle (3,4);
\fill[blue](2,1) rectangle (3,3);
\fill[blue](2,1) rectangle (3,2);
\fill[blue](2,0) rectangle (3,1);

%third coloumn

\fill[blue](3,1) rectangle (4,4);
\fill[blue](3,1) rectangle (4,3);
\fill[blue](3,1) rectangle (4,2);
\fill[blue](3,0) rectangle (4,1);

%fourth coloumn

\fill[blue](4,1) rectangle (5,3);
\fill[blue](4,1) rectangle (5,2);
\fill[blue](4,0) rectangle (5,1);

%fith coloumn

\fill[blue](5,1) rectangle (6,2);
\fill[blue](5,0) rectangle (6,1);

%sixth coloumn
\fill[blue](6,0) rectangle (7,1);

% grid (only triangular part visually emphasized)
\foreach \x in {1,...,6}
  \draw[gray!70, very thin] (\x,0) -- (\x,6);

\foreach \y in {1,...,5}
  \draw[gray!70, very thin] (0,\y) -- (7,\y);

% labels
\draw[gray!70, very thin] (0,6) -- (1,5);
\node at (0.75,5.75) {$k$};
\node at (0.25,5.25) {$m$};

\foreach \c in {0,...,5}
  \node at (\c+1.5,5.5) {\c};

\foreach \r in {1,...,5}
  \node at (0.5,5-\r+0.5) {\r};

\end{tikzpicture}
\caption{Pictorial representation of the comparison theorem in~$T^*S^n$ on the left and $T^*\R P^n$ on the right for the first few~$k\leq m$.}
\end{figure}

This suggests that for Lagrangians which are not tori, the notion of \emph{knottedness} can take on various different meanings, depending on the situation. 

As in \cite{OakUsh16}, we compactify the above cotangent bundles by performing a symplectic cut to obtain the complex quadric $Q^n$ and complex projective space $\CP^n$, respectively. The Lagrangians from \cref{thm:compopen} all come in a family depending on one real parameter, which should be thought of as \emph{size}. For small enough sizes, the Lagrangians can be considered in the compactifications. 

\begin{TheoremA}
    \label{thm:compclosed}
    For $r,r' \in (0,2\pi)$, the Lagrangians~$\mathcal{P}_{k,m}(r), \mathcal{P}^{\Ch}_{k,m}(r') \subset Q^n$ are 
    \begin{itemize}
        \item not homologous if~$k \neq 0$ and at least one of the~$k,m$ is even, 
        \item diffeomorphic but not Lagrangian isotopic if~$m\geq k \geq 3$ are both odd, 
        \item  Lagrangian isotopic if~$(k,m)=(1,1)$,
        \item Lagrangian isotopic but not Hamiltonian isotopic if~$k = 0$ and~$r'\neq 2\pi-r$,
        \item Hamiltonian isotopic if~$k = 0$ and~$r'= 2\pi-r$.
    \end{itemize}
    The Lagrangians~$\mathcal{P}_{k,m}(s), \mathcal{P}^{\Ch}_{k,m}(\pi-s) \subset \CP^n$ are Hamiltonian isotopic for all $s \in (0,\pi)$. 
\end{TheoremA}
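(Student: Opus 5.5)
Since Theorem~B is a composite statement, I will treat each bullet separately. The tools are the reduction description of $\cp_{k,m}(r)$ and $\pkmch(r')$ (built from an $\SO(k+1)\times\SO(m+1)$-type action and a reduced space), together with the symplectic cut $T^*S^n \rightsquigarrow Q^n$, $T^*\RP^n\rightsquigarrow \CP^n$ of \cite{OakUsh16}.

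\textbf{Inheriting the obstructions from Theorem~A.} The non-homologous case follows from the computation behind \cref{thm:compopen}. There I compare the classes of $\cp_{k,m}$ and $\pkmch$ in $H_*(T^*S^n)$. In $Q^n$, the classes should be detected by intersection numbers with a cycle, for instance the Lagrangian sphere $S^n$ (the zero section) or a fibre-like cycle disjoint from the divisor at infinity. The hope is that this pairing is still one of the parity-sensitive numbers, roughly an Euler-characteristic count, which vanishes for one Lagrangian and not for the other when $k\neq 0$ and one of $k,m$ is even.

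For $m\ge k\ge3$ both odd, I would upgrade the non-Lagrangian-isotopy argument from $T^*S^n$. An invariant that survives compactification is the Maslov class on $\pi_1$, or the image of $\pi_1(L)\to\pi_1$ of the complement of the divisor, combined with how the $S^k$- and $S^m$-factors bound discs. Since $k,m\ge3$, these spheres are simply connected, and the homotopy classes of discs with boundary on $L$ through the divisor differ between the two constructions. Isotopy invariance of the Maslov index then distinguishes them. I expect this step to be the \emph{main obstacle}: the compactification adds new discs, and in $Q^n$ the invariant used in $T^*S^n$ may die. I would first try the relative homotopy class of the boundary $S^1$-loop combined with the Maslov index and the $\Z/2$ linking with the divisor.

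\textbf{Isotopies and the Hamiltonian statements.} For $(1,1)$ and for $k=0$, the open case already gives a Lagrangian isotopy. This isotopy happens inside the region below the cut level, so it persists in $Q^n$, provided the sizes are below $2\pi$, which I check from the moment map image.

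For $k=0$, the Lagrangians are parametrized by a size $r$. The anti-holomorphic involution, or the $\Z/2$ symmetry of $Q^n$ swapping the two ends of the geodesic flow (the reflection $\mu\mapsto 2\pi-\mu$ of the $S^1$-moment map of the normalized geodesic flow), should map $\cp_{0,m}(r)$ to $\pkmch(2\pi-r)$. This symmetry is Hamiltonian because it lies in the identity component of $\SO(n+2)$. That gives the Hamiltonian isotopy when $r'=2\pi-r$.

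For $r'\neq 2\pi-r$, I distinguish the two Lagrangians by a Hamiltonian invariant: the symplectic area class or the displacement energy, computed from the areas of Maslov-index-two discs. Via the reduction, these areas are affine in $r$ and $r'$ respectively, with coincidence exactly when $r'=2\pi-r$.

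The $\CP^n$ statement follows the same way: the analogous involution of the $S^1$-moment image $[0,\pi]$ for $T^*\RP^n\subset\CP^n$ sends $s\mapsto\pi-s$ and exchanges the two constructions.
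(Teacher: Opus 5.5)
Your treatment of the $(1,1)$ bullet matches the paper: the isotopy built in $T^*S^3$ fits inside $D^*S^3\subset Q^3$. The $k=0$ and $\CP^n$ statements, however, rest on a symmetry that does not exist.

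\textbf{The $k=0$ and $\CP^n$ cases.} A map realizing $H\mapsto 2\pi-H$ would have to exchange $H^{-1}(0)=\Sigma\cong S^n$ with the divisor at infinity $H^{-1}(2\pi)=\{z_0=0\}\cong Q^{n-1}$. These have different dimensions for $n\geq3$, and in any case one is Lagrangian and the other symplectic. An antiholomorphic involution is antisymplectic, so it is neither Hamiltonian nor an element of $\SO(n+2)$. The same objection applies to $\RP^n$ versus the divisor in $\CP^n$.

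Your source for the $k=0$ Lagrangian isotopy is also wrong. For $m\geq2$, Theorem~A says $\mathcal{P}_{0,m}$ and $\mathcal{P}^{\mathrm{Ch}}_{0,m}$ are \emph{not} Lagrangian isotopic in $T^*S^n$, since their minimal Maslov numbers are $2m$ and $2$.

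The missing idea lives in the reduced space. For $k=0$ the cut locus descends to a \emph{smooth} point, with no $\Z_2$-orbifold point (unlike $k\neq0$). So $Z'=\widehat{Z'}\setminus\{y_*\}$ is an honest disc of area $2\pi$.
\begin{itemize}
\item In $Z'$, the circle $S^1(r)$ bounds a disc of area $2\pi-r$ (the side containing the cut point), and $\Gamma(r')$ bounds area $r'$.
\item Embedded circles bounding equal area in a disc are Hamiltonian isotopic, and the isotopy lifts to $\overline{\mu}_{0,m}^{-1}(0)$. Hence $\mathcal{P}_{0,m}(r)$ is Hamiltonian isotopic to $\mathcal{P}^{\mathrm{Ch}}_{0,m}(2\pi-r)$.
\item Letting the enclosed area vary gives the Lagrangian isotopy for all $r,r'$.
\end{itemize}
The $\CP^n$ case is identical, since there the smooth part $Z$ is a disc of area $\pi$ for every $k$.

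After this reduction, your Maslov-$2$ area comparison is a reasonable way to get non-Hamiltonian-isotopy for $r'\neq2\pi-r$; the paper leaves this point implicit. You must track the ambiguity from adding the line class (Maslov $2n$, area $2\pi$). For $n\geq3$ the resulting set of Maslov-$2$ areas still determines the size.

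\textbf{The first bullet.} You have misread ``not homologous''. As in Theorem~A, it means the Lagrangians themselves have non-isomorphic homology. For $k\neq0$, $\mathcal{P}_{k,m}$ is the mapping torus of $(-\id)\times(-\id)$ on $S^k\times S^m$. This map acts nontrivially on $H_k$ or $H_m$ when $k$ or $m$ is even, so the Wang sequence rules out $H_*(\mathcal{P}_{k,m})\cong H_*(S^1\times S^k\times S^m)$. Compactification does not change the Lagrangians, so the bullet is immediate.

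Your ambient-class plan targets a different statement, which is false in general. Both Lagrangians are disjoint from $\Sigma$, so their pairings with it vanish. Moreover, when $k,m$ are both even, $n$ is odd and $H_n(Q^n;\Z_2)=0$.

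\textbf{The second bullet.} The obstacle you anticipate does not arise. For $k,m\geq3$ one has $\pi_1(L)\cong\Z$ and $\pi_2(L)=0$. So the Maslov class descends to $\pi_1(L)\to\Z/2n$, with values $n-1$ and $2$ on generators (computed in $D^*S^n$). A Lagrangian isotopy would force $n-1\equiv\pm2\pmod{2n}$, which is impossible for $n=k+m+1\geq7$. The diffeomorphism holds because the antipodal map of an odd-dimensional sphere is isotopic to the identity.
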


In the case where $k=1$, and $m$ is odd, we do not know whether the Lagrangians are Lagrangian isotopic -- they are not distinguished by their Maslov class, see \cref{prop:k1modd}. Observe that the comparison changes drastically when the space is compactified. The same phenomenon can already be observed in the classical example of the Polterovich torus. In $T^*S^2$, it is not Hamiltonian isotopic to the Chekanov torus. After compactifying to $S^2 \times S^2$, it is Hamiltonian isotopic to the Chekanov torus. In particular, there is intersection rigidity between every such Hamiltonian isotopy and the symplectic sphere coming from the compactification. The same holds in the general setting. 

\begin{corollary*}
    Let~$Q_0^{n-1} = \{z_{n+1} = 0\} \subset Q^n$ be the standard quadric in~$Q^n$. Every Hamiltonian isotopy mapping~$\mathcal{P}_{k,m}(s)$ to~$\mathcal{P}^{\Ch}_{k,m}(\pi-s)$ in~$\CP^n$ has to intersect the quadric~$Q^{n-1}_0 \subset \CP^n$. Similarly, every Hamiltonian isotopy mapping~$\mathcal{P}_{0,m}(r)$ to~$\mathcal{P}^{\Ch}_{0,m}(2\pi-r)$ in~$Q^n$ has to intersect the quadric~$Q^{n-1}_0 \subset Q^n$. 
\end{corollary*}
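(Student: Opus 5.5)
The corollary follows by combining Theorem~\ref{thm:compclosed} with Theorem~\ref{thm:compopen}, arguing by contradiction.

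First we identify complements. Removing the quadric $Q^{n-1}_0=\{z_{n+1}=0\}$ from $\CP^n$ leaves an open domain symplectomorphic to a disk cotangent bundle $D^*_{<c}\RP^n$. This is the symplectic cut description of $\CP^n$ as a compactification of $T^*\RP^n$, used to transplant the Lagrangians $\mathcal{P}_{k,m}(s)$ and $\mathcal{P}^{\Ch}_{k,m}(\pi-s)$ into $\CP^n$. Likewise, $Q^n\setminus Q^{n-1}_0$ is symplectomorphic to an open disk bundle $D^*_{<c}S^n$. Under these identifications, the Lagrangians in the corollary are exactly the images of the Lagrangians of Theorem~\ref{thm:compopen} of the corresponding size. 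The first step is to check that this identification holds on the nose, i.e.\ that the compactified Lagrangians are defined by pulling back the cotangent ones through this embedding.

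Now suppose a Hamiltonian isotopy $\phi_t$ of $\CP^n$ maps $\mathcal{P}_{k,m}(s)$ to $\mathcal{P}^{\Ch}_{k,m}(\pi-s)$ and $\phi_t(\mathcal{P}_{k,m}(s))\cap Q^{n-1}_0=\emptyset$ for all $t$. Then $\bigcup_t \phi_t(\mathcal{P}_{k,m}(s))$ is a compact subset of the complement, and hence lies in some $D^*_{\le c'}\RP^n$ with $c'<c$. This gives a Lagrangian isotopy in $D^*_{<c}\RP^n\subset T^*\RP^n$ that is exact in the following sense: its flux, in $H^1$ of the Lagrangian, vanishes. We must check this flux vanishing intrinsically in the complement, not only in $\CP^n$.

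The cleanest way is to cut off the generating Hamiltonian near the moving Lagrangian. Take $H_t$ generating $\phi_t$, and multiply it by a cutoff function equal to $1$ on a neighbourhood of the compact trace $\bigcup_t\phi_t(L)$ and supported in the complement of $Q^{n-1}_0$. The resulting Hamiltonian lives in $T^*\RP^n$ and moves $L$ along the same path $\phi_t(L)$. The cutoff Hamiltonian flow agrees with $\phi_t$ on $L$ as long as the trajectories of points of $L$ stay in the region where the cutoff is $1$. This is the subtle point: the trajectories $\phi_t(x)$ for $x\in L$ are contained in $\bigcup_t\phi_t(L)$, so this holds. We therefore get a compactly supported Hamiltonian isotopy in $T^*\RP^n$ mapping $\mathcal{P}_{k,m}$ to $\mathcal{P}^{\Ch}_{k,m}$. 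This contradicts Theorem~\ref{thm:compopen}, which says they are not even Lagrangian isotopic, or at least not Hamiltonian isotopic, in every case.

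The $Q^n$ statement is identical, applied to $\mathcal{P}_{0,m}(r)$ and $\mathcal{P}^{\Ch}_{0,m}(2\pi-r)$. Theorem~\ref{thm:compopen} gives non-Hamiltonian-isotopy in $T^*S^n$ for $k=0$ and all $m$: it is Lagrangian but not Hamiltonian isotopic for $m=1$, and not Lagrangian isotopic otherwise.

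The main obstacle is that Theorem~\ref{thm:compopen} must hold for the specific sizes $r$ and $2\pi-r$ (resp.\ $s$ and $\pi-s$), not only for equal sizes. I expect its invariants (Maslov class or displacement energy and Floer-type obstructions) to be insensitive to size or to apply uniformly. If not, one would instead invoke the size-independent obstruction the paper uses for the open case.
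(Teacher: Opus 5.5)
Your argument is correct and is essentially the paper's proof. The paper's one-line proof uses the same two ingredients. First, $\CP^n\setminus Q^{n-1}_0$ (resp.\ $Q^n\setminus Q^{n-1}_0$) is the open disk cotangent bundle containing both Lagrangians, so an isotopy avoiding the divisor would give a compactly supported Hamiltonian isotopy in $T^*\RP^n$ (resp.\ $T^*S^n$), contradicting \cref{thm:compopen}; your cutoff of the Hamiltonian is exactly the step the paper leaves implicit, and \cref{thm:compclosed} guarantees such isotopies exist in the compactification. Your worry about sizes is unnecessary: the obstructions behind \cref{thm:compopen} (minimal Maslov numbers, and displaceability of $\pkmch$ versus non-displaceability of $\pkm$) hold for all values of the size parameters, which is why the paper drops them from the notation in that proof.
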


\proof It will follow from our construction that there is a symplectomorphism of the unit cotangent bundle~$D^*\RP^n$ to~$\CP^n \setminus Q^{n-1}$ mapping~$\mathcal{P}_{k,m}(r), \mathcal{P}^{\Ch}_{k,m}(r) \subset D^*\RP^n$ to their respective versions in~$\CP^n$. Now the claim follows from Theorems \ref{thm:compopen} and \ref{thm:compclosed}. Similarly, we have~$Q^n \setminus Q^{n-1} \cong D^*S^n$. 
\proofend

\subsection{Lagrangian tori in the three-dimensional quadric}
In the three-dimensional quadric $Q^3$, the above constructions yield two monotone Lagrangian tori $\cp_{1,1}, \cp_{1,1}^{\rm Ch}$. The second main theme of the paper is a detailed study of these tori using their enumerative invariants. It is a natural question to compare the new torus $\cp_{1,1}^{\rm Ch}$ to existing tori in $Q^3$. As proven in~\cite[Theorem 1.3]{OakUsh16}, the Oakley--Usher torus $\cp_{1,1}$ can be viewed as the Biran circle bundle construction over the Clifford torus in the two-dimensional quadric $Q^2 = S^2 \times S^2$. It is therefore natural to consider circle bundles over the infinitely many Vianna tori in $S^2 \times S^2$ found in \cite{Via17}. We show that $\cp_{1,1}^{\rm Ch}$ is distinct from all of these examples, and thus \emph{exotic} in a meaningful way.

\begin{TheoremA}
	\label{thm:quadric_exotic}
	The torus $\cp_{1,1}^{\rm Ch}$ cannot be mapped by a symplectomorphism to $\cp_{1,1}$ or the lift of any other Vianna torus from the divisor $Q^2 \subset Q^3$.
\end{TheoremA}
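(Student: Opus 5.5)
The plan is to separate the tori by a symplectic invariant that is strong enough to see the whole Vianna family at once. The abstract suggests the $\Psi$-invariant of Shelukhin--Tonkonog--Vianna, taken together with its versal deformation. For a monotone torus $L\subset Q^3$, the basic input is the disk potential $W_L$, a Laurent polynomial in $H_1(L)\cong\Z^3$. Its Newton polytope, taken up to $\GL(3,\Z)$, is already an invariant up to symplectomorphism. The $\Psi$-invariant refines this: roughly, it is the set of critical points of the potential, or more precisely the critical-value/eigenvalue data, paired with the deformation data.

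The first step is to compute $W$ for $\cp_{1,1}^{\rm Ch}$. I would use the reduction description of $\cp^{\Ch}_{1,1}$ as a lift of a Chekanov-type curve in a reduced space. This lets me count Maslov-2 disks, either by a degeneration/neck-stretching argument or by identifying it with a known mutation of the Clifford-type torus. I expect the answer to be a wall-crossing (mutation) of the potential of $\cp_{1,1}$.

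The second step is to compute the potentials of the Biran lifts of the Vianna tori $T_{a,b,c}$ (Markov-type triples) in $Q^2=S^2\times S^2$. The Biran circle-bundle construction over the divisor $Q^2\subset Q^3$ should give a formula of the form
\[
W_{\tilde T}(x,y,z) \;=\; z\,W_{T}(x,y) + z^{-1}\cdot(\text{monomial}),
\]
or a similar formula obtained by adding a fiber variable. This follows from the known lifting formula for potentials via the Biran circle bundle (Diogo--Tonkonog--Vianna type results), together with the known potentials of the Vianna tori.

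The comparison is then carried out as follows.

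1. Distinguish $\cp_{1,1}^{\rm Ch}$ from $\cp_{1,1}$ using the Newton polytope, or simply the number of Maslov-2 disk classes.

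2. For general Vianna lifts, note that their Newton polytopes grow without bound. Only finitely many lifts can therefore share the polytope of $\cp^{\Ch}_{1,1}$, up to $\GL(3,\Z)$.

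3. If coincidences of the polytope remain, for instance if $\cp^{\Ch}_{1,1}$ has the same Newton polytope as some lift, use the finer $\Psi$-invariant. This means computing the critical points of $W$ and the local versal deformations, i.e. the Milnor/Jacobian algebra structure at each critical point, and showing that they differ.

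The symplectomorphism invariance of all of these objects is assumed from Shelukhin--Tonkonog--Vianna.

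The main obstacle is the case where the naive potential fails to distinguish the tori. I suspect that $\cp^{\Ch}_{1,1}$ may be related to a lift by a mutation that preserves the polytope, or even that its potential agrees with one of them up to change of coordinates. This is why the versal deformation is needed. In that case I would first compute the bulk- or Maslov-4-deformed potential (the versal deformation) near the degenerate critical points of $W$. I would then check whether a numerical invariant differs between $\cp^{\Ch}_{1,1}$ and every lift: either the multiplicity of a critical point, or which eigenvalue of $c_1\star$ it lies over. Computing this deformation rigorously, meaning counting the relevant higher disks, is the hardest step.
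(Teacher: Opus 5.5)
Your framework (attach to each torus a polytope in $H^1(L;\R)$ and compare up to $\GL(3;\Z)$) is close to the paper's: by \cite[Theorem B]{SheTonVia24}, the $\Psi$-polytope is, up to scaling, the polar dual of the Newton polytope of the disk potential. However, you have misread what $\Psi$ and its versal deformation are.

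\emph{What $\Psi$ actually is.} $\Psi(L)$ is a single number: the least area of a holomorphic disk that contributes to the Fukaya $A_\infty$-algebra. Its versal deformation comes from deforming the Lagrangian itself by flux $a\in H^1(L;\R)$. This gives a germ $\Psi(L)+\min_i\langle a,\xi_i\rangle$ (\cref{prop:min_functionals}), whose superlevel set is $\Delta^{\!\Psi}(L)$. It has nothing to do with critical points, Milnor/Jacobian algebras, or Maslov-$4$/bulk deformations.

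\emph{Consequences for your plan.} Your Step~3 fallback is not a refinement of $\Psi$, and it would not work anyway. $QH^*(Q^3)$ is semisimple with one-dimensional eigenspaces, so critical values are confined to $\operatorname{Spec}(c_1\star -)$. One expects exactly one nondegenerate critical point over each eigenvalue that is hit, as happens for $\cp$. This data is far too coarse to distinguish anything here. Conversely, the obstacle you anticipate does not arise: by the duality just mentioned, the polytope alone already separates the tori.

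The genuine gap is that you have no argument covering all lifts at once. Growth of Newton polytopes gives at best finiteness, and even that needs two things you do not supply:
\begin{itemize}
\item a $\GL(3;\Z)$-invariant measure of size;
\item growth also for the quadrilateral mutations $\Delta_k$. These are included in the statement but omitted by your Markov-type $T_{a,b,c}$, and they are exactly the delicate case.
\end{itemize}
The remaining finite list is then unspecified, and your tool for it fails.

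The missing idea is structural, namely \cref{lem:psi_polytope_lifted}: for every Vianna torus, $\Delta^{\!\Psi}(\widehat T^{\rm Via}_k)=\operatorname{Cone}(\Delta_k\times\{2\pi\})$. This is obtained by lifting a versal deformation from $Q^2$. On an open dense set its members are Hamiltonian isotopic to toric fibres, whose lifts are Gelfand--Cetlin fibres; one then extends by \cref{prop:min_functionals}.

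Likewise, $\Delta^{\!\Psi}(\cp^{\Ch})$ (\cref{prop:germPchek}) is computed with no disk counting at all. A family of probes of direction $(1,1,2)$ shows that for $b_1b_2\neq 0$ the deformed tori are Hamiltonian isotopic to Gelfand--Cetlin fibres $T(\Phi(b))$. The $\Psi$-invariant of these fibres is the integral affine distance to $\partial\Delta_{Q^3}$.

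The comparison is then immediate. $\Delta^{\!\Psi}(\cp^{\Ch})$ is a pyramid over a lozenge whose four-valent apex is at integral affine distance $\pi$ from the opposite facet. A cone over a triangle has no four-valent vertex. A cone over a quadrilateral has its apex at distance $2\pi$ from the base.

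Your Step~1 (neck-stretching or mutation to obtain $W_{\cp^{\Ch}}$) and Step~2 (an explicit lifting formula) are therefore both harder than necessary and still not sufficient. The lifting formula as written already fails for $\cp$ itself. There the Clifford diamond of lattice area $2$ lifts to a unimodular base facet of the Newton polytope of $W_{\cp}$, an index-two effect. A formula of the form $zW_T+z^{-1}(\cdots)$ cannot reproduce this.
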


In particular, this settles one of the open cases in \cref{thm:compclosed}. Similar lifts of Vianna tori from $\CP^2$ to $\CP^n$ have been considered in \cite{ChaHirWan24} and are studied in much greater generality in forthcoming work by Diogo--Tonkonog--Vianna--Wu~\cite{DTVW26}. Furthermore, we study the previously known Oakley--Usher Lagrangian $\cp_{1,1} \subset Q^3$ in more detail. Our proof of this result heavily relies on the $\Psi$-invariant introduced by Shelukhin--Tonkonog--Vianna \cite{SheTonVia24} of which we compute versal deformations. To that end, we view $\cp_{1,1}$ as the monotone fibre of a Gelfand--Cetlin fibration.

\begin{TheoremA}
	\label{thm:quadric_non_displ}
    There is a spin structure on the Oakley--Usher torus $\cp_{1,1}\subset Q^3$, and a  basis of $\pi_1(\cp_{1,1}) \cong \Z^3$ for which the superpotential $\mathscr{W}_{\mathcal{P}_{1,1}} : \Hom(\pi_1(\mathcal{P}),\Lambda^{\times})\cong (\Lambda^{\times})^3 \to \Lambda$ is
    \begin{equation*}
    \mathscr{W}_{\mathcal{P}_{1,1}}(x_1,x_2,x_3)
    =
    T^{2\pi/3} \left(x_1+x_2+\frac{x_3}{x_1}+\frac{x_3}{x_2}+\frac{1}{x_3}
\right),
    \end{equation*}
    where $\Lambda$ is a suitable Novikov field.
	In particular, the Oakley--Usher torus $\cp_{1,1} \subset Q^3$ is non-displaceable. Furthermore, it carries three critical local systems, which, when taken together with a disjoint Lagrangian sphere $\Sigma \subset Q^3$, split-generate the monotone Fukaya category of $Q^3$.
\end{TheoremA}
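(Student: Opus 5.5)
The plan is to realise $\cp_{1,1}$ as the monotone fibre of a toric degeneration. The paper indicates this is done via a Gelfand--Cetlin type fibration on $Q^3$, which is the isotropic flag variety $SO(5)/P$. Its Gelfand--Cetlin polytope $\Delta$ then gives, through the standard toric-degeneration results (Nishinou--Nohara--Ueda, or Galkin's presentation of the Landau--Ginzburg model of $Q^3$), a superpotential with one term per facet of $\Delta$. The superpotential should be a sum of $5$ Laurent monomials, matching the $5$ facets. Since the Maslov index two discs are all regular and are counted with sign $+1$ after choosing the spin structure induced by the toric degeneration, the potential is
\begin{equation*}
\mathscr{W}=\sum_{F\subset \Delta\ \mathrm{facet}} T^{\ell_F(u_0)}x^{\nu_F}.
\end{equation*}
Here $\nu_F$ is the primitive inward normal and $u_0$ is the barycentre, the unique point with equal affine distance to all facets. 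That distance is $2\pi/3$ with the paper's normalisation, so monotonicity gives the common factor $T^{2\pi/3}$.

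The first step is to identify $\cp_{1,1}$ with this monotone Gelfand--Cetlin fibre. I would do this by writing the Oakley--Usher torus in the reduction picture used earlier in the paper, namely as an $S^1$-bundle over the Clifford torus in $Q^2$ (\cite[Theorem 1.3]{OakUsh16}). I would then check that the three Gelfand--Cetlin Hamiltonians restrict to this picture: two of them are the moment maps on $Q^2=S^2\times S^2$ and the third is the Biran circle coordinate. Since both tori are monotone and are level sets of commuting functions with matching values, they coincide.

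The second step is to compute the normals. The inequalities should read $u_1\le u_3$, $u_2\le u_3$ (giving $x_1,x_2$), $u_3-u_1\le c$, $u_3-u_2\le c$ (giving $x_3/x_1,x_3/x_2$), and $u_3\ge 0$ (giving $1/x_3$), after a unimodular change of basis. The change of basis is fixed by matching these with the stated monomials.

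The main obstacle is that the Gelfand--Cetlin fibration is not toric: it has a non-smooth fibre over a lower-dimensional face. One must therefore justify that no additional Maslov two discs contribute to the monotone fibre, for instance via the Nishinou--Nohara--Ueda degeneration argument, or the computation in \cite{SheTonVia24} of this kind of fibre.

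For non-displaceability, it suffices to exhibit a critical point of $\mathscr{W}$ in $(\Lambda^\times)^3$ with valuation zero. The equation $\partial_{x_1}\mathscr{W}=0$ gives $x_1^2=x_3$, and by symmetry $x_2^2=x_3$. The equation $\partial_{x_3}\mathscr{W}=0$ gives $1/x_1+1/x_2=1/x_3^2$. With $x_1=\pm x_2$, only $x_1=x_2$ works, giving $x_1^3=2$. This yields three critical points, each with a nonzero critical value. Nonvanishing Floer cohomology then follows from the standard Cho--Oh/Auroux argument.

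Split-generation I would deduce from Sheridan's criterion. The eigenvalues of $c_1\star$ on $QH(Q^3)$ are $0$ and $3\sqrt[3]{2}\,\zeta^j T^{2\pi/3}$ for $j=0,1,2$. The nonzero eigenspaces are one-dimensional, so each is generated by the corresponding torus local system, since its closed-open map is nonzero there. The $0$-eigenspace is generated by a Lagrangian sphere $\Sigma$ (a vanishing cycle), using \cite{Smith}-type results for quadrics. Finally, $\Sigma$ can be taken disjoint from $\cp_{1,1}$ by locating it near the ``other'' end of the cotangent model.
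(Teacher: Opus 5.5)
Your overall route is the paper's: realise $\cp$ as the monotone fibre of a Gelfand--Cetlin fibration, degenerate $Q^3$ to a singular toric variety, read off $\mathscr W$ from \cite[Theorem 1.2]{NisNohUed12}, find three critical points, and finish with Sheridan's criterion and a Lagrangian sphere. However, the two steps that carry the content are not set up correctly.

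\textbf{The fibration.} Do not take the Gelfand--Cetlin system that $Q^3=\SO(5)/P$ carries as a coadjoint orbit, via $\SO(5)\supset\SO(4)\supset\SO(3)\supset\SO(2)$. After reducing by the Biran circle, its remaining Hamiltonians descend to the Gelfand--Cetlin system of $Q^2$, not to the standard toric structure. Its monotone fibre is therefore the lift of the Chekanov torus (compare the remark on Kim and Kawamoto in the introduction), not $\cp_{1,1}$, and your check in the first step would fail. What you need is $\mu=M\mu_{1,1}$ from \cref{prop:GCfibration}, built from the geodesic circle and the maximal torus $\SO(2)\times\SO(2)\subset\SO(4)$. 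Then no Biran-lift comparison is needed: $\cp_{1,1}(r)=\mu_{1,1}^{-1}(r,0,0)$ by definition, so $\cp=\mu^{-1}(\frac{2\pi}{3},\frac{2\pi}{3},\frac{4\pi}{3})$. Its polytope is $\Delta_{Q^3}=\{x_1,x_2\geqslant 0,\ x_3\geqslant x_1,\ x_3\geqslant x_2,\ x_3\leqslant 2\pi\}$. The inequalities you wrote have two pairs of opposite normals $\pm(1,0,-1)$, $\pm(0,1,-1)$. They therefore cut out an unbounded region, containing the ray $u_1=u_2=u_3\geqslant 0$. No unimodular change of basis can match them to the stated exponents, which contain no opposite pair.

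\textbf{The facet formula.} ``One term per facet'' is not a general consequence of having a toric degeneration. \cite[Theorem 1.2]{NisNohUed12} gives it when the central fibre admits a small resolution, and it fails otherwise. For example, the same family one dimension lower, $tz_0^2+z_1^2+z_2^2+z_3^2=0$, degenerates $Q^2$ to a quadric cone whose $A_1$ point has no small resolution. There the monotone fibre is the Chekanov torus, whose potential is not the three-term facet sum. The paper supplies exactly this for $tz_0^2+z_1^2+\dots+z_4^2=0$:
\begin{itemize}
\item the central fibre is the projective cone over $Q^2$, with a single conifold point over the vertex $0\in\Delta_{Q^3}$, and this point admits a small resolution;
\item an antisymplectic involution shows that the vanishing locus is $\Sigma=\mu^{-1}(0)$ (\cref{lem:vanishingl}), so $\cp$ is transported to the monotone toric fibre.
\end{itemize}
By contrast, \cite{SheTonVia24} alone only gives the support of $\mathscr W$. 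That is enough for non-displaceability, since any such five-term potential with nonzero coefficients has a critical point, but it does not give the stated coefficients.

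\textbf{Smaller corrections.}
\begin{itemize}
\item Arithmetic: $2/x_1=1/x_1^4$ gives $x_1^3=\tfrac12$, not $2$.
\item The nonzero eigenvalues of $c_1\star$ are $3\sqrt[3]{4}\,\zeta\,T^{2\pi/3}$, from $h^4=4qh$ with $q=T^{2\pi}$. The critical values $6x_1T^{2\pi/3}$ match them exactly.
\item $\Sigma$ needs no relocating: it is the zero section, automatically disjoint from $\cp\subset\{2\pi|p|=\frac{4\pi}{3}\}$.
\item $HF(\Sigma)\neq 0$ because its minimal Maslov number is $6$, so $m_0=0$ and Oh's spectral sequence degenerates \cite{Oh96}.
\item The $0$-eigensummand of $QH^*(Q^3)$ is also one-dimensional, which is what \cite[Corollary 1.12]{She16} needs for $\Sigma$.
\end{itemize}
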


The displaceability status of $\cp_{1,1} \subset Q^3$ was an open question asked by Oakley--Usher. To compute the superpotential, we consider an algebraic degeneration of $Q^3$ which transports $\cp_{1,1}$ to the monotone fibre of a singular toric variety. This technique is standard by now, see in particular Nishinou--Nohara--Ueda~\cite{NisNohUed10, NisNohUed12}. In particular, we rely on their \cite[Theorem 1.2]{NisNohUed12}. The second statement in \cref{thm:quadric_non_displ} is inspired by a corresponding generation result of the Fukaya category in the open case (i.e.\ in $T^*S^n$) due to Abouzaid--Diogo \cite{AboDio23}. Again, the compactification from $T^*S^3$ to $Q^3$ changes the situation in an essential way:  whereas the compact-monotone Fukaya category of $T^*S^3$ is split-generated by an uncountable collection of branes supported on the zero-section equipped with suitable bounding cochains and a one-parameter family of Oakley--Usher Lagrangians equipped with uncountaby many unitary local systems, the monotone Fukaya category of $Q^3$ is split-generated by a Lagrangian sphere together with three critical local systems on the Oakley--Usher torus.

\begin{remark*} Kim \cite{Kim23} and Kawamoto \cite{Kaw24} have studied a certain Lagrangian torus in quadrics of arbitrary dimension which arise as monotone fibres of a Gelfand--Cetlin fibration on $Q^n$. For $n=3$, their torus \emph{does not coincide} with $\cp_{1,1} \subset Q^3$, meaning that the displaceability result we prove is, to our knowledge, new. Roughly speaking, their torus is the Biran lift of the Chekanov torus in the two-dimensional quadric $Q^2 = S^2 \times S^2$, whereas the Oakley--Usher torus is the Biran lift ot the Clifford torus therein.
\end{remark*}

\begin{remark*}
    As was pointed out to us by Renato Vianna, enumerative properties of $\cp_{1,1}$ in the \emph{open} manifold $T^*S^3$ were studied by Chan--Pomerleano--Ueda~\cite{CPU16} and Achig--Andrango~\cite{Ach24}. The effect of the compactification on the superpotential is outlined in the latter paper. 
\end{remark*}

\subsection{Construction} Let us now move back the general case of the Lagrangians $\cp_{k,m}, \cp_{k,m}^{\rm Ch}$. Succinctly, one could say that \emph{$\cp^{\rm Ch}_{k,m}$ is to the conventional Oakley--Usher Lagrangian what the Chekanov torus in $\R^4$ is to the Clifford torus.} We therefore call $\cp_{k,m}^{\rm Ch}$ \emph{Chekanov Oakley--Usher Lagrangian}, or \emph{Chekanov OU-Lagrangian} for short. 

\begin{figure}[ht]
    \centering
    \begin{tikzpicture}
        \fill[opacity=0.2] 
            (0,0) circle (2);
        \node[cross] at (0,0) {};
        %\foreach \t in {0,0.2,0.4,...,2} {\draw[gray] (0,0) circle (\t);};
        \draw[red, thick] (0,0) circle (0.8);
        \node[text=red] at (-1.0,0.9) {\small~$\pkm(r)$};
        \draw[blue, thick] (0,-1.2) arc(-90:90:1.2) arc(270:90:0.2) arc(90:-90:1.6) arc(270:90:0.2);
        \node[text=blue] at (1.7,1.4) {\small~$\pkmch(a)$};
        \node[text=black] at (0,-2.5) {\small~$\R^2\setminus\{0\}$};
    \end{tikzpicture}
    
    \caption{ \label{fig:chek} The symplectic quotient after reduction by $\SO(k+1) \times \SO(n+1)$ is the punctured plane. The red curve lifts to the Oakley--Usher Lagrangian, the blue curve to its Chekanov version.}
\end{figure}

Before discussing the construction, let us briefly recall how to view the Clifford and Chekanov tori from the point of view of symplectic reduction. On $\R^4 = \C^2$, consider the Hamiltonian $S^1$-action given by positive rotation in the first and negative rotation in the second factor in $\C \times \C$. This action is generated by the Hamiltonian $H = \pi\vert z_1\vert^2 - \pi \vert z_2 \vert^2$. The symplectic quotient at the level $H=0$ can be identified with the punctured plane equipped with the standard symplectic form; the puncture corresponds to the fixed point $(0,0) \in \C^2$ of the circle action. Since the Clifford torus is invariant under the action, it projects to a circle in the quotient, and it is easy to check that this circle encloses the origin. This situation is illustrated in \cref{fig:chek}, where the red circle corresponds to the Clifford torus. Lifting the blue circle on the other hand yields a the Chekanov torus in $\C^2$. This idea goes back to \cite{EliPol97}, and was first explicitly formulated using symplectic reduction in \cite{CheSch10}. Very similar ideas have led to the advent of McDuff's so-called \emph{probes}, which can be used to displace toric fibres. 

Our point of view on Oakley--Usher Lagrangians and their Chekanov counterparts is similar to the above discussion, with the crucial difference that the Hamiltonian group action we consider is by a non-abelian group. Let us make this precise on the space $T^*S^n$, the situation is similar in the other ambient spaces. The obvious smooth action of $\SO(n+1)$ on $S^n$ lifts to a Hamiltonian action on its cotangent bundle $T^*S^n$. For any $k,m$ with $n = k + m + 1$, the block-diagonal embedding of $G_{k,m} = \SO(k+1) \times \SO(m+1)$ into $\SO(n+1)$ yields a natural Hamiltonian $G_{k,m}$-action on $T^*S^n$. As was already pointed out in \cite{OakUsh16}, the Oakley--Usher Lagrangian~$\pkm$ is invariant under this action. Performing symplectic reduction with respect to this action on the zero-level set of the corresponding moment map, we obtain a two-dimensional reduced space symplectomorphic to the punctured plane. As for the case of the Clifford torus, the image of $\cp_{k,m}(r)$ is the red curve enclosing the origin. The area parameter $r > 0$ is precisely the symplectic area it bounds. Lifting the blue curve (which does not enclose the origin!) in the same reduced space defines the Chekanov OU-Lagrangian. 

This can be viewed as a non-abelian version of probes. By this approach, we prove various displaceability results for~$\pkm$ and~$\pkmch$. In particular, we precisely recover the displaceability results proved in \cite{OakUsh16} for~$\pkm$ by different methods.

\subsection{Outline of the paper}
In \cref{sec:preliminaries}, we discuss and recall notions from smooth and symplectic topology which we use later in the paper. \cref{sec:construction} discusses Hamiltonian group actions, symplectic quotients and compactification via symplectic cuts all of which are used to provide our point of view on Oakley--Usher Lagrangians and define their Chekanov counterparts. In \cref{sec:comparison} we compare $\cp_{k,m}$ and $\cp_{k,m}^{\rm Ch}$ and prove Theorems~\ref{thm:compopen} and \ref{thm:compclosed}. In \cref{sec:tori_quadric}, we investigate the Shelukhin--Tonkonog--Vianna $\Psi$-invariant and its versal deformation, we discuss a Gelfand--Cetlin fibration on $Q^3$ which has the Oakley--Usher torus as its monotone fibre, and construct infinitely many Vianna tori in $Q^3$. Combining this proves \cref{thm:quadric_exotic}. In \cref{sec:enumerative}, we compute the disk counting potential of the Oakley--Usher torus in $Q^3$ by using a toric degeneration and discuss how this fits in the mirror symmetry picture, proving \cref{thm:quadric_non_displ}.

\subsection{Notation and conventions}
We identify~$S^1 = \{e^{2\pi i \theta} \in \C \sth \theta \in \R\} = \R/\Z$ throughout the paper. When speaking of homology groups of a manifold~$X$, we assume~$\Z$-coefficients unless specified, and write~$H_1(X)$ in place of~$H_1(X;\Z)$. By~$X \sslash_{\mu = c}G$ we denote the symplectic quotient obtained from performing symplectic reduction at the level~$c$ on a symplectic manifold~$X$ by a Hamiltonian~$G$-action generated by a moment map~$\mu$. By~$\overline{X}_{H \leqslant 1}$, we denote the space obtained as the lower half from performing a symplectic cut at the level~$H = 1$ of a Hamiltonian~$H$ generating a Hamiltonian~$S^1$-action. Throughout the paper, we tacitly make the following fairly natural abuse of notation. Let~$F,H$ be two commuting moment maps on the same space~$X$. If~$X$ admits symplectic reduction at the level~$F=c$, then the group action generated by~$H$ naturally descends to the symplectic quotient. We denote the Hamiltonian generating it again by~$H$.

\subsection*{Acknowledgments} 
This project grew out of a Master thesis written by AP under the supervision of JB at ETH Zürich. We thank Paul Biran for orchestrating this. We warmly thank Leonid Polterovich for drawing our attention to Oakley--Usher Lagrangians, and Johanna Bimmermann, Marc Fares, Johannes Hauber, David Keren Yaar, Joel Schmitz, Mike Usher, and Renato Vianna for interesting discussions, comments and questions. We are particularly grateful to Felix Schlenk for asking many interesting questions, which led us to think about what are now Section \ref{sec:tori_quadric} of this paper. JB is supported by SNSF Ambizione Grant PZ00P2-223460.

\section{Topological preliminaries}
\label{sec:preliminaries}

In this section we discuss some background material, which will be used in later sections: Maslov index computations, monotonicity, and the topology of certain mapping tori.

\subsection{Maslov class and monotonicity}
 \label{algebra section}
Let~$(M^{2n},\omega)$ be a symplectic manifold, and~$L \subset M$ be a Lagrangian submanifold. One of our main tools for distinguishing Lagrangians is the minimal Maslov number of~$L$, defined below. It is invariant under Lagrangian isotopy. Recall that not being Lagrangian isotopic is strictly stronger than not being Hamiltonian isotopic. One of the main methods in this paper is to explicitly compute the
minimal Maslov numbers for the Lagrangians of interest. Furthermore, to prove monotonicity of the Lagrangian submanifolds considered, we will need to study the set~$\omega(\pi_2(M)) \subset \R$.

For these purposes, we will encounter exact sequences of the form
\[\begin{tikzcd}
 \pi_2(L) \arrow[r, "i^2_*"] & \pi_2(M) \arrow[r, "j_*"] & \pi_2(M,L) \arrow[r, "\partial"] & \pi_{1}(L)\arrow[r, "i^1_*"] & \pi_1(M) \arrow[r] & \pi_1(M)/\text{im}(i^1_*),
  \end{tikzcd}
\]
obtained from the homotopy long exact sequence of the pair~$(M,L)$. Note that~$\pi_1(M)/\text{im}(i^1_*)$ is a set equipped with a~$\pi_1(M)$-action but not a group in general, as~$\im(i^1_*)$ need not be a normal subgroup of~$\pi_1(M)$.

The Maslov class is a homomorphism~$m_L \in \text{Hom}(\pi_2(M,L),\Z)$, defined as follows. Let~$[u] \in \pi_2(M,L)$. After choosing a symplectic trivialization of~$u^*TM$ , we obtain a loop~$u|_{\partial D}^*TL$ in the Lagrangian Grassmannian~$\Lambda(n)$, representing a homotopy class in~$\pi_1(\Lambda(n)) \cong \Z$. We assign the integer (winding number) associated to this homotopy class, called the \textit{Maslov index} of~$[u]$, as the value of~$m_L([u])$. 
The area class~$A_L \in \Hom(\pi_2(M,L),\R)$ is simply defined as 
\begin{equation}
A_L([u]):= \int_{D} u^* \omega.
\end{equation}
\begin{definition}
Let~$(M,\omega)$ be a symplectic manifold. A Lagrangian~$L\subset M$ is called monotone if there exists~$\lambda > 0$ such that~$A_L = \lambda m_L$.
\end{definition}
\begin{proposition}
\label{prop: vanishing of inclusion on pi2}
Let~$L\subset (M,\omega)$ be a Lagrangian. Assume~$\pi_2(M) \cong \Z$ and~$A_L|_{j_*(\pi_2(M))} \neq 0$. Then the map~$i^2_*$ induced on~$\pi_2$ by the inclusion~$i:L \xhookrightarrow{} M$ is the zero map.
\end{proposition}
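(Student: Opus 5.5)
The plan is to use exactness of the homotopy long exact sequence at $\pi_2(M)$, namely $\im(i^2_*) = \ker(j_*)$, together with the fact that the area class $A_L$ pulls back along $j_*$ to symplectic area on spheres. We want to show $\ker(j_*) = 0$; then $i^2_* = 0$ follows.

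First we check that the composition $A_L \circ j_* \colon \pi_2(M) \to \R$ is a group homomorphism. Indeed $j_*$ is a homomorphism of groups. Here $\pi_2(M,L)$ is a group, since relative $\pi_2$ is a group, possibly non-abelian. The area class $A_L$ is additive under concatenation, because the integral of $\omega$ is additive. On a sphere class $[v] \in \pi_2(M)$ the composition is $\int_{S^2} v^*\omega$, viewing the sphere as a disk whose boundary is collapsed to a point of $L$.

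Next we use the hypothesis $\pi_2(M)\cong\Z$. Let $g$ be a generator. The hypothesis $A_L|_{j_*(\pi_2(M))} \neq 0$ says that $A_L\circ j_*$ is a nonzero homomorphism $\Z \to \R$. Therefore $A_L(j_*(g)) = c \neq 0$, and so $A_L(j_*(g^\ell)) = \ell c$. This vanishes only for $\ell = 0$, so $A_L \circ j_*$ is injective. Hence $j_*$ is injective, i.e.\ $\ker(j_*) = 0$.

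Finally, exactness gives $\im(i^2_*) = \ker(j_*) = 0$, so $i^2_*$ is the zero map. There is no real obstacle here. The only point needing a remark is that $j_*$ and $A_L$ are compatible homomorphisms, i.e.\ that the area of a sphere class equals the area class of its image in $\pi_2(M,L)$. This holds because both are computed as integrals of $\omega$ over the same map. Basepoint issues are harmless, since $\omega$-area is invariant under the $\pi_1$-action.
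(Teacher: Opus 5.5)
Your argument is correct and is essentially the paper's: both rest on the fact that $A_L\circ j_*$ is a nonzero, hence injective, homomorphism on $\pi_2(M)\cong\Z$. The paper shows $A_L(j_*(i^2_*[u]))=\int_{S^2}(i\circ u)^*\omega=0$ directly from the Lagrangian condition, whereas you derive injectivity of $j_*$ and then apply exactness; in your version the Lagrangian hypothesis is still used, implicitly, because it is what makes $A_L$ well defined on $\pi_2(M,L)$.
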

\proof
Let~$[u]\in \pi_2(L)$ be represented by a map~$u: S^2 \to L$, and let~$B$ be a generator of~$\pi_2(M)$. There exists~$k\in \Z$ such that~$i^2_*([u]) = kB$. As~$L$ is Lagrangian, 
\[
0 = \int_{S^2} u^*i^* \omega = \int_{S^2} (i\circ u)^* \omega = A_L([j \circ i\circ u]) = A_L(j_*(i^2_*([u])))= k A_L(j_*(B)).
\]
From the assumption that~$A_L|_{j_*(\pi_2(M))} \neq 0$, it follows that~$k = 0$.
\proofend

Above we stated the general definition of the Maslov class as a~$\Z$-valued homomorphism, but to compute the minimal Maslov number of~$L$ and to see how it changes after the various compactifications considered, we need a description in terms of~$\pi_1(L)$, or suitable subgroups thereof, rather than~$\pi_2(M,L)$. In the situations below, the Maslov class induces a homomorphism
$\overline{m}_L \in \text{Hom}(G, \Z_s)$, where~$G$ is a subgroup of~$\pi_1(L)$. Ideally, we would like~$s=0$, meaning $\Z_s = \Z$. The main reason is the following: all the Lagrangians we will consider have Abelian fundamental group, so if~$\overline{m}_L$ is~$\Z$-valued, this map becomes a homomorphism in~$\text{Hom}(\Z^{r}, \Z)$, where~$r:=\text{rank}(\pi_1(L))$. When~$s=0$, we define the minimal Maslov number as
\begin{equation}
N^G_{L}:=\min \{\overline{m}_L(\gamma) > 0 \:|\:\gamma \in G\}.
\end{equation}
If~$G = \pi_1(L)$, we simply write~$N_L$. In these cases, as $\Z_s$ is abelian and $L$ is path-connected, it follows from the universal coefficients theorem and the Hurewicz theorem that $\text{Hom}(\pi_1(L), \Z_k) \cong H^1(L;\Z_k)$. Therefore, we can also see the homomorphism induced from the Maslov class as an element $\overline{m}_L \in H^1(L,\Z_k)$.

We distinguish three cases:
\begin{enumerate}
\item In the case where~$\pi_2(M)= 0 = \pi_1(M)$,~$s=0$ holds: the boundary map~$\partial$ is an isomorphism, and the Maslov class factors through a unique map~$\overline{m}_L \in \text{Hom}(\pi_1(L), \Z)$ such that~$m_L = \overline{m}_L \circ \partial$. This will be the case for the cotangent bundle of spheres~$T^*S^n$ for~$n\geq 3$ considered below.

\item The case when~$\pi_2(M)= 0$ but~$M$ is not simply connected is similar. By exactness, we obtain that~$\partial$ is injective and 
\begin{equation}
\pi_2(M,L) \cong \ker(i^1_*: \pi_1(L) \to \pi_1(M)).
\end{equation}
We see that only the homotopy classes of loops in~$L$ that are contractible in~$M$ define disk classes on~$M$.
We thus obtain a map~$\overline{m}_L \in \text{Hom}(\ker(i^1_*),\Z)$. This is the case of~$T^*\RP^n$ for~$n\geq 3$, as~$\pi_1(T^*\RP^n) \cong \Z_2$.

\item If~$\pi_2(M)$ does not vanish, more care is required to define a similar induced map. This will be the case for the complex projective space~$\CP^n$ and quadrics~$Q^n\subset \CP^{n+1}$. Denote by~$h: \pi_2(M)\to H_2(M)$ the Hurewicz homomorphism, and recall that for a class~$A \in \pi_2(M)$, the following holds:
\[
m_L(j_*(A)) = 2(c_1(M) \cdot h(A)).
\]
In this paper, the manifolds~$M$ with nontrivial~$\pi_2$ are simply connected, in which case~$h$ is an isomorphism. Moreover, they satisfy the assumptions of \cref{prop: vanishing of inclusion on pi2}. We will identify~$A$ with its image~$h(A)$ in~$H_2(M)$ and use the notation~$c_1(A):=c_1(M) \cdot h(A)$ throughout. By exactness of the sequence, we obtain 
\[
\pi_1(L) = \im(\partial) \cong \pi_2(M,L)/\ker(\partial)=\pi_2(M,L)/\im(j_*) \cong \pi_2(M,L)/\pi_2(M).
\]
As~$c_1(\pi_2(M))\subset \Z$ is a subgroup of~$\Z$, there exists~$N_{M}\in \mathbb{N}$ such that~$c_1(\pi_2(M))= N_{M}\Z$. We call this number the \textit{minimal Chern number of~$M$}. There exists a unique map~$\overline{m}_L \in \text{Hom}(\pi_1(L),\Z/2N_{M}\Z)$ such that the diagram 
\[
\begin{tikzcd}[column sep=4em, row sep=3 em]
\pi_2(M,L)
  \arrow[r,"m_L"] 
  \arrow[d, swap, "\partial"] 
& \Z
  \arrow[d, "\text{pr}" ] \\
\pi_1(L) 
  \arrow[r, "\overline{m}_L"] 
&\Z/2N_{M}\Z
\end{tikzcd}
\]
commutes, where~$\text{pr}:\Z \to \Z/2N_{M}\Z$ is the standard projection.
Thus the induced Maslov map on~$\pi_1(M)$ only determines Maslov indices modulo~$2N_M$, and hence only determines the minimal Maslov number up to the corresponding congruence ambiguity. Analogously, the area class defines a map $\overline{A}_L : \pi_1(L) \to \R/\omega(\pi_2(M))$.
If the manifold~$M$ is monotone, that is, if there exists~$\kappa >0$ such that~$[\omega]|{_{\pi_2(M)}} = \kappa c_1(M)|_{\pi_2(M)}$, we see that~$\omega(\pi_2(M)) = \kappa N_M\Z$, and the area map can be seen as a map
\[
\overline{A}_L : \pi_1(L) \to \R/\kappa N_M\Z.
\]
By commutativity of the diagram, the area map~$\overline{A}_L$ is obtained from the Maslov class~$\overline{m}_L$ via the homomorphism~$\Phi_{\lambda}:\Z/2N_{M}\Z \to \R/\kappa N_M\Z$ induced by multiplication by~$\frac{\kappa}{2}$. Hence, if~$L$ is monotone, its monotonicity constant is equal to~$\frac{\kappa}{2}$.
\end{enumerate}

\subsection{Mapping tori}
\label{mapping tori}
Let~$G$ be a topological group, and~$X$ be a CW-complex. We define 
\[
\textnormal{Bun}_G(X) := \{\text{isomorphism classes of~$G$-principal bundles over~$X$}\}.
\]
It is a standard fact in algebraic topology that there is a bijection between~$\textnormal{Bun}_G(X)$ and~$[X,BG]$, the set of homotopy classes of maps from~$X$ to the classifying space of~$G$.

In the case where~$X = S^1$, we obtain a more explicit description in terms of the topology of~$G$:
\begin{equation}
\label{classification of mapping tori}
   [S^1, BG] = \pi_1(BG) \cong \pi_0(\Omega BG) \cong \pi_0(G),
\end{equation}
where~$\Omega$ is the based loop space of~$G$. The last two isomorphisms follow from the fact that~$\Omega BG$ is homotopy-equivalent to~$G$, and~$\pi_{n}(\Omega Y) \cong \pi_{n+1}(Y)$ (see \cite{HusemollerFB}).

Let now~$k,m\in\mathbb{N}\cup \{0\}$. Recall that every fiber bundle 
\begin{equation}
\label{firbre bundles}
    S^k \times S^m \xhookrightarrow{} E \to S^1,
\end{equation}
can be obtained from a unique (up to bundle isomorphism)~$\text{Diff}(S^k \times S^m)$-principal bundle~$P \to S^1$ via the Borel construction (see again \cite{HusemollerFB}):
\begin{equation}
E \cong P \times_{\text{Diff}(S^k \times S^m)} S^k \times S^m.
\end{equation}
Let~$\Gamma:= \langle a_k,a_m \rangle \subset \text{Diff}(S^k \times S^m)$, where~$a_k(x,y) := (-x,y)$ and analogously for~$a_m$. As a group,~$\Gamma \cong \Z_2 \times \Z_2$.
Here we want to classify those fibre bundles with structure group~$\Gamma$; equivalently, we classify~$\textnormal{Bun}_{\Gamma}(S^1)$.
By \eqref{classification of mapping tori} 
\begin{equation}
[S^1, B\Gamma] \cong \pi_0(\Gamma) \cong \pi_0(\langle a_k\rangle) \times \pi_0(\langle a_m \rangle)\cong 
\begin{cases}
1, \qquad \qquad\:\:\: \; k,m \text{ even},\\
\Z_2, \qquad \:\qquad\text{one of~$k,m$ even},\\
\Z_2 \times \Z_2, \qquad k,m \text{ odd}.
\end{cases}
\end{equation}
This follows from the fact that the antipodal map~$a_n :S^n \to S^n$ is homotopic to the identity (and thus~$[a_n]=1$) if and only if~$n$ is odd.

Notice that if two such bundles are isomorphic as fibre bundles over~$S^1$, then their total spaces are diffeomorphic as smooth manifolds. Hence, for bundles with structure group reduced to~$\Gamma$, the above classification determines the diffeomorphism type of the total space.
For~$\varphi\in \Diff(F)$, its \textit{mapping torus} is
\[
T_\varphi:=\frac{[0,1]\times F}{(1,z)\sim (0,\varphi(z))}.
\]
Thus, the four bundles above have total spaces
\[
T_{\id}\cong S^1\times S^k\times S^m, \qquad T_{a_k}\cong  (S^1\times S^k)/\Z_2\times S^m,
\]
\[
T_{a_m}\cong S^k\times  (S^1\times S^m)/\Z_2, \qquad  T_{a_ka_m}\cong (S^1\times S^k\times S^m)/\Z_2.
\]
As~$\pi_0(\text{Homeo}(S^n)) \cong \Z_2$,~$(S^1\times S^k)/\Z_2$ is the only nontrivial topological~$S^k$-bundle over~$S^1$.
We next record a useful alternative description of the nontrivial mapping torus~$T_{a_k}$: besides being an~$S^k$-bundle over~$S^1$, it can also be viewed as an~$S^1$-bundle over~$\R P^k$.

\begin{proposition}
$T_{a_k}$ admits a natural structure of~\(S^1\)-bundle over~\(\R P^k\), and in fact it can be written as the unit circle bundle 
\[
T_{a_k}\cong S(\gamma^1\oplus \varepsilon^1),
\]
where~\(\gamma^1\to \R P^k\) is the tautological line bundle and~\(\varepsilon^1\) is the trivial line bundle.
\end{proposition}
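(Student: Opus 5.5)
The plan is to write down an explicit diffeomorphism. Use the presentation
\[
T_{a_k} = \frac{\R\times S^k}{(t+1,x)\sim(t,-x)}
\]
(equivalent to the mapping torus definition up to the orientation of the interval). Equivalently, $T_{a_k}=(S^1\times S^k)/\Z_2$ where $\Z_2$ acts freely by $(e^{2\pi i\theta},x)\mapsto(-e^{2\pi i\theta},-x)$. Here the identification of $\R/2\Z$ with $S^1$ sends $t\mapsto e^{\pi i t}$, so that shifting $t$ by $1$ becomes multiplication by $-1$.

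The projection $(z,x)\mapsto[x]\in\RP^k$ is invariant under the $\Z_2$-action, so it descends to a map $p\colon T_{a_k}\to\RP^k$. The fibre over $[x]$ is $\{(z,x),(z,-x)\}/\Z_2\cong S^1$, since the representative $(z,-x)$ is identified with $(-z,x)$. This gives the natural $S^1$-bundle structure.

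To identify this bundle with $S(\gamma^1\oplus\varepsilon^1)$, recall that
\[
\gamma^1=\{([x],v)\sth v\in\R x\}\subset\RP^k\times\R^{k+1}.
\]
A unit vector in $\gamma^1\oplus\varepsilon^1$ over $[x]$ with $\vert x\vert=1$ has the form $(\lambda x,\mu)$ with $\lambda^2+\mu^2=1$. Define
\[
\Phi\colon S^1\times S^k\to S(\gamma^1\oplus\varepsilon^1),\qquad \Phi(\cos\phi+i\sin\phi,\,x)=\bigl([x],\,\cos\phi\, x,\,\sin\phi\bigr).
\]

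The main step is checking that $\Phi$ descends and is bijective on the quotient. This requires the correct $\Z_2$-action, and the convention is the delicate point. The action $(\phi,x)\mapsto(\phi+\pi,-x)$ gives $\Phi\mapsto([x],\cos\phi\, x,-\sin\phi)$, which is not invariant. So instead I would use the action $(z,x)\mapsto(\bar z,-x)$, i.e.\ $(\phi,x)\mapsto(-\phi,-x)$. This has fixed points in $S^1$ but still acts freely on the product. Under it, $\cos\phi\,x$ and $\sin\phi$ are preserved, so $\Phi$ is invariant.

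One must then check that the quotient by this action is still $T_{a_k}$. The quotient $S^1\times S^k/((\phi,x)\sim(-\phi,-x))$ is fibred over $S^1/(\phi\sim-\phi)\cong[0,\pi]$, which is not obviously a mapping torus. So rather than rely on that, I would directly build a map from the mapping torus. Send $(t,x)\in[0,1]\times S^k$ to $([x],\cos(\pi t)x,\sin(\pi t))$. At $t=1$ this gives $([x],-x,0)$, and at $t=0$ applied to $a_k(x)=-x$ it gives $([-x],-x,0)$. These agree, so the gluing is respected and the map is well defined on $T_{a_k}$.

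It remains to show this map is a bijection:

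1. Over a fixed $[x]$, the pairs $(t,\pm x)$ with $t\in[0,1)$ give the unit vectors $(\pm\cos(\pi t)x,\sin(\pi t))$, that is, $(\lambda x,\mu)$ with $\mu=\sin(\pi t)\ge0$ and $\lambda=\pm\cos(\pi t)$.
2. This covers only the half $\mu\ge0$, with $\mu=0$ hit twice. This exposes that the parametrization must run $\phi$ over a full circle.
3. To fix this, I would modify the map to $([x],\cos(2\pi t)x,\sin(2\pi t))$. Then $t=1$ gives $([x],x,0)$, which should match $t=0$ at $-x$, namely $([x],-x,0)$. These do not agree.

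So the correct fix is to send $(t,x)\mapsto\bigl([x],\ \cos(\pi t)\,x,\ \sin(\pi t)\bigr)$ with $x$ ranging over all of $S^k$. For fixed $[x]=\{x,-x\}$, the two choices give $\mu=\sin(\pi t)\ge 0$ always, again missing $\mu<0$.

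The honest resolution is to use $e^{i\pi t}$ in the $\gamma^1$-coordinate paired with the sign ambiguity. Write a unit vector in $\gamma^1\oplus\varepsilon^1$ over $[x]$ as the unordered pair $\{(x,w),(-x,\bar w\,)\}$ with $w\in S^1$, encoding $(\mathrm{Re}\,w\cdot x,\mathrm{Im}\,w)$. The symmetry of the fibre is then $(x,w)\mapsto(-x,-\bar w)$, and one checks this equals $(\mathrm{Re}\,w\, x,\mathrm{Im}\,w)$, as required.

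Hence $S(\gamma^1\oplus\varepsilon^1)\cong(S^k\times S^1)/\langle(x,w)\mapsto(-x,-\bar w)\rangle$. Projecting to $S^k$ instead, the quotient is an $S^k$-bundle over $S^1/(w\sim-\bar w)$, and the main obstacle is recognizing this as the mapping torus of $a_k$. I would finish with the classification above rather than by hand. The total space is a closed manifold fibring over $S^1$ with fibre $S^k$ (use the $\mathrm{Re}$-like coordinate $w^2$-type invariant, via $w\mapsto$ the class of $\mathrm{Im}\,w$ combined with the sign). Its monodromy is $a_k$: going once around reverses $x$. By \eqref{classification of mapping tori} and the preceding discussion, the bundle is therefore $T_{a_k}$.
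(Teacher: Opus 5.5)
The first step is fine: $[z,x]\mapsto[x]$ does make $T_{a_k}$ an $S^1$-bundle over $\RP^k$. The gap is in the last step, and it cannot be closed because the identification with $S(\gamma^1\oplus\varepsilon^1)$ is false for every $k\geq 1$.

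That last step asserts that $(S^k\times S^1)/\langle (x,w)\mapsto(-x,-\bar w)\rangle$ fibres over a circle with fibre $S^k$ and monodromy $a_k$. Projecting to the $w$-coordinate lands in $S^1/(w\sim-\bar w)$, which is an arc, since the reflection fixes $\pm i$. No ``$w^2$-type invariant'' helps either, since $(-\bar w)^2\neq w^2$ in general.

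Your computation with $\Phi$ already located the reason. On the circle fibre, the involution defining $T_{a_k}$ is the rotation $z\mapsto -z$, while the one defining $S(\gamma^1\oplus\varepsilon^1)$ is the reflection $w\mapsto-\bar w$. Incidentally, $(\phi,x)\mapsto(-\phi,-x)$ does not preserve $\Phi$; it negates both components. The correct symmetry is $(\phi,x)\mapsto(\pi-\phi,-x)$, the one you eventually wrote.

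A free involution of the connected orientable manifold $S^1\times S^k$ has orientable quotient if and only if it preserves orientation. The map $(z,x)\mapsto(-z,-x)$ has orientation sign $(-1)^{k+1}$, while $(w,x)\mapsto(-\bar w,-x)$ has sign $(-1)^{k}$. Hence exactly one of $T_{a_k}$ and $S(\gamma^1\oplus\varepsilon^1)$ is orientable.

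For $k=1$ concretely: $a_1$ is rotation by $\pi$, so $T_{a_1}\cong T^2$. By contrast, $\gamma^1\oplus\varepsilon^1$ is a non-orientable plane bundle over $\RP^1\cong S^1$, and its circle bundle is the Klein bottle. In general, $(x,(u,v))\mapsto(ux,v)$ exhibits $S(\gamma^1\oplus\varepsilon^1)$ as $S^{k+1}$ blown up at two points, i.e.\ $\RP^{k+1}\#\RP^{k+1}$.

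The paper's proof breaks at the same place. It arrives at $S^k\times_{\Z_2}S^1$ with the reflection action. It then replaces the reflection by the antipodal map of $S^1$, on the grounds that both lie in the nontrivial class of $\pi_0(\Diff(S^1))$. But the antipodal map of $S^1$ is a rotation and lies in the identity component.

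What is true, and what the first half of your proposal essentially proves, is $T_{a_k}\cong S(\gamma^1\oplus\gamma^1)$, because rotation by $\pi$ is $-\mathrm{id}$ on $\R^2$. Explicitly, take your own map but place $\sin(\pi t)$ in a second copy of $\gamma^1$ instead of in $\varepsilon^1$:
\[
[t,x]\longmapsto\bigl([x],\,\cos(\pi t)\,x,\,\sin(\pi t)\,x\bigr).
\]
This respects the gluing $(1,x)\sim(0,-x)$. It is also bijective on each fibre. Write points over $[x]$ as $(\cos\theta\,x,\sin\theta\,x)$. Then the points $[t,x]$ with $t\in[0,1)$ give $\theta\in[0,\pi)$, and the points $[t,-x]$ give $\theta\in[\pi,2\pi)$. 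So the proposition should be corrected to this statement rather than proved as written.
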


\proof
Let~\(p:S^k\to \R P^k\) be the double cover, viewed as a principal~\(\mathbb Z_2\)-bundle, where~\(\mathbb Z_2\) acts on~\(S^k\) by the antipodal involution. By definition, the tautological line bundle is the associated bundle
\[
\gamma^1 \cong S^k\times_{\mathbb Z_2}\mathbb R,
\]
where~\(\mathbb Z_2\) acts on~\(\mathbb R\) by the sign representation. Hence
\[
\gamma^1\oplus \varepsilon^1 \cong S^k\times_{\mathbb Z_2}(\mathbb R\oplus \mathbb R),
\]
where~\(\mathbb Z_2\) acts on the first summand by sign and trivially on the second. Passing to unit sphere bundles gives
\[
S(\gamma^1\oplus \varepsilon^1)\cong S^k\times_{\mathbb Z_2}S^1,
\]
where~\(\mathbb Z_2\) acts on~\(S^1\subset \mathbb R\oplus \mathbb R\) by the involution induced from~\((u,v)\mapsto(-u,v)\). This involution is orientation-reversing on~\(S^1\), hence in the nontrivial class of~$\pi_0(\Diff(S^1))\cong \Z_2$. Therefore, up to bundle isomorphism we can consider~$S^k\times_{\mathbb Z_2}S^1$ with diagonal antipodal action on both factors. But this quotient is precisely the mapping torus
\[
\frac{[0,1]\times S^k}{(1,x)\sim(0,-x)}=T_{a_k},
\]
proving the claim.
\proofend

As observed above, the manifolds under consideration are in several cases mapping tori. We will use the following standard exact sequence, called \textit{Wang exact sequence}, to study their homology.
\begin{lemma}[Wang exact sequence]
Let~$f \in \textnormal{Homeo}(F)$ and let 
\begin{equation}
E_f:=\frac{[0,1]\times F}{(1,z)\sim (0,f(z))}
\end{equation}
denote its mapping torus. Then there is a natural long exact sequence in homology:
\begin{equation}\label{eq:Wang}
\cdots \to H_q(F) \xrightarrow{f_*-\mathrm{id}} H_q(F) \to H_q(E_f) \to H_{q-1}(F) \xrightarrow{f_*-\mathrm{id}} H_{q-1}(F) \to \cdots
\end{equation}
\end{lemma}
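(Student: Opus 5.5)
The strategy is to build the mapping torus from two copies of a cylinder glued along $F$, and then read off the sequence from the Mayer--Vietoris sequence of that decomposition. Concretely, set $E=E_f$ and let $\pi:E\to S^1=[0,1]/\{0\sim1\}$ be the projection. I would take the open sets
\[
U=\pi^{-1}\bigl((0,1)\bigr)\cong (0,1)\times F,\qquad V=\pi^{-1}\bigl([0,\tfrac14)\cup(\tfrac34,1]\bigr).
\]
Both $U$ and $V$ deformation retract onto a copy of $F$. The intersection is $U\cap V\cong \bigl((0,\tfrac14)\sqcup(\tfrac34,1)\bigr)\times F$, which is homotopy equivalent to $F\sqcup F$.

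The first task is to identify the maps induced by the two inclusions of $U\cap V$. I would use the fibres $F_{1/8}$ and $F_{7/8}$ as representatives for the two components, and the fibre $F_0=F_1$ for $V$. Inside $U$, both $F_{1/8}$ and $F_{7/8}$ are identified with $F$ via the product structure, so the inclusion into $U$ induces $(a,b)\mapsto a+b$. Inside $V$, one component retracts onto $F_0$ directly. The other component retracts onto $F_1$, and under the gluing $(1,z)\sim(0,f(z))$ this identification is $f$. So the inclusion into $V$ induces $(a,b)\mapsto a+f_*b$, up to which side carries the $f$. This step is the only real subtlety: keeping track of which end of the interval picks up $f$, and of the resulting signs.

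Mayer--Vietoris then gives the long exact sequence
\[
\cdots\to H_q(F)\oplus H_q(F)\xrightarrow{\Phi} H_q(F)\oplus H_q(F)\to H_q(E)\to H_{q-1}(F)\oplus H_{q-1}(F)\xrightarrow{\Phi}\cdots,
\]
where $\Phi(a,b)=(a+b,\;a+f_*b)$, with the usual sign convention absorbed into the second coordinate. Next I would change bases. On the source, substitute $a'=a+b$ and keep $b$. On the target, use the automorphism $(x,y)\mapsto(x,\,y-x)$. In these coordinates $\Phi$ becomes
\[
(a',b)\mapsto\bigl(a',\;(f_*-\mathrm{id})b\bigr),
\]
which is the identity on the first summand plus $f_*-\mathrm{id}$ on the second.

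Finally, the identity summand splits off. An exact sequence whose maps contain a direct summand of the form $\mathrm{id}:A\to A$ remains exact after that summand is deleted. Equivalently, $\ker\Phi\cong\ker(f_*-\mathrm{id})$ and $\operatorname{coker}\Phi\cong\operatorname{coker}(f_*-\mathrm{id})$, and exactness at $H_q(E)$ is inherited. What remains is exactly \eqref{eq:Wang}. Naturality follows from the naturality of Mayer--Vietoris with respect to fibre-preserving maps of mapping tori.
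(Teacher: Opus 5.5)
The paper gives no proof of this lemma; it quotes it as a standard fact from algebraic topology, so there is no argument of the paper's to compare yours against. Your Mayer--Vietoris derivation is the standard textbook argument, and it is correct.

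The one place where $f$ enters is the identification of the right-hand end $(\tfrac34,1]\times F$ of $V$ with $(-\tfrac14,0]\times F$ via $(t,z)\mapsto(t-1,f(z))$. This is where you need $f^{-1}$ to be continuous, and it is what produces $\Phi(a,b)=(a+b,\,a+f_*b)$. Neither the Mayer--Vietoris sign nor the choice between $f_*-\mathrm{id}$ and $\mathrm{id}-f_*$ matters, since both changes preserve kernels and images.

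For the splicing step, it would be cleaner to state two facts explicitly after your change of basis:
\begin{itemize}
\item $\operatorname{im}\partial_q=\ker\Phi_{q-1}\subset 0\oplus H_{q-1}(F)$;
\item the map $H_q(F)\oplus H_q(F)\to H_q(E_f)$ vanishes on $\operatorname{im}\Phi_q\supset H_q(F)\oplus 0$.
\end{itemize}
Restricting to the second summands then gives the exact sequence directly. In that sequence, the map $H_q(F)\to H_q(E_f)$ is, up to sign, induced by the inclusion of the fibre over $0$.
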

\begin{proposition}
\label{prop: homotopy distinction}
Let~$E_{f}$ be the mapping torus of~$f$ as above. If there exists~$q \in \mathbb{N}\cup \{0\}$ such that the induced map~$f_*-\mathrm{id}: H_q(F) \to H_q(F)$ is not the zero map, then~$E_f$ is not homologous to~$S^1 \times F$.
\end{proposition}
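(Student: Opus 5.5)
We compare homology groups. Here ``homologous'' should be read as having isomorphic homology groups (this is how the statement is used later to separate $\cp_{k,m}$ from $\cp_{k,m}^{\rm Ch}$). So we will show that $H_*(E_f)$ and $H_*(S^1\times F)$ differ in some degree.

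The first step is to compute the homology of the trivial mapping torus. Applying the Wang exact sequence \eqref{eq:Wang} to $f=\id$ kills every map $f_*-\id$. The long exact sequence then breaks into short exact sequences
\[
0\to H_q(F)\to H_q(S^1\times F)\to H_{q-1}(F)\to 0 .
\]
These split, by the Künneth formula or since $S^1\times F\to F$ has a section. Hence $H_q(S^1\times F)\cong H_q(F)\oplus H_{q-1}(F)$ for every $q$.

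For general $f$, the Wang sequence gives for each $q$ a short exact sequence
\[
0\to \operatorname{coker}\bigl(f_*-\id \colon H_q(F)\to H_q(F)\bigr)\to H_q(E_f)\to \ker\bigl(f_*-\id\colon H_{q-1}(F)\to H_{q-1}(F)\bigr)\to 0 .
\]
Now suppose $f_*-\id$ is nonzero on $H_q(F)$, and compare $H_q(E_f)$ with $H_q(F)\oplus H_{q-1}(F)$, or $H_{q+1}(E_f)$ with $H_{q+1}(F)\oplus H_q(F)$. In the finitely generated setting (compact manifolds $F$, e.g.\ $S^k\times S^m$) the cleanest comparison is by counting. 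Since $f_*-\id\neq 0$, its image in $H_q(F)$ is a nonzero subgroup. So either the cokernel is strictly smaller than $H_q(F)$, or the kernel is a proper subgroup of $H_q(F)$.

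The main obstacle is making ``strictly smaller'' precise for abelian groups, since a proper quotient or proper subgroup can be abstractly isomorphic to the whole group (e.g.\ $2\Z\cong\Z$). To handle this, tensor everything with a field $\mathbb{K}$ and run the same Wang argument with $\mathbb{K}$-coefficients. Choose $\mathbb{K}=\Q$ if $f_*-\id$ is nonzero rationally, and otherwise $\mathbb{K}=\Z_p$ for a suitable prime $p$. For finitely generated groups, a nonzero endomorphism stays nonzero over $\Q$ or over some $\Z_p$. Over a field, rank--nullity gives
\[
\dim\operatorname{coker}+\dim\ker\bigl(f_*-\id\ \text{on}\ H_q\bigr)=2\dim H_q(F;\mathbb{K})-2\operatorname{rk}(f_*-\id)<2\dim H_q(F;\mathbb{K}).
\]
Summing the Wang sequences over all degrees gives $\sum_q\dim H_q(E_f;\mathbb{K})<\sum_q \dim H_q(S^1\times F;\mathbb{K})$. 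Therefore $H_q(E_f;\mathbb{K})\not\cong H_q(S^1\times F;\mathbb{K})$ for some $q$. By the universal coefficient theorem, the integral homology groups then cannot all be isomorphic either, which proves the claim.

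In the cases of interest, $f$ is an antipodal map on a sphere factor of even dimension. There $f_*=-\id$ on the top class, so $f_*-\id=-2$, which is already nonzero over $\Q$, and the rational computation suffices.
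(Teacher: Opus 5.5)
Your rank--nullity count over a field and your final appeal to the universal coefficient theorem are fine. The reduction step, however, is false: a nonzero endomorphism of a finitely generated abelian group need not stay nonzero over $\Q$ or over any $\Z_p$. Multiplication by $2$ on $\Z/4$ is a counterexample, and it occurs geometrically. Take the Moore space $F=S^1\cup_{z\mapsto z^4}D^2$, and let $f$ be complex conjugation on both cells; this is compatible with the attaching map. Then $f_*=-1$ on $H_1(F)=\Z/4$, so $f_*-\id=2\neq 0$. Indeed the Wang sequence gives $H_1(E_f)\cong\Z\oplus\Z/2\not\cong\Z\oplus\Z/4=H_1(S^1\times F)$. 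Over $\Q$ and over every $\Z_p$, though, $f_*$ is the identity on $H_*(F;\mathbb{K})$. For $\Q$ and odd $p$ this homology is concentrated in degree $0$. For $\Z_2$ it is one copy of $\Z_2$ in each of the degrees $0,1,2$, where any automorphism is the identity. So all field Betti numbers of $E_f$ and $S^1\times F$ agree, and no choice of field can detect the difference. As written, your argument only covers the case where $f_*-\id$ is nonzero on $H_*(F;\mathbb{K})$ for some field. That case does include the antipodal-map application.

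The fix is to keep your counting idea but use coefficients $R=\Z/n$ and compare orders of finite groups instead of dimensions. For an endomorphism $h$ of a finite abelian group $B$ one has $|\ker h|=|\operatorname{coker} h|=|B|/|\im h|$. Write $g^R_j$ for $f_*-\id$ on $H_j(F;R)$. The Wang sequence with $R$-coefficients then yields
\[
|H_q(E_f;R)|=\frac{|H_q(F;R)|\,|H_{q-1}(F;R)|}{|\im g^R_q|\,|\im g^R_{q-1}|},\qquad |H_q(S^1\times F;R)|=|H_q(F;R)|\,|H_{q-1}(F;R)|.
\]
Since $H_q(F)$ is finitely generated, $\bigcap_{n\geq 1} nH_q(F)=0$, so $\im(f_*-\id)\not\subset nH_q(F)$ for some $n$. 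Then $(f_*-\id)\otimes\Z/n\neq 0$, and this map is the restriction of $g^R_q$ to the natural subgroup $H_q(F)\otimes\Z/n\subset H_q(F;\Z/n)$. Hence the orders differ in degree $q$, and the universal coefficient theorem finishes exactly as in your last step.

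For comparison, the paper's proof passes directly from the abstract isomorphism $H_*(E_f)\cong H_*(F)\oplus H_{*-1}(F)$ to $f_*-\id\equiv 0$ via the Wang sequence. The point you flagged is precisely what that step leaves unjustified. A finiteness assumption like yours is genuinely needed: swapping two circles in an infinite wedge of circles gives abstractly isomorphic homology.
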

\proof
Assume that~$E_f$ is homologous to~$S^1 \times F$. From Künneth's formula, we have~$H_*(E_f) \cong H_*(F) \oplus H_{*-1}(F)$. Therefore, for every~$q \in \mathbb{N}\cup \{0\}$, the following short exact sequence holds: 
\[
\begin{tikzcd}
0 \arrow[r] & H_q(F) \arrow[r] & H_q(S^1 \times F) \arrow[r] & H_{q-1}(F) \arrow[r] & 0.
  \end{tikzcd}
\]
From the Wang exact sequence, if follows that~$f_*-\mathrm{id} \equiv 0$ on~$H_*(F)$.
\proofend

\section{Construction}
\label{sec:construction}

In this section, we use symplectic reduction to provide an alternate viewpoint on Oakley--Usher Lagrangians and construct their Chekanov versions. The subsections \S\ref{ssec:OU_reduction} and \S\ref{ssec:OU_quadric} do not contain any essentially new ideas when compared to \cite{OakUsh16}; they merely provide a slightly different point of view.

\subsection{Oakley--Usher Lagrangians and symplectic reduction}

\label{ssec:OU_reduction}

Let~$k,m \in \Z_{\geqslant 0}$, not both zero, and~$n = k + m + 1 \geq 2$. The integers~$k,m$ will be interchangeable and hence we can assume~$k \leqslant m$ and~$m > 0$. One of our main tools to study Oakley--Usher Lagrangians is a Hamiltonian action of~$S^1 \times \SO(k+1) \times \SO(m+1)$ on~$T^*S^n \setminus 0_{S^n}$. We begin with the~\(\SO(k+1) \times \SO(m+1)\)-part, which is induced by the natural $\SO(n+1)$-aciton. Recall that any diffeomorphism~\(\phi \colon Q \to Q\) induces a symplectomorphism of~\(T^*Q\) via its cotangent lift
\begin{equation}
T^*\phi(q,p)=\bigl(\phi(q),(D_q\phi)^{-T}p\bigr).
\end{equation}
Via this lift, the standard $\SO(n+1)$-action on $S^n$ yields a Hamiltonian action by $\SO(n+1)$ on $T^*S^n$. Viewing
\begin{equation}
T^*S^n=\{(q,p)\in \R^{n+1}\times \R^{n+1}\mid |q|=1,\ q\cdot p=0\},
\end{equation}
this lifted action is especially simple: since each~\(A\in \SO(n+1)\) acts linearly and orthogonally, one has~\(D_qA=A\) and~\(A^{-T}=A\), so
\begin{equation}
(q,p)\longmapsto (Aq,Ap).
\end{equation}
Using the splitting~$\R^{n+1}=\R^{k+1}\times \R^{m+1}$,
we may write~$q=(q_1,q_2)$ and~$p=(p_1,p_2)$, and restricting along the natural block-diagonal inclusion~$\SO(k+1)\times \SO(m+1)\hookrightarrow \SO(n+1)$ yields a Hamiltonian action of~$G_{k,m}:=\SO(k+1)\times \SO(m+1)$ on~$T^*S^n$.

The~$S^1$-part of the action is given by a reparameterization of the geodesic flow which makes it 1-periodic. This reparameterization is generated by the Hamiltonian~$H(q,p) = 2 \pi \vert p \vert$ and does not extend to the zero section. We will refer to this as the \emph{geodesic~$S^1$-action}. Together, this gives an~$S^1\times \SO(k+1)\times \SO(m+1)$-action on~$T^*S^n \setminus 0_{S^n}$ defined by
\begin{equation}
    \label{eq:groupactionmk}
    \begin{split}
        \Phi^{k,m}_{\theta,A,B}(q_1,q_2,p_1,p_2) = \:&\Bigg(\bigg(\cos 2\pi \theta Aq_1+\sin 2\pi\theta \frac{Ap_1}{|p|}, 
        \cos 2\pi\theta Bq_2+\sin 2\pi\theta \frac{Bp_2}{|p|}\bigg),\\
        &\bigg (\cos  2\pi\theta Ap_1-\sin  2\pi\theta |p|Aq_1,\cos 2\pi \theta Bp_2-\sin 2\pi\theta |p|Bq_2\bigg)\Bigg).
    \end{split}
\end{equation}

\begin{remark}
    \label{rk:globalstab}
    This group action is effective if and only if at least one of~$k,m$ is even. If both are odd, the element~$(\frac{1}{2},-\id,-\id)$ acts by the identity.
\end{remark}

Since~$SO(n+1)$ acts on~$S^n$ by isometries with respect to the standard metric, its Hamiltonian lift to~$T^*S^n$ preserves the reparameterized geodesic flow and thus the~$S^1$- and~$G_{k,m}$-actions commute. Taking these facts together, we obtain the following.

\begin{proposition}
The group action \eqref{eq:groupactionmk} is Hamiltonian and generated by the moment map
\begin{equation}
    \label{eq:momentmapmk}
    \begin{split}
    \mu_{k,m} \colon T^*S^n \setminus 0_{S^n} &\rightarrow \R \times \mathfrak{g}_{k,m}^*, \\
    (q,p) = (q_1,q_2,p_1,p_2) & \mapsto (2 \pi \vert p \vert, q_1 \wedge p_1, q_2 \wedge p_2),
    \end{split}
\end{equation}
where~$\mathfrak{g}_{k,m}^* := \textnormal{Lie}(G_{k,m})^*$ is the dual of the Lie algebra, and where we have used the natural identification~$\mathfrak{so}(k+1)^* \cong \Lambda^2 \R^{k+1}$ and similarly for~$m$. 
\end{proposition}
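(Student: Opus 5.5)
The plan is to check the three components of the moment map one at a time and then add them up, since the $S^1$-action and the $G_{k,m}$-action commute.

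\emph{The $G_{k,m}$-part.} The action of $\SO(n+1)$ on $T^*S^n$ is a cotangent lift, so it is Hamiltonian with moment map $\mu(q,p)(\xi) = \lambda(X_\xi) = p\cdot(\xi q)$. Here $\lambda = p\,dq$ is the Liouville form, $\xi\in\mathfrak{so}(n+1)$, and $X_\xi$ is the infinitesimal generator, which is $(\xi q,\xi p)$ at $(q,p)$. Under the identification $\mathfrak{so}(n+1)^*\cong\Lambda^2\R^{n+1}$ given by pairing $\xi$ with $v\wedge w$ via $w\cdot \xi v$ (up to a sign convention), this moment map is $q\wedge p$. Restricting to the block-diagonal subalgebra $\mathfrak{so}(k+1)\oplus\mathfrak{so}(m+1)$ amounts to projecting $q\wedge p$ onto $\Lambda^2\R^{k+1}\oplus\Lambda^2\R^{m+1}$. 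The mixed terms $q_1\wedge p_2$ and $q_2\wedge p_1$ drop out, leaving $(q_1\wedge p_1, q_2\wedge p_2)$. Equivariance holds automatically for cotangent lifts, since $\lambda$ is invariant.

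\emph{The $S^1$-part.} On $T^*S^n\setminus 0_{S^n}$, take $H = 2\pi|p|$. I would compute its Hamiltonian vector field, with the sign convention used in the paper, by writing $T^*S^n\subset\R^{2n+2}$ as a symplectic submanifold cut out by $|q|^2=1$ and $q\cdot p = 0$. Projecting $\nabla H = (0, 2\pi p/|p|)$ appropriately, one gets $\dot q = 2\pi p/|p|$ and $\dot p = -2\pi |p|\, q$. This is the velocity-normalized geodesic flow, and its solution with initial condition $(q,p)$ is
\begin{equation*}
q(\theta)=\cos 2\pi\theta\, q+\sin 2\pi\theta\, \frac{p}{|p|}, \qquad p(\theta)=\cos 2\pi\theta\, p-\sin 2\pi\theta\, |p|\, q,
\end{equation*}
with $|p|$ conserved. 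This flow is $1$-periodic, so it defines an $S^1$-action. Composing it with $(A,B)$ acting linearly gives exactly \eqref{eq:groupactionmk}; this uses the fact that $(A,B)$ commutes with the flow, because $|(Ap_1,Bp_2)| = |p|$. The most delicate point is checking the sign convention for $\omega$ and the Hamiltonian vector field, so that the flow of $H$ is the displayed formula and not its inverse. I would fix the convention $\omega = dq\wedge dp$ and $\iota_{X_H}\omega = -dH$, and verify it on $T^*S^1$.

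\emph{Combining.} A Hamiltonian action of a product group, with commuting actions and moment maps, has moment map equal to the tuple of the individual moment maps. What remains is to check that $H$ is $G_{k,m}$-invariant and that each component of $\mu_{G_{k,m}}$ is invariant under the geodesic flow. The first is clear since $A$ and $B$ are orthogonal. For the second, note that $q_1(\theta)\wedge p_1(\theta)$ expands to $(\cos^2 + \sin^2)\, q_1\wedge p_1$: the $q_1\wedge q_1$ and $p_1\wedge p_1$ terms vanish. So $q_1\wedge p_1$ is conserved, and likewise $q_2\wedge p_2$. Invariance of these functions means the Poisson brackets between the components vanish, so the combined map is an equivariant moment map for $S^1\times G_{k,m}$, as claimed.
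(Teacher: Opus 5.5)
Your proposal is correct and follows the same route as the paper. The paper has no separate proof: it argues in the surrounding text that the $G_{k,m}$-part is a cotangent lift, that $2\pi\vert p\vert$ generates the $1$-periodic reparametrized geodesic flow, and that the two actions commute because $\SO(n+1)$ acts by isometries, while your explicit flow computation and the check $q_i(\theta)\wedge p_i(\theta)=q_i\wedge p_i$ fill in what it leaves implicit. There is one small slip to fix in your $T^*S^1$ check: the pair $\omega=dq\wedge dp$, $\iota_{X_H}\omega=-dH$ actually gives $\dot q=-\partial_p H$, i.e.\ the inverse flow, whereas the paper's convention $\omega=d\lambda=\sum_i dp_i\wedge dq_i$ with $\iota_{X_H}\omega=-dH$ yields your displayed formula and the cotangent-lift moment map $\lambda(X_\xi)$ with the right sign.
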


\begin{definition}
    For every~$k,m$ and~$r>0$, we define the \emph{Oakley--Usher Lagrangian}~$\pkm(r) \subset T^*S^n$ as the orbit 
    \begin{equation}
        \pkm(r) := (S^1 \times G_{k,m})\cdot z_r
    \end{equation}
    of the reference point 
    \begin{equation}
    \label{eq:refpoint}
    z_r = \left(e_1,0,0,re_{m+1}\right) \in 
    T^*S^n \subset \R^{n+1} \times \R^{n+1}.
    \end{equation}
    under the group action \eqref{eq:groupactionmk}.
\end{definition}
We shall verify in Proposition \ref{prop:OU_as_lifts} that~$\pkm(r)$ is indeed Lagrangian. It is easy to see that this definition agrees with the one given in \cite[Section 5]{OakUsh16} up to rescaling the \emph{size}~$r > 0$ of the submanifold. From \cite[Proposition 5.1]{OakUsh16} it follows that every~$\pkm(r)$ is a non-displaceable, monotone Lagrangian submanifold. For~$k=0$, we have~$G_{k,m} \cong \SO(m+1)$, since~$\SO(1)$ is the trivial group. In most of the paper, this case needs special attention, as it differs substantially from the case~$k \neq 0$. For~$k=0$, the stabilizer
\begin{equation}
    \label{eq:stabeq0}
    \operatorname{stab}_{z_r}(S^1 \times \SO(m+1)) \cong \SO(m),
\end{equation}
is given by the set of elements fixing~$e_{m+1}$. Since the Oakley--Usher Lagrangian is defined as the orbit of~$z_r$, we find that its diffeomorphism type is given by
\begin{equation}
    \label{eq:OU_top_keq0}
    \cp_{0,m}(r) \cong (S^1 \times \SO(m+1))/\SO(m) \cong S^1 \times S^m.
\end{equation}
For~$k \neq 0$, the situation is slightly more complicated. For every integer~$l \geqslant 2$ let~$R_l \in \SO(l)$ be the diagonal matrix~$R_l = \operatorname{diag}(-1,1,\ldots,1,-1)$ with only~$\pm 1$ entries on the diagonal such that all entries are~$+1$, except for the first and the last one. We find
\begin{equation}
    \label{eq:stabkneq0}
    \operatorname{stab}_{z_r}(S^1 \times G_{k,m}) 
    \cong (\SO(k) \times \SO(m))\rtimes \Z_2,
\end{equation}
where~$\SO(k)$ is given by all elements in~$\SO(k+1)$ fixing~$e_1$ and~$\SO(m)$ is given by all elements in~$\SO(m+1)$ fixing~$e_{m+1}$. Notice however that, as opposed to the case~$k=0$, this does not give the full stabilizer. Indeed, the element~$(\frac{1}{2},R_{k+1},R_{m+1})$ fixes~$z_r$, too. It generates the~$\Z_2$-copy in \eqref{eq:stabkneq0} by acting on~$\SO(k) \times \SO(m)$ by multiplication on the right. For~$k \neq 0$, we find the diffeomorphism type 
\begin{align}
    \cp_{k,m}(r) 
    & \cong S^1 \times \SO(k+1) \times \SO(m+1)/(\SO(k) \times \SO(m))\rtimes \Z_2 \nonumber\\
    & \cong (S^1 \times S^k \times S^m)/\Z_2,
    \label{eq:OU_top_kneq0}
\end{align}
where the last~$\Z_2$-action is by the antipodal map on each of the sphere factors simultaneously. Recall from the discussion in Section \ref{mapping tori} that if~$k,m$ are both odd,~$\cp_{k,m}(r)$ is diffeomorphic to~$S^1 \times S^k \times S^m$ and if only one of them is odd, say~$m$, then it is diffeomorphic to~$(S^1 \times S^k)/\Z_2 \times S^m \cong S(\gamma^1\oplus \varepsilon^1)$. We can give an explicit description of the space in question by considering the map 
\begin{equation*}
    (S^1 \times S^k \times S^m)/\Z_2 \rightarrow S^1, \quad
    [\theta,x,y] \mapsto 2\theta,
\end{equation*}
which exhibits the space in question as a mapping torus of the map~$(x,y) \mapsto (-x,-y)$ on~$S^k \times S^m$.

\begin{proposition}
The Oakley--Usher Lagrangian can be viewed as the level set of the moment map,
\begin{equation}
\label{P as moment fibre}
\pkm(r) = \mu_{k,m}^{-1}(r,0,0).
\end{equation}
\end{proposition}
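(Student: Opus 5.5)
We prove the two inclusions separately: orbit into level set by equivariance, and level set into orbit by explicitly solving the equations $q_1\wedge p_1=0$, $q_2\wedge p_2=0$. First a normalization remark. At the reference point \eqref{eq:refpoint} we have $q_1=e_1$, $p_1=0$, $q_2=0$, $p_2=re_{m+1}$. Hence $q_1\wedge p_1=0$, $q_2\wedge p_2=0$, and the $S^1$-component is $2\pi|p|=2\pi r$. So $\mu_{k,m}(z_r)=(2\pi r,0,0)$, and \eqref{P as moment fibre} should be read with the first entry normalized to match the size parameter. What we actually prove is $\pkm(r)=\mu_{k,m}^{-1}(\mu_{k,m}(z_r))$.

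\emph{Inclusion $\subseteq$.} The function $|p|$ is $\SO(n+1)$-invariant and is preserved by its own flow. The $G_{k,m}$-part of $\mu_{k,m}$ is equivariant for the coadjoint action, and it is invariant under the geodesic $S^1$-action because the two actions commute. The value $(0,0)\in\mathfrak g_{k,m}^*$ is fixed by the coadjoint action. Hence the whole orbit of $z_r$ stays in the level set.

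\emph{Inclusion $\supseteq$.} Let $(q,p)$ lie in the level set, so $|p|=r$, $q_1\wedge p_1=0$ and $q_2\wedge p_2=0$.
\begin{itemize}
\item The wedge conditions say $q_i$ and $p_i$ are linearly dependent for $i=1,2$. So we can write $q_1=au$, $p_1=bu$ with $u\in S^k$ and $a,b\in\R$, and $q_2=cv$, $p_2=dv$ with $v\in S^m$. If both vectors of a pair vanish, the unit vector is chosen arbitrarily. For $k=0$ we take $u=1$ and let the signs of $a,b$ carry the information.
\item The constraints defining $T^*S^n$ together with $|p|=r$ become $a^2+c^2=1$, $ab+cd=0$ and $b^2+d^2=r^2$. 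So $(a,c)$ is a unit vector in $\R^2$ and $(b,d)=\pm r(-c,a)$.
\item Use $A\in\SO(k+1)$ with $Au=e_1$ and $B\in\SO(m+1)$ with $Bv=e_{m+1}$, using transitivity on spheres. This moves $(q,p)$ into the cotangent bundle of the great circle $S^n\cap\operatorname{span}(e_1,e_{m+1})$, at a covector of length $r$.
\item If the sign is "$-$", we flip $v\mapsto -v$. This replaces $(c,d)$ by $(-c,-d)$ and turns the sign into "$+$". It is realized by an element of $\SO(m+1)$ sending $e_{m+1}\mapsto-e_{m+1}$, which exists because $m\ge1$.
\item By \eqref{eq:groupactionmk}, the geodesic $S^1$-action restricted to this plane rotates $(a,c)$ and $(b,d)$ simultaneously, and $(\cos 2\pi\theta\, q + \sin 2\pi\theta\, p/|p|)$ sweeps the whole great circle. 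Hence some $\theta$ brings $(a,c,b,d)$ to $(1,0,0,r)$, that is, to $z_r$. Therefore $(q,p)$ lies in the orbit of $z_r$.
\end{itemize}

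The main obstacle is the bookkeeping in the converse direction rather than any conceptual difficulty. One must check that the degenerate configurations ($q_1=0$, $q_2=0$, $p_1=0$ or $p_2=0$) are still covered by the parametrization $q_1=au$, $p_1=bu$, $q_2=cv$, $p_2=dv$. One must also check that the orientation sign in $(b,d)=\pm r(-c,a)$ can always be removed by $G_{k,m}$ alone. This includes the case $k=0$, where only the $\SO(m+1)$-factor is available, and that is why we flip $v$ rather than $u$.
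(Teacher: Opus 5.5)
Your proof is correct and follows the same route as the paper. The inclusion of the orbit into the level set comes from equivariance of $\mu_{k,m}$. For the converse, you use collinearity of $q_i,p_i$ together with $|q|=1$, $q\cdot p=0$ and fixed $|p|$, then transitivity of $\SO(k+1)\times\SO(m+1)$ on $S^k\times S^m$ and a suitable time of the geodesic $S^1$-action.

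Your explicit sign flip $v\mapsto -v$ is exactly what the paper absorbs into its choice of $y\in S^m$. Your normalization remark is also justified: with $z_r$ as defined, $\mu_{k,m}(z_r)=(2\pi r,0,0)$, and the paper's own proof mixes the two normalizations.
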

\proof
Since~$\mu_{k,m}(z_r) = (r,0,0)$, the orbit~$\pkm(r)$ is contained in~$\mu_{k,m}^{-1}( r,0,0)$. Indeed, the moment map is equivariant with respect to the coadjoint action of $S^1 \times G_{k,m}$ on its Lie algebra, and $(r,0,0)$ is fixed by that action. Conversely, if~$(q,p)\in \mu_{k,m}^{-1}(r,0,0)$, then~$q_1\wedge p_1 = 0$ and~$q_2\wedge p_2 = 0$, so~$q_1,p_1$ and~$q_2,p_2$ are collinear. Together with the constraints~$|q| = 1$,~$q\cdot p = 0$ and~$2\pi |p| = r$, this forces
\[
(q,p) = \left(\cos(2\pi \theta) x,\sin(2\pi \theta)y,-\frac{r}{2\pi}\sin(2\pi \theta)x,\frac{r}{2\pi}\cos(2\pi \theta)y\right),
\]
for some~$\theta \in S^1$ and~$(x,y) \in S^k \times S^m$. By transitivity of the~$SO(k+1)\times SO(m+1)$-action, we can find~$A \in SO(k+1), B\in SO(m+1)$ such that~$Ae_1 = x$ and~$Be_{m+1} = y$. Therefore,~$(q,p) = \Phi^{k,m}_{\theta, A,B}(z_r)$, which proves the claim.
\proofend

To develop our alternative viewpoint on the Oakley--Usher Lagrangians, we would like to perform symplectic reduction with respect to the~$G_{k,m}$-part of the action \eqref{eq:groupactionmk}. To that end, we consider the moment map of that sub-action
\begin{equation}
    \overline{\mu}_{k,m} \colon T^*S^n \rightarrow \mathfrak{so}(k+1)^* \times \mathfrak{so}(m+1)^* = \mathfrak{g}_{k,m}^*.
\end{equation}
This moment map extends smoothly to the zero-section~$0_{S^n}$ and so does the Hamiltonian action it generates. Let us now discuss symplectic reduction on the level~$\overline{\mu}_{k,m} = 0$. This seems hazardous at first, since~$0$ is not a regular value of~$\overline{\mu}_{k,m}$. However, in the spirit of stratified symplectic reduction in the sense of Lerman--Sjamaar \cite{SjaLer91}, we will see that we can still make sense of this. Let~$\phi^H$ be the~$S^1$-action on~$T^*S^n \setminus 0_{S^n}$ generated by~$H(q,p) = 2\pi \vert p \vert$.  Define the map~$I_{k,m}$ by

\begin{equation}
    \label{eq:Ikm}
    I_{k,m} \colon \R_{>0} \times S^1 \times S^k \times S^m \rightarrow T^*S^n, \quad
    (\rho,\alpha,x,y) \mapsto \phi^H_{\alpha}\left(x,0,0,\rho y\right),
\end{equation}
if~$k \neq 0$ and by~$I_{0,m}(\rho,\alpha,y) = \phi^H_{\alpha}(1,0,0,\rho y)$ if~$k=0$.

\begin{proposition}
    \label{prop:Ikm_properties}
    The map~$I_{k,m}$ as defined by \eqref{eq:Ikm} has the following properties:
    \begin{enumerate}
        \item It is~$S^1 \times G_{k,m}$-equivariant with respect to the obvious~$S^1 \times G_{k,m}$-action on the domain and the action \eqref{eq:groupactionmk} on the target;
        \item it maps sets~$\{r\} \times S^1 \times S^k \times S^m$ (and~$\{r\} \times S^1 \times S^m$ for~$k=0$) to~$\pkm(r) = \mu_{k,m}^{-1}(2\pi r,0,0)$;
        \item it is a diffeomorphism to~$\overline{\mu}_{0,m}^{-1}(0) \setminus 0_{S^n}$ when~$k=0$;
        \item it is a smooth double cover of~$\overline{\mu}_{k,m}^{-1}(0) \setminus 0_{S^n}$ when~$k \neq 0$;
        \item it satisfies
            \begin{equation*}
                I_{k,m}^* \omega = d\rho \wedge d\alpha,
            \end{equation*}
        where~$\omega = d\lambda$ denotes the canonical symplectic form on~$T^*S^n$.
    \end{enumerate}
\end{proposition}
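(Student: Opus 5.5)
The plan is to make $I_{k,m}$ completely explicit and then check the five properties one at a time. At a point $(x,0,0,\rho y)$ we have $|p|=\rho$ and $q=(x,0)$, $p=(0,\rho y)$. Plugging this into \eqref{eq:groupactionmk} with $A=\id$, $B=\id$ gives
\begin{equation*}
I_{k,m}(\rho,\alpha,x,y)=\bigl(\cos(2\pi\alpha)\,x,\ \sin(2\pi\alpha)\,y,\ -\rho\sin(2\pi\alpha)\,x,\ \rho\cos(2\pi\alpha)\,y\bigr).
\end{equation*}
For $k=0$ the same formula holds with $x=1$ fixed.

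\emph{Properties (1) and (2).} For (1), the $G_{k,m}$-action commutes with $\phi^H$, and $(A,B)\cdot(x,0,0,\rho y)=(Ax,0,0,\rho By)$. The $S^1$-part follows from the group law $\phi^H_\theta\circ\phi^H_\alpha=\phi^H_{\theta+\alpha}$. For (2), the formula gives $q_1\wedge p_1=-\rho\cos\sin\, x\wedge x=0$, similarly $q_2\wedge p_2=0$, and $H=2\pi\rho$. So the slice $\rho=r$ lands in $\mu_{k,m}^{-1}(2\pi r,0,0)$, which is the Oakley--Usher orbit by \eqref{P as moment fibre}.

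\emph{Properties (3) and (4).} The first step is surjectivity onto $\overline{\mu}_{k,m}^{-1}(0)\setminus 0_{S^n}$. I will argue exactly as in the proof of \eqref{P as moment fibre}. If $q_1\wedge p_1=0$ and $q_2\wedge p_2=0$, then $q_1,p_1$ are both multiples of a single unit vector $x\in S^k$, and $q_2,p_2$ are both multiples of a single $y\in S^m$. Write $q_1=ax$, $p_1=bx$, $q_2=cy$, $p_2=dy$. The constraints $|q|=1$, $q\cdot p=0$, $|p|=\rho>0$ become $a^2+c^2=1$, $ab+cd=0$, $b^2+d^2=\rho^2$. Hence $(a,c)=(\cos 2\pi\alpha,\sin 2\pi\alpha)$ and $(b,d)=\pm\rho(-c,a)$. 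The sign $-$ is absorbed by replacing $(\alpha,x)$ by $(-\alpha,-x)$, or when $k=0$ by $(\alpha+\tfrac12,\,-y)$ with a suitable reparametrisation; this needs a short check.

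The second step is to determine the fibres. Two preimages of the same point must have the same $\rho=|p|$. They must also have the same $(a,c,b,d)$ up to the simultaneous sign change $(\alpha,x,y)\mapsto(\alpha+\tfrac12,-x,-y)$. This is a free involution, and it is the $\Z_2$ already seen in \eqref{eq:stabkneq0}. For $k=0$ the involution is unavailable, because $x$ is pinned to $1$ and $q_1=\cos 2\pi\alpha$ changes sign under $\alpha\mapsto\alpha+\tfrac12$. So $I_{0,m}$ is injective, while for $k\neq 0$ it is exactly two-to-one.

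The third step is to show that $I_{k,m}$ is an immersion. I will compute $dI_{k,m}$ directly. The four components of the differential are $\partial_\rho$, $\partial_\alpha$, and tangent vectors to $S^k$ and $S^m$. In the $(q,p)$-coordinates they are linearly independent: $\partial_\rho$ only moves $p$ radially, $\partial_\alpha$ rotates within $\operatorname{span}(x,y)$, and the sphere directions are orthogonal to $x$ and $y$. This needs care when $\cos$ or $\sin$ vanishes, where one of the sphere directions is seen only through $p$ rather than $q$. Since $I_{k,m}$ is also proper onto the complement of $0_{S^n}$, the quotient by the involution is an injective proper immersion onto $\overline{\mu}_{k,m}^{-1}(0)\setminus 0_{S^n}$. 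This shows that this set is a smooth submanifold, even though $0$ is not a regular value. It also shows that $I_{k,m}$ is a diffeomorphism for $k=0$ and a smooth double cover for $k\neq0$.

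\emph{Property (5).} With $\lambda=p\cdot dq$, I use $x\cdot dx=0$ and $y\cdot dy=0$. Then
\begin{equation*}
I_{k,m}^*\lambda=\rho\sin^2(2\pi\alpha)\,2\pi\,d\alpha+\rho\cos^2(2\pi\alpha)\,2\pi\,d\alpha=2\pi\rho\,d\alpha .
\end{equation*}
This is the Hamiltonian $H$ times $d\alpha$, as expected for a free circle action. Taking $d$ gives $I_{k,m}^*\omega=d(2\pi\rho)\wedge d\alpha$. The stated form $d\rho\wedge d\alpha$ differs from this by a factor of $2\pi$. I will reconcile the two by matching the normalisation of $\rho$ (or of $\alpha$) with that used in (2), where $\rho=r$ corresponds to $H=2\pi r$. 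Flagging and fixing this constant is part of the check.

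I expect the main obstacle to be the immersion and embedding claim in (3)–(4). Near $\cos 2\pi\alpha=0$ or $\sin 2\pi\alpha=0$, one has to verify that no tangent directions collapse. One also has to make sure the image really exhausts the singular level set away from $0_{S^n}$, including the sign bookkeeping in the surjectivity step. Everything else is direct computation.
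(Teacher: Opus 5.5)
Your proposal is correct in substance. For (1), (2) and (5) it is the paper's argument, except that you pull back $\lambda$ globally rather than evaluating $\omega$ on pushed-forward vectors at $\alpha=0$. That is slightly cleaner and gives $I_{k,m}^*\lambda=2\pi\rho\,d\alpha$ outright.

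For (3)--(4) your route differs from the paper's.
\begin{itemize}
\item The paper writes $\overline{\mu}_{k,m}^{-1}(0)\setminus 0_{S^n}=\bigcup_{\rho>0}\pkm(\rho)$. It then reads off injectivity, respectively two-to-one-ness, slice by slice from the stabilizers \eqref{eq:stabeq0} and \eqref{eq:stabkneq0} via orbit--stabilizer.
\item You solve the collinearity equations and compute the fibres by hand. Your involution $(\alpha,x,y)\mapsto(\alpha+\tfrac12,-x,-y)$ is exactly the extra stabilizer element $(\tfrac12,R_{k+1},R_{m+1})$. You then add an immersion-plus-properness argument.
\end{itemize}
The paper's version is shorter and explains the $\Z_2$ group-theoretically. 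However, it only produces a bijective, respectively two-to-one, map onto a level set at a non-regular value, and takes its smooth structure for granted. Your version actually proves that $\overline{\mu}_{k,m}^{-1}(0)\setminus 0_{S^n}$ is a closed embedded submanifold of $T^*S^n\setminus 0_{S^n}$, and that $I_{k,m}$ is a smooth covering onto it.

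The immersion check you fear is harmless. For a kernel vector $t\partial_\rho+s\partial_\alpha+X+Y$, project the $(q_1,p_1)$- and $(q_2,p_2)$-components onto $x^\perp$ and $y^\perp$. This leaves $\cos(2\pi\alpha)X=\rho\sin(2\pi\alpha)X=0$ and the analogous equations for $Y$, so $X=Y=0$. The remaining $x$- and $y$-components give $s\sin(2\pi\alpha)=s\cos(2\pi\alpha)=0$, hence $s=0$, and then $t=0$.

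There are two corrections.

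First, your sign-absorbing substitution is wrong. $I_{k,m}(\rho,-\alpha,-x,y)$ is the antipode $(q,p)\mapsto(-q,-p)$ of the point you want. Your $k=0$ choice $(\alpha+\tfrac12,-y)$ also fails, since it flips the sign of $q_1$. The correct substitution, valid verbatim for every $k$ including $k=0$, is $(\alpha,y)\mapsto(-\alpha,-y)$:
\begin{equation*}
I_{k,m}(\rho,-\alpha,x,-y)=\bigl(\cos(2\pi\alpha)x,\ \sin(2\pi\alpha)y,\ \rho\sin(2\pi\alpha)x,\ -\rho\cos(2\pi\alpha)y\bigr).
\end{equation*}

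Second, the factor $2\pi$ you found cannot be reconciled by a choice of normalisation while keeping both (2) and (5) literally true. With $I_{k,m}$ as defined, and (2) as stated, the correct formula is $I_{k,m}^*\omega=2\pi\,d\rho\wedge d\alpha=dH\wedge d\alpha$. The paper's proof obtains $d\rho\wedge d\alpha$ only because its pushforward $(I_{k,m})_*\partial_\alpha=(0,y_0,-\rho x_0,0)$ drops the factor $2\pi$ that comes from differentiating $\cos 2\pi\alpha$ and $\sin 2\pi\alpha$. Replacing $\rho$ by $2\pi\rho$ makes (5) true, but turns (2) into $\mu_{k,m}^{-1}(r,0,0)$. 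That is in fact the convention the paper uses afterwards, for instance in the moment-fibre description of $\pkm(r)$ and in cutting at $\rho=2\pi$. So you should state the result you prove as $I_{k,m}^*\omega=2\pi\,d\rho\wedge d\alpha$, rather than try to match the printed formula.
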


\proof Properties \emph{(1)} and \emph{(2)} are immediate. To prove \emph{(3)} and \emph{(4)}, note that the level set in question can be decomposed as 
\begin{equation*}
    \overline{\mu}^{-1}_{k,m}(0)
    = \bigcup_{\rho >0} \mu^{-1}_{k,m}(\rho ,0,0)
    = \bigcup_{\rho >0} \pkm(\rho).
\end{equation*}
The claims then follow from equivariance, and \eqref{eq:stabeq0} and \eqref{eq:stabkneq0}, respectively. For Property \emph{(5)}, we will compute the pullback~$I^*_{k,m}\omega$ on the tangent space
\begin{equation}
    T_{(\rho,0,x_0,y_0)}(\R_{>0} \times S^1 \times S^k \times S^m)
    = T_\rho \R_{>0} \times T_0 S^1 \times T_{x_0}S^k \times T_{y_0}S^m.
\end{equation}
Note that it is sufficient to consider the case~$\alpha = 0$, since the symplectic form is invariant under the~$S^1$-action. In the above product decomposition of the tangent space, take vectors~$\pp_{\rho} \in T_{\rho} \R_{>0}, \pp_{\alpha} \in T_0 S^1, X, X' \in T_{x_0}S^k, Y, Y' \in T_{y_0}S^m$, and compute
\begin{align*}
    (I_{k,m})_* \pp_{\rho} &= (0,0,0,y_{0}), \quad
    (I_{k,m})_* X^{(\prime)} = (X^{(\prime)},0,0,0), \\
    (I_{k,m})_* \pp_{\alpha} &= (0,y_0,-\rho x_0,0), \quad
    (I_{k,m})_* Y^{(\prime)} = (0,0,0,\rho Y^{(\prime)}).
\end{align*}
Recall that the canonical symplectic form~$\omega$ on~$T^*S^n$ is given by the restriction of~$\sum_{i=1}^{n+1} dp_i \wedge dq_i$ to~$T^*S^n \subset \R^{n+1} \times \R^{n+1}$. Therefore, 
\begin{align*}
   & I_{k,m}^* \omega (\pp_{\rho},X) = I_{k,m}^* \omega (\pp_{\rho},Y) = 0, \\
   & I_{k,m}^* \omega (X,X') = I_{k,m}^* \omega (Y,Y') = I_{k,m}^* \omega (X,Y) = 0, \\
   & I_{k,m}^* \omega (\pp_{\alpha},X) = -\rho x_0 \cdot X = 0, \\
   & I_{k,m}^* \omega (\pp_{\alpha},Y) = \rho y_0 \cdot Y = 0.
\end{align*}
The only non-zero term is~$I_{k,m}^* \omega (\pp_{\rho},\pp_{\alpha}) = y_0 \cdot y_0 = 1$, proving the claim.
\proofend

\cref{prop:Ikm_properties} allows us to set up symplectic reduction away from the zero-section, despite the fact that~$0$ is not a regular value of~$\overline{\mu}_{k,m}$. We start with the (easier) case of~$k=0$. The quotient space of~$\R_{>0} \times S^1 \times S^m$ under the~$\SO(m+1)$-action is simply~$\R_{>0} \times S^1$. Using properties \emph{(1)} and \emph{(3)} to find that this induces a quotient map 
\begin{equation}
    \label{eq:def_symp_red}
    \pi \colon \overline{\mu}_{0,m}^{-1}(0) \setminus 0_{S^n} \rightarrow \R_{>0} \times S^1,
\end{equation}
which, by property \emph{(5)} satisfies~$\pi^*(d\rho \wedge d\alpha) = \omega$. In other words, the symplectic manifold~$(\R_{>0} \times S^1 , d\rho \wedge d\alpha)$ is the symplectic reduced space of the~$G_{0,m} \cong \SO(m+1)$-action on~$\overline{\mu}^{-1}_{k,m}(0) \setminus 0_{S^n}$. 

For~$k \neq 0$, the double cover~$I_{k,m}$ induces a diffeomorphism~$\R_{>0} \times (S^1 \times S^k \times S^m)/\Z_2 \rightarrow \overline{\mu}_{k,m}^{-1}(0) \setminus 0_{S^n}$, where~$\Z_2$ acts by the antipodal map on all of the sphere factors simultaneously. Indeed, this follows from the equivariance of~$I_{k,m}$ together with \eqref{eq:stabkneq0}. The quotient map~$\R_{>0} \times (S^1 \times S^k \times S^m)/\Z_2 \rightarrow \R_{>0} \times \RP^1$ is defined as~$[\rho,\alpha,x,y] \mapsto [\rho,\alpha]$, meaning that the \emph{symplectic quotient} is~$\R_{>0} \times \RP^1$ equipped with the form~$d\rho \wedge d\alpha$ in this case. Note however that the natural Hamiltonian~$(\rho,\alpha) \mapsto \rho$ is~$1/2$-periodic on~$\R_{>0} \times \RP^1$. For convenience, we thus identify~$\R_{>0} \times \RP^1$ with~$\R_{>0} \times S^1$ via the symplectomorphism~$(\rho,\alpha) \mapsto (\rho/2,2\alpha)$. On this new space, the Hamiltonian~$(\rho,\alpha) \mapsto \rho$ induces an honest~$S^1$-action as per our convention~$S^1 = \R / \Z$. Thus we find a quotient map as in \eqref{eq:def_symp_red} for~$k \neq 0$.

\begin{definition}
    \label{def:reductionTSn}
    We call 
    \begin{equation*}
        Y := (\R_{>0} \times S^1 , d\rho \wedge d\alpha)
    \end{equation*}
    the \emph{symplectic quotient} of~$(T^*S^n)^{\times} = T^*S^n \setminus 0_{S^n}$ by the~$G_{k,m}$-action. We denote this quotient by~$(T^*S^n)^{\times}\sslash_{\overline{\mu}_{k,m} = 0} G_{k,m}$ and its \emph{quotient map} by
    \begin{equation}
        \label{eq:reductionTSn}
        \pi \colon \overline{\mu}_{k,m}^{-1}(0) \setminus 0_{S^n} \rightarrow Y.
    \end{equation}
\end{definition}

\begin{remark}
    \label{rk:singreductionTSn}
    We denote by~$\widehat{Y}$ the \emph{reduced space}~$T^*S^n \sslash_{\overline{\mu}_{k,m} = 0} G_{k,m} = \overline{\mu}_{k,m}^{-1}(0)/G_{k,m}$ (i.e. without removing the zero-section). It is given by adding a distinguished point~$y_*$ to~$Y$ and extending~$\pi$ by continuity, such that~$\pi^{-1}(y_*) = S^n$. The resulting space~$\widehat{Y}$ is a plane containing one distinguished point, which, in the language of Lerman--Sjamaar \cite{SjaLer91}, belongs to a lower stratum of the stratified reduced space. We will usually only consider the smooth part~$Y \subset \widehat{Y}$, but it is sometimes convenient to think in terms of~$\widehat{Y}$ instead of~$Y$.
\end{remark}

From the point of view of the above construction, we can view the Oakley--Usher Lagrangians as lifts of curves
\begin{equation*}
    S^1(r) = \{r\} \times S^1 \subset Y = \R_{>0} \times S^1,\quad
    r > 0. 
\end{equation*}
\begin{proposition} 
\label{prop:OU_as_lifts}
The symplectic quotient defined in \cref{def:reductionTSn} has the following properties:
\begin{enumerate}
\item For~$k=0$, the map \eqref{eq:reductionTSn} is a trivial~$S^m$-bundle over~$Y$.
\item For~$k \neq 0$, the map \eqref{eq:reductionTSn} is an~$S^k \times S^m$-bundle over~$Y$ with monodromy given by the factor-wise antipodal map~$(x,y) \mapsto (-x,-y)$.
\item The Oakley--Usher Lagrangians can be viewed as the following lifts:
    \begin{equation*}
        \label{eq:OU_as_lifts}
        \pkm(r)
        =
        \begin{cases}
            \pi^{-1}(S^1(r)) & k = 0; \\
            \pi^{-1}(S^1(r/2)) & k \neq 0.
        \end{cases}
    \end{equation*}
\item More generally, the lift~$\pi^{-1}(C)$ of any embedded smooth curve~$C \subset Y$ is a Lagrangian submanifold of~$T^*S^n$.
\end{enumerate}
\end{proposition}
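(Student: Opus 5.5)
We work entirely through the parametrization $I_{k,m}$ of \cref{prop:Ikm_properties}, transporting everything to the model space $\R_{>0}\times S^1\times S^k\times S^m$, where the quotient map becomes the projection to the $(\rho,\alpha)$-factor.

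For \emph{(1)}, property \emph{(3)} of \cref{prop:Ikm_properties} identifies $\overline{\mu}_{0,m}^{-1}(0)\setminus 0_{S^n}$ with $\R_{>0}\times S^1\times S^m$. Under this identification $\pi$ is the projection $(\rho,\alpha,y)\mapsto(\rho,\alpha)$. This is visibly a trivial $S^m$-bundle.

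For \emph{(2)}, property \emph{(4)} and the discussion before \cref{def:reductionTSn} identify the level set with $\R_{>0}\times(S^1\times S^k\times S^m)/\Z_2$, where $\Z_2$ acts by $(\alpha,x,y)\mapsto(\alpha+\tfrac12,-x,-y)$. We must justify the shift in $\alpha$: the element $(\tfrac12,R_{k+1},R_{m+1})$ lies in the stabilizer, and a half-turn of the geodesic flow sends $(x,0,0,\rho y)$ to $(-x,0,0,-\rho y)$. So $I_{k,m}(\rho,\alpha+\tfrac12,-x,-y)=I_{k,m}(\rho,\alpha,x,y)$, and one checks via equivariance and \eqref{eq:stabkneq0} that this is the only identification. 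After the rescaling $(\rho,\alpha)\mapsto(\rho/2,2\alpha)$, the map $\pi$ becomes $[\rho,\alpha,x,y]\mapsto(\rho/2,2\alpha)$. This is the mapping torus description from \cref{mapping tori}, crossed with $\R_{>0}$. Going once around $S^1$ in the quotient coordinate $2\alpha$ lifts to a path from $\alpha$ to $\alpha+\tfrac12$, and the fibres are glued by $(x,y)\mapsto(-x,-y)$. This gives the claimed bundle with antipodal monodromy.

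For \emph{(3)}, combine property \emph{(2)} of \cref{prop:Ikm_properties}, which says $I_{k,m}(\{r\}\times\cdots)=\pkm(r)$, with the decomposition of the level set into the disjoint sets $\pkm(\rho)$. The preimage of $\{\rho=r\}$ is exactly $\pkm(r)$. In the case $k\neq 0$ the rescaling sends $\rho=r$ to $\rho/2=r/2$, hence $\pkm(r)=\pi^{-1}(S^1(r/2))$.

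For \emph{(4)}, let $C\subset Y$ be an embedded curve. By \emph{(1)} and \emph{(2)}, $\pi$ is a submersion, so $L:=\pi^{-1}(C)$ is an embedded submanifold. Its dimension is $1+k+m=n$, which is half of $\dim T^*S^n=2n$. It remains to show $\omega|_L=0$. By property \emph{(5)}, $\omega=\pi^*(d\rho\wedge d\alpha)$ on the level set, after pulling back by the (local) diffeomorphism $I_{k,m}$ and passing to the quotient; the rescaling $(\rho,\alpha)\mapsto(\rho/2,2\alpha)$ is a symplectomorphism, so this identity is unaffected. Hence $\omega|_L=\pi^*\bigl((d\rho\wedge d\alpha)|_C\bigr)$, and this vanishes because $C$ is one-dimensional. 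So $L$ is Lagrangian.

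The main obstacle is the bookkeeping in \emph{(2)}. One must confirm that the deck transformation of $I_{k,m}$ is exactly $(\alpha,x,y)\mapsto(\alpha+\tfrac12,-x,-y)$, correctly tracking the half-period of the geodesic flow. One must also confirm that the resulting monodromy is the simultaneous antipodal map, rather than the antipodal map on only one factor.
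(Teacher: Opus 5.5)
Your proposal is correct and follows essentially the same route as the paper: (3) comes from property (2) of the parametrization $I_{k,m}$ together with the rescaling $(\rho,\alpha)\mapsto(\rho/2,2\alpha)$, and (4) comes from the dimension count plus $\omega=\pi^*(d\rho\wedge d\alpha)$ on the level set. The only difference is that the paper deduces (1) and (2) from (3) and the previously established mapping-torus topology of $\pkm(r)$, while you read them off directly from the deck transformation $(\alpha,x,y)\mapsto(\alpha+\tfrac12,-x,-y)$ of $I_{k,m}$; this is the same half-period/stabilizer computation, so neither ordering gains or loses anything.
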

\vspace{0.8 cm}
\begin{figure}[ht]
\centering
\begin{tikzpicture}[scale=1.0, thick, baseline]

% Left boundary (r = 0) — NOT included
\draw[dashed] (0,0) ellipse (0.5 and 1.5);

% Cylinder sides (solid part)
\draw (0,1.5) -- (5,1.5);
\draw (0,-1.5) -- (5,-1.5);

% Continuation to infinity (dashed)
\draw[dashed] (5,1.5) -- (7,1.5);
\draw[dashed] (5,-1.5) -- (7,-1.5);

% Highlighted slice S^1(r)
\draw[red, very thick] (3,0) ellipse (0.5 and 1.5);
\node[red, above] at (3,1.6) {$S^1(r)= \{r\}\times S^1$};

% Arrow for r → ∞
\node at (8.5,0) {$r \to +\infty$};

% Label
\node at (3,-2.5) {$Y = \mathbb{R}_{>0} \times S^1$};
\end{tikzpicture}
\caption{The reduced space $Y$, topologically a cylinder, and a curve $S^1(r)$ at fixed height.}
\end{figure}

\proof Point \emph{(3)} follows immediately from \emph{(2)} in \cref{prop:Ikm_properties} and the construction of the symplectic reduction. The~$1/2$-factor in the second case comes from post-composing the natural quotient map with~$(\rho,\alpha) \mapsto (\rho/2,2\alpha)$ in the case~$k \neq 0$. Points \emph{(1)} and \emph{(2)} are a consequence of point \emph{(3)} together with the topology of the Oakley--Usher Lagrangians, as discussed in \eqref{eq:OU_top_keq0} and \eqref{eq:OU_top_kneq0}. The lift in point \emph{(4)} is of the correct dimension and the fact that the symplectic form vanishes on its tangent space follows from the fact that the restriction of the symplectic form~$\omega$ to~$\mu_{k,m}^{-1}(0) \setminus 0_{S^n}$ is given by~$\pi^* (d\rho \wedge d\alpha)$, and any curve in~$Y$ is automatically Lagrangian.
\proofend

\subsection{OU-Lagrangians in the quadric}
\label{ssec:OU_quadric} We view the complex~$n$-dimensional quadric as
\begin{equation*}
    Q^n = \{[z_0 \colon \ldots \colon z_{n+1}] \in \CP^{n+1} \sth z_0^2 + \ldots + z_{n+1}^2 = 0\},
\end{equation*}
which we equip with the symplectic form induced by the Fubini--Study form on~$\CP^{n+1}$.

\begin{remark}
Here we chose a specific model of the quadric, but the resulting symplectic manifold does not depend on that choice. Indeed, the hypersurfaces defined by any two homogeneous polynomials of degree $d$  are symplectomorphic.
\end{remark}

The following is widely known, but we were unable to find a proof in general dimension. See also the work by Bimmermann~\cite{Bim24} for related results proved by different methods.

\begin{proposition} 
    \label{prop:cut}
    The space~$\overline{T^*S^n}_{H \leqslant 2\pi}$ obtained by performing a symplectic cut at the level~$H = 2\pi \vert p \vert = 2\pi$ is symplectomorphic to the quadric~$Q^n = \{z_0^2 + \ldots + z_{n+1}^2 = 0\} \subset \CP^{n+1}$ equipped with the symplectic form induced by the Fubini--Study form on~$\CP^{n+1}$. 
\end{proposition}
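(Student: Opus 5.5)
Let $D_1 := \{(q,p)\in T^*S^n : |p|<1\}$ and $Q^{n-1}_\infty := \{z_0=0\}\cap Q^n$. The plan is to write down an explicit map from the open unit disc bundle $D_1$ onto the complement $Q^n \setminus Q^{n-1}_\infty$, check that it is a symplectomorphism onto its image, and then check that it intertwines the geodesic $S^1$-action near $|p|=1$ with a circle action on $Q^n$ whose fixed locus is the divisor $Q^{n-1}_\infty \cong \{\sum_{j\geq 1} z_j^2=0\}\subset \CP^n$. Since the symplectic cut $\overline{T^*S^n}_{H\leqslant 2\pi}$ is, by definition, $D_1$ with the boundary $\{|p|=1\}$ collapsed along the orbits of the $S^1$-action generated by $H$, the map will then extend continuously to a bijection onto $Q^n$. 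The smooth and symplectic structures will match near the divisor because the cut is characterised locally by the moment map: near the reduced divisor it is the symplectic reduction of $D_1\times\C$ at level $H+\pi|w|^2=2\pi$.

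Concretely, I would use the identification $T^*S^n \cong Q^n_{\text{aff}} := \{u\in\C^{n+1} : \sum u_j^2 = 1\}$ via $u = \cosh(|p|)\,q + i\,\sinh(|p|)\,p/|p|$ (or a suitably reparametrised version). I would then embed $Q^n_{\text{aff}}$ into $Q^n$ by
\[
u \mapsto [\,\lambda(|p|) : i\,\nu(|p|)\, u\,],
\]
choosing the radial profile so that the pullback of the Fubini--Study form is $d\lambda_{\mathrm{can}}$. A cleaner alternative I would try first is the direct formula
\[
(q,p)\mapsto \Big[\sqrt{1-|p|^2} : \; q\sqrt{1-|p|^2}\cdot\alpha + i\,\tfrac{p}{|p|}\beta\Big]
\]
in the spirit of the known $S^2\times S^2$ picture, with the coefficients fixed by requiring $\SO(n+1)$-equivariance together with equivariance of the geodesic circle action. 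The $\SO(n+1)$-equivariance is automatic for any such radial formula. The moment map of the circle action $[z_0:z]\mapsto[z_0: e^{\pm 2\pi i\theta}$-rotation in the $(q,p/|p|)$-plane$]$, restricted to $Q^n$, is a function of $|z_0|^2/|z|^2$, and I would solve for the profile so that this moment map equals $2\pi|p|$. Because the Kähler structure on $Q^n$ is $\SO(n+1)$-invariant, the pullback form is determined by its values on the orbit directions and on $\partial_\rho$. The computation reduces to a single $\SO(n+1)$-orbit through the points $(e_1,\rho e_2)$, namely the 2-dimensional slice $\{z_3=\dots=z_{n+1}=0\}$, which is a conic $\cong \CP^1$. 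Matching the forms there is exactly the $n=1$ case, which can be done by hand.

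The alternative route, which I would pursue if the explicit profile becomes messy, is more conceptual. Both $\overline{T^*S^n}_{H\leqslant 2\pi}$ and $Q^n$ carry Hamiltonian $S^1\times\SO(n+1)$-actions. The $S^1$ action on $Q^n$ rotates the $(z_0,\ \cdot)$ coordinates appropriately; more precisely it is the $\SO(2)\subset\SO(n+2)$ action mixing $z_0$ with the imaginary part, commuting with $\SO(n+1)$. Both spaces are symplectic manifolds with cohomogeneity-one-type structure under the combined action and have matching moment images $[0,2\pi]$. One would then invoke an equivariant uniqueness statement, for instance via Delzant-type local normal forms along the orbits, together with the volume check $\mathrm{Vol}(Q^n)=\mathrm{Vol}(D_1^*S^n)$.

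The main obstacle I expect is the smooth extension across the cut divisor $\{|p|=1\}/S^1 \to Q^{n-1}_\infty$: one must verify that the map, composed with the local model $D_1\times\C\sslash S^1$, is a diffeomorphism there rather than only a homeomorphism. I would handle this by writing the map in the form $(q,p,w)\mapsto[\,w\cdot(\ldots): \ldots\,]$ on the level set $H+\pi|w|^2=2\pi$. I would show that it is $S^1$-invariant and descends to a local diffeomorphism near $w=0$, checking the differential on the slice computed above.
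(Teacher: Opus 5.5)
Your overall architecture matches the paper's. You build an $\SO(n+1)$-equivariant symplectic embedding of $\{|p|<1\}$ onto $Q^n\setminus\{z_0=0\}$, then extend it over the cut by writing it on the level set $\{|p|+\pi|w|^2=1\}\subset T^*S^n\times\C$, with a factor of $\bar w$ in the $z_0$-slot. The gap is in the central step. The proposal never produces the embedding; two of your three routes to it fail outright, and the third needs correcting.

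\emph{The ``cleaner'' formula.} The formula you would try first, $[\sqrt{1-|p|^2}:\alpha\sqrt{1-|p|^2}\,q+i\beta\,\hat p]$ with $\hat p=p/|p|$, cannot be continuous along the zero section. In the chart $z_0=1$, write $z=x+iy$. The quadric is then $\{|y|^2=1+|x|^2,\ x\cdot y=0\}$, so $y$ never vanishes and must carry $q$. Meanwhile $x$ must carry $p$ and vanish on the zero section, whose image is $\Sigma=\{[1:iq]\}$. Your formula puts $\hat p$ into $y$ with a coefficient of modulus $\geq 1$ at $p=0$.

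\emph{The $\cosh/\sinh$ formula.} Here $[\lambda:i\nu u]$ contains no freedom, since $\sum u_j^2=1$ forces $\lambda=\pm\nu$. The only freedom is the reparametrisation $g(|p|)$ inside $u$. Taking $u=\cosh g\,q-i\sinh g\,\hat p$ with $\tanh g=|p|$ gives exactly the paper's map $\psi(q,p)=[\sqrt{1-|p|^2}:p+iq]$. With your sign $+i\sinh g$ you get $\psi(q,-p)$ up to a radial reparametrisation. That reverses the symplectic form, so no profile makes the pullback $d\lambda_{\mathrm{can}}$. The paper verifies symplecticity in one line, $\psi^*\lambda_{FS}=\sum_j p_j\,dq_j$, using only $|q|=1$ and $q\cdot p=0$.

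\emph{Pinning down the profile.} Your means of doing this are also flawed.
\begin{itemize}
\item The geodesic $S^1$-action does not extend over the zero section, so on $Q^n$ it lives only on $Q^n\setminus\Sigma$. It is also not projective-linear: by Schur's lemma the centraliser of $\SO(n+1)$ in $PO(n+2,\C)$ is finite for $n\geq 2$. So the ``$\SO(2)\subset\SO(n+2)$ commuting with $\SO(n+1)$'' does not exist. Your fallback route (global Hamiltonian $S^1\times\SO(n+1)$-actions on both sides plus an equivariant uniqueness theorem) therefore rests on a false premise.
\item $\SO(n+1)$-invariance alone does not reduce the symplectic check to the conic $\{z_3=\dots=z_{n+1}=0\}$. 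The closed invariant form $d(\hat p\cdot dq)$ vanishes on the slice $T^*S^1$. But at $(e_1,\rho e_2)$, on the tangent vectors $(\delta q,\delta p)=(e_3,0)$ and $(0,e_3)$, it equals $\pm 1/\rho$.
\end{itemize}
What does work is the moment map. Those transverse directions are orbit directions, and $\omega(X_\xi,\cdot)=d\langle\mu,\xi\rangle$. So on this cohomogeneity-one space, an equivariant map intertwining the $\SO(n+1)$-moment maps is symplectic on $\{p\neq 0\}$. For $x=\sinh g\,\hat p$ and $y=\cosh g\,q$, the Fubini--Study moment map is proportional to $x\wedge y/(1+|x|^2+|y|^2)=\tfrac12\tanh g\,\hat p\wedge q$. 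Comparing with $q\wedge p$, with lines normalised to area $2\pi$, forces $\tanh g=|p|$.

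\emph{The extension step.} This part of your plan is fine and matches the paper. There, $\bar\psi[q,p,w]$ is $\psi$ applied to $(q,p)$ flowed by the geodesic action through the angle $\arg w$. After multiplying by $e^{-i\arg w}$, the $z_0$-entry becomes $\bar w\sqrt{\pi(1+|p|)}$ and the remaining entries are visibly smooth at $w=0$. The paper writes down an explicit inverse. Symplecticity on the divisor then follows by density, not from any local characterisation of the cut.
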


\proof Let 
\begin{equation}
    \psi \colon T^*S^n \supset D^*S^n = \{\vert p \vert < 1 \}
    \rightarrow Q^n,\quad
    (q,p) \mapsto \left[\sqrt{1-\vert p \vert^2} : p + iq\right].
\end{equation}
Notice that the image of~$\psi$ is completely contained in the open set~$\mathcal{U}:=\{z_0 \neq 0\}\subset \mathbb{C}P^n$. In this chart, we pass to homogeneous coordinates~$(y_1,...,y_n) \in \C^n$, which allow an explicit expression for the Fubini-Study form: 
\begin{equation*}
\omega_{FS} = \frac{i}{2}\partial\bar\partial
\log\bigl(1+|y|^2\bigr)
=
d\Bigl(
\frac{i}{2}\frac{\sum_j \bar y_jdy_j - y_jd\bar y_j}
{1+|y|^2}
\Bigr)=: d\lambda_{FS}.
\end{equation*}
In homogeneous coordinates,~$\psi$ has the expression
\begin{equation*}
\psi(q,p) = \frac{p+iq}{\sqrt{1-p^2}}.
\end{equation*}
It is easily shown that~$\psi^*\lambda_{FS} = \sum_j p_j dq_j = \lambda$, which implies that~$\psi$ is a symplectic embedding. 
Setting
    \begin{equation}
        F \colon T^*S^n \times \C \rightarrow \R,\quad
        F(q,p,w) = \vert p \vert + \pi \vert w \vert^2,
    \end{equation}
the symplectic cut~$\overline{T^*S^n}_{H \leqslant 2\pi}$ is defined as the symplectic quotient~$(T^*S^n \times \C)\sslash_{F = 1}S^1$. Recall that there is a natural symplectic embedding~$\iota$ of~$D^*S^n = \{ \vert p \vert < 1\}$ into the cut space. Let~$ \overline{\psi} \colon \overline{T^*S^n}_{\vert p \vert \leqslant 1} \setminus \{p = 0\} \rightarrow Q^n$ be defined by
\begin{equation}
    \label{eq:overline_psi}
    \left[ q,p,w = \rho e^{i \alpha} \right] \mapsto
    \left[\sqrt{1-\vert p \vert^2} :  \cos \alpha p- \vert p \vert \sin \alpha q  + i \left(\cos \alpha q + \sin \alpha \frac{p}{\vert p \vert} \right)\right]
\end{equation}
for all~$w \neq 0$ and by~$\overline{\psi}[q,p,0] = [0 : p + iq]$ for~$w = 0$. This map \emph{extends}~$\psi$ in the sense that~$\overline{\psi}\vert_{D^*S^n} \circ \iota = \psi$. In particular, we can smoothly extend the definition of~$\overline{\psi}$ to~$\{p=0\}$ by~$\overline{\psi}\vert_{\{p=0\}} \circ \iota \vert_{0_{S^n}} = \psi \vert_{0_{S^n}}$. It follows that if~$\overline{\psi}$ is a diffeomorphism, then it is a symplectomorphism. Indeed, since it extends~$\psi$, the map~$\overline{\psi}$ is a symplectomorphism of~$D^*S^n$ to the complement of the sub-quadric~$\{z_0 = 0\} \subset Q^n$ and thus a symplectomorphism everywhere since this set is open and dense. Let us prove that~$\overline{\psi}$ is a diffeomorphism. We first show that~$\overline{\psi}$ is smooth. It suffices to prove smoothness on the subset~$w=0$. Recall that the symplectic reduction takes place on the level set~$\{F = 1\} = \{\vert p \vert + \pi \vert w \vert^2 = 1\}$ and hence we have~$w=0$ if and only if~$\vert p \vert = 1$. Note that we can write an element in the image of \eqref{eq:overline_psi} as
\begin{equation}
    \label{eq:overline_psi2}
    \left[e^{-i\alpha}\sqrt{1-\vert p \vert^2} : e^{-i\alpha}\left( \cos \alpha p- \vert p \vert \sin \alpha q  + i \left(\cos \alpha q + \sin \alpha \frac{p}{\vert p \vert} \right) \right) \right],
\end{equation}
which makes sense even for~$w=0$, since the expression then reduces to~$[0 : p+iq]$, which is independent of~$\alpha$. The fact that the map defined by \eqref{eq:overline_psi2} obviously lifts to a smooth map~$\{F=1\} \rightarrow Q^n$ shows that~$\overline{\psi}$ is itself smooth. Its inverse~$\overline{\psi}^{-1} \colon Q^n \rightarrow \overline{T^*S^n}_{\vert p \vert \leqslant 1}$ is, for~$z_0 \neq 0$, defined by 
\begin{equation}
    [z_0 = \sigma e^{i\beta} : z] \mapsto \left[ \cos \beta P  - \vert P \vert \sin \beta Q, \cos \beta Q  + \sin \beta \frac{P}{\vert P \vert} , e^{i\beta} z_0 \right],
\end{equation}
where~$P = P[z_0:z] = \operatorname{Im} (e^{-i\beta}z)$ and~$Q = Q[z_0:z] = \operatorname{Re}(e^{-i\beta}z)$. Again, note that this expression does not depend on~$\beta$ when~$z_0 = 0$, and hence it extends smoothly everywhere. This proves the claim.
\proofend

From the proof, we obtain the natural embedding of the unit co-disk bundle into the quadric,
\begin{equation}
    \label{eq:psi_emb}
    \psi \colon D^*S^n \hookrightarrow Q^n.
\end{equation}

\begin{definition}
    For all~$r \in (0,2\pi)$, the image~$\psi(\pkm(r)) \subset Q^n$ of~$\pkm(r) \subset D^*S^n$ is, by abuse of terminology and notation, again denoted by~$\pkm(r) \subset Q^n$ and called the \emph{Oakley--Usher Lagrangian}. 
\end{definition}

Note that the symplectic cut in \cref{prop:cut} is carried out with respect to the first component of the original moment map~$\mu_{k,m}$ on~$T^*S^n$. Therefore, the resulting space~$Q^n$ again carries an~$S^1 \times G_{k,m}$-action. Continuing our natural abuse of notation, we again denote the corresponding moment maps on $Q^n$ by~$\mu_{k,m}$ and~$\overline{\mu}_{k,m}$. For all~$r \in (0,2\pi)$, the new Oakley--Usher Lagrangians~$\pkm(r) \subset Q^n$ can again be viewed as the orbit of the group action and a level set of the moment map, respectively:
\begin{equation*}
    \pkm(r) 
    = (S^1 \times G_{k,m})\cdot \psi(z_r)
    = \mu^{-1}_{k,m}(r, 0,0).
\end{equation*}
From now on, we also denote the image of the zero-section~$0_{S^n} \subset T^*S^n$ under the embedding~$\psi$ and its complement by 
\begin{equation}
    \label{eq:sigma_notation}
    \Sigma = \psi(0_{S^n}) \subset Q^n, \quad
    (Q^n)^{\times} = Q^n \setminus \Sigma.
\end{equation}

\begin{remark} 
Here we chose a specific model of the quadric. It is natural to wonder whether our construction depends on this choice. As it is proven in the next proposition, the answer turns out to be negative.
\end{remark}

Since the~$S^1$- and~$G_{k,m}$-actions commute, taking the symplectic cut as in \cref{prop:cut} commutes with the symplectic reduction as in \cref{def:reductionTSn}.

\begin{proposition}
    \label{prop:redcut}
    The spaces~$\overline{((T^*S^n)^{\times}\sslash_{\overline{\mu}_{k,m} = 0}G_{k,m})}_{H \leqslant 2\pi}$ and~$(\overline{(T^*S^n)^{\times}}_{H \leq 2\pi})\sslash_{\overline{\mu}_{k,m} = 0}G_{k,m}$ are symplectomorphic.
\end{proposition}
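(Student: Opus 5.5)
The plan is to realize both spaces as a single symplectic quotient of $(T^*S^n)^{\times} \times \C$ by the commuting action of $S^1 \times G_{k,m}$, and to perform the two reductions in stages. Recall that the cut $\overline{X}_{H \leqslant 2\pi}$ is defined as $(X \times \C)\sslash_{F=1} S^1$, where $F = H/2\pi + \pi\vert w\vert^2$, as in the proof of \cref{prop:cut}. Let $G_{k,m}$ act on $(T^*S^n)^{\times} \times \C$ by acting on the first factor and trivially on $\C$. This action has moment map $\overline{\mu}_{k,m}$, pulled back to the product, and it commutes with the $S^1$-action generated by $F$. The reason is that the $G_{k,m}$-action commutes with the geodesic $S^1$-action and preserves $\vert p\vert$, as noted before \eqref{eq:momentmapmk}.

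Both spaces should then be identified with the level set
\begin{equation*}
Z := \{F = 1\} \cap \{\overline{\mu}_{k,m} = 0\} \subset (T^*S^n)^{\times} \times \C,
\end{equation*}
divided by $S^1 \times G_{k,m}$, with the reduced form characterized by pulling back to $\omega \oplus \omega_\C$ restricted to $Z$. The two orders of reduction correspond to the two ways of quotienting $Z$ in stages: first by $G_{k,m}$ and then by $S^1$, or the reverse. Since $\overline{\mu}_{k,m}$ is $S^1$-invariant and $F$ is $G_{k,m}$-invariant, each intermediate level set is invariant under the other group. Hence the second reduction makes sense on the first quotient, and both stagewise quotients agree set-theoretically with $Z/(S^1\times G_{k,m})$. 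The forms agree because in each case the reduced form $\omega_{\mathrm{red}}$ is the unique form with $p^*\omega_{\mathrm{red}} = (\omega\oplus\omega_\C)\vert_Z$, where $p$ denotes the projection from $Z$. Uniqueness holds since $p$ is a surjective submersion on the smooth part.

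The main obstacle is that $0$ is not a regular value of $\overline{\mu}_{k,m}$, so the standard reduction-in-stages theorem does not literally apply. Here I would use the explicit model from \cref{prop:Ikm_properties}. After removing the zero section, the map $I_{k,m}$ identifies $\overline{\mu}_{k,m}^{-1}(0)\setminus 0_{S^n}$ equivariantly with $\R_{>0}\times S^1\times S^k\times S^m$, up to the $\Z_2$-quotient when $k\neq 0$. Under this identification the form pulls back to $d\rho\wedge d\alpha$, and $H$ becomes $\rho$, or $2\rho$ before the rescaling $(\rho,\alpha)\mapsto(\rho/2,2\alpha)$. Thus $Z$ is identified with $\{\rho + \pi\vert w\vert^2 = 1\}\times S^k\times S^m$, modulo $\Z_2$ if $k \neq 0$. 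On this set the $G_{k,m}$-action is free on the sphere factors, with the $\Z_2$ and the rescaling handled as in \cref{def:reductionTSn}, and the $S^1$-action is the diagonal rotation of $(\alpha, w)$.

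With this model, both sides become explicit. Reducing first by $G_{k,m}$ gives $(Y\times\C)\sslash S^1$, which is the cut $\overline{Y}_{\rho\leqslant 1}$ of the cylinder $Y$, that is, an open disc. Cutting first and then reducing by $G_{k,m}$ gives the same quotient $\{\rho+\pi\vert w\vert^2=1\}/(S^1 \times G_{k,m})$. In both cases the form is determined by $d\rho\wedge d\alpha + \omega_\C$, so the identity map on $Z/(S^1\times G_{k,m})$ is the desired symplectomorphism. Finally, I would check that the factor $1/2$ for $k\neq 0$ is compatible with the cut level. This is why the statement uses $H$ instead of $\rho$: the cut is taken at $H = 2\pi$ on both sides, and $H$ descends to the same Hamiltonian on $Y$.
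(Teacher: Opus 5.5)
Your proposal is correct and is essentially the paper's argument: both spaces are the joint reduction of $(T^*S^n)^{\times}\times\C$ at $(\widetilde{H},\overline{\mu}_{k,m})=(2\pi,0)$ by the commuting $S^1\times G_{k,m}$-action, reached by reducing in stages in either order, and your use of $I_{k,m}$ to justify this at the non-regular level $\overline{\mu}_{k,m}=0$ is extra care that the paper's proof leaves implicit. One correction: $G_{k,m}$ acts transitively, not freely, on $S^k\times S^m$, with stabilizers $\SO(k)\times\SO(m)$, which is exactly why $0$ is not a regular value; your identification of the quotient with the $(\rho,\alpha,w)$-factor only needs transitivity, so the argument still goes through.
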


Here we have abused notation in a fairly natural way: the Hamiltonian~$H$ induces another Hamiltonian~$S^1$-action on the reduced space~$(T^*S^n)^{\times}\sslash_{\overline{\mu}_{k,m} = 0}G_{k,m}$, whose Hamiltonian we again denote by~$H$. It is with respect to the latter that we perform a symplectic cut to get the space~$\overline{((T^*S^n)^{\times}\sslash_{\overline{\mu}_{k,m} = 0}G_{k,m})}_{H \leqslant 2\pi}$. Similarly, in the second expression~$\overline{\mu}_{k,m}$ denotes the moment map induced on the cut space.

\smallskip
\proofof{\cref{prop:redcut}} Recall from \cite{Ler95} that any symplectic cut space~$\overline{X}_{G \leqslant 1}$ can itself be viewed as the symplectic reduced space~$(X \times \C)\sslash_{\widetilde{G} = 1}$, where~$\widetilde{G}(x,w) = G(x) + \pi \vert w \vert^2$. We find
\begin{equation}
    (\overline{(T^*S^n)^{\times}}_{H \leqslant 2\pi})\sslash_{\overline{\mu}_{k,m} = 0}G_{k,m}
    =
    \big(((T^*S^n)^{\times} \times \C)\sslash_{\widetilde{H} = 2\pi}S^1\big) \sslash_{\overline{\mu}_{k,m}} G_{k,m},
\end{equation}
and 
\begin{equation}
    \overline{((T^*S^n)^{\times}\sslash_{\overline{\mu}_{k,m} = 0}G_{k,m})}_{H \leqslant 2\pi}
    =
    \big(((T^*S^n)^{\times} \sslash_{\overline{\mu}_{k,m}} G_{k,m})\times \C\big)\sslash_{\widetilde{H} = 2\pi}\,S^1,
\end{equation}
Since~$G_{k,m}$ acts trivially on the copy of~$\C$, the latter space is symplectomorphic to~$(((T^*S^n)^{\times} \times \C) \sslash_{\overline{\mu}_{k,m}} G_{k,m})\sslash_{\widetilde{H} = 2\pi}S^1$. Since the~$S^1$- and~$G_{k,m}$-actions on~$(T^*S^n)^{\times} \times \C$ commute, we can use reduction by stages to deduce that both spaces are symplectomorphic to~$(T^*S^n)^{\times} \times \C \sslash_{(\widetilde{H},\overline{\mu}_{k,m}) = (2\pi,0)}$, which proves the claim.
\proofend

This allows us to identify the reduced space of the quadric~$(Q^n)^{\times}\sslash_{\overline{\mu}_{k,m} = 0}G_{k,m}$.

\begin{corollary}
\label{cor:redspace}
    If~$k=0$, then the reduced space is an open disk of area~$2\pi$ equipped with the standard symplectic form. If~$k \neq 0$, then the reduced space is an open orbifold-disk of area~$\pi$ with a~$\Z_2$-orbifold point at the origin and equipped with the standard symplectic form on the smooth part.
\end{corollary}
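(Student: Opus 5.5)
We derive the corollary from \cref{prop:redcut} together with the explicit description of $Y$ in \cref{def:reductionTSn}. The plan has three steps: cut $Y$ at $H = 2\pi$, identify the result with the reduced space of $(Q^n)^{\times}$, and then add back the lower stratum.

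First we identify the Hamiltonian $H$ induced on $Y = (\R_{>0}\times S^1, d\rho\wedge d\alpha)$. By construction of $I_{k,m}$ (property (2) of \cref{prop:Ikm_properties} and point (3) of \cref{prop:OU_as_lifts}), a point $(\rho,\alpha)$ with $k=0$ lies on $\pkm(\rho)$, where $H = 2\pi|p| = \rho$ in the normalization of the moment map used there. So $H(\rho,\alpha)=\rho$, generating the standard $1$-periodic rotation in $\alpha$. For $k\neq 0$ we used the rescaling $(\rho,\alpha)\mapsto(\rho/2,2\alpha)$ from $\R_{>0}\times\RP^1$. Under it, the induced Hamiltonian of the geodesic action becomes $2\rho$. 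We must check carefully that the geodesic $S^1$-action descends to a $1$-periodic action for $H/2$ on the quotient. This factor-of-two bookkeeping is the main point to get right, and I would verify it against \cref{rk:globalstab}: the element $(\tfrac12,-\id,-\id)$ explains why half the geodesic circle already closes up after quotienting by $G_{k,m}$.

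Next we perform the cut. Cutting $(\R_{>0}\times S^1, d\rho\wedge d\alpha)$ at $\rho \le c$ with respect to the rotation $\rho$ collapses the boundary circle $\{\rho=c\}$ to a point. It yields the punctured open disk $\{0<\rho<c\}$ in action-angle coordinates, compactified along the outer boundary only, i.e.\ $\{0<\rho\le c\}$ with the circle $\rho=c$ collapsed. This is a closed disk minus its center, of area $c$. For $k=0$ we have $c=2\pi$. For $k\ne0$ the cut level $H=2\pi$ corresponds to $\rho = \pi$, giving area $\pi$. By \cref{prop:redcut}, this equals the reduced space of the cut space $\overline{(T^*S^n)^\times}_{H\le 2\pi}$, which is $(Q^n)^\times$ by \cref{prop:cut}. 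More precisely, it is the open set $\psi(D^*S^n\setminus 0_{S^n})$ together with the divisor $\{z_0=0\}$, with $\Sigma$ removed.

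Finally we add back the distinguished point coming from $\Sigma$, as in \cref{rk:singreductionTSn}. The point at $\rho\to0$ fills the puncture, and the ``open disk'' of the statement is to be read as the disk with center corresponding to $\Sigma$. To see the local model there, we pass to the double cover $\R_{>0}\times S^1\times S^k\times S^m$ given by $I_{k,m}$. Before the $\Z_2$-quotient the angular variable has period $1$ in the $\rho$-normalization. For $k\ne0$, the antipodal identification halves the angle, so near the origin the quotient is $\C/\Z_2$, i.e.\ a cone of angle $\pi$. Equivalently, the rescaled coordinates $(\rho/2,2\alpha)$ form a smooth disk chart away from $0$, but the puncture is the image of a $\Z_2$ quotient of a smooth neighbourhood in the double cover, which is an orbifold point. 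On the smooth part the symplectic form is $d\rho\wedge d\alpha$, i.e.\ standard. The total area is computed from the cut level as above.
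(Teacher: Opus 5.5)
Your identification of the induced Hamiltonian on $Y$ ($\rho$ for $k=0$, $2\rho$ for $k\neq 0$) and of the cut level, and hence of the areas $2\pi$ and $\pi$, agrees with the paper. The geometric picture you then draw is wrong, however, and it puts the orbifold point in the wrong place.

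First, $\{0<\rho\le c\}$ with the circle $\{\rho=c\}$ collapsed is not a closed disk minus its center. It is an open disk whose \emph{center} is the collapsed circle, i.e.\ the image of the divisor $\{z_0=0\}=\{H=2\pi\}$. The missing end $\rho\to 0$ is the one corresponding to $\Sigma$.

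Second, the corollary concerns the reduced space of $(Q^n)^{\times}=Q^n\setminus\Sigma$, as you yourself note in your second step, so no point should be added for $\Sigma$. Adding it produces the stratified spheres $\widehat{Z}$, $\widehat{Z'}$, in which the point coming from $\Sigma$ sits opposite the origin, not at it. Reading the statement with the center at $\Sigma$ also contradicts how the corollary is used later. For instance, \cref{prop:OU_quadric_props} needs $S^1(r)\subset Z'$ to bound a smooth disk of area $2\pi-r$ around the cut point.

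Third, your local argument for the orbifold point fails. The deck involution of $I_{k,m}$ is $(\rho,\alpha,x,y)\mapsto(\rho,\alpha+\tfrac12,-x,-y)$. On $(\rho,\alpha)$ this is a free half-rotation, so it creates no singular point, and $Y$ is smooth. The $\Z_2$-point comes from the cut itself, and this is exactly where the fact from your first step must be used. The circle action generated by $H$ on $Y$ is the effective rotation run at double speed, so every point of $Y$ has $\Z_2$-stabilizer. Consequently, in $(Y\times\C)\sslash_{H+\pi|w|^2=2\pi}S^1$ the points with $w=0$, i.e.\ the cut locus $\{H=2\pi\}$, have isotropy $\Z_2$, while the points with $w\neq 0$ have trivial isotropy because $S^1$ acts effectively on $\C$. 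Hence the collapsed circle, which is the origin of the disk, is the $\Z_2$-orbifold point, and the complement is smooth with the standard form.

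This is the paper's argument. With the topology corrected and the orbifold point moved to the cut locus, your first two steps already give the full proof.
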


\proof
    By \cref{prop:cut} and \cref{prop:redcut}, we obtain 
    \begin{equation}    
        (Q^n)^{\times}\sslash_{\overline{\mu}_{k,m} = 0}G_{k,m}
        \cong \overline{((T^*S^n)^{\times}\sslash_{\overline{\mu}_{k,m} = 0}G_{k,m})}_{H \leqslant 2\pi}.
    \end{equation}
    The reduced space is~$Y = (T^*S^n)^{\times}\sslash_{\overline{\mu}_{k,m} = 0}G_{k,m}$, see \cref{def:reductionTSn}. For~$k=0$, the Hamiltonian~$H = 2\pi \vert p \vert$ descends to the Hamiltonian~$(\rho,\alpha) \mapsto \rho$ on~$Y$. The cut space~$\overline{Y}_{\rho = 2\pi}$ is given by the open disk of area~$2\pi$ as claimed. For~$k \neq 0$, however, the action induced by~$H = 2\pi \vert p \vert$ on the reduced space has~$\Z_2$-stabilizer (it is generated by~$(\rho,\alpha) \mapsto 2\rho$ in our choice of coordinates in~$Y$), see also the construction preceding \cref{def:reductionTSn}. This means that the cut locus is a~$\Z_2$-orbifold point, as claimed.
\proofend

As in \cref{rk:singreductionTSn}, we will often think of the reduced spaces of~$T^*S^n$ and~$Q^n$ by the~$G_{k,m}$-actions without removing the zero-section~$0_{S^n} \subset T^*S^n$ or its image~$\Sigma \subset Q^n$, respectively. In that case, we add a \emph{distinguished point} to the reduced space. By abuse of terminology, we still call the so-obtained space \emph{reduced space}. Though we only use the properties of symplectic reduction on the smooth part of the so-obtained reduced spaces, this is often helpful to visualize the situation. Throughout the rest of the paper, we will use the following notation.

\begin{definition}
    \label{def:YZZ}
    We denote the reduced spaces of~$T^*S^n$ by
    \begin{equation*}
        \widehat{Y} = T^*S^n\sslash_{\overline{\mu}_{k,m} = 0}G_{k,m}, 
        \quad
        Y = (T^*S^n)^{\times}\sslash_{\overline{\mu}_{k,m} = 0}G_{k,m}
    \end{equation*}
    and the reduced spaces of~$Q^n$ when~$k \neq 0$ by
    \begin{equation*}
        \widehat{Z} = Q^n\sslash_{\overline{\mu}_{k,m} = 0}G_{k,m}, 
        \quad
        Z = (Q^n)^{\times} \setminus \{H = 2\pi\} \sslash_{\overline{\mu}_{k,m} = 0}G_{k,m}.
    \end{equation*}
    In the case~$k=0$, we denote the reduced spaces of~$Q^n$ by 
    \begin{equation*}
        \widehat{Z'} = Q^n\sslash_{\overline{\mu}_{0,m} = 0}G_{0,m}, 
        \quad
        Z' = (Q^n)^{\times}\sslash_{\overline{\mu}_{0,m} = 0}G_{0,m}.
    \end{equation*}
\end{definition}

The spaces~$Y,Z,Z'$ are the respective smooth parts of the reduced spaces~$\widehat{Y}, \widehat{Z}, \widehat{Z'}$ which each contain one distinguished point. By the discussion surrounding \cref{def:reductionTSn} and by \cref{cor:redspace} we find that 
\begin{enumerate}
\item~$\widehat{Y}$ is homeomorphic to the plane and contains one distinguished point (corresponding to~$0_{S^n} \subset T^*S^n$) whose complement~$Y$ is symplectomorphic to the half-cylinder~$\R_{>0} \times S^1 = \{(\rho,\alpha)\}$ equipped with the standard form~$d\rho \wedge d\alpha$;
\item~$\widehat{Z}$ is homeomorphic to the two-sphere and contains one distinguished point (corresponding to~$\Sigma \subset Q^n$) and a~$\Z_2$-orbifold point. The complement~$Z$ of these two points is symplectomorphic to the finite cylinder~$((0,\pi) \times S^1, d\rho \wedge d\alpha)$ of area~$\pi$;
\item~$\widehat{Z'}$ is homeomorphic to the two-sphere and contains one distinguished point (corresponding to~$\Sigma \subset Q^n$) whose complement~$Z'$ is symplectomorphic to the standard disk~$D = \{\pi \vert z \vert^2 < 2\pi\}$ of area~$2\pi$.
\end{enumerate}

\begin{proposition}
    \label{prop:OU_quadric_props}
    The Oakley--Usher Lagrangians in~$Q^n$ have the following properties. The second and third point were proved in Propositions 6.3 and 6.5 in \cite{OakUsh16}. For the third point, we present an alternative proof.
    \begin{enumerate}
        \item The Oakley--Usher Lagrangians can be viewed as the following lifts:
            \begin{equation*}
                \label{eq:OU_as_lifts}
                    \pkm(r)
                    =
                    \begin{cases}
                        \pi^{-1}(S^1(r)) & k = 0; \\
                        \pi^{-1}(S^1(r/2)) & k \neq 0,
                    \end{cases}
            \end{equation*}
            where~$S^1(r)$ is the circle at height~$\rho = r$ in the respective reduced spaces~$Z,Z'$ we have discussed above;
        \item The OU-Lagrangian~$\cp_{k,m}(r)\subset Q^n$ is monotone if and only if~$r= r_* = \frac{2\pi(n-1)}{n}$;
        \item Any Oakley--Usher Lagrangian~$\cp_{0,m}(r) \subset Q^n$ with~$r > \pi$ is displaceable. In particular, for all~$m \geqslant 2$, the monotone Oakley--Usher Lagrangian~$\cp_{0,m}(r_*) \subset Q^n$ is displaceable.
    \end{enumerate}
\end{proposition}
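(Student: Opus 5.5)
Point (1) follows by transporting \cref{prop:OU_as_lifts} through the symplectic embedding $\psi\colon D^*S^n\hookrightarrow Q^n$ of \eqref{eq:psi_emb}. By \cref{prop:redcut}, cutting and reducing commute, and $\psi$ is equivariant for $S^1\times G_{k,m}$ (it is the restriction of the cut-space identification $\overline{\psi}$). Hence the quotient map on $(Q^n)^\times$ restricts on $\psi(D^*S^n\setminus 0_{S^n})$ to the quotient map of $Y$, truncated at $\rho<2\pi$ (or $\rho<\pi$ after the rescaling $(\rho,\alpha)\mapsto(\rho/2,2\alpha)$ when $k\neq 0$). Since $\pkm(r)\subset D^*S^n$ for $r<2\pi$, the description as $\pi^{-1}(S^1(r))$, respectively $\pi^{-1}(S^1(r/2))$, carries over verbatim.

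For point (2) I use case (3) of \S\ref{algebra section}. Here $Q^n$ is simply connected for $n\geq 2$. For $n\geq 3$ we have $\pi_2(Q^n)\cong\Z$, with $c_1=n$ on the line class $\ell$ and $\omega(\ell)=\pi$ for the normalization in which $\Sigma$ has area-zero complement of area $2\pi$. The case $n=2$ gives two line classes and I treat it separately. The monotonicity constant is then forced to be $\kappa/2=\pi/(2n)$. It suffices to compute $\overline{m}_L$ and $\overline{A}_L$ on a generating set of $\pi_1(\pkm(r))$.

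The fibre directions come from $S^k\times S^m$. These are simply connected unless $k$ or $m$ equals $1$, in which case I use the fibre disks inside $D^*S^n$, which are known from the $T^*S^n$ computation. The essential class is the \emph{section class} of $\pi$ over $S^1(r)$. The inner disk is bounded by $S^1(r)$ and contains the distinguished point, that is, it hits $\Sigma$. The outer disk runs to the cut locus $\{H=2\pi\}$, where $\pi^{-1}$ of the boundary collapses onto the divisor $Q^{n-1}_0$.

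I will compute Maslov indices via the outer disk, lifting it to a smooth disk $u$ in $Q^n$ that intersects $Q^{n-1}_0$ once (twice, as an orbifold disk, for $k\neq 0$). Its area equals the area of the annulus $\{\rho\in(r,2\pi)\}$. Its Maslov index is $2\,u\cdot Q_0^{n-1}$ plus the Maslov index of the loop relative to the holomorphic volume form on the complement $D^*S^n$. That second term vanishes for the geodesic $S^1$-orbit directions, because $\pkm$ is special Lagrangian-like there, or more concretely because in $T^*S^n$ all OU-Lagrangians are exact. Since $[Q_0^{n-1}]$ is Poincaré dual to $c_1/n\cdot$(hyperplane class), the relevant divisor is anticanonical-times-$(n-1)/n$. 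So I expect area $2\pi-r$ and Maslov index $2(n-1)$, up to the $k\neq 0$ rescaling, which cancels consistently.

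Exactness in $T^*S^n$ makes every disk class inside $D^*S^n$ have area equal to $r$ times its Maslov index divided by that of the inner class. Matching the two ratios gives $r/(\text{inner Maslov})=(2\pi-r)/(2(n-1))$. With the inner Maslov index equal to $2$, this yields $r=2\pi(n-1)/n$, and uniqueness gives the \emph{only if}. The main obstacle is pinning down the inner Maslov index and the $T^*S^n$ monotonicity relation cleanly. For that I would simply invoke \cite[Proposition 5.1]{OakUsh16}, which states that $\pkm(r)\subset T^*S^n$ is monotone, together with a single explicit disk computation.

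Point (3) is a probe argument in $\widehat{Z'}$. For $k=0$, $Z'$ is a disk of area $2\pi$ whose centre is the distinguished point. The circle $S^1(r)$ bounds, on the side away from the centre, an annulus of area $2\pi-r<\pi$ extending to the boundary point, where the circle bundle collapses onto $Q^{n-1}_0$. Viewing $\widehat{Z'}$ as a sphere of area $2\pi$ with $S^1(r)$ enclosing the distinguished point in a region of area $r>\pi$, I pick a compactly supported Hamiltonian on $Z'$, supported away from the distinguished point, that moves $S^1(r)$ off itself. This is possible exactly because the complementary disk around the cut point has area less than $\pi$, less than half, so the circle can be displaced within $Z'$, with the cut point being smooth in the sphere. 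I then lift that Hamiltonian $G_{0,m}$-invariantly by pulling it back along $\pi$ and extending by cut-off near $\Sigma$. Since the isotopy avoids the distinguished point, the lift is well defined, and it displaces $\pi^{-1}(S^1(r))$. Finally, $r_*=2\pi(n-1)/n>\pi$ holds iff $n>2$, i.e.\ $m\geq 2$.
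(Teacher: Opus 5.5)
Your arguments for (1) and (3) follow the paper. For (1) you transport \cref{prop:OU_as_lifts} through $\psi$ using \cref{prop:redcut}. For (3) you displace $S^1(r)$ inside $Z'$, where it bounds a disk of area $2\pi-r<\pi$ around the smooth cut point, and lift the Hamiltonian. One wording slip: the distinguished point is not in $Z'$. $Z'$ is the disk centred at the cut point, which is the picture your displacement argument actually uses.

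The gap is in (2). The paper simply cites \cite[Proposition 6.3]{OakUsh16} there, and your direct computation assigns the wrong Maslov indices.
\begin{itemize}
\item The inner disk is the lift of $\{\rho\leqslant r\}$ and passes through $\Sigma$, the singular stratum. So it does not have the Maslov index $2$ that \cref{lem:maslov_reduction} gives for disks in the smooth part; that value belongs to the Chekanov case.
\item Its Maslov index is the minimal Maslov number $2(n-1)$ of $\cp_{0,m}\subset T^*S^n$ from \cite[Proposition 5.1]{OakUsh16}, or $n-1$ on the half-loop when $k\neq 0$. This is exactly the number the paper uses to separate $\pkm$ from $\pkmch$ in Theorem~\ref{thm:compopen}.
\item Your justification for the outer disk fails. $\pkm(r)$ is not exact in $T^*S^n$, since the inner disk has area $r>0$. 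So the boundary term relative to the holomorphic volume form is $-2(n-1)$, not $0$.
\item Since $c_1(Q^n)=n\,\mathrm{PD}[Q^{n-1}_0]$, that volume form has a pole of order $n$. Each intersection with $Q^{n-1}_0$ therefore contributes $2n$, not $2$.
\end{itemize}
Hence the outer disk has area $2\pi-r$ and Maslov index $2n-2(n-1)=2$. Inner and outer disks glue to the line class, with area $2\pi$ and Maslov index $2n$. This also shows $\omega(\ell)=2\pi$ rather than $\pi$, so the target constant is $\pi/n$, not $\pi/(2n)$.

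Your own equation $r/2=(2\pi-r)/(2(n-1))$ actually solves to $r=2\pi/n$. You reach $r_*=2\pi(n-1)/n$ only through an algebra slip that undoes the swapped indices; the two values agree only for $n=2$, a case you announce but never treat.

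To repair (2), argue as the paper does for $\pkmch$. For $n\geqslant 3$ the exact sequence $0\to\pi_2(Q^n)\to\pi_2(Q^n,\pkm(r))\to\pi_1(\pkm(r))\to 0$ shows that $\pi_2(Q^n,\pkm(r))$ is generated by $\ell$ together with the image of $\pi_2(D^*S^n,\pkm(r))\cong\pi_1(\pkm(r))$. On that image, $\pkm(r)$ is monotone with constant $r/(2(n-1))$. This comes from area $r$ and Maslov $2(n-1)$ when $k=0$, and from area $r/2$ and Maslov $n-1$ on the half-loop when $k\neq 0$. The line class has ratio $\omega(\ell)/(2c_1(\ell))=\pi/n$. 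So $\pkm(r)$ is monotone in $Q^n$ if and only if $r/(2(n-1))=\pi/n$, that is, $r=2\pi(n-1)/n$.
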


\proof For point \emph{(1)}, use point \emph{(3)} in \cref{prop:OU_as_lifts} to consider 
\begin{equation*}
    \cp^{Q^n}_{k,m}(r)
    = \psi(\cp^{T^*S^n}_{k,m}(r))
    = \psi(\pi^{-1}(S^1(s))),
\end{equation*}
where~$s = r$ if~$k=0$ and~$s = r/2$ if~$k \neq 0$. Recall from \cref{prop:redcut} that taking the symplectic cut at~$H = 2\pi$ on~$T^*S^n$ commutes with symplectic reduction on~$\overline{\mu}_{k,m}^{-1}(0)$. Therefore, we find that~$\cp^{Q^n}_{k,m}(S^1(s))$, where~$S^1(s) \subset Z,Z'$ is the image of~$S^1(s) \subset Y$ after the symplectic cut in the quotient, as claimed. Point \emph{(2)} is \cite[Proposition 6.3]{OakUsh16}. Point \emph{(3)} is a consequence of point \emph{(1)} together with the fact that~$Z'$ is symplectomorphic to a disk of area~$2\pi$. Indeed, the circle~$S^1(r) \subset Z'$ bounds a smooth disk of area~$2\pi - r$ (since the symplectic cut from~$Y$ to~$Z'$ happens at~$\rho = 2\pi$) and any Hamiltonian displacing the circle~$S^1(r)$ lifts to a Hamiltonian displacing~$\cp_{k,m}(r) \subset Q^n$. For area reasons, such a Hamiltonian exists whenever~$2\pi - r < \pi$. \proofend

\subsection{Chekanov Oakley--Usher Lagrangians} Recall from \cref{prop:OU_as_lifts} and \cref{prop:OU_quadric_props} that the Oakley--Usher Lagrangians in~$T^*S^n$ and~$Q^n$ can be defined as lifts of curves in certain reduced spaces. Often the smooth part of these reduced spaces is symplectomorphic to a cylinder. Thus besides lifting non-contractible curves (yielding Oakley--Usher Lagrangians), we can lift contractible curves, too. As discussed in \cref{prop:OU_as_lifts}, such lifts yield Lagrangian submanifolds in~$T^*S^n$, and the case of the quadric follows by the same token. Again, we start by discussing the case of~$T^*S^n$.

\begin{definition}
    For given~$k,m$, and~$a > 0$, let~$\Gamma(a) \subset Y$ be a closed embedded curve bounding a smooth disk of area~$a$ if~$k = 0$ and of area~$a/2$ if~$k \neq 0$. Define the \emph{Chekanov OU-Lagrangian in~$T^*S^n$} to be~$\cp^{\Ch}_{k,m}(a) = \pi^{-1}(\Gamma(a))\subset T^*S^n$. 
\end{definition}

In the special case~$n=2$, recall that the conventional Oakley--Usher Lagrangian~$\cp_{0,1}(r) \subset T^*S^2$ is simply the Polterovich torus~$T_{\rm Pol}(r)$. The corresponding Chekanov Oakley--Usher Lagrangian, on the other hand, is Hamiltonian isotopic to the torus obtained from embedding a Chekanov torus~$T_{\rm Ch} \subset \C^2$ into~$T^*S^2$ by a symplectic ball embedding. This is a monotone, displaceable Lagrangian. We show that this is true for all~$k,m$.

\begin{proposition}
\label{prop:COUproperties}
The Chekanov Oakley--Usher Lagrangian~$\cp^{\Ch}_{k,m}(a)$ is 
\begin{enumerate}
    \item diffeomorphic to~$S^1 \times S^k \times S^m$ for~$k\neq 0$ and to~$S^1 \times S^m$ for~$k = 0$;
    \item Lagrangian;
    \item well-defined up to Hamiltonian isotopy;
    \item displaceable;
    \item monotone with monotonicity constant~$\lambda = \frac{a}{4}$ for~$k\neq 0$ and~$\lambda = \frac{a}{2}$ for~$k= 0$.
\end{enumerate} 
\end{proposition}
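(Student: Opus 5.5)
We treat the five properties in order, using the reduced space $Y$ and the quotient map $\pi$ from \cref{def:reductionTSn} together with \cref{prop:OU_as_lifts}.

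\emph{Diffeomorphism type.} By \cref{prop:OU_as_lifts}, $\pi$ is a fibre bundle over $Y$. For $k=0$ it is the trivial $S^m$-bundle. For $k\neq 0$ it is an $S^k\times S^m$-bundle whose monodromy around the generator of $\pi_1(Y)$ is the antipodal map. Since $\Gamma(a)$ bounds a disk in $Y$, it is contractible in $Y$, so the restricted bundle has trivial monodromy. Hence $\pi^{-1}(\Gamma(a))\cong S^1\times S^k\times S^m$, respectively $S^1\times S^m$.

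\emph{Lagrangian.} This is point \emph{(4)} of \cref{prop:OU_as_lifts}.

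\emph{Well-definedness.} Any two embedded curves in the open cylinder $Y$ bounding disks of the same area are isotopic through embedded curves bounding disks of that area. This is a standard two-dimensional fact: first isotope, then use Moser to fix the enclosed area. Such an isotopy is generated by a compactly supported Hamiltonian $h_t$ on $Y$. The pullback $h_t\circ\pi$, cut off suitably near the zero section, is a $G_{k,m}$-invariant function on $\overline{\mu}_{k,m}^{-1}(0)$. To get a Hamiltonian on all of $T^*S^n$, extend it $G_{k,m}$-invariantly to a neighbourhood of the level set, away from the zero section. Invariant functions generate flows that preserve $\overline{\mu}_{k,m}^{-1}(0)$ and cover the reduced flow, so the lifted flow carries $\pi^{-1}(\Gamma)$ to $\pi^{-1}(\Gamma')$.

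\emph{Displaceability.} Because $Y$ is a half-cylinder of infinite area, we can choose $\Gamma(a)$ inside a region $\{\rho>R\}$ and displace it there by a compactly supported Hamiltonian isotopy on $Y$. By the same lifting argument as above, this gives a Hamiltonian isotopy of $T^*S^n$ displacing $\pi^{-1}(\Gamma(a))$: lifts of disjoint curves are disjoint.

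\emph{Monotonicity.} For $n\ge 3$ we have $\pi_1(T^*S^n)=\pi_2(T^*S^n)=0$, so by case (1) of the Maslov discussion it suffices to compute area and Maslov index on $\pi_1(L)$. (The case $n=2$ is the Chekanov torus and is handled separately.)
\begin{itemize}
\item Loops in the fibre $S^k\times S^m$ are trivial in $\pi_1$ whenever $k,m\ge 2$. When $k$ or $m$ equals $1$, we need the circle fibres: such a loop bounds a disk of zero area inside $\overline{\mu}_{k,m}^{-1}(0)$, since the form there is $\pi^*(d\rho\wedge d\alpha)$. That disk is obtained by letting the fibre loop shrink toward the zero section in $\widehat{Y}$, or by noting it is contained in a fibre. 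One then checks that its Maslov index is $0$ by comparing with the analogous disk for $\pkm$, whose monotonicity is known from \cite[Proposition 5.1]{OakUsh16}.
\item For the loop $\gamma$ lifting $\Gamma(a)$ via a section, take the disk $u$ obtained by lifting the disk $D$ bounded by $\Gamma(a)$ through a local section of $\pi$ over $D$. Its area is $a$ for $k=0$. For $k\ne 0$ the area is $a/2$, because $D$ has area $a/2$ in $Y$ by the definition of $\Gamma(a)$.
\item The Maslov index of $u$ is $2$. The splitting $TM|_u = T(\text{fibre directions})\oplus T(\text{orbit directions})\oplus\C$ is trivial over the disk, with the fibre part constant, so only the planar curve $\Gamma$ contributes; it is embedded and bounds $D$, giving Maslov index $2$.
\end{itemize}
This yields $\lambda=a/2$ for $k=0$ and $\lambda=a/4$ for $k\ne0$.

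I expect the Maslov computation to be the main obstacle. Making rigorous that the Lagrangian splitting along $\partial u$ has contractible fibre part requires care, since the $G_{k,m}$-orbit directions and their symplectic complements need to be trivialised consistently over $D$. A cleaner alternative is to use a Lagrangian isotopy through lifts of curves, deforming $\Gamma$ into a neighbourhood of a circle, to relate Maslov indices to those of $\pkm$.
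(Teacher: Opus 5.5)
Your proposal is correct. For (1)--(4), and for the overall shape of (5), it follows the paper's argument. The bundle over the contractible curve is trivial, and Hamiltonian isotopies lift from $Y$; you even supply the $G_{k,m}$-invariant extension that the paper leaves implicit. Then $\pi_2(T^*S^n,\pkmch(a))\cong\pi_1(\pkmch(a))$ splits into fibre classes plus one lifted disk of area $a/2$ (resp.\ $a$) and Maslov index $2$.

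You genuinely differ in the treatment of the fibre classes. The paper takes $[u]\in\ker(\pi_*\circ\partial)$ and projects it to $(Y,\Gamma(a))$. It uses that $\pi_2(Y,\Gamma(a))\to\pi_1(\Gamma(a))$ is an isomorphism, and then reads off vanishing area and Maslov index from $\omega=\pi^*(d\rho\wedge d\alpha)$ and \cref{lem:maslov_reduction}. You instead use disks running into the zero section, and get Maslov index $0$ by comparison with the monotone $\pkm$.

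This buys something real. For $k=1$ or $m=1$ the fibre circles are essential in $\pi_1(\overline{\mu}_{k,m}^{-1}(0)\setminus 0_{S^n})$, so they bound no disk inside the smooth part of the level set. The paper's projection $\pi\circ u$, and its use of \cref{lem:maslov_reduction}, tacitly assume such a disk. Your argument avoids this, at the cost of importing the monotonicity of $\pkm$ from \cite{OakUsh16}.

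Three points need tightening.

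\emph{Area of fibre loops.} Drop ``or by noting it is contained in a fibre'': a fibre circle is essential in $S^k\times S^m$ and bounds no disk there. The vanishing of area is immediate anyway. Since $\omega=d\lambda$ with $\lambda$ the Liouville form, and for the cotangent-lifted action $\lambda(X_\xi)=\pm\langle\overline{\mu}_{k,m},\xi\rangle$ vanishes on the level set, $\lambda$ integrates to zero along any loop in a $G_{k,m}$-orbit there. So any disk bounding a fibre loop has area $0$.

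\emph{Maslov comparison.} Use (3) to route $\Gamma(a)$ through a point $z\in S^1(r/2)$. A disk bounding a fibre loop in $\pi^{-1}(z)$ then has boundary on both $\pkmch(a)$ and $\pkm(r)$. Along that boundary their tangent planes are $(d\pi)^{-1}(\ell)$, with $\ell=T_z\Gamma(a)$ and $\ell=T_zS^1(r/2)$ respectively. Rotating $\ell$ inside $T_zY$ is a homotopy through loops of Lagrangian planes. Hence the two Maslov indices agree, and the one for $\pkm(r)$ is $0$ by monotonicity.

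\emph{The generator.} Your worry about the trivialisation disappears once you note that the lifted disk lies in $\overline{\mu}_{k,m}^{-1}(0)\setminus 0_{S^n}$. By \cref{prop:Ikm_properties}(5), this set is coisotropic with characteristic leaves the $G_{k,m}$-orbits. The orbit tangent bundle is therefore a totally real subbundle of its complexification over the \emph{whole} disk, so it contributes $0$. The complement is identified with $TY$, where $T\Gamma(a)$ contributes $2$. This is exactly \cref{lem:maslov_reduction}, which you can simply quote here, since this disk stays in the smooth part.
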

\proof
As~$\Gamma(a)$ is contractible, the fiber bundle~$F_{k,m}\xhookrightarrow{} \overline{\mu}^{-1}_{k,m}(0)\setminus 0_{S^n} \to S^1\times \R_{>0}$ restricted to~$\Gamma(a)$ is a trivial bundle, or~$\cp^{\Ch}_{k,m}(a) = \pi^{-1}(\Gamma(a)) \cong \Gamma(a) \times F_{k,m}$, where~$F_{k,m}$ denotes the fibre of the corresponding fibre bundle. As~$\Gamma(a) \cong S^1$, this proves point \emph{(1)}. Point \emph{(2)} follows from \emph{(4)} in \cref{prop:OU_as_lifts}. To prove well-definedness, we need to prove that, up to Hamiltonian isotopy,~$\cp^{\Ch}_{k,m}(a)$ does not depend on the choice of the curve~$\Gamma(a)$. Since any two such curves are Hamiltonian isotopic, a lift of the Hamiltonian isotopy to~$T^*S^n$ proves~\emph{(3)}. Similarly, any such curve~$\Gamma(a)$ is displaceable and the lift of any displacing Hamiltonian isotopy displaces~$\cp^{\Ch}_{k,m}(a)$, proving \emph{(4)}.

Let us now turn to the proof of monotonicity. Since the case~$(k,m) = (0,1)$ corresponds to the known Chekanov torus in~$T^*S^2$, we restrict our attention to the case~$n = k + m + 1 \geqslant 3$. Thus, we have~$\pi_1(T^*S^n) = 0 = \pi_2(T^*S^n)$ and we are in the first case of the enumeration at the end of \S\ref{algebra section}: the Maslov class factors through a homomorphism~$\overline{m}_{\cp^{\Ch}_{k,m}(a)}: \pi_1(\cp^{\Ch}_{k,m}(a)) \to \Z$ such that~$m_{\cp^{\Ch}_{k,m}(a)}=\overline{m}_{\cp^{\Ch}_{k,m}(a)} \circ \partial$.
Note that~$\pi_1(\cp^{\Ch}_{k,m}(a)) \cong \pi_1(S^1) \oplus \pi_1(F_{k,m})$, as opposed to the case of~$\cp_{k,m}$ treated in \cite[Proposition 5.1]{OakUsh16}, where~$\pi_1(S^1) \oplus \pi_1(F_{k,m})$ embeds as an index-two subgroup into~$\pi_1(\pkm(r))$. 
Here we prove \emph{(5)} for the general case~$k\neq 0$. The proof for the case~$k=0$ works similarly; the difference in the monotonicity constant comes from the discrepancy of areas bounded by circles which lift to Oakley--Usher Lagrangians, see point \emph{(3)} in \cref{prop:OU_as_lifts}. 
We consider the composition 
\[
\pi_* \circ \pp \colon \pi_2(T^*S^n, \cp^{\Ch}_{k,m}(a)) \rightarrow \pi_1(\Gamma(a))
\]
with the projection of the symplectic reduction~$\pi$. The kernel of this map is~$\ker (\pi_* \circ \pp) = \pi_1(F_{k,m})$ which is isomorphic to~$\Z^2$ if~$k=m=1$, to~$\Z$ if either~$k=1$, or~$m=1$, or to the trivial group if~$k,m > 1$. 

If~$[u] \in \ker(\pi_* \circ \pp)$, the boundary loop~$\partial [u]$ projects to the trivial element of~$\pi_1(\Gamma(a))$. Equivalently,~$(\pi \circ u) |_{\partial D^2} \subset \Gamma(a)$ is null-homotopic in~$\Gamma(a)$. But the long exact sequence of the pair~$(Y, \Gamma(a))$ implies that the boundary operator~$\partial': \pi_2(Y,\Gamma(a)) \to \pi_1(\Gamma(a)) \cong \mathbb{Z}$ is an isomorphism. Here~$Y \cong S^1 \times \R_{>0}$ denotes the reduced space. Therefore~$[\pi \circ u] = 0 \in \pi_2(Y,\Gamma(a))$, and~$\pi \circ u$ is homotopic through maps of pairs to a map with image contained in~$\Gamma(a)$. Moreover, since the boundary loop is null-homotopic in~$\Gamma(a)$, this map is homotopic (as a map of pairs) to a constant map. It follows that 
\[
\int_{D^2} u^*\omega = \int_{D^2} (\pi \circ u)^*(d\rho \wedge d\alpha) = 0.
\]
\\
Similarly, the Maslov class is invariant under homotopy, and it vanishes for a constant loop, so $m_{\Gamma(a)} ([\pi \circ u]) = 0$
By the \cref{lem:maslov_reduction} below, it follows that $m([u]) = 0$.
Since~$\pi_* \circ \pp$ descends to an isomorphism~$\pi_2(T^*S^n, \cp^{\Ch}_{k,m}(a))/\ker(\pi_* \circ \pp) \rightarrow \pi_1(\Gamma(a)) \cong \Z$, monotonicity follows. To find the monotonicity constant, we need to elaborate. 

Let~$\delta \colon (D^2,\pp D^2) \rightarrow (Y,\Gamma(a))$ be the disk such that~$\pp [\delta] \in \pi_1(\Gamma(a)) \cong \Z$ equals the preferred generator. Since~$D^2$ is contractible, the restriction of~$ \pi \colon \overline{\mu}^{-1}_{k,m}(0) \setminus \{0\} \rightarrow Y$ to~$\im \delta$ is a trivial~$F_{k,m}$-bundle. Therefore,~$\delta$ lifts to a map~$\widehat{\delta} \colon (D^2,\pp D^2) \rightarrow (\overline{\mu}^{-1}_{k,m}(0) \setminus 0_{S^n},\cp^{\Ch}_{k,m}(a))$ (the fact that its image is disjoint from the zero-section follows from the fact that the image of $\delta$ lies in the smooth part $Y$ of the reduced space) satisfying
\begin{itemize}
    \item[(i)]~$\pi \circ \widehat{\delta} = \delta$;
    \item[(ii)]~$[\widehat{\delta}] \in \pi_2(T^*S^n, \cp^{\Ch}_{k,m}(a))$ generates~$\pi_2(T^*S^n, \cp^{\Ch}_{k,m}(a))/\ker(\pi_* \circ \pp)$;
    \item[(iii)]~$\int_{\widehat{\delta}} \omega = \int_{\delta} d\rho \wedge d\alpha$;
    \item[(iv)]~$m_{\cp^{\Ch}_{k,m}(a)}[\widehat{\delta}] = m_{\Gamma(a)}[\delta]$.
\end{itemize}
Point (ii) follows from the above. Indeed, we have shown that~$\pi_* \circ \pp$ descends to an isomorphism~$\pi_2(T^*S^n, \cp^{\Ch}_{k,m}(a))/\ker(\pi_* \circ \pp) \rightarrow \pi_1(\Gamma(a))$ and~$\pi_* \circ \pp$ maps~$\widehat{\delta}$ to the generator of~$\pi_1(\Gamma(a))$ by construction. Point (iii) follows from the fact that~$(Y,d\rho \wedge d\alpha)$ is the reduced space~$(T^*S^n)^{\times}\sslash_{\overline{\mu}_{k,m} = 0}G_{k,m}$ and hence~$\omega \vert_{\overline{\mu}^{-1}_{k,m}(0) \setminus \{0\}} = \pi^*(d\rho \wedge d\alpha)$. Point (iv) follows from \cref{lem:maslov_reduction}. Note that the arguments in the proof of this lemma carry over to our case, where we do reduction on~$\overline{\mu}^{-1}_{k,m}(0) \setminus \{0\}$ instead of on the full level set. 

Since~$m_{\Gamma(a)}[\delta] = 2$ and~$\int_{\delta} d\rho \wedge d\alpha = \frac{a}{2}$, the last claim \emph{(5)} of the proposition follows.
\proofend

We now state the required Lemma. 

\begin{lemma}
\label{lem:maslov_reduction}
Let~$(X,\omega)$ be a symplectic manifold equipped with a Hamiltonian~$G$-action generated by the moment map~$\mu$ such that~$X$ admits symplectic~$G$-reduction at the level~$\mu = 0$. Denote its quotient by~$X_0 = X\sslash_{\mu = 0}G$ and its reduction projection by~$\pi \colon \mu^{-1}(0) \rightarrow X_0$. Furthermore, let~$L \subset \mu^{-1}(0)$ be a Lagrangian submanifold and~$u \colon (D^2,\pp D^2) \rightarrow (\mu^{-1}(0),L)$ be a disk with boundary on~$L$ contained in~$\mu^{-1}(0)$. Then its Maslov index is 
\begin{equation*}
    m_{L}[u] = m_{\pi(L)}[\pi \circ u],
\end{equation*}
where we recall that~$\pi(L) \subset X_0$ is automatically a smooth Lagrangian.
\end{lemma}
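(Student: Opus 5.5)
The plan is to split $u^*TX$ along the boundary into a part that sees the reduced space and a part that sees only the group directions, and to show that the second part contributes no Maslov index.

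Along $\mu^{-1}(0)$ (assumed regular with free action, as in the hypothesis "admits symplectic reduction"), at each point $x\in\mu^{-1}(0)$ there is a symplectic splitting
\begin{equation*}
T_xX = H_x \oplus \bigl(\mathfrak{g}\cdot x \oplus J(\mathfrak{g}\cdot x)\bigr).
\end{equation*}
Here $\mathfrak{g}\cdot x$ is the tangent space to the orbit, $J$ is a $G$-invariant compatible almost complex structure, and $H_x = T_x\mu^{-1}(0)\cap (\mathfrak{g}\cdot x)^{\omega}$ is a horizontal complement to the orbit inside $T_x\mu^{-1}(0)$. The differential $d\pi_x$ identifies $H_x$ symplectically with $T_{\pi(x)}X_0$. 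The second summand $V_x=\mathfrak{g}\cdot x\oplus J(\mathfrak{g}\cdot x)$ is symplectic, and $\mathfrak{g}\cdot x$ is a Lagrangian subspace of it. Both summands form smooth symplectic subbundles of $TX|_{\mu^{-1}(0)}$.

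The next step is to check that this splitting is compatible with $L$. Since $L\subset\mu^{-1}(0)$ is Lagrangian and $\mu^{-1}(0)$ is coisotropic with characteristic foliation given by the orbits, $T_xL$ contains $\mathfrak{g}\cdot x = (T_x\mu^{-1}(0))^{\omega}$. This is the standard fact that a Lagrangian contained in a coisotropic submanifold contains its characteristic directions: $T_xL=(T_xL)^{\omega}\supset (T_x\mu^{-1}(0))^{\omega}$. Hence $L$ is $G$-invariant and $\pi(L)$ is a Lagrangian submanifold of $X_0$. Projecting onto the summands gives, for $x\in L$,
\begin{equation*}
T_xL = (T_xL\cap H_x)\oplus \mathfrak{g}\cdot x,
\end{equation*}
where $T_xL\cap H_x$ maps isomorphically under $d\pi$ to $T_{\pi(x)}\pi(L)$. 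I expect the main care to be needed in verifying that $T_xL\cap H_x$ has the correct dimension, namely $\dim L-\dim G$. This follows by a dimension count: $T_xL\subset T_x\mu^{-1}(0)= H_x\oplus\mathfrak{g}\cdot x$, and $T_xL$ contains the second factor.

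By additivity of the Maslov index under direct sums of symplectic bundle pairs,
\begin{equation*}
m_L[u] = \mu\bigl(u^*H,\,(T L\cap H)|_{\partial}\bigr) + \mu\bigl(u^*V,\,(\mathfrak{g}\cdot u)|_{\partial}\bigr).
\end{equation*}
The first pair is the pullback under $\pi\circ u$ of $(TX_0,T\pi(L))$, so its Maslov index equals $m_{\pi(L)}[\pi\circ u]$. For the second pair, the infinitesimal action gives a trivialization $\mathfrak{g}\times D^2\to \mathfrak{g}\cdot u$ defined over the whole disk, not only over its boundary. This needs freeness, or at least local freeness, of the action on $\mu^{-1}(0)$. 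This real subbundle is Lagrangian in $V$, and its complexification trivializes $V\cong (\mathfrak{g}\otimes\C)\times D^2$. In that trivialization the boundary Lagrangian loop is the constant $\mathfrak{g}\otimes\R$, so its Maslov index is $0$, which proves the lemma.

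The point I expect to need the most care is the remark in the proof of \cref{prop:COUproperties} that the argument carries over to $\overline{\mu}^{-1}_{k,m}(0)\setminus 0_{S^n}$. There the action has finite stabilizers ($\Z_2$ when $k\neq0$) but is still locally free, so the infinitesimal action is still injective and the orbit directions still form a trivialized bundle $\mathfrak{g}\times D^2$ along $u$. The rest of the argument only uses local structure, so it should go through unchanged.
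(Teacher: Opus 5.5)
Your argument is correct up to one slip. It is the standard splitting argument that the paper's one-line proof imports from Smith's Proposition~3.2: split $TX|_{\mu^{-1}(0)}$ into a horizontal part identified with $\pi^*TX_0$ plus the complexified orbit directions, split $TL$ compatibly, and note that the orbit part carries a Lagrangian subbundle over the whole disk, so it contributes Maslov index $0$.

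The slip concerns $H_x$. Since $T_x\mu^{-1}(0)=\ker d\mu_x=(\mathfrak g\cdot x)^{\omega}$, your formula $H_x=T_x\mu^{-1}(0)\cap(\mathfrak g\cdot x)^{\omega}$ returns all of $T_x\mu^{-1}(0)$. That space contains the orbit, so $T_xX=H_x\oplus V_x$ fails as written. You want $H_x:=V_x^{\omega}=T_x\mu^{-1}(0)\cap(J\,\mathfrak g\cdot x)^{\omega}$, the metric-orthogonal complement of the orbit inside the level set. With that definition, everything you say about $H$ holds.

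Your final paragraph misidentifies the stabilizers, although its conclusion survives. On $M:=\overline{\mu}_{k,m}^{-1}(0)\setminus 0_{S^n}$, the $G_{k,m}$-stabilizer of a point is conjugate to $\SO(k)\times\SO(m)$. The $\Z_2$ you quote is the extra part of the stabilizer in $S^1\times G_{k,m}$, and it involves the circle factor. So once $\max(k,m)\geq 2$:
\begin{itemize}
\item the action is not locally free, and $\xi\mapsto\xi_X$ is not injective;
\item $0$ is not a regular value even away from the zero section;
\item your trivialization of the orbit directions by $\mathfrak g$ is not available.
\end{itemize}

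What the argument actually needs is only that all orbits in $M$ have the same dimension $n-1$. Then $T_xM\subset\ker d(\overline{\mu}_{k,m})_x=(\mathfrak g\cdot x)^{\omega}$, together with $\dim M=n+1=2n-(n-1)$, forces $T_xM=(\mathfrak g\cdot x)^{\omega}$. Hence $M$ is coisotropic with the orbits as characteristic leaves, and the splitting $H\oplus V$ along $M$ exists exactly as before. Finally, $u^*(\mathfrak g\cdot X)$ is a trivial real bundle simply because $D^2$ is contractible. Its complexification therefore again contributes Maslov index $0$.
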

\proof
The proof follows from the one of \cite[Proposition 3.2]{Smi21}. Recall that we do not exactly do symplectic reduction because our spaces are singular at the zero section. However, the Lagrangians considered do not intersect the singular locus, and thus the same reasoning applies.
\proofend

Let us now turn to the construction of the Chekanov Oakley--Usher Lagrangian in the quadric~$Q^n = \overline{T^*S^n}_{H \leqslant 2 \pi}$, which we view again as the space obtained after performing a symplectic cut on the cotangent bundle~$T^*S^n$, see \cref{prop:cut}. Recall from the discussion surrounding \cref{def:YZZ} that the reduced space of~$Q^n$ at the level~$\overline{\mu}_{k,m} = 0$
\begin{enumerate}
    \item is a sphere~$\widehat{Z}'$ of symplectic area~$2\pi$ with one distinguished point in case~$k=0$,
    \item is a sphere~$\widehat{Z}$ of symplectic area~$\pi$ with one distinguished point and one~$\Z_2$-orbifold point in case~$k \neq 0$.
\end{enumerate}

For all~$k$, the curve~$\Gamma(a) \subset Y$ (bounding a disk of area~$a$ if~$k=0$ and area~$a/2$ if~$k \neq 0$) can be made to sit in the respective cut spaces~$Z,Z'$ whenever~$a < 2\pi$.

\begin{definition}
    For all~$a \in (0,2\pi)$, we call~$\pkmch(a) = \pi^{-1}(\Gamma(a)) \subset Q^n$ the \emph{Chekanov OU-Lagrangian in the quadric}. 
\end{definition}

As in the case of the conventional Oakley--Usher Lagrangian, \cref{prop:redcut} shows that this is equivalent to embedding a copy of~$\pkmch(a) \subset D^*S^n$ into~$Q^n$ via the natural symplectic embedding~$\psi$, as defined in \eqref{eq:psi_emb}. The definition using reduction shows furthermore that we can pick a copy of~$\pkmch(a)$ to lie in~$D^*S^n$ whenever~$a \in (0,2\pi)$. 

By the same arguments as above, we find that the Chekanov OU-Lagrangians in the quadric are Lagrangian submanifolds which are well-defined up to Hamiltonian isotopy. Let us now turn to the question of when they are monotone. See \cref{prop:OU_quadric_props} for the analogous statement about the conventional OU-Lagrangians in the quadric. 

Let~$l = [\CP^1] \in H_2(Q^n)$ be the class of the line. A possible representative is the image of the map 
\begin{equation*}
\CP^1 \to Q^n, \qquad [s:t] \to [s:is:t:it].
\end{equation*}
Its symplectic area is~$\int_{\CP^1} \omega = 2\pi$ and its Chern class is~$c_1(Q^n) \cdot l = n$. Since~$l$ is a generator of~$H_2(Q^n)$, we find that the quadric~$Q^n$ is a monotone symplectic manifold with
\begin{equation}
    \label{eq:quadric_monotonicity}
    [\omega] = \frac{2\pi}{n}c_1(Q^n).
\end{equation}

\begin{proposition}
    The Chekanov OU-Lagrangian~$\pkmch(a) \subset Q^n$ is monotone if and only if~$a = a_* = \frac{4\pi}{n}$ for~$k\neq 0$ and~$a = a_* = \frac{2\pi}{n}$ for~$k=0$.
\end{proposition}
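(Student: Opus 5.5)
We check monotonicity on a set of classes that, together with $j_*(\pi_2(Q^n))$, generate $\pi_2(Q^n,L)$, where $L=\pkmch(a)\subset Q^n$. We are in the third case of the enumeration in \S\ref{algebra section}: $\pi_1(Q^n)=0$, $\pi_2(Q^n)\cong\Z$ is generated by the line $l$, and by \eqref{eq:quadric_monotonicity} we have $\omega(l)=2\pi$ and $m(j_*l)=2c_1(l)=2n$. On $j_*(\pi_2(Q^n))$ the area class is therefore $\frac{\pi}{n}$ times the Maslov class, for every $a$. Consequently $L$ is monotone if and only if $A_L=\frac{\pi}{n}\,m_L$ on a family of classes which, modulo $j_*\pi_2(Q^n)$, generate $\pi_2(Q^n,L)$. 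Since $\pi_1(Q^n)=0$, the sequence of the pair gives $\pi_2(Q^n,L)/j_*\pi_2(Q^n)\cong\pi_1(L)$ via $\partial$. So it suffices to find disks whose boundaries generate $\pi_1(L)$, and to compute their areas and Maslov indices.

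Such disks come from the open part. By the remark after the definition of $\pkmch(a)$, we may assume $L=\psi(L')$ with $L'=\pkmch(a)\subset D^*S^n$, where $\psi$ is the symplectic embedding \eqref{eq:psi_emb}. Area and Maslov index of a disk lying in the open set $\psi(D^*S^n)$ are intrinsic to that open set, because the Maslov index only uses a trivialization of $u^*TM$ over the disk. Hence every disk class in $D^*S^n$ with boundary on $L'$ has the same area and Maslov index after pushing forward by $\psi$.

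We reuse the proof of \cref{prop:COUproperties}. We have $\pi_1(L')\cong\pi_1(\Gamma(a))\oplus\pi_1(F_{k,m})$. The classes in $\ker(\pi_*\circ\partial)$ realize the $\pi_1(F_{k,m})$-summand and have area $0$ and Maslov index $0$. The lifted disk $\widehat\delta$ realizes the generator of $\pi_1(\Gamma(a))$, with Maslov index $2$ by \cref{lem:maslov_reduction}, and area $a/2$ if $k\neq0$, respectively $a$ if $k=0$. For $n\geq3$ these disks exist inside $D^*S^n$ because $\pi_1(T^*S^n)=0$, and their boundaries generate $\pi_1(L)$. For $n=2$, i.e.\ $(k,m)=(0,1)$, the fibre loop still bounds a disk in $D^*S^2$, since $T^*S^2$ is simply connected. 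The ambiguity in that disk is by the zero-section class, which has area $0$ and Maslov index $0$ because $c_1(T^*S^2)=0$. So the same values hold.

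The fibre classes satisfy $0=\frac{\pi}{n}\cdot 0$ for every $a$. Monotonicity therefore reduces to a single condition on $\widehat\delta$: $\frac{a}{2}=\frac{\pi}{n}\cdot 2$ for $k\neq0$, which gives $a=\frac{4\pi}{n}$; and $a=\frac{2\pi}{n}$ for $k=0$. Conversely, for any other value of $a$ the class $\widehat\delta$ violates the proportionality, so $L$ is not monotone. The main point requiring care is that $\widehat\delta$ must have Maslov index $2$ when viewed in $Q^n$, and not merely in $T^*S^n$. This follows from the locality of the Maslov index noted above, together with the fact that $\widehat\delta$ lies in $\psi(D^*S^n)$, since $\Gamma(a)$ and the disk it bounds avoid the cut locus.
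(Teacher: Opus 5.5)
Your proof is correct and follows essentially the paper's route. Both arguments use the exact sequence $0\to\pi_2(Q^n)\to\pi_2(Q^n,L)\to\pi_1(L)\to 0$, transport the disk classes of $\pkmch(a)\subset D^*S^n$ into $Q^n$ via $\psi$ (monotone there with constant $a/4$ for $k\neq 0$ and $a/2$ for $k=0$, by \cref{prop:COUproperties}), and compare with the constant $\pi/n$ forced by $[\omega]=\frac{2\pi}{n}c_1(Q^n)$; the paper packages this through the induced maps $\overline{m}\colon\pi_1(L)\to\Z_{2n}$ and $\overline{A}\colon\pi_1(L)\to\R/2\pi\Z$, whereas you work directly with a generating set of $\pi_2(Q^n,L)$. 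One small slip, shared with the paper, is the claim $\pi_2(Q^n)\cong\Z$, which fails for $n=2$ where $Q^2\cong S^2\times S^2$; this is harmless, since you only use $A_L=\frac{\pi}{n}m_L$ on $j_*\pi_2(Q^n)$, and that holds by monotonicity of $Q^2$.
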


\proof We prove the Proposition for the case~$k\neq 0$ as the other one is analogous; the discrepancy for $a_*$ comes from point \emph{(5) in \cref{prop:COUproperties}}. We view~$\pkmch(a) \subset Q^n$ as the image of the Chekanov OU-Lagrangian~$\pkmch(a) \subset D^*S^n$ discussed above under the symplectic embedding~$D^*S^n \hookrightarrow Q^n$ discussed for example in the proof of \cref{prop:cut}. The homotopy long exact sequence of the pair~$(Q^n,\cp^{\Ch}_{k,m}(a))$ yields the following short exact sequence:
\[\begin{tikzcd}
0 \arrow[r] & \pi_2(Q^n) \arrow[r, "i_*"] & \pi_2(Q^n,\cp^{\Ch}_{k,m}(a)) \arrow[r, "\partial"] & \pi_{1}(\cp^{\Ch}_{k,m}(a)) \arrow[r] & 0.
  \end{tikzcd}
\]
This follows from the third case of Section \ref{algebra section} and \cref{prop: vanishing of inclusion on pi2}. The Maslov class induces a homomorphism
\[
\overline{m}_{\pkmch}:\pi_1(\pkmch(a))\to \mathbb Z_{2n}
\]
such that for every class~$\beta\in \pi_2(Q^n,\pkmch(a))$ one has~$\overline{m}_{\pkmch}(\pp \beta) \equiv m_{\pkmch}(\beta)\pmod{2n}$. We now compare this induced map with the Maslov class of~$\pkmch(a)$ viewed inside~$D^*S^n$. By the first case of Section \ref{algebra section}, the boundary map~$\partial_D:\pi_2(D^*S^n,\pkmch(a))\xrightarrow{\cong}\pi_1(\pkmch(a))$
is an isomorphism, and the Maslov class factors through a homomorphism
\[
\overline{m}_{\pkmch(a)}^{D}:\pi_1(\pkmch(a))\to \mathbb Z.
\]
By Proposition~\ref{prop:COUproperties}, the Lagrangian~$\pkmch(a)\subset D^*S^n$ is monotone with monotonicity constant~$a/4$. We claim that the induced Maslov map~$\overline{m}_{\pkmch}$ is precisely the reduction of~$m_{\pkmch(a)}^{D}$ modulo~$2n$. Indeed, let~$\gamma\in \pi_1(\pkmch(a))$ and
let~$\beta_D\in \pi_2(D^*S^n,\pkmch(a))$ be the unique class with~$\partial_D (\beta_D)=\gamma$.
Via the inclusion~$(D^*S^n,\pkmch(a))\hookrightarrow (Q^n,\pkmch(a))$, we may regard~$\beta_D$ as a class in~$\pi_2(Q^n,\pkmch(a))$ with boundary~$\gamma$. Any other class in~$\pi_2(Q^n,\pkmch(a))$ with boundary~$\gamma$ differs from~$\beta_D$ by the image of an element of~$\pi_2(Q^n)$, and hence its Maslov index differs from~$\mu_{\pkmch(a)}(\beta_D)$ by an element of~$2n\mathbb Z$. Therefore
\[
m_{\pkmch(a)}(\gamma)\equiv m_{\pkmch(a)}^{D}(\gamma)\pmod{2n},\qquad A_{\pkmch(a)}(\gamma) \equiv A^D_{\pkmch(a)}(\gamma) \pmod{2\pi}.
\]
Using~$\omega(\gamma)=\frac{a}{4}\,m_{\pkmch(a)}^{D}(\gamma)$, we obtain  
\[
A_{\pkmch(a)}(\gamma) \equiv \bigg[\frac{a}{4}m_{\pkmch(a)}^{D}(\gamma)\bigg].
\]
From the discussion at the end of Section \ref{algebra section}, we know that~$\pkmch(a)$ is monotone if and only if the monotonicity constant is equal to~$\frac{\pi}{n}$, or equivalently,
\[
\bigg[\frac{a}{4}m_{\pkmch(a)}^{D}(\gamma)\bigg]=\Phi_{\frac{2\pi}{n}}([m_{\pkmch(a)}^{D}(\gamma)]) = \bigg[\frac{\pi}{n}m_{\pkmch(a)}^{D}(\gamma)\bigg]
\qquad
\text{for all }\gamma\in \pi_1(\pkmch(a)).
\]
This hold if and only if~$\frac{a}{4}= \frac{\pi}{n}$, proving the claim.
\proofend

By a proof analogous to the one of \cref{prop:OU_quadric_props}, we can prove the following corollary. Note that in the case of the Chekanov OU-Lagrangians, its reduced curves~$\Gamma(a)$ always bound a disk contained in the smooth part of the reduced space, which is why we obtain displaceability results even for~$k \neq 0$. 

\begin{corollary}
    \label{cor:COU_displaceable}
    For~$k = 0$, the Chekanov OU-Lagrangian~$\cp^{\Ch}_{0,m}(a) \subset Q^n$ is displaceable whenever~$a < \pi$. In particular, the monotone copy~$\cp^{\Ch}_{0,m}(a_*) \subset Q^n$ is displaceable for all~$n \geqslant 3$. For~$k \neq 0$, the Chekanov OU-Lagrangian~$\pkmch(a) \subset Q^n$ is displaceable whenever~$a < \pi/2$. In particular, the monotone copy~$\cp^{\Ch}_{k,m}(a_*) \subset Q^n$ is displaceable for all~$n \geqslant 5$.
\end{corollary}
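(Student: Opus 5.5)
The plan is to mirror the proof of point \emph{(3)} in \cref{prop:OU_quadric_props}. I work in the reduced spaces of \cref{def:YZZ} and use \cref{prop:redcut} to view $\pkmch(a)\subset Q^n$ as the lift $\pi^{-1}(\Gamma(a))$ of a contractible embedded curve in the smooth part of the reduced space. Any compactly supported Hamiltonian isotopy of that smooth part which displaces $\Gamma(a)$ from itself can be pulled back along $\pi$ to a $G_{k,m}$-invariant Hamiltonian on $\overline{\mu}_{k,m}^{-1}(0)$. This Hamiltonian is then extended to a $G_{k,m}$-invariant Hamiltonian on $Q^n$, exactly as in the proof of \cref{prop:COUproperties}(4), and its flow displaces $\pkmch(a)$. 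The whole argument therefore reduces to a two-dimensional area count.

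For $k=0$, the smooth part $Z'$ is symplectomorphic to the open disk of area $2\pi$, and $\Gamma(a)$ bounds a smooth disk of area $a$ in it. We may choose $\Gamma(a)$ to lie in a compact region of $Z'$ away from the distinguished point $y_*$. An embedded circle bounding area $a$ in a disk of total area $2\pi$ can be displaced by a compactly supported Hamiltonian isotopy as soon as $a<2\pi-a$, i.e.\ $a<\pi$. For the monotone copy, $a_*=2\pi/n<\pi$ holds precisely when $n\geqslant 3$, which gives the first half of the statement.

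For $k\neq 0$, the smooth part $Z$ is the cylinder $(0,\pi)\times S^1$ of total area $\pi$. Its two ends correspond to the distinguished point (coming from $\Sigma$) and to the $\Z_2$-orbifold point (coming from the cut). Here $\Gamma(a)$ bounds a smooth disk of area $a/2$ contained in $Z$. I place this disk inside an embedded open disk $D\subset Z$ of area arbitrarily close to $\pi$, for instance a slit-off neighbourhood of the complement of a thin arc joining the two ends, and displace $\Gamma(a)$ inside $D$. By the same two-dimensional area count, this works whenever $a/2<\pi/2$. In particular it works under the stated hypothesis $a<\pi/2$, since then $a/2<\pi/4$. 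For the monotone copy $a_*=4\pi/n$, the bounded area is $a_*/2=2\pi/n$. For $n\geqslant 5$ this is at most $2\pi/5<\pi/2$, so the displacement inside $D$ applies.

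The main obstacle is justifying that the disk-displacement in $Z$ really lifts. The Hamiltonian on the reduced space must be compactly supported away from both the distinguished point and the orbifold point, so that its pullback is smooth and $G_{k,m}$-invariant on a neighbourhood of $\overline{\mu}_{k,m}^{-1}(0)$ avoiding $\Sigma$ and the cut locus. One must then check that its extension to $Q^n$ (via an invariant cutoff in the $\overline{\mu}_{k,m}$-direction, as in \cref{prop:COUproperties}) still displaces $\pi^{-1}(\Gamma(a))$. Since the lifted flow preserves $\overline{\mu}_{k,m}^{-1}(0)$ and covers the displacing flow downstairs, the image of $\pi^{-1}(\Gamma(a))$ is $\pi^{-1}$ of a curve disjoint from $\Gamma(a)$. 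This yields displaceability.
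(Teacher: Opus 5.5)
Your proposal is correct and is the argument the paper intends: lift a compactly supported displacing Hamiltonian from the smooth part of the reduced space, so that everything reduces to a two-dimensional area count, exactly as in point \emph{(3)} of \cref{prop:OU_quadric_props}. For $k\neq 0$ your count gives displaceability whenever $a<\pi$, since the bounded area $a/2$ is less than half the area $\pi$ of the cylinder; this sharper bound is right, and it is what actually yields the monotone claim for $n\geqslant 5$, whereas the stated hypothesis $a<\pi/2$ alone would only cover $n\geqslant 9$.
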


We will show in \cref{thm:compclosed} that~$\cp_{0,m}^{\Ch}(a)$ is Hamiltonian isotopic to~$\cp_{0,m}(2\pi - a)$, which reduces the first part of the above statement to the third statement of \cref{prop:OU_quadric_props}.

\subsection{A~$\Z_2$-quotient} Similar to the treatment in \cite{OakUsh16}, an analogous construction can be carried out in~$T^*\RP^n$ and its compactification via symplectic cut~$\C P^n = \overline{T^*\R P^n}_{\vert p \vert \leqslant 1}$. Up to a few crucial differences, on which we will put particular emphasis, the case of~$T^*\R P^n$ is almost identical to the case of~$T^*S^n$, just as the case of~$\C P^n$ is almost identical to the one of~$Q^n$, which is why we leave the analogous details and proofs to the reader. By abuse of notation, we use the same symbols for the moment maps, submanifolds and other objects involved.

We view the cotangent bundle~$T^*\R P^n$ as the discrete quotient~$T^*S^n/I$, where~$I$ denotes the involution~$I(q,p) = (-q,-p)$. Denote by~$\Pi \colon T^*S^n \rightarrow T^*\R P^n$ the quotient map. Since~$I = \Phi^{k,m}_{1/2,\id,\id}$ for the group action defined in~\eqref{eq:groupactionmk} and since the element~$(\frac{1}{2},\id,\id)$ commutes with all elements in~$S^1 \times G_{k,m}$, we obtain an induced~$S^1 \times G_{k,m}$-action on~$T^*\RP^n$. Obviously, the element~$(\frac{1}{2},\id,\id)$ acts by the identity on~$T^*\R P^n$. Therefore, we quotient out~$S^1 \times G_{k,m}$ by~$(\frac{1}{2},\id,\id)$, yielding another action of the same group generated by the moment map~$\mu'_{k,m}(\Pi(q,p)) = (\pi \vert p \vert, q_1 \wedge p_1, q_2 \wedge p_2)$. Note that we have changed the Hamiltonian in the first component from $2\pi \vert p \vert$ to $\pi \vert p \vert$, in order for it to define an $S^1=\R/\Z$-action on $T^*\RP^n$. We denote its projection to the latter two components generating the~$G_{k,m}$-action by~$\overline{\mu}'_{k,m}$. Similarly to \cref{prop:cut}, the symplectic cut space~$\overline{T^*\R P^n}_{\vert p \vert \leq 1}$ is symplectomorphic to~$\C P^n$. Denote the so-obtained symplectic embedding by~$\psi \colon D^*\RP^n \hookrightarrow \C P^n$.

\begin{definition}
    For all~$r > 0$, the (Chekanov) Oakley--Usher Lagrangians in~$T^*\RP^n$ are defined as the projections~$\Pi(\pkm^{\rm (Ch)}(r))$. For~$r \in (0,\pi)$, the (Chekanov) Oakley--Usher Lagrangians in~$\CP^n$ are defined as their embeddings under~$\psi \colon D^*\RP^n \hookrightarrow \C P^n$. By abuse of notation, we denote these Lagrangians again by~$\pkm^{\rm (Ch)}(r)$.
\end{definition}

The conventional Oakley--Usher Lagrangians, first defined by Oakley--Usher, are again orbits of the~$S^1 \times G_{k,m}$-action on~$T^*\RP^n$ and~$\CP^n$. They have similar properties as their counterparts in~$T^*S^n$ and~$Q^n$. In particular, the submanifolds~$\pkm(r) \subset T^*\RP^n$ are monotone, non-displaceable Lagrangians. The copy~$\pkm(r) \subset \CP^n$ is monotone if and only if~$r = r_* := \frac{2\pi (n-1)}{n+1}$. The computation is the same as for the quadric, as again~$l=[\C P^1]$ is a generator of~$H_2(\C P^n)$ with symplectic area~$2\pi$, with the difference that the first Chern class satisfies~$c_1(\C P^n) \cdot l = n+1$.

The reduction picture still holds. Let us discuss some details. For~$k \neq 0$, the stabilizer of the~$S^1 \times G_{k,m}$-action on~$\pkm(r) \subset T^*\RP^n$ is isomorphic to~$\Z_2 \ltimes (\SO(k) \times \SO(m))$, where~$\SO(k)$ fixes~$e_1$,~$\SO(m)$ fixes~$e_{n+1}$ and the~$\Z_2$-copy is generated by~$(0,C,D)$ where~$C,D$ are linear transformations satisfying~$Ce_1 = -e_1$ and~$De_{n+1} = -e_{n+1}$. Compare this to the situation in~$T^*S^n$ described around \eqref{eq:stabkneq0}. It follows that~$\pkm(r)$ is diffeomorphic to~$\RP^1 \times  (S^k \times S^m)/\Z_2 \cong S^1 \times (S^k \times S^m)/\Z_2$, where~$\Z_2$ acts by the antipodal map in both factors. Note that the used diffeomorphism is given by the double cover~$S^1 \to \R P^1$. This diffeomorphism has the effect of doubling the area and Maslov class of~$\pkm(r)$.

For~$k = 0$, the stabilizer on~$\cp_{0,m}(r)$ is isomorphic to~$\SO(m)$ and~$\cp_{0,m}(r)$ is diffeomorphic to~$S^1 \times S^m$. Contrary to the case of~$T^*S^n$, the OU-Lagrangian in~$T^*\RP^n$ is always a trivial fiber bundle over~$S^1$. 
Similarly to the~$T^*S^n$-case, we find~$\pkm(r) = (\mu_{k,m}')^{-1}(r/2,0,0)$ which can be used to prove the following. 

\begin{proposition}
    The reduced space~$(T^*\R P^n)\sslash_{\overline{\mu}_{k,m}' = 0}G_{k,m}$ is a stratified plane whose smooth part is symplectomorphic to the standard punctured plane~$Y = S^1 \times \R_{>0}$. 
\end{proposition}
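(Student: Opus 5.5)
The plan is to deduce this from the $T^*S^n$ case (\cref{prop:Ikm_properties} and \cref{def:reductionTSn}), using that $\Pi \colon T^*S^n \to T^*\RP^n$ is a symplectic double cover. First, $\Pi$ is $S^1\times G_{k,m}$-equivariant once the $S^1$-factor is reparametrized, and $\overline{\mu}'_{k,m}\circ \Pi = \overline{\mu}_{k,m}$. Hence the level set in $T^*\RP^n$ is
\[
(\overline{\mu}'_{k,m})^{-1}(0)=\Pi\bigl(\overline{\mu}_{k,m}^{-1}(0)\bigr)=\overline{\mu}_{k,m}^{-1}(0)/I .
\]
This uses that $I$ preserves $\overline{\mu}_{k,m}^{-1}(0)$, which holds because $I=\Phi^{k,m}_{1/2,\id,\id}$ commutes with the $G_{k,m}$-action. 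Since the two actions commute, the reduced space is
\[
(T^*\RP^n)\sslash_{\overline{\mu}'_{k,m}=0}G_{k,m}\;\cong\;\widehat{Y}/\bar I,
\]
where $\bar I$ is the involution induced by $I$ on $\widehat{Y}$. The zero section $0_{\RP^n}$ again maps to a single distinguished point, which is the lower stratum. Away from it I work with the smooth part $Y/\bar I$.

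Next I would identify $\bar I$ in the coordinates $(\rho,\alpha)$ given by $I_{k,m}$. By \cref{prop:Ikm_properties}(1), $I_{k,m}$ intertwines $\Phi^{k,m}_{1/2,\id,\id}$ with $\alpha\mapsto \alpha+1/2$ on the domain. So $\bar I$ is the half-rotation $(\rho,\alpha)\mapsto(\rho,\alpha+1/2)$ in the coordinates before rescaling. The two cases then go as follows.
\begin{itemize}
\item For $k=0$, $Y=\R_{>0}\times S^1$ directly, and the quotient is $\R_{>0}\times \R/\tfrac12\Z$ with the form $d\rho\wedge d\alpha$.
\item For $k\neq0$, the quotient $\R_{>0}\times\RP^1$ had already been rescaled by $(\rho,\alpha)\mapsto(\rho/2,2\alpha)$. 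In the rescaled coordinates $\bar I$ becomes a full turn $\alpha\mapsto\alpha+1$. One must check whether $I$ acts trivially on the reduced space here. I expect it does: $I$ composed with $(0,-\id,-\id)\in G_{k,m}$ acts as the stabilizer-type element $(\tfrac12,\id,\id)$ times the $\Z_2$-identification. So for $k\neq0$ the quotient map is an isomorphism, or at most a half-rotation again.
\end{itemize}
In either case the smooth part is $\R_{>0}\times(\R/c\Z)$ with the form $d\rho\wedge d\alpha$. Rescaling $(\rho,\alpha)\mapsto(c\rho,\alpha/c)$ gives a symplectomorphism to the standard $Y=S^1\times\R_{>0}$, which is symplectomorphic to the punctured plane via $(\rho,\alpha)\mapsto \sqrt{\rho/\pi}\,e^{2\pi i\alpha}$. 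This rescaling is also consistent with the new Hamiltonian $\pi|p|$ generating an honest $\R/\Z$-action, as stated in the text.

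For the stratified structure, I would note that the image of $0_{\RP^n}$ is the single point added as $\rho\to0$, exactly as in \cref{rk:singreductionTSn}. Continuity of the extended projection gives a plane with one distinguished point.

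The main obstacle is the bookkeeping of the $\Z_2$-actions for $k\neq0$. The antipodal identification already present in $\overline{\mu}_{k,m}^{-1}(0)\setminus 0_{S^n}\cong \R_{>0}\times(S^1\times S^k\times S^m)/\Z_2$ interacts with $I$, and I must decide whether $I$ descends to a nontrivial rotation of $Y$. I would settle this by computing $I\circ I_{k,m}(\rho,\alpha,x,y)=I_{k,m}(\rho,\alpha+\tfrac12,x,y)$ directly. I would then compare with the relation $I_{k,m}(\rho,\alpha+\tfrac12,-x,-y)=I_{k,m}(\rho,\alpha,x,y)$ coming from the stabilizer \eqref{eq:stabkneq0}. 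This comparison should show that $I$ is the $G_{k,m}$-translate of $(x,y)\mapsto(-x,-y)$, which acts trivially on the quotient. The conclusion, a punctured plane up to rescaling, holds in either outcome.
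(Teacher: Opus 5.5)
Your proposal is correct and follows the paper's route. The paper only sketches this proposition, noting that $I=\Phi^{k,m}_{1/2,\id,\id}$ lies in the circle factor and hence descends through the $G_{k,m}$-reduction; you flesh this out by computing $\widehat{Y}/\bar I$, which is a free half-rotation for $k=0$. Your hedging for $k\neq 0$ is unnecessary, since the ``full turn'' you found already means $\bar I$ is trivial on $Y$: the identity $I\circ I_{k,m}(\rho,\alpha,x,y)=I_{k,m}(\rho,\alpha+\tfrac12,x,y)=I_{k,m}(\rho,\alpha,-x,-y)$ lies in the same $G_{k,m}$-orbit because $\SO(l+1)$ acts transitively on $S^l$ for $l\geq 1$, whereas your heuristic via $(0,-\id,-\id)$ fails when $k$ or $m$ is even, because then $-\id$ is not special orthogonal.
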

Note that since the action of~$I$ is in the circle factor of the group~$S^1 \times G_{k,m}$, it factors through the reduction by the~$G_{k,m}$-component.

\begin{definition}
    \label{def: projective YZ}
    We denote the reduced spaces of~$T^*\R P^n$ by
    \begin{equation*}
        \widehat{Y} = T^*\R P^n\sslash_{\overline{\mu'}_{k,m} = 0}G_{k,m}, 
        \quad
        Y = (T^*\R P^n)^{\times}\sslash_{\overline{\mu'}_{k,m} = 0}G_{k,m}
    \end{equation*}
    and the reduced spaces of~$\C P^n$ by
    \begin{equation*}
        \widehat{Z} = \C P^n\sslash_{\overline{\mu'}_{k,m} = 0}G_{k,m}, 
        \quad
        Z = (\C P^n)^{\times}  \sslash_{\overline{\mu'}_{k,m} = 0}G_{k,m}.
    \end{equation*}
\end{definition}

The spaces~$Y,Z$ are the respective smooth parts of the reduced spaces~$\widehat{Y}, \widehat{Z}$ which each contain one distinguished point. By the discussion surrounding \cref{def:reductionTSn} and by \cref{cor:redspace} we find that 
\begin{enumerate}
\item~$\widehat{Y}$ is homeomorphic to the plane and contains one distinguished point (corresponding to~$0_{\R P^n} \subset T^*\R P^n$) whose complement~$Y$ is symplectomorphic to the half-cylinder~$\R_{>0} \times S^1 = \{(\rho,\alpha)\}$ equipped with the standard form~$d\rho \wedge d\alpha$;
\item~$\widehat{Z}$ is homeomorphic to the two-sphere and contains one distinguished point (corresponding to~$\Sigma \subset \C P^n$) whose complement~$Z$ is symplectomorphic to the standard disk~$D = \{\pi \vert z \vert^2 < \pi\}$ of area~$\pi$.
\end{enumerate}

Denote the symplectic quotient map by 
\begin{equation}
\label{eq:reductionTRPn}
 \pi': (\overline{\mu}_{k,m}')^{-1}(0)\setminus 0_{\R P^n}\to Y.
\end{equation}
It is easy check to the action of $I$ acts on the fibres of the symplectic quotient map \eqref{eq:reductionTSn} we have used in the case of $T^*S^n$. Therefore $\pi$ factors through $\pi'$ and furthermore this kills the monodromy of $\pi$ from \cref{prop:OU_as_lifts}. Therefore, we obtain the following.  

\begin{proposition}
\label{prop:fiber bundle RPn}
The symplectic quotient defined in \cref{eq:reductionTRPn} has the following properties:
\begin{enumerate}
\item For~$k=0$, the map \eqref{eq:reductionTRPn} is a trivial~$S^m$-bundle over~$Y$;
\item For~$k \neq 0$, the map \eqref{eq:reductionTRPn} is a trivial~$(S^k \times S^m)/\Z_2$-bundle over~$Y$.
\end{enumerate}
\end{proposition}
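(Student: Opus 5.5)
The plan is to deduce both statements from the corresponding description on $T^*S^n$ in \cref{prop:OU_as_lifts}, using that $\Pi \colon T^*S^n \to T^*\RP^n$ intertwines the two reductions. First I will check that $\Pi$ maps $\overline{\mu}_{k,m}^{-1}(0)\setminus 0_{S^n}$ onto $(\overline{\mu}'_{k,m})^{-1}(0)\setminus 0_{\RP^n}$. This holds because the last two components of $\mu'_{k,m}$ are induced from those of $\mu_{k,m}$, and $I$ preserves $q_i\wedge p_i$. Next I will check that $I=\Phi^{k,m}_{1/2,\id,\id}$ acts on the reduced space $Y$ of \cref{def:reductionTSn}. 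By the equivariance of $I_{k,m}$ in \cref{prop:Ikm_properties}, it acts as $(\rho,\alpha)\mapsto(\rho,\alpha+\tfrac12)$ for $k=0$. For $k\neq0$, it acts as the identity on $Y$ after the rescaling $(\rho,\alpha)\mapsto(\rho/2,2\alpha)$, since $\alpha\mapsto\alpha+\tfrac12$ is then a full turn.

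For $k=0$, \cref{prop:OU_as_lifts} gives a $G_{0,m}$-equivariant trivialization $\R_{>0}\times S^1\times S^m\cong\overline{\mu}_{0,m}^{-1}(0)\setminus 0_{S^n}$, namely $I_{0,m}$. In these coordinates $I$ acts by $(\rho,\alpha,y)\mapsto(\rho,\alpha+\tfrac12,y)$, because $\phi^H_{1/2}=I$ and $I_{0,m}(\rho,\alpha,y)=\phi^H_\alpha(1,0,0,\rho y)$. The quotient is therefore $\R_{>0}\times(S^1/\tfrac12)\times S^m$. Rescaling the circle coordinate so that the new $S^1$-action generated by $\pi|p|$ is $1$-periodic, this becomes $Y\times S^m$, with $\pi'$ the projection. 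This gives a trivial bundle and proves (1).

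For $k\neq0$, I will work upstairs on $\R_{>0}\times S^1\times S^k\times S^m$ with the group $\Z_2\times\Z_2$ generated by $\sigma\colon(\rho,\alpha,x,y)\mapsto(\rho,\alpha+\tfrac12,-x,-y)$, the deck transformation of $I_{k,m}$ (via the stabilizer element $(\tfrac12,R_{k+1},R_{m+1})$, up to the $G_{k,m}$-action), and $\tau\colon(\rho,\alpha,x,y)\mapsto(\rho,\alpha+\tfrac12,x,y)$, the lift of $I$. Then $\sigma\tau$ acts by $(x,y)\mapsto(-x,-y)$ with $\alpha$ fixed. The quotient by $\langle\sigma,\tau\rangle=\langle\tau,\sigma\tau\rangle$ is $\R_{>0}\times(S^1/\tfrac12)\times (S^k\times S^m)/\Z_2$, which is visibly a product. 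The projection to the first two factors agrees with $\pi'$ after the same rescaling. Hence $\pi'$ is a trivial $(S^k\times S^m)/\Z_2$-bundle, proving (2). Equivalently, the monodromy $(x,y)\mapsto(-x,-y)$ of $\pi$ from \cref{prop:OU_as_lifts}(2) becomes trivial in the fibre $(S^k\times S^m)/\Z_2$.

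The main obstacle is bookkeeping. I need to verify that the lift of $I$ in the $I_{k,m}$-coordinates really is $\tau$ and commutes with $\sigma$. It must not, for instance, also flip $x$; that would only swap the roles of $\tau$ and $\sigma\tau$ and give the same conclusion, so the argument is robust to that. I also need the circle rescalings to match the normalization of $\mu'_{k,m}$. The rest follows directly from \cref{prop:Ikm_properties}.
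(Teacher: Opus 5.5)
Your proposal is correct and follows the same idea as the paper's brief argument: $I=\phi^H_{1/2}$ descends the reduction $\pi$ to $\pi'$, and for $k\neq 0$ passing to the fibre $(S^k\times S^m)/\Z_2$ kills the monodromy $(x,y)\mapsto(-x,-y)$ of \cref{prop:OU_as_lifts}(2); you make this explicit through the commuting lifts $\sigma,\tau$ in the coordinates of $I_{k,m}$. Your separate treatment of $k=0$ is a welcome extra precision that the paper's one-line sketch glosses over: there $I$ does not act fibrewise but rotates $Y$ by a half-turn, so the base of $\pi'$ is $Y/\Z_2$, which is again a cylinder.
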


This implies that~$\pkmch(a) \cong S^1 \times (S^k \times S^m)/\mathbb{Z}_2$. We see that~$\pkm(r)$ and~$\pkmch(a)$ are trivial bundles over~$S^1$ with fiber~$(S^k \times S^m)/\mathbb{Z}_2$ and therefore diffeomorphic in~$T^*\RP^n$. 

\begin{remark}
    \label{rk:displaceableTRPn}
    The above discussion shows that, just as in the case of $T^*S^n$, the Chekanov OU-Lagrangians $\cp_{k,m}(a) \subset T^*\RP^n$ can be viewed as lifts $\cp_{k,m}(a) = (\pi')^{-1}(\Gamma(a))$ of curves $\Gamma(a) \subset Y$ in the symplectic quotient. In particular, they are all displaceable by displacing the curve in the quotient. 
\end{remark}

\section{Comparison}
\label{sec:comparison}

In this section, we prove \cref{thm:compopen} comparing the Oakley--Usher Lagrangians to their Chekanov counterparts in the open manifolds $T^*S^n, T^*\RP^n$; and \cref{thm:compclosed} comparing them after compactification of the cotangent bundles to $Q^n, \CP^n$, respectively. \smallskip

\subsection{Proof of Theorem A}

The parameters~$r,a$ in~$\pkm(r)$ and~$\pkmch(a)$ are irrelevant in this proof, which is why we drop them from the notation. We first discuss the case of~$T^*S^n$. We start by determining for which~$k,m$, the Oakley--Usher type Lagrangians~$\pkm$ and~$\pkmch$ are diffeomorphic. For~$k=0$, the diffeomorphism type is~$S^1 \times S^m$ in both cases. For~$k \neq 0$, on the other hand, it is given by
\begin{equation}
    \pkm \cong (S^1 \times S^k \times S^m)/\Z_2,\qquad
    \pkmch \cong S^1 \times S^k \times S^m,
\end{equation}
where~$\Z_2$ acts by the antipodal map on each of the factors. As we have seen, the former can be viewed as a fibre bundle over~$S^1$ with fibre~$S^k \times S^m$ and monodromy~$\tau = (- \id_{S^k}) \times (- \id_{S^m })$. Note that~$-\id_{S^n}$ is homotopic to the identity if~$n$ is odd, which shows that~$\pkm$ and~$\pkmch$ are diffeomorphic when~$k,m$ are both odd. On the other hand, if at least one of~$k,m$ is even, we have~$\tau_* - \id \not\equiv 0$ on~$H_k(\pkm)$ (or~$H_m(\pkm)$, depending on which of~$k,m$ is even). From Propsition \ref{prop: homotopy distinction}, we conclude the first bullet point of the theorem.

Let us now determine when the Lagrangians are Lagrangian isotopic. It was shown in the proof of \cite[Proposition 5.1]{OakUsh16} that the minimal Maslov number of~$\pkm$ is~$k+m = n-1$ whenever~$k \neq 0$ and~$2m = 2(n-1)$ whenever~$k = 0$. On the other hand, we have proven, in the proof of \cref{prop:COUproperties}, that the minimal Maslov number of the Chekanov Oakley--Usher Lagrangian is~$2$ for all~$k,m$. Since the Maslov number is preserved by Lagrangian isotopies, this proves that~$\pkm$ and~$\pkmch$ are not Lagrangian isotopic whenever~$(k,m) \notin \{(0,1),(1,1)\}$.

Let us now show that the Lagrangian tori~$\cp_{0,1}, \cp^{\Ch}_{0,1} \subset T^*S^2$ are Lagrangian isotopic. Recall that both are contained in the level set
\begin{equation*}
    \overline{\mu}_{0,1}(q_1,q_2,q_3,p_1,p_2,p_3)
    = q_2p_3 - q_3p_2
    = 0.
\end{equation*}
In the plane~$\widehat{Y} = \overline{\mu}_{0,1}^{-1}(0) / S^1$ obtained as reduced space, they map to closed embedded curves~$\Upsilon = \pi(\cp_{0,1})$ and~$\Gamma = \pi(\cp^{\Ch}_{0,1})$, where~$\Upsilon$ encloses the point belonging to the lower-dimensional stratum (corresponding to~$0_{S^2}$) and~$\Gamma$ does not. The main idea is to look at the deformations~$\Upsilon_{\varepsilon}, \Gamma_{\varepsilon} \subset \widehat{Y}_{\varepsilon}$ in the reduced spaces~$\widehat{Y}_{\varepsilon} = \overline{\mu}_{0,1}^{-1}(\varepsilon) / S^1$. One can check that any~$\varepsilon \neq 0$ is a regular value of~$\overline{\mu}_{0,1}$ and hence~$\widehat{Y}_{\varepsilon}$ is a smooth standard symplectic plane. Hence~$\Upsilon_{\varepsilon}$ is isotopic to~$\Gamma_{\varepsilon}$ and thus their lifts in~$\overline{\mu}_{0,1}^{-1}(\varepsilon)$ are Lagrangian isotopic. We obtain the desired Lagrangian isotopy by 
\begin{enumerate}
    \item deforming~$\cp_{0,1}^{\Ch} = \pi^{-1}(\Upsilon)$ to the lift of~$\Upsilon_{\varepsilon} \subset \widehat{Y}_{\varepsilon}$ by varying~$\varepsilon$;
    \item deforming the lift of~$\Upsilon_{\varepsilon}$ to the lift of~$\Gamma_{\varepsilon}$ as above;
    \item deforming the lift of~$\Gamma_{\varepsilon}$ to~$\cp_{0,1} = \pi^{-1}(\Gamma)$ by varying~$\varepsilon$ to make it vanish.
\end{enumerate}
The same strategy works for the case~$(k,m) = (1,1)$ by varying~$\varepsilon$ in the level set of the reduction~$\overline{\mu}_{1,1}^{-1}(\varepsilon,\varepsilon)/(S^1 \times S^1)$. See also \cref{rk:lagisotopy_11}.

\begin{remark} This strategy of proof only works in case~$G_{k,m} = SO(k+1) \times SO(m+1)$ is abelian, which is the case if and only if~$k,m \leqslant 1$. Indeed, if~$G_{k,m}$ is not abelian, then the level sets~$\overline{\mu}_{k,m}^{-1}(c)$ for~$c \neq 0$ are not invariant under~$G_{k,m}$, meaning that we cannot perform symplectic reduction.
\end{remark}
Since~$\pkm \subset T^*S^n$ is non-displaceable by \cite[Proposition 5.1]{OakUsh16} and~$\pkmch$ is displaceable by \cref{prop:COUproperties}, they are not Hamiltonian isotopic. This finishes the proof in the case of~$T^*S^n$.

We turn to the cotangent bundle of~$\R P^n$. As mentioned after Proposition \ref{prop:fiber bundle RPn},~$\pkm$ and~$\pkmch$ are diffeomorphic for all~$k$ and~$m$.

Suppose that $\Pi(\cp_{k,m})$ and $\Pi(\cp^{\rm Ch}_{k,m})$ are Lagrangian
isotopic in $T^*\RP^n$, and denote such an isotopy by
$\overline L_t$. Since $\Pi:T^*S^n\to T^*\mathbb{RP}^n$ is a symplectic covering, the preimage $L_t:=\Pi^{-1}(\overline L_t)$ is a Lagrangian isotopy. Note also that $L_0 = \Pi^{-1}(\overline L_0) = \cp_{k,m}$ because $I(\cp_{k,m}) = \cp_{k,m}$. On the other hand, $L_1$ is one of the connected components of 
\[
\Pi^{-1}(\Pi(\pkmch)) = \pkmch\cup I(\pkmch).
\]
Because $\Pi(\pkmch)$ is a connected and embedded submanifold, we either have $ \pkmch = I(\pkmch)$ or $ \pkmch\cap I(\pkmch) = \varnothing$. In the second case, recall that the deck transformation $I$ is the time-$\frac{1}{2}$ map of the reparametrized geodesic flow $\phi^H$ on~$T^*S^n \setminus 0_{S^n}$ generated by~$H(q,p) = 2\pi \vert p \vert$.
Therefore, we get a Hamiltonian isotopy between $\pkmch $ and $I(\pkmch)$. 
In both cases, after composing (if necessary) with this Hamiltonian isotopy, we obtain a Lagrangian isotopy between $\cp_{k,m}$ and $\pkmch$ in $T^*S^n$. By what we proved above, this is impossible
unless $(k,m)\in\{(0,1),(1,1)\}$.

For~$(k,m) \in \{(0,1),(1,1)\}$ the same symplectic reduction argument as in the case of $T^*S^n$ shows that a Lagrangian isotopy exists.

In the cases~$(k,m) \in \{(0,1),(1,1)\}$,~$\Pi(\mathcal{P}_{k,m})$ and~$\Pi(\pkmch)$ are not Hamiltonian isotopic in~$T^*\R P^n$. As in the case above, they are distinguished by displaceability; this follows from \cref{rk:displaceableTRPn} and \cite[Proposition 7.1]{OakUsh16}. This concludes the proof in the case of~$T^*\R P^n$.
\proofend

\subsection{Proof of Theorem B}
We start by discussing the case of~$Q^n$. The compactification by symplectic cut of the ambient space does not change the topology of the Lagrangians, therefore if~$k\neq 0$ and at least one of~$k,m$ is even,~$\cp_{k,m}(r)$ and~$\pkmch$ are not homologous. If~$a,r < \pi$ the Lagrangian isotopy of~$\cp_{1,1}^{\Ch}(r) \subset T^*S^3$ to~$\cp_{1,1}(a) \subset T^*S^3$ discussed in the above proof of \cref{thm:compopen} can be chosen such that it is contained in~$D^*S^3 \subset Q^3$, proving the third bullet point in the statement of \cref{thm:compclosed}. Again we temporarily omit the parameters~$a,r$ as they do not play a role in the proof.

Now let us discuss the Maslov numbers in order to distinguish the Lagrangians up to Lagrangian isotopy and prove the second bullet point. Recall that any Lagrangian isotopy~$\phi:(Q^n,\cp_{k,m}(r)) \to (Q^{n},\pkmch(a))$ induces an isomorphism 
\[
\phi_*: \pi_2(Q^n,\cp_{k,m}) \xlongrightarrow{\cong} \pi_2(Q^n,\pkmch)
\]
preserving the Maslov class. Let~$L \in \{\cp_{k,m},\pkmch\}$. Recall that the Maslov index of~$\cp_{k,m}$ is~$n-1$, while the one of~$\pkmch$ is 2. As~$\pi_2(Q^n) \cong \Z$, we are in the third case discussed in Subsection \ref{algebra section}. Recall that~$c_1(Q^n) = n$, and the Maslov class factors through a map 
\[
\overline{m}_L: \pi_1(L)\to \Z_{2n}, \qquad
\overline{m}_L(\gamma) :=m_L(\beta) \text{ mod } 2n,
\]
where~$\beta \in \pi_2(Q^n,L)$ such that~$\gamma = \partial \beta$. Assume first that~$k,m\geqslant 3$. This implies that~$\pi_2(L) =0$, and~$\pi_1(L) \cong \Z$. 
Since the isomorphism set is
\[
\Iso(\pi_1(\cp_{k,m}),\pi_1(\pkmch)) = \text{Aut}(\Z) \cong \{\pm 1\},
\]
we obtain~$n-1 \equiv \pm 2 \quad \text{mod }2n$. As~$n\geqslant 7$, this equation has no solution. Therefore, no such Lagrangian isotopy can exist. 

For the case~$(k,m)= (1,1)$, it can be shown in the same way as above for~$T^*S^n$ that the two Lagrangians (which have the topology of 3-tori) are Lagrangian isotopic in the quadric. We show below that the Maslov class cannot distinguish the two Lagrangians in the missing case~$k =1$,~$m\geqslant 3$ odd.

Let~$k=0$. Recall from the discussion surrounding \cref{def:YZZ} that the reduced space~$\widehat{Z'} = Q^n \sslash_{\overline{\mu}_{0,m} = 0} G_{0,m}$ is a stratified two-sphere, the smooth part~$Z'$ of which symplectomorphic to the standard symplectic disk~$D = \{\pi \vert z \vert^2 < 2\pi\}$ of area~$2\pi$. The OU-Lagrangian~$\cp_{0,m}(r)$ maps to a closed embedded curve bounding area~$2\pi - r$ in~$Z'$. The Chekanov OU-Lagrangian~$\cp^{\Ch}_{0,m}(a)$ maps to a closed embedded curve bounding area~$a$ in~$Z'$. This means that for~$a = 2\pi - r$, these curves are Hamiltonian isotopic in~$Z'$. The lift of this Hamiltonian isotopy to~$\overline{\mu}_{0,m}^{-1}(0) \subset Q^n$ proves the last bullet point concerning~$Q^n$. 

For~$\CP^n$, the proof is similar. Recall from \cref{def: projective YZ} that, for all~$k,m$, the smooth part of the reduced space~$\CP^n \sslash_{\overline{\mu}_{k,m} = 0}G_{k,m}$ is symplectomorphic to a symplectic disk with area~$\pi$.
\proofend 

Note that \cref{thm:compclosed} does not treat the case where $k=1$, and $m \geqslant 3$ is odd. 
\begin{question}
    Let $m \geqslant 3$ be odd. Are $\cp_{1,m}, \mathcal{P}^{\textnormal{Ch}}_{1,m} \subset Q^n$ Lagrangian isotopic?
\end{question}
For completeness sake, we prove that the Maslov class cannot be used to distinguish these Lagrangians.

\begin{proposition}
\label{prop:k1modd}
Let~$m \geqslant 3$ be odd. Then the induced Maslov homomorphism on~$\pi_1$ does not distinguish~$\cp_{1,m}$ and~$\mathcal{P}^{\textnormal{Ch}}_{1,m}$ in~$Q^n$.
\end{proposition}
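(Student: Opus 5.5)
The plan is to compute both induced Maslov homomorphisms $\overline{m}_L\colon \pi_1(L)\to \Z_{2n}$ explicitly and then exhibit an isomorphism $\pi_1(\cp_{1,m})\to\pi_1(\pkmch)$ intertwining them. Here $n = m+2$. Since $k=1$ and $m\geqslant 3$ are both odd, both Lagrangians are diffeomorphic to $S^1\times S^1\times S^m$ by the discussion in \S\ref{mapping tori}. Hence $\pi_1(\cp_{1,m})\cong\pi_1(\pkmch)\cong \Z^2$, and any isomorphism between them is given by a matrix in $\GL_2(\Z)$.

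First, I determine $\overline{m}_{\pkmch}$. By the proof of \cref{prop:COUproperties}, $\pi_1(\pkmch)\cong \pi_1(\Gamma)\oplus \pi_1(F_{1,m})$ with $F_{1,m}=S^1\times S^m$. The Maslov class vanishes on $\pi_1(F_{1,m})$, which is the kernel of $\pi_*\circ\partial$, and equals $2$ on the lift of the generator of $\pi_1(\Gamma)$. Passing to $Q^n$ via the comparison argument in the proof of the monotonicity criterion for $\pkmch\subset Q^n$, we get $\overline{m}_{\pkmch}=(2,0)$ modulo $2n$ in this basis.

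Second, I determine $\overline{m}_{\cp_{1,m}}$. I use \cite[Proposition 5.1]{OakUsh16}, or redo it via \cref{lem:maslov_reduction}. In $T^*S^n$, the index-two subgroup $\pi_1(S^1)\oplus\pi_1(S^1)$ coming from the orbit $S^1\times \SO(2)\times \SO(m+1)$ carries Maslov values determined by the standard computation for the geodesic circle and the $\SO(2)$ rotation. One generator of $\pi_1(\cp_{1,m})$ is the half-loop $\theta\in[0,\tfrac12]$ composed with $(R_2,R_{m+1})$; it has Maslov index $k+m=n-1$, the minimal Maslov number. The $\SO(2)$-loop rotating the $S^k=S^1$ factor has some value $c$, which I expect to be $0$; this should be checked by lifting to the double cover $S^1\times S^1\times S^m$ and using \cref{lem:maslov_reduction}. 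So in a suitable basis $\overline{m}_{\cp_{1,m}}=(n-1,c)$ modulo $2n$.

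Finally, I look for $\begin{pmatrix}a&b\\ c'&d\end{pmatrix}\in\GL_2(\Z)$ sending one homomorphism to the other modulo $2n$. Because $\Z_{2n}$ is cyclic, two homomorphisms $\Z^2\to\Z_{2n}$ differ by an automorphism of $\Z^2$ exactly when their images coincide. Equivalently, the gcd of their values with $2n$ must agree. The image of $(2,0)$ is $2\Z_{2n}$. For $(n-1,c)$: since $n-1=m+1$ is even, it lies in $2\Z_{2n}$. Moreover $\gcd(n-1,2n)=2\gcd(\tfrac{n-1}{2},n)=2$, because $n$ is odd and $\gcd(n-1,n)=1$. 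So the image is again $2\Z_{2n}$, provided $c$ is even; the Maslov class of an orientable Lagrangian is always even, so this holds. A $\GL_2(\Z)$ change of basis realizes the identification, for instance via the Smith normal form reduction of a row vector.

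The main obstacle is the second step: pinning down $\overline{m}_{\cp_{1,m}}$ on a genuine basis of $\pi_1$, given the index-two subtlety of the non-trivial mapping torus structure. Only its image in $\Z_{2n}$ matters, however, and that follows from knowing the minimal Maslov number $n-1$ together with the evenness of all values.
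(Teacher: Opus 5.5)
Your argument is correct. It differs from the paper's mainly in the final algebraic step.

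The paper chooses bases in which the two induced Maslov homomorphisms read $(n-1,0)$ and $(2,0)$ modulo $2n$. It then writes down an explicit matrix in $\GL(2;\Z)$ with entries $a=\frac{n-1}{2}$, $b=n$, completed to a unimodular matrix by Bézout since $\gcd(\frac{n-1}{2},n)=1$. You instead use the basis-free fact that the $\GL(2;\Z)$-orbits in $\Hom(\Z^2,\Z_{2n})$ are classified by their image. You then check that both images equal $2\Z_{2n}$ via $\gcd(n-1,2n)=2$, which is the same arithmetic input in another form.

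Your route buys two things. The existence of normalized bases, which the paper takes for granted, is no longer needed. It also shows that the value on the second generator is irrelevant.

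Your second step can be shortened considerably. By the Oakley--Usher minimal Maslov number, the $\Z$-valued Maslov map of $\cp_{1,m}\subset T^*S^n$ has image $(n-1)\Z$. The induced map in $Q^n$ is its reduction mod $2n$, by the same comparison argument the paper uses for $\pkmch$. Hence the image is $\langle n-1\rangle=2\Z_{2n}$, and you need neither the value $c$ nor its evenness.

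Two points in the orbit classification need tightening:
\begin{itemize}
\item Smith normal form of an integer lift $(\alpha,\beta)$ only gives $(g,0)$ with $g=\gcd(\alpha,\beta)$. You need one more step: replace $0$ by $2n$, which is harmless modulo $2n$, and reduce $(g,2n)$ to $(\gcd(g,2n),0)$. This reaches the canonical representative $(\gcd(\alpha,\beta,2n),0)$, which depends only on the image.
\item The classification relies on the source having rank at least two, not just on $\Z_{2n}$ being cyclic. For $\pi_1\cong\Z$, as in the case $k,m\geqslant 3$ in the proof of \cref{thm:compclosed}, the only automorphisms are $\pm 1$. There two homomorphisms with the same image need not be equivalent, which is exactly why the Maslov class does distinguish the Lagrangians in that case.
\end{itemize}
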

\proof
Since~$k=1, m\geqslant 3$, we have~$\pi_1(\cp_{1,m}) \cong \pi_1(\cp^{\text{Ch}}_{1,m}) \cong \mathbb{Z}^2$. Choose a basis~$(\gamma_1,\gamma_2)$ of~$\pi_1(\cp_{1,m})$ and a basis~$(\widetilde{\gamma}_1,\widetilde{\gamma}_2)$ of~$\pi_1(\cp^{\text{Ch}}_{1,m})$ such that
\[
\overline m_{\cp_{1,m}}(\gamma_1)=n-1 \text{ mod } 2n, \qquad \overline m_{\cp_{1,m}}(\gamma_2)=0 \text{ mod } 2n.
\]
\[
\overline m_{\cp^{\text{Ch}}_{1,m}}(\widetilde{\gamma}_1)=2 \text{ mod } 2n, \qquad \overline m_{\cp^{\text{Ch}}_{1,m}}(\widetilde{\gamma}_2)=0 \text{ mod } 2n.
\]
Note that
\[
\Iso(\pi_1(\cp_{1,m}),\pi_1(\cp^{\text{Ch}}_{1,m})) = \text{Aut}(\Z^2) \cong \GL(2;\Z).
\]
Set 
\[
a:=\frac{n-1}{2}, \qquad b:= n.
\]
Since~$n=k+m+1=m+2$ is odd,~$a\in \Z$, and moreover
\[
\gcd(a,b) = \gcd\left(\frac{n-1}{2},n \right) = 1.
\]
By Bézout's identity, there exist~$c,d\in \Z$ such that~$ad-bc = \pm 1$. Therefore, 
\[
A := \begin{pmatrix}
        a & b  \\
        c & d  
        \end{pmatrix}\in \GL(2;\Z).
\]
We obtain 
\[
(\overline m_{\cp^{\text{Ch}}_{1,m}} \circ A)(\gamma_1) = \overline m_{\cp^{\text{Ch}}_{1,m}}(a \widetilde{\gamma}_1+c\widetilde{\gamma}_2) \equiv  2a = n-1 = \overline m_{\cp_{1,m}}(\gamma_1).
\]
\[
(\overline m_{\cp^{\text{Ch}}_{1,m}} \circ A)(\gamma_2) = \overline m_{\cp^{\text{Ch}}_{1,m}}(b \widetilde{\gamma}_1+d\widetilde{\gamma}_2) \equiv  2b = 2n \equiv 0 =\overline m_{\cp_{1,m}}(\gamma_2).
\]
Therefore, 
\[
\overline m_{\cp^{\text{Ch}}_{1,m}} \circ A = \overline m_{\cp_{1,m}},
\]
showing that the Maslov homomorphisms are equivalent under the natural~$\GL(2;\Z)$-action on~$\Hom(\Z^2,\Z_{2n})$. Therefore the Maslov class does not distinguish the two Lagrangians in this case.
\proofend

\section{Chekanov OU-torus in the three-dimensional quadric}
\label{sec:tori_quadric}

In this section, we exclusively work with the case~$k=m=1$ in the three-dimensional quadric~$Q^3$. For ease of notation, we drop the~$1,1$-subscripts and sizes of the OU-Lagrangians everywhere. In other words, we write~$\cp = \cp_{1,1}(\frac{4\pi}{3})$ and~$\cp^{\Ch} = \cp_{1,1}^{\Ch}(\frac{4\pi}{3})$. Recall that these are the monotone OU-Lagrangian and Chekanov OU-Lagrangian, respectively.

\subsection{The $\Psi$-invariant and its versal deformation} In order to distinguish tori up to Hamiltonian isotopy, we use the so-called $\Psi$-invariant, introduced by Shelukhin--Tonkonog--Vianna~\cite{SheTonVia24}. Let $(X,\omega)$ be a symplectic manifold. The $\Psi$-invariant associates to every orientable spin Lagrangian~$L \subset X$ a number~$\Psi(L) \in (0,+\infty]$ which measures the smallest symplectic area of a~$J$-holomorphic disk which contributes nontrivially to some algebraic disk-count in the Fukaya~$A_{\infty}$-algebra of~$L$.

As we will see, $\Psi$ itself does not distinguish the tori we are interested in. For example, we will see that $\Psi(\cp) = \Psi(\cp^{\rm Ch}) = \frac{2\pi}{3}$. Therefore, we consider its versal deformation. We refer to \cite[Section 3]{BreHauSch23} for details on versal deformations. 

\begin{definition}
	Let~$L \subset X$ be an orientable spin Lagrangian. We call \emph{$\Psi$-germ (of $L$)} the germ, in the sense of \cite[Definition 3.13]{BreHauSch23}, of the invariant~$\Psi$ at~$L$. We denote it by
    	\begin{equation*}
        	\psi_L = [\Psi]_L \colon H^1(L;\R) \dashrightarrow (0,+\infty],
    	\end{equation*}
    where the dashed arrow denotes a map defined on a neighbourhood of the origin of~$H^1(L;\R)$.
\end{definition}

\begin{proposition}
	\label{prop:min_functionals}
	The $\Psi$-germ is of the form 
	\begin{equation}
		\psi_L(a) = \Psi(L) + \min\{\langle a , \xi_i \rangle \},
	\end{equation}
	for some $\xi_1,\ldots,\xi_N \in H_1(L)$. In particular, $\psi_L$ extends to $\psi_L \colon H^1(L;\R) \rightarrow (0,+\infty]$.
\end{proposition}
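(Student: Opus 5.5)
The plan is to realise the germ concretely via the Weinstein neighbourhood of $L$ and then read off how disk areas change. Fix a Weinstein neighbourhood $\iota\colon U \subset T^*L \hookrightarrow X$ of $L$. For $a \in H^1(L;\R)$ small, choose a closed $1$-form $\alpha_a$ representing $a$, linear in $a$, and set $L_a = \iota(\operatorname{graph}(\alpha_a))$. By \cite[Section 3]{BreHauSch23}, the germ $\psi_L$ is the germ at $0$ of $a \mapsto \Psi(L_a)$. It is well defined because different choices of $\alpha_a$ and of the Weinstein embedding change $L_a$ only by Hamiltonian isotopy, and $\Psi$ is a Hamiltonian invariant.

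The first step is the area computation. The isotopy $L_t = L_{ta}$, $t\in[0,1]$, identifies $\pi_2(X,L)\cong\pi_2(X,L_a)$. For a class $\beta$ with boundary $\partial\beta\in H_1(L)$, the flux formula gives
\[
\omega(\beta_a) = \omega(\beta) + \langle a,\partial\beta\rangle,
\]
up to a global sign convention. Since this is linear in $a$, it produces the form $\Psi(L)+\langle a,\xi\rangle$ with $\xi=\partial\beta$ (or $-\partial\beta$).

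The second step is to describe $\Psi(L_a)$ as a minimum over classes. $\Psi(L_a)$ is the minimal area of a class $\beta$ which contributes nontrivially to the relevant $A_\infty$ disk counts (an obstruction/energy-filtration quantity), with spin structure and local systems transported along the isotopy. I would show that for $a$ in a sufficiently small neighbourhood, the set of classes whose counts are nonzero is independent of $a$. The argument is the standard one: the $A_\infty$-structures on $L$ and $L_a$ are related by deforming with a local system or bounding cochain $b_a$. Concretely, via a Fukaya trick or a pullback of almost complex structure under a diffeomorphism $\phi_a$ mapping $L$ to $L_a$, the counts of class $\beta$ for $L_a$ equal those of $L$, weighted by $T^{\langle a,\partial\beta\rangle}$. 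The Fukaya trick is where I expect the real work, since the pulled-back structure must be tame and the energy shifts exactly of flux form. Hence "nonzero contribution" is a class-wise condition independent of $a$, and only the area shifts.

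The final step combines these. $\Psi(L)$ is attained as a minimum over finitely many classes $\beta_1,\dots,\beta_N$ of minimal area among the contributing ones. Finiteness follows from Gromov compactness: below any fixed energy bound, only finitely many classes carry holomorphic disks. For small $a$, contributing classes with area strictly larger than $\Psi(L)$ stay strictly above, by a uniform gap coming from discreteness of areas below a bound. So
\[
\psi_L(a)=\min_i\{\omega(\beta_i)+\langle a,\partial\beta_i\rangle\}=\Psi(L)+\min_i\langle a,\xi_i\rangle,
\]
with $\xi_i=\partial\beta_i$. If no class contributes, $\Psi=+\infty$ identically (empty minimum). The right-hand side is defined for all $a$, which gives the extension to $H^1(L;\R)$.
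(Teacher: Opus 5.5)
This is essentially the paper's argument: $\Psi(L)$ is realised by finitely many minimal-area classes $\beta_i$ (finite by Gromov compactness) whose areas shift by $\langle a,\partial\beta_i\rangle$ along the versal deformation. Where the paper simply cites the proof of Theorem~4.9 in \cite{SheTonVia24} for the local stability of the contributing classes, you sketch it via the Fukaya trick. One point needs more than you wrote: discreteness of areas below a bound does not control classes of large area, since $\partial\beta$ is unbounded in $H_1(L)$. For the uniform gap you therefore also need $|\langle a,\partial\beta\rangle|\leqslant K|a|\,\omega(\beta)$ for classes with $J$-holomorphic representatives. This follows by writing $\langle a,\partial\beta\rangle=\int_D u^*d\tilde\alpha_a$ for an extension $\tilde\alpha_a$ of $\alpha_a$ to $X$, and using that $\omega$-area dominates the metric area of $J$-holomorphic disks.
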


\proof
It was proved in \cite{SheTonVia24} that, in a small neighbourhood of $L$ in the space of Lagrangians, the invariant $\Psi$ can be written as the minimum of symplectic areas of relative classes $\beta_1,\ldots,\beta_N \in H_2(X,L)$ with boundary on $L$. This set consists of those classes that contain a disk which realizes the value of $\Psi(L)$. By Gromov's compactness theorem, this set is finite. See the proof of Theorem 4.9 in \cite{SheTonVia24} for more details. Setting $\xi_i = \pp \beta_i$ and computing the versal deformation yields the claim. 
\proofend

Instead of working with $\psi_L$ directly, it is often more convenient to work with its (super-) level sets, since these are easier to manipulate and can be represented visually.

\begin{definition}
    Let~$L \subset X$ be an orientable spin Lagrangian. We call 
    \begin{equation*}
        \Delta^{\!\Psi}(L) = \{\psi_L \geqslant 0\} \subset H^1(L;\R)
    \end{equation*}
    the \emph{$\Psi$-polytope of $L$}. 
\end{definition}

\begin{remark}
	All tori we will consider here can be viewed as the monotone fibre of some so-called \emph{Gelfand--Cetlin fibration}, meaning that the $\Psi$-polytope agrees with 
	\begin{enumerate}
		\item the polytope obtained as the base of the fibration, after a translation that shifts the image of the monotone fibre $L$ at the origin, if necessary.
		\item the dual of the Newton polytope of the superpotential of the fibre in question.
	\end{enumerate}
	The fact that these three polytopes agree is \cite[Theorem B]{SheTonVia24}. We only use Gelfand--Cetlin fibrations once to compute $\psi_\cp$ for the Oakley--Usher Lagrangian $\cp \subset Q^3$. All other invariants will be computed by versal deformation techniques. 
\end{remark}

We use the following result to distinguish tori.

\begin{proposition}
	\label{prop:distinguish_tori}
	Let $L,L' \subset X$ be orientable spin Lagrangian manifolds. 
	If $L,L' \subset X$ are Hamiltonian isotopic, then there exists an isomorphism $\chi \colon H^1(L;\Z) \rightarrow H^1(L';\Z)$ such that $\chi_\R(\Delta^{\!\Psi}(L)) = \Delta^{\!\Psi}(L')$, where $\chi_\R \colon H^1(L;\R) \rightarrow H^1(L';\R)$ denotes the extension to $\R$-coefficients.
\end{proposition}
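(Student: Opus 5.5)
The plan is to show that the $\Psi$-germ is a Hamiltonian invariant, transported by the map that a Hamiltonian isotopy induces on cohomology. Suppose $\phi_t$, $t\in[0,1]$, is a Hamiltonian isotopy with $\phi_1(L)=L'$. The diffeomorphism $\phi_1|_L \colon L\to L'$ induces an isomorphism $\chi := (\phi_1|_L^{-1})^* \colon H^1(L;\Z)\to H^1(L';\Z)$. Its real extension $\chi_\R$ is the map I aim to show carries $\Delta^{\!\Psi}(L)$ onto $\Delta^{\!\Psi}(L')$. Since $\Delta^{\!\Psi}$ is defined as the superlevel set $\{\psi\geqslant 0\}$, it suffices to prove the stronger identity $\psi_{L'}\circ\chi_\R=\psi_L$ as germs, and hence globally via the extension given by \cref{prop:min_functionals}.

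The key input is the invariance of $\Psi$ itself. Following \cite{SheTonVia24}, $\Psi(L)$ is defined from holomorphic disk counts in the Fukaya $A_\infty$-algebra. It is therefore invariant under symplectomorphisms: $\Psi(\varphi(L))=\Psi(L)$ for any symplectomorphism $\varphi$, after transporting the almost complex structure and the spin structure.

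To pass to germs, recall from \cite[Section 3]{BreHauSch23} how the germ is computed. Small Lagrangian deformations of $L$ are parametrised, up to Hamiltonian isotopy, by their flux class $a\in H^1(L;\R)$ near $0$ (via Weinstein neighbourhoods, taking graphs of closed $1$-forms). The germ is then $a\mapsto\Psi(L_a)$, which is well defined because $\Psi$ is a Hamiltonian invariant. Now $\phi_1$ maps a Weinstein neighbourhood of $L$ to one of $L'$, so the deformation $L_a$ goes to a Lagrangian $\phi_1(L_a)$ near $L'$. By naturality of flux under symplectomorphisms, the flux class of $\phi_1(L_a)$ relative to $L'$ is $\chi_\R(a)$. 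Hence $\phi_1(L_a)$ is Hamiltonian isotopic to $L'_{\chi_\R(a)}$. Invariance of $\Psi$ then gives $\psi_{L'}(\chi_\R(a))=\psi_L(a)$ for all $a$ near $0$.

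Finally I pass from germs to the global statement. By \cref{prop:min_functionals}, $\psi_L(a)=\Psi(L)+\min_i\langle a,\xi_i\rangle$ is a piecewise linear function, positively homogeneous in $a$ after subtracting the constant. A germ of this form determines the set $\{\xi_i\}$ modulo redundant ones, and hence determines the extension. So the identity near $0$ propagates to all of $H^1(L;\R)$: we get $\chi_\R$ mapping the $\xi_i$ for $L'$ to $(\chi^{-1})^*$-images of those for $L$, and $\Psi(L')=\Psi(L)$. Taking superlevel sets at $0$ yields $\chi_\R(\Delta^{\!\Psi}(L))=\Delta^{\!\Psi}(L')$.

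The main obstacle is the naturality step in the third paragraph: verifying that the identification of nearby Lagrangians with $H^1$ is compatible with $\phi_1$. Concretely, one has to check that the flux class of $\phi_1(L_a)$ relative to $L'$, measured with a different Weinstein neighbourhood, equals $\chi_\R(a)$. I would handle this by computing the flux as the area swept by boundary cycles, which is manifestly symplectomorphism-invariant. Alternatively, one can simply cite \cite[Section 3]{BreHauSch23}, where invariance of germs under symplectomorphisms is established in general.
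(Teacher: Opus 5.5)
Your proposal is correct and follows the paper's proof. You take $\chi=(\phi_1|_L^{*})^{-1}$ and establish $\psi_{L'}\circ\chi_\R=\psi_L$ from the invariance of $\Psi$, which the paper obtains by citing \cite[Proposition 3.14]{BreHauSch23} and which you essentially reprove via naturality of flux, and you then pass to superlevel sets; your homogeneity argument via \cref{prop:min_functionals} only makes explicit the germ-to-global step the paper leaves implicit (and it uses only that $\phi_1$ is a symplectomorphism, which is the version \cref{thm:exoticCOU} actually needs).
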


\proof
	Let $\phi$ be a Hamiltonian isotopy with $\phi(L) = L'$. Set $\chi = (\phi\vert_L^*)^{-1}$. Since $\Psi$ is invariant under Hamiltonian isotopies, we find that $\psi_{L'} = \psi_{L} \circ \chi_\R^{-1}$ by \cite[Proposition 3.14]{BreHauSch23}. It follows that $\chi_\R(\Delta^{\!\Psi}(L)) = \Delta^{\!\Psi}(L')$.
\proofend

In practice, we usually identify $H^1(L;\R)$ and $H^1(L';\R)$ with $\R^n$ and consider the polytopes up to $\GL(n;\Z)$. In that case, we obtain that the $\Psi$-polytopes of $L$ and $L'$ are related by an element in $\GL(n;\Z)$ whenever they are Hamiltonian isotopic.

\subsection{The Oakley--Usher Lagrangian}
To compute the $\Psi$-germ of $\cp$, we rely on \cite[Proposition C]{SheTonVia24}. It determines $\Psi$ of fibres of so-called \emph{Gelfand--Cetlin fibrations} which have a monotone fibre. A continuous Lagrangian torus fibration is called \emph{Gelfand--Cetlin} if it restricts to a toric moment map away from the codimension two faces. As it turns out, the moment map~$\mu_{1,1}$ defined by \eqref{eq:momentmapmk} induces a Gelfand--Cetlin fibration on~$Q^3$ in which~$\cp$ appears as a monotone fibre, see \cref{fig:GCpol} for its polytope.

\begin{figure}[h!]
\centering
\begin{tikzpicture}[scale=1.05, >=Latex, line cap=round, line join=round]

% Requires \usetikzlibrary{calc,arrows.meta}

% -------------------------------------------------
% projection vectors
% -------------------------------------------------
\coordinate (O) at (0,0);
\coordinate (e1) at (2.5,0.6);
\coordinate (e2) at ( 2,-0.7);
\coordinate (e3) at ( 0.0,3);

\draw[->, thick, gray!70] (O) -- ($(O)+1.2*(e1)$) node[below right] {$x_2$};
\draw[->, thick, gray!70] (O) -- ($(O)+1.2*(e2)$) node[below] {$x_1$};
\draw[->, thick, gray!70] (O) -- ($(O)+1.2*(e3)$) node[above] {$x_3$};
% -------------------------------------------------
% vertices of the polytope
% -------------------------------------------------
\coordinate (T) at ($(O)+(e3)$);
\coordinate (A) at ($(O)+(e1)+(e3)$);
\coordinate (B) at ($(O)+(e2)+(e3)$);
\coordinate (C) at ($(O)+(e1)+(e2)+(e3)$);
\coordinate (L) at ($(O)+0.9*(e1)+0.9*(e2)+0.7*(e3)$);

% monotone point p = (1/3,1/3,2/3)
\coordinate (P) at ($(O)+0.303*(e1)+0.333*(e2)+0.667*(e3)$);

% -------------------------------------------------
% polytope
% -------------------------------------------------
\fill[gray!5 ]  (T)--(A)--(C)--(B)--cycle;
\fill[gray!20 ]  (O)--(T)--(B)--cycle;
\fill[gray!40] (O)--(B)--(C)--cycle;

\draw[thick] (T)--(A)--(C)--(B)--cycle;
\draw[thick] (O)--(T)--(B)--cycle;
\draw[thick] (O)--(B)--(C)--cycle;

% hidden edge
\draw[dashed] (O)--(A);

% -------------------------------------------------
% monotone point
% -------------------------------------------------
\fill[blue!80!black] (P) circle (2.2pt);
\node[blue!80!black, left=3pt] at (P)
{$x_{\mathcal{P}}$};

% title
\node[font=\small] at (L) {$\Delta_{Q^3}$};

\end{tikzpicture}

\caption{ \label{fig:GCpol} The Gelfand--Cetlin polytope~\(\Delta_{Q^3}\)}
\end{figure}

\begin{proposition}
    \label{prop:GCfibration}
    There is a Gelfand--Cetlin fibration~$\mu \colon Q^3 \rightarrow \Delta_{Q^3}$ over the polytope 
    \begin{equation}
        \label{eq:deltaQ3}
        \Delta_{Q^3} =
        \{x_1,x_2 \geqslant 0, \; x_3 - x_1 \geqslant 0, \; x_3 - x_2 \geqslant 0, \; 2\pi-x_3 \geqslant 0\} \subset \R^3,
    \end{equation}
    such that
    \begin{enumerate}
        \item the fibre~$\Sigma = \mu^{-1}(0)$ is a Lagrangian sphere;
        \item the fibre~$\cp = \mu^{-1}(x_{\cp})$ of~$x_{\cp} = (\frac{2\pi}{3},\frac{2\pi}{3},\frac{4\pi}{3})$ is the Oakley--Usher Lagrangian, and 
        \item the restriction~$\mu \colon Q^3 \setminus \Sigma \rightarrow \Delta_{Q^3} \setminus \{0\}$ is a smooth toric moment map.
    \end{enumerate}
\end{proposition}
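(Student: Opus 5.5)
The plan is to take $\mu$ to be the moment map $\mu_{1,1}$ of \eqref{eq:momentmapmk}, transported to $Q^3$ via \cref{prop:cut}, followed by an integral affine change of coordinates. For $k=m=1$ the group $G_{1,1}=\SO(2)\times\SO(2)$ is a $2$-torus, and $\mathfrak{so}(2)^*\cong\R$. So on $(Q^3)^\times = Q^3\setminus\Sigma$ the action of $S^1\times G_{1,1}$ is a Hamiltonian $T^3$-action. Its moment map is $(H,f_1,f_2)=(2\pi|p|,\,q_1\wedge p_1,\,q_2\wedge p_2)$, and it extends continuously to $\Sigma$ by $(0,0,0)$.

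\textbf{Step 1: the image.} Since $|q_1\wedge p_1|+|q_2\wedge p_2|\leqslant |q_1||p_1|+|q_2||p_2|\leqslant |q|\,|p|=|p|$, the image of $(H,u_1,u_2)$ with $u_i:=2\pi f_i$ is the square pyramid
\[
\{\,|u_1|+|u_2|\leqslant H\leqslant 2\pi\,\}.
\]
The apex $H=0$ is attained exactly on $p=0$, i.e.\ on $\Sigma$. The base $H=2\pi$ is the cut divisor $\{z_0=0\}$. One checks surjectivity by exhibiting points explicitly, with $q_i,p_i$ in the planes $\R^2$.

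Now set
\[
x_1=\tfrac12(H+u_1+u_2),\qquad x_2=\tfrac12(H+u_1-u_2),\qquad x_3=H.
\]
This maps the pyramid onto $\Delta_{Q^3}$ as in \eqref{eq:deltaQ3}; for instance, $x_3-x_1\geqslant0$ is equivalent to $u_1+u_2\leqslant H$. It sends $\Sigma$ to $0$, which gives (1) once we know $\Sigma$ is a Lagrangian sphere; this holds since $\Sigma=\psi(0_{S^3})$. The matrix is not in $\GL(3;\Z)$ as it stands. I would justify integrality using \cref{rk:globalstab}: for $k,m$ odd the element $(\tfrac12,-\id,-\id)$ acts trivially, so the effective torus is $T^3/\Z_2$. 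Its weight lattice is an index-two superlattice, and in those coordinates the half-integer formulas above are integral.

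\textbf{Step 2: fibres.} By the proposition identifying $\pkm(r)$ with $\mu_{k,m}^{-1}(r,0,0)$, which transfers to $Q^n$, the fibre over $H=4\pi/3=r_*$, $u=0$ is the monotone Oakley--Usher torus $\cp$. This point maps to $x_{\cp}=(\tfrac{2\pi}{3},\tfrac{2\pi}{3},\tfrac{4\pi}{3})$, proving (2). In general, the fibres over $\Delta_{Q^3}\setminus\{0\}$ should be single $T^3/\Z_2$-orbits. This can be checked as in the proof of that proposition, by using the $G_{1,1}$-action to normalise $q_1,q_2$ and solving for $p$.

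\textbf{Step 3: smoothness.} For (3), I would show that $\mu$ is a smooth toric moment map on $Q^3\setminus\Sigma$. In the interior and over the open faces with $H<2\pi$, this is a direct computation of stabilisers of the effective torus: the side facets correspond to $q_i\wedge p_i=\pm|q_i||p_i|$ together with collinearity conditions. The top facet and its edges are handled by \cref{prop:cut} and the fact that symplectic cuts of toric manifolds are toric. Smoothness near the top then follows from Lerman's construction, which gives the Delzant condition at the four base vertices; these can also be checked directly against the polytope.

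The main obstacle I expect is Step 3 together with the lattice bookkeeping. One has to verify that the effective quotient torus makes $\mu$ a genuine Delzant-type moment map near the side edges and base vertices, rather than an orbifold one. I would try to settle this by computing isotropy groups at explicit representative points on each edge.
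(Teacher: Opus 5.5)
Your proposal is correct and follows the paper's proof essentially step for step. The shared steps are: start from $\mu_{1,1}$; enlarge the lattice by $(\tfrac12,\tfrac12,\tfrac12)$ to obtain an effective torus; apply a rational change of basis; cut at $\mu_3=H=2\pi$; and read off $\Sigma$ and $\cp$ as the fibres over $0$ and $x_{\cp}$. Your change of basis differs from the paper's $M$ only by an integral symmetry of $\Delta_{Q^3}$.

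For (3), the paper skips your planned isotropy computations. It simply uses that an effective Hamiltonian $T^3$-action on a $6$-manifold automatically has connected stabilisers, hence is locally Delzant. One correction to your terminology: it is the period lattice of the torus that becomes an index-two superlattice of $\Z^3$. The weight lattice is the index-two sublattice $\{x_1+x_2+x_3\in 2\Z\}$, and that is what your half-integer formulas carry onto $\Z^3$.
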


\proof
Recall that the continuous moment map 
\begin{equation}
    \label{eq:mu11}
    \mu_{1,1} \colon T^*S^3 \rightarrow \R^3, \quad (q,p) \mapsto (2\pi \vert p \vert, p_1q_2 - p_2q_1, p_3q_4 - p_4q_3)
\end{equation}
generates a Hamiltonian~$T^3$-action away from the zero-section~$0_{S^3}$. It is smooth away from the zero-section, and its image is the cone defined by the following four equations
    \begin{equation*}
        \widehat{\Delta}_{T^*S^3}
        =
        \{x_1 \pm x_2 \pm x_3 \geqslant 0\} \subset \R^3.
    \end{equation*}
Recall however from \cref{rk:globalstab} that the element~$\theta_{1/2} = (1/2,1/2,1/2) \in T^3$ acts trivially, meaning that the action is not effective and thus not toric. This can be remedied as follows: Viewing the standard torus~$T^3$ as the quotient of~$\R^3$ by the lattice~$\Lambda_0 = \Z^3$, we can add~$\theta_{1/2}$ as a generator to the lattice~$\Lambda_0$ to obtain~$\Lambda$. The new torus~$\R^3 / \Lambda$ acts effectively and has the same image under~$\mu_{1,1}$, though with respect to a different lattice in~$\R^3 = \mathfrak{t}^*$ (the target space of~$\mu_{1,1}$). The dual lattice of~$\Lambda$ is given by
\begin{equation*}
    \Lambda^* := \{(x_1,x_2,x_3) \in \mathbb{Z}^3 \sth x_1 + x_2 + x_3 \in 2\Z \}.
\end{equation*}
Since we are used to drawing the image of the moment map in~$\R^3 = \mathfrak{t}^*$ equipped with the \emph{standard} integral affine structure~$\Lambda_0^* = \Z^3$, we apply some~$M \in \GL(3;\Q)$ which induces an isomorphism from~$\Lambda^*$ to~$\Z^3$. It is not hard to show that the linear transformation
\begin{equation}
    \label{eq:basechange_lattice}
    M = \frac{1}{2}
    \begin{pmatrix}
        1 & -1 & -1 \\
        1 & 1 & -1 \\
        2 & 0 & 0
    \end{pmatrix}
\end{equation}
has this property. We define a new moment map~$\mu = M \mu_{1,1}$. Since, in the original torus, the only element acting by the identity on~$T^*S^3 \setminus 0_{S^3}$ was~$\theta_{1/2} \in T^3$, this moment map induces an effective Hamiltonian~$T^3$-action and turns~$T^*S^3 \setminus 0_{S^3}$ into a toric manifold. Its continuous extension to~$0_{S^3}$ satisfies~$\mu^{-1}(0) = 0_{S^3}$ and its image is the cone
    \begin{equation*}
        \Delta_{T^*S^3}
        =
        \{x_1,x_2 \geqslant 0, \; x_3 - x_1 \geqslant 0, \; x_3 - x_2 \geqslant 0\} 
        \subset \R^3.
    \end{equation*}
Recall from \cref{prop:cut} that performing a symplectic cut at~$2\pi \vert p \vert = 2\pi$ yields the quadric~$Q^3$. The Hamiltonian~$2\pi \vert p \vert$ is equal to the first component of the original moment map~$\mu_{1,1}$ and thus corresponds to the third component of~$\mu$, as a computation using~$M$ shows. Therefore, performing a symplectic cut with respect to~$\mu_3 = 2\pi$ yields~$Q^3$. By abuse of notation, we denote the resulting moment map by~$\mu \colon Q^3 \rightarrow \R^3$, too. By construction, its image is~$\Delta_{Q^3}$ as defined in \eqref{eq:deltaQ3}. Let~$\Sigma = \mu^{-1}(0) \subset Q^3$. This is the Lagrangian sphere obtained from~$0_{S^3} \subset T^*S^3$ after the symplectic cut. 

Recall from \eqref{P as moment fibre} that the Oakley--Usher Lagrangians~$\cp_{1,1}(r) \subset T^*S^3$ are given by~$\mu_{1,1}^{-1}(r,0,0)$ in the original moment map and thus we obtain 
\begin{equation*}
    \cp_{1,1}(r) = \mu^{-1}(\textstyle\frac{r}{2},\textstyle\frac{r}{2},r),
\end{equation*}
with respect to the new moment maps for all~$r > 0$ in~$T^*S^3$ and for~$r \in (0,2\pi)$ in~$Q^3$. Recall from \cref{prop:OU_quadric_props} that the Oakley--Usher Lagrangian in~$Q^3$ is monotone for~$r = \frac{4\pi}{3}$. It follows that~$\cp = \mu^{-1}(x_{\cp}) \subset Q^3$ for~$x_{\cp}=(\frac{2\pi}{3},\frac{2\pi}{3},\frac{4\pi}{3})$. By construction, the moment map~$\mu \colon Q^3 \rightarrow \Delta_{Q^3}$ is continuous and a smooth toric moment map away from~$\Sigma$.
\proofend

Let 
    \begin{equation*}
        T(x) = \mu^{-1}(x), \quad
        x \in \Int \Delta_{Q^3},
    \end{equation*}
denote the corresponding Gelfand--Cetlin fibres and let~$d_{\rm IA}(x,\pp \Delta_{Q^3})$ be the integral affine distance of the point~$x \in \R^3$ to the boundary of~$\Delta_{Q^3}$. Explicitly, it is given by
    \begin{equation}
    	\label{eq:IAdist_Q^3}
        d_{\rm IA}(x,\pp \Delta_{Q^3})
        =
        \min \{x_1,x_2,x_3 - x_1, x_3 - x_2, 2\pi-x_3\}.
    \end{equation}

Together with \cref{prop:GCfibration}, we use \cite[Proposition C]{SheTonVia24} to find the following. 

\begin{proposition}
    \label{prop:Psi_fibres}
    For every~$x \in \Int \Delta_{Q^3}$, the~$\Psi$-invariant of a Gelfand--Cetlin fibre~$T(x)$ is equal to the integral affine distance of~$x$ to the boundary~$\pp \Delta_{Q^3}$,
    \begin{equation*}
        \Psi(T(x)) = d_{\rm IA}(x,\pp \Delta_{Q^3}).
    \end{equation*}
\end{proposition}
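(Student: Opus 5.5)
The plan is to apply \cite[Proposition C]{SheTonVia24} directly to the Gelfand--Cetlin fibration $\mu \colon Q^3 \rightarrow \Delta_{Q^3}$ of \cref{prop:GCfibration}. That result states, roughly: if $X$ is a monotone symplectic manifold carrying a Gelfand--Cetlin type fibration whose base polytope contains the image of a monotone fibre, then for every interior point $x$ the $\Psi$-invariant of the fibre $T(x)$ equals the integral affine distance from $x$ to the boundary of the base. The proof therefore consists of checking the hypotheses and then reading off the distance.

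\textbf{Step 1: hypotheses.} First I verify the hypotheses for $Q^3$.
\begin{enumerate}
\item $Q^3$ is monotone, by \eqref{eq:quadric_monotonicity}.
\item By \cref{prop:GCfibration}(3), $\mu$ is continuous and is a smooth toric moment map away from the single fibre $\Sigma=\mu^{-1}(0)$. The only non-toric point $0$ lies at the apex, where the codimension-two edges $\{x_1=0,x_3=x_1\}$ and $\{x_2=0,x_3=x_2\}$ meet. So all interior fibres are smooth Lagrangian tori, and near every facet interior the fibration is honestly toric.
\item A monotone interior fibre exists, namely $\cp=\mu^{-1}(x_\cp)$ by \cref{prop:GCfibration}(2), which is monotone by \cref{prop:OU_quadric_props}. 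As a consistency check, $d_{\rm IA}(x_\cp,\pp\Delta_{Q^3})=\frac{2\pi}{3}$. This equals the distance to each of the five facets, matching the monotonicity constant $\frac{\pi}{3}$ times Maslov index $2$.
\end{enumerate}

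\textbf{Step 2: integral affine structure.} Second, I make sure the integral affine structure used in the formula is the correct one. The lattice change $M$ in \eqref{eq:basechange_lattice} was chosen exactly so that the effective torus action has weight lattice $\Z^3$. The primitive inward normals of the five facets are then $e_1$, $e_2$, $e_3-e_1$, $e_3-e_2$ and $-e_3$, all of which are primitive in $\Z^3$. This shows that $d_{\rm IA}$ is given by \eqref{eq:IAdist_Q^3}. Each facet corresponds to a toric divisor, and the basic Maslov-$2$ disk through that facet has area equal to the corresponding affine coordinate.

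\textbf{Step 3: the main obstacle.} The hard part is ensuring that the singular fibre $\Sigma$ does not create extra holomorphic disks of smaller area, nor spoil the lower bound. I would handle this in two halves.
\begin{itemize}
\item \emph{Upper bound.} This is soft: the five basic toric disks live in $Q^3\setminus\Sigma$, where the toric picture is honest. Their areas are the five facet distances, and the minimal one contributes nontrivially to the disk potential. Alternatively, one can use a versal deformation/small perturbation argument, since $\Psi$ is given by areas of classes.
\item \emph{Lower bound.} I would use monotonicity together with the area--Maslov relation as in \cite{SheTonVia24}. Every disk class on $T(x)$ has area equal to $\frac{\pi}{3}\mu(\beta)$ plus a linear term $\langle x-x_\cp,\pp\beta\rangle$. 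Any contributing disk has Maslov index at least $2$, and its boundary class is a nonnegative combination of the facet normals. If needed, the nonnegativity can be obtained via the toric degeneration picture, since the singular locus sits over a codimension-three vertex.
\end{itemize}
Combining the two halves, the minimum is attained at a facet, giving $d_{\rm IA}$. Should Proposition C require only that the singular locus lie over faces of codimension at least two, Step 1 already suffices, and the statement follows immediately.
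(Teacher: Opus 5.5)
Your proposal follows the paper's argument, which simply applies \cite[Proposition C]{SheTonVia24} to the Gelfand--Cetlin fibration of \cref{prop:GCfibration}: this fibration is toric away from $\Sigma=\mu^{-1}(0)$ and has the monotone fibre $\cp$ over $x_{\cp}$, so your Steps~1--2 are all that is needed (incidentally, $0$ is the vertex where the four facets $x_1=0$, $x_2=0$, $x_3=x_1$, $x_3=x_2$ meet, and the sets $\{x_1=0,\ x_3=x_1\}\cap\Delta_{Q^3}$ and $\{x_2=0,\ x_3=x_2\}\cap\Delta_{Q^3}$ you call edges are just $\{0\}$). Drop Step~3 rather than rely on it, because it fails on its own: since $\Delta_{Q^3}$ is bounded, its inward normals positively span $\Z^3$ (e.g.\ $-e_1=(-e_3)+(e_3-e_1)$), so the condition that $\pp\beta$ is a nonnegative combination of facet normals is vacuous, and together with Maslov index $\geqslant 2$ it gives no lower bound. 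For instance, the Maslov-$2$ class $4\beta_1-l$ (with $\beta_1$ the basic disk through $\{x_1=0\}$ and $l$ the line class) has area $4x_1-2\pi<d_{\rm IA}(x,\pp\Delta_{Q^3})$ at $x=x_{\cp}-te_1$ for small $t>0$; controlling which classes actually carry holomorphic disks near $\Sigma$ is exactly what Proposition~C supplies.
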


\begin{remark}
    The ease with which the results in \cite{SheTonVia24} allow us to compute~$\Psi$ on all of the fibres of~$\mu$ is the reason why we use the invariant~$\Psi$ instead of displacement energy. In fact, we suspect that whenever~$\Psi$ is affine linear (as opposed to just piecewise affine linear) on a neighbourhood of a point~$x \in \Int \Delta_{Q^3}$, then the displacement energy agrees with~$\Psi$. The upper bound on displacement energy readily follows from probes (see for example \cite{McD11}) in the Gelfand--Cetlin polytope, but the lower bound requires more thought.
\end{remark}

In particular, \cref{prop:Psi_fibres} allows us to compute the~$\Psi$-germ of~$\cp$. Let~$H_1(\cp) \cong \Z^3$ be the isomorphism induced by taking the generators of~$H_1(\cp)$ defined by the (local) toric action generated by~$\mu$. By \cite[Proposition 3.15]{BreHauSch23}, we obtain a versal deformation of~$\cp$ by considering~$a \mapsto \mu^{-1}(x_{\cp} + a)$, where~$a \in \R^n \cong H^1(\cp;\R)$ under the identification dual to the above isomorphism. Recall that $\cp = \mu^{-1}((\frac{2\pi}{3},\frac{2\pi}{3},\frac{4\pi}{3}))$ in the Gelfand--Cetlin fibration. From Proposition \ref{prop:Psi_fibres} and \eqref{eq:IAdist_Q^3} we obtain the following.

\begin{corollary}
\label{cor:germP}
There is an identification~$H^1(\cp;\R) \cong \R^3$ such that for all $a \in \R^3$,
    \begin{align*}
    	\psi_{\cp}(a) &= \textstyle\frac{2\pi}{3} + \min\{a_1,a_2,a_3 - a_1, a_3 - a_2,- a_3\},\\
    	\Delta^{\!\Psi}(\cp) &= \{a \in \R^3 \sth a_1,a_2,a_3-a_1,a_3-a_2,-a_3 \geqslant -\textstyle\frac{2\pi}{3} \}.
    \end{align*}
\end{corollary}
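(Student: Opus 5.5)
The plan is to combine \cref{prop:Psi_fibres} with the versal deformation of $\cp$ furnished by the Gelfand--Cetlin fibration of \cref{prop:GCfibration}. First, fix the identification: the local toric $T^3$-action generated by $\mu$ near $\cp=\mu^{-1}(x_\cp)$ gives a basis of $H_1(\cp)\cong\Z^3$, namely the orbits of the three coordinate circles. Dualizing gives $H^1(\cp;\R)\cong\R^3$. By \cite[Proposition 3.15]{BreHauSch23}, the family $a\mapsto T(x_\cp+a)=\mu^{-1}(x_\cp+a)$ is then a versal deformation of $\cp$. To justify this, I would check that the flux of the path of fibres $T(x_\cp+ta)$, $t\in[0,1]$, is $a$ under this identification. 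This is the standard action-angle computation: on the smooth toric part $Q^3\setminus\Sigma$, the symplectic form is $\sum dx_i\wedge d\theta_i$ in action-angle coordinates, so the flux through the cylinder swept by the $i$-th basis circle is $a_i$.

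Second, since the $\Psi$-germ is computed along any versal deformation, $\psi_\cp(a)=\Psi(T(x_\cp+a))$ for all $a$ small enough that $x_\cp+a\in\Int\Delta_{Q^3}$. By \cref{prop:Psi_fibres} and \eqref{eq:IAdist_Q^3}, this equals
\[
\min\{x_1,\,x_2,\,x_3-x_1,\,x_3-x_2,\,2\pi-x_3\}\quad\text{evaluated at } x=x_\cp+a.
\]
With $x_\cp=(\frac{2\pi}{3},\frac{2\pi}{3},\frac{4\pi}{3})$, every one of the five affine functions takes the value $\frac{2\pi}{3}$ at $x_\cp$. Their linear parts are $a_1$, $a_2$, $a_3-a_1$, $a_3-a_2$ and $-a_3$. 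This gives the stated formula near the origin.

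Third, the formula has exactly the shape in \cref{prop:min_functionals}, with $\xi_i$ running over these five integral covectors. That proposition's extension statement therefore lets us take the same expression on all of $\R^3$. The description of $\Delta^{\!\Psi}(\cp)=\{\psi_\cp\geqslant 0\}$ follows immediately by rewriting each term as $\ell_i(a)\geqslant -\frac{2\pi}{3}$.

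The main obstacle is the first step: making sure the flux identification is exact, so that no extra linear change of coordinates appears. Here I must use the lattice-adjusted moment map $\mu=M\mu_{1,1}$ rather than $\mu_{1,1}$. Only $\mu$ generates an effective $T^3$-action, and with $\mu_{1,1}$ the factor $2$ ambiguity from $\theta_{1/2}$ would distort the formula.
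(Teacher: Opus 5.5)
Your proposal is correct and follows essentially the same route as the paper. In both, $H^1(\cp;\R)\cong\R^3$ is identified via the circles of the effective toric action generated by $\mu=M\mu_{1,1}$, \cite[Proposition 3.15]{BreHauSch23} gives $a\mapsto\mu^{-1}(x_\cp+a)$ as the versal deformation, and \cref{prop:Psi_fibres} together with \eqref{eq:IAdist_Q^3} is evaluated at $x_\cp+a$; your flux check and your use of \cref{prop:min_functionals} (equivalently, positive homogeneity of the minimum) to extend from a neighbourhood of the origin to all of $\R^3$ only make explicit what the paper leaves implicit.
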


As expected, the $\Psi$-polytope of $\cp$ agrees with the base polytope $\Delta_{Q^3}$ of the Gelfand--Cetlin fibration after a translation by $\mu(\mathcal{P})=(\frac{2\pi}{3},\frac{2\pi}{3},\frac{4\pi}{3})$.

\subsection{The Chekanov Oakley--Usher Lagrangian}
\label{ssec:pchek}
On the other hand, we obtain the following~$\Psi$-germ for the Chekanov Oakley--Usher Lagrangian~$\cp^{\rm Ch}$. 

\begin{proposition}
    \label{prop:germPchek}
    There is an identification~$H^1(\cp^{\rm Ch};\R) \cong \R^3$ such that for all $b \in \R^3$,
    \begin{align*}
        \psi_{\cp^{\Ch}}(b) &= \textstyle\frac{2\pi}{3} + \min\{\pm b_1 \pm b_2 - 2b_3, b_3\},\\
        \Delta^{\!\Psi}(\cp^{\Ch}) &= \{b \in \R^3 \sth \pm b_1 \pm b_2 - 2b_3, b_3 \geqslant -\textstyle\frac{2\pi}{3}\}.
    \end{align*}
\end{proposition}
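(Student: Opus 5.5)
The plan is to write down an explicit three-parameter versal deformation of $\cp^{\Ch}$ that stays inside the reduction picture, and then compute $\Psi$ along it by finding the Maslov-two disk classes with nonzero count. This uses that for $k=m=1$ the group $G_{1,1}=\SO(2)\times\SO(2)=T^2$ is abelian. So, as in the proof of \cref{thm:compopen}, we may reduce not only at $\overline{\mu}_{1,1}=0$ but at any nearby level $c=(c_1,c_2)$. For small $c\neq 0$ the reduced space $\widehat{Z}_c=Q^3\sslash_{\overline{\mu}_{1,1}=c}T^2$ is a sphere. Its area varies affinely in $c$, and it has a single $\Z_2$-orbifold point, coming from the cut locus $\{H=2\pi\}\cong Q^2$; the distinguished point of $\Sigma$ has been smoothed out.

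Take a curve $\Gamma_c(A)\subset \widehat{Z}_c$ which bounds area $A$ on the side away from the orbifold point, and set $L(c,A)=\pi_c^{-1}(\Gamma_c(A))$. This gives a three-parameter family of Lagrangian tori with $L(0,\frac{2\pi}{3})=\cp^{\Ch}$. The first step is to check, via the flux criterion of \cite[Proposition 3.15]{BreHauSch23}, that this family is versal. The flux of varying $A$ is the class dual to the loop that projects onto $\Gamma$. The flux of varying $c_i$ is dual to the $i$-th orbit circle of $T^2$. Together these three loops form a basis of $H_1(\cp^{\Ch})$ up to the index-two ambiguity caused by the non-effectiveness of \cref{rk:globalstab}. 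I would fix the identification $H^1(\cp^{\Ch};\R)\cong\R^3$ using the effective lattice $\Lambda^*$ and the matrix $M$ from the proof of \cref{prop:GCfibration}. With this normalization I expect $b_3=A-\frac{2\pi}{3}$, and $(b_1,b_2)$ to be a lattice change of $(c_1,c_2)$.

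The second step is to determine the disk classes realizing $\Psi$. The holomorphic disks in question can be taken $T^2$-invariant. By \cref{lem:maslov_reduction}, Maslov-two disks on $L(c,A)$ then project to Maslov-two (orbifold) disks in $\widehat{Z}_c$ with boundary on $\Gamma_c$, and there are two kinds.
\begin{enumerate}
\item The disk on the side not containing the orbifold point. It has area $A=\frac{2\pi}{3}+b_3$ and lifts to a single class, which gives the term $b_3$.
\item The disk on the side containing the orbifold point. Its area in the smooth part is $\mathrm{area}(\widehat{Z}_c)-A$. Because of the $\Z_2$-stabilizer, the lifted class has to double the orbifold disk. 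At $c=0$ this gives area $2(\pi-\frac{2\pi}{3})=\frac{2\pi}{3}$, and in general an area of the form $\frac{2\pi}{3}-2b_3+\ell(c)$.
\end{enumerate}
The lifts of the second kind differ by $T^2$-orbit classes. I expect exactly four of them with nonzero count, corresponding to the four lifts through the fixed circles over the cut locus. These are the same four facets $x_1,x_2,x_3-x_1,x_3-x_2$ that meet at the vertex $0$ of $\Delta_{Q^3}$. Their linear parts should be $\pm b_1\pm b_2$. Taking the minimum over all five classes gives the formula for $\psi_{\cp^{\Ch}}$, and $\Delta^{\!\Psi}(\cp^{\Ch})$ is its super-level set by definition.

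The main obstacle is justifying that these five classes are exactly the ones with nonzero algebraic count, since $\Psi$ only sees classes that contribute to the disk count. My first attempt would be a neck-stretching or local-model argument. Near the monotone parameter, the relevant disks should live in a neighbourhood of the divisor $Q^2$, where the geometry is that of the Biran circle bundle and counts reduce to $T^2$-equivariant counts. Failing that, I would use invariance of $\Psi$ under wall-crossing: $\cp^{\Ch}$ and $\cp$ differ by moving the reduced curve across the point of $\Sigma$. In a Gelfand--Cetlin picture this is a mutation at the vertex $0$ of $\Delta_{Q^3}$, and the polytope of \cref{prop:germPchek} should be the mutation of the polytope in \cref{cor:germP}. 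That comparison doubles as a consistency check on the lattice identification.
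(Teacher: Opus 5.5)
Your set-up is the right one and matches the paper's. Since $\mu_2-\mu_1$ and $\mu_3-\mu_1-\mu_2$ are exactly the two components of $\overline{\mu}_{1,1}$, your levels $c$ are the paper's $(b_1,b_2)$, and your flux computation reproduces the identification $b_3=A-\frac{2\pi}{3}$ of \cref{lem:VDlift}.

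The gap is in your second step, which carries all the content of the proposition. You never establish which classes actually contribute to the disk counts of $L(c,A)$; you leave this as an expectation. The heuristic you offer for it does not work as stated. A nonconstant holomorphic disk with boundary on $L(c,A)$ cannot be $T^2$-invariant, since $T^2$ acts freely on $L(c,A)$. Such disks also need not lie in the level set $\overline{\mu}_{1,1}^{-1}(c)$. \cref{lem:maslov_reduction} only concerns disks lying in the level set, so it does not let you push them down to $\widehat{Z}_c$. Your fallbacks, neck-stretching near $Q^2$ or a wall-crossing formula across the conifold fibre $\Sigma$, are not results you can quote here. Each would be a separate and considerably harder computation.

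The missing idea is that no disk counting is needed, and your first paragraph already contains the ingredient.
\begin{itemize}
\item For $c_1c_2\neq0$, the reduced space $\widehat{Z}_c$ has no distinguished point. Its smooth part is a standard disk, on which the residual $S^1$-action rotates about the point where the probe $P_c$ enters through one of $F_1,\dots,F_4$.
\item Hence $\Gamma_c(A)$ is Hamiltonian isotopic, inside that disk, to the orbit circle bounding the same area. Lifting this isotopy shows that $L(c,A)$ is Hamiltonian isotopic to the Gelfand--Cetlin fibre $T(\Phi(c,A))$.
\item By \cref{prop:Psi_fibres}, $\Psi(L(c,A))=d_{\rm IA}(\Phi(c,A),\pp\Delta_{Q^3})$.
\item By \cref{prop:min_functionals}, $\psi_{\cp^{\Ch}}$ is a minimum of finitely many linear functionals. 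Its values on the open dense set $c_1c_2\neq0$ therefore determine it everywhere.
\end{itemize}

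Two intermediate claims also need correcting. When exactly one $c_i$ vanishes, $P_c$ meets an edge of $\Delta_{Q^3}$, and $\widehat{Z}_c$ still has a distinguished point. The area of $\widehat{Z}_c$ is not affine in $c$; it equals $\pi-\frac{|c_1|+|c_2|}{2}$. These kinks are exactly what produce the four functionals $\pm b_1\pm b_2-2b_3$. With this area, your doubled-disk heuristic does give the correct minimum. It only becomes a proof once you replace it by the isotopy to Gelfand--Cetlin fibres.
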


\begin{figure}[h!]
\centering
\begin{tikzpicture}[scale=1.05, >=Latex, line cap=round, line join=round]

% Requires \usetikzlibrary{calc,arrows.meta}

% -------------------------------------------------
% projection vectors
% -------------------------------------------------
\coordinate (O) at (0,0);
\coordinate (e1) at (2.5,0.6);
\coordinate (e2) at ( 2,-0.7);
\coordinate (e3) at ( 0.0,3);

\draw[->, thick, gray!70] ($(O)-1.2*(e1)$) -- ($(O)+1.2*(e1)$) node[below right] {$x_2$};
\draw[->, thick, gray!70] ($(O)-1.2*(e2)$) -- ($(O)+1.2*(e2)$) node[below] {$x_1$};
\draw[->, thick, gray!70] (O) -- ($(O)+0.75*(e3)$) node[above] {$x_3$};
% -------------------------------------------------
% vertices of the polytope
% -------------------------------------------------
\coordinate (T) at ($(O)+0.5*(e3)$);
\coordinate (A) at ($(O)-(e1)$);
\coordinate (B) at ($(O)+(e2)$);
\coordinate (C) at ($(O)+(e1)$);
\coordinate (D) at ($(O)-(e2)$);
\coordinate (L) at ($(O)+0.3*(e1)+0.3*(e2)+0.5*(e3)$);

% -------------------------------------------------
% polytope
% -------------------------------------------------
\fill[gray!10 ]  (A)--(T)--(D)--cycle;
\fill[gray!20 ]  (A)--(B)--(T)--cycle;
\fill[gray!40] (B)--(C)--(T)--cycle;

\draw[thick] (A)--(T)--(D)--cycle;
\draw[thick] (A)--(B)--(T)--cycle;
\draw[thick] (B)--(C)--(T)--cycle;

% hidden edge
\draw[dashed] (C)--(D);

\draw[->, thick, gray!70] ($(O)-1.2*(e1)$) -- ($(O)+1.2*(e1)$) node[below right] {$x_2$};
\draw[->, thick, gray!70] ($(O)-1.2*(e2)$) -- ($(O)+1.2*(e2)$) node[below] {$x_1$};
\draw[->, thick, gray!70] (O) -- ($(O)+0.75*(e3)$) node[above] {$x_3$};

% title
\node[font=\small] at (L) {$\Delta^{\Psi}(\cp^{\rm Ch})$};

\end{tikzpicture}

\caption{ \label{fig:Psipol} The $\Psi$-polytope of the Chekanov OU Lagrangian $\cp^{\rm Ch}$. We have applied a translation in the $x_3$-direction for readability.}
\end{figure}

See \cref{fig:Psipol} for an illustration of $\Delta^\Psi(\cp^{\rm Ch})$. The $\Psi$-polytopes $\Delta^{\!\Psi}(\cp)$ and $\Delta^{\!\Psi}(\cp^{\Ch})$ cannot be mapped to one another by an element in $\GL(3;\Z)$. One way of seeing this is as follows: The integral affine distance of the four-valent vertex of $\Delta^{\!\Psi}(\cp)$ to its opposite facet is $2\pi$. For $\Delta^{\!\Psi}(\cp^{\Ch})$ this distance is $\pi$. The integral affine distance of a rational hyperplane $h \subset \R^n$ to a point $p \in \R^n$ is measured by mapping $h$ to a coordinate plane of $\R^n$ by an element of $\GL(n;\Z)$ and then measuring the Euclidean distance of the image of $p$ under that transformation. This is well-defined and an invariant under $\GL(n;\Z)$. By Proposition \ref{prop:distinguish_tori}, we obtain the following.

\begin{theorem}
	The Lagrangian tori $\cp, \cp^{\Ch} \subset Q^3$ are not Hamiltonian isotopic.
\end{theorem}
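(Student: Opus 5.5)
The plan is to argue by contradiction using \cref{prop:distinguish_tori}. Both tori are orientable; we fix spin structures (the standard ones induced by the toric trivialisation for $\cp$, and the product structure from $\cp^{\Ch}\cong \Gamma\times T^2$ for $\cp^{\Ch}$). If $\cp$ and $\cp^{\Ch}$ were Hamiltonian isotopic, there would be some $\chi\in \GL(3;\Z)$ with $\chi_\R(\Delta^{\!\Psi}(\cp))=\Delta^{\!\Psi}(\cp^{\Ch})$. Here we use the identifications of \cref{cor:germP} and \cref{prop:germPchek}, which are induced by bases of $H_1(\cdot;\Z)$, so the comparison really happens in $\GL(3;\Z)$. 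It therefore suffices to find a $\GL(3;\Z)$-invariant of rational polytopes that takes different values on the two polytopes.

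Next we describe both polytopes combinatorially. The polytope $\Delta^{\!\Psi}(\cp)$ is a translate of $\Delta_{Q^3}$, which is a square pyramid. Its apex is the origin, where the four facets $x_1,x_2,x_3-x_1,x_3-x_2\geqslant 0$ meet. Its base is the facet $x_3=2\pi$, with vertices $(0,0,2\pi),(2\pi,0,2\pi),(0,2\pi,2\pi),(2\pi,2\pi,2\pi)$.

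The polytope $\Delta^{\!\Psi}(\cp^{\Ch})$ is cut out by the four inequalities $\pm b_1\pm b_2-2b_3\geqslant -\frac{2\pi}{3}$ together with $b_3\geqslant -\frac{2\pi}{3}$. Its apex is where the four slanted facets meet, at $b=(0,0,\frac{\pi}{3})$. Its base lies in $b_3=-\frac{2\pi}{3}$ and is the square $|b_1|+|b_2|\leqslant 2\pi$. So it is also a square pyramid.

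In each case the four-valent vertex is unique, since all other vertices are three-valent. Hence any lattice-preserving affine isomorphism must send apex to apex and base facet to base facet. The invariant we use is the integral affine distance from the apex to the affine hyperplane spanned by the base. For a rational hyperplane $\{\langle \nu,x\rangle=c\}$ with $\nu$ primitive integral, this distance is $|\langle\nu,p\rangle-c|$. It is preserved by $\GL(3;\Z)$ and by translations, because primitive conormals go to primitive conormals.

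Now we compute this distance for each polytope.
\begin{itemize}
\item For $\Delta^{\!\Psi}(\cp)$, the base conormal is $\nu=(0,0,1)$, which is primitive. The distance is $|0-2\pi|=2\pi$.
\item For $\Delta^{\!\Psi}(\cp^{\Ch})$, the base conormal is again $(0,0,1)$. The distance is $\frac{\pi}{3}+\frac{2\pi}{3}=\pi$.
\end{itemize}
Since $2\pi\neq\pi$, no such $\chi$ exists, and the theorem follows.

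The main obstacle is not this final comparison but the input it relies on, namely \cref{prop:germPchek}. One must show that the versal deformation of $\cp^{\Ch}$ has exactly the five minimising classes with boundaries $\pm e_1\pm e_2-2e_3$ and $e_3$, each with nonzero algebraic count. The plan there is to lift disks from the reduced orbifold sphere $\widehat{Z}$. One Maslov-2 disk is the disk bounded by $\Gamma$ in the smooth part. The others come from disks through the $\Z_2$-orbifold point, and they acquire the $T^2=\SO(2)\times\SO(2)$ fibre winding. Their counts should follow from comparison with the Chekanov torus picture, or from wall-crossing relative to the toric fibre $\cp$. Deforming $\Gamma$ in the reduced space and the fibre by the $T^2$-action gives a versal family. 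For the present theorem, however, we simply invoke \cref{prop:germPchek}. We also need to check that the identification there is integral, so that the conclusion is about $\GL(3;\Z)$ rather than $\GL(3;\Q)$; otherwise the integral affine distances would not be well defined.
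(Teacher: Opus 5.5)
Your proposal is correct and follows essentially the paper's proof: you invoke \cref{prop:distinguish_tori} and separate the two $\Psi$-polytopes from \cref{cor:germP} and \cref{prop:germPchek} by the integral affine distance from the unique four-valent vertex to the opposite facet, which is $2\pi$ for $\cp$ and $\pi$ for $\cp^{\Ch}$. Your extra checks, namely that the four-valent vertex is unique and that the identifications $H^1(\cdot;\R)\cong\R^3$ are integral, make explicit details that the paper leaves implicit.
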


All that is left to prove is \cref{prop:germPchek}. Its proof follows the same pattern as the computation of displacement energy germs in \cite{Bre22, Bre23b, BreHauSch23, CheSch10}: on a small neighbourhood of~$\cp^{\Ch}$ in its versal deformation, there is an open dense set on which a deformation of~$\cp^{\Ch}$ is Hamiltonian isotopic to a deformation of~$\cp$. Note that the latter are just the fibres $T(x)$ of the Gelfand--Cetlin fibration and thus we can use \cref{prop:Psi_fibres} to prove \cref{prop:germPchek}. This computes $\psi_{\cp^{\Ch}}$ and the $\Psi$-polytope on an open dense subset of a neighbourhood of the origin. By Proposition~\ref{prop:min_functionals}, this determines $\psi_{\cp^{\Ch}}$ and the polytope everywhere. \smallskip

\proofof{\cref{prop:germPchek}} The proof is very similar to Sections 4 and 5 in \cite{Bre23b}, which we recommend for more details. 
\\
\\
\noindent
{\bf Step 0: Outline.} Recall that, by construction, the Lagrangian torus~$\cp^{\Ch}$ is the lift of a curve~$\Gamma(\frac{4\pi}{3})$ bounding area~$\frac{2\pi}{3}$ in the reduced space~$\widehat{Z} = Q^3\sslash_{\overline{\mu}_{1,1} = 0}G_{1,1}$, see the discussion surrounding \cref{def:YZZ}. We can interpret this symplectic reduction in terms of (singular) probes, according to \cite{McD11}, see also \cite{Bre23} for an exposition using symplectic reduction explicitly. With respect to the original (non-effective) moment map~$\mu_{1,1}$ as defined in \eqref{eq:mu11} this corresponds to performing symplectic reduction on the probe with direction~$(1,0,0)$. Since we have defined~$\mu = M \mu_{1,1}$, this corresponds to performing symplectic reduction on a probe of primitive direction~$(1,1,2)$ in the image of~$\mu$. 

Recall that a versal deformation of the torus~$\cp^{\Ch} \subset Q^3$ is a continuous map 
\begin{equation}
    \label{eq:vdcpchek}
    v_{\cp^{\Ch}} \colon H^1(\cp^{\Ch};\R) \cong \R^3 \dashrightarrow \mathcal{L}_{\cp^{\Ch}}
\end{equation}
defined on a neighbourhood of the origin~$0 \in H^1(\cp^{\Ch};\R)$ which is a local section to the Lagrangian flux map. By~$\cl_{\cp^{\Ch}}$ we have denoted the space of Lagrangian deformations of~$\cp^{\Ch}$ equipped with the~$C^1$ topology. For a detailed exposition, we recommend \cite[\S 3.3]{BreHauSch23}, see in particular Definition 3.9 therein. To define such a versal deformation of~$\cp^{\Ch}$ explicitly, we will carry out symplectic reduction on a \emph{family of probes} pointing in the direction~$(1,1,2)$ and identify the members of the versal deformation as certain lifts of curves in the family of reduced spaces. The deformation space of~$\cp^{\Ch}$ is three-dimensional, see \eqref{eq:vdcpchek}: one of the parameters will come from varying the area of the curve~$\Gamma(\cdot)$ we lift, and the two other parameters will come from varying the reduced space in the family that we lift the curve from. \smallskip
\noindent
{\bf Step 1: Symplectic reduction.} Let us now set up the family of symplectic reductions. To that end, we define the following moment map
\begin{equation}
    \label{eq:nu}
    \nu \colon Q^3 \rightarrow \R^2, \quad
    \nu = (\mu_2 - \mu_1, \mu_3 - \mu_1 - \mu_2 ).
\end{equation}
Let us point out that the level sets (fibres) of~$\nu$ point in the direction~$(1,1,2)$ in the image of~$\mu$, which justifies this choice. Since its components are integral linear combinations of the components of~$\mu$, the moment map~$\nu$ defines a~$T^2$-subaction of the~$T^3$-action generated by~$\mu$. In fact, the inclusion of tori~$T^2 \hookrightarrow T^3$ is given by~$(\theta_1,\theta_2) \mapsto (-\theta_1 -\theta_2 , \theta_1 -\theta_2 , \theta_2)$. Let us compute the image of~$\nu$. To that end, recall from \cref{prop:GCfibration} that the image~$\Delta_{Q^3}$ of~$\mu$ is defined by a set of five inequalities~$\ell_i(x) \geqslant 0$ for 
\begin{equation}
    \label{eq:ell}
    \ell_0(x) = 2\pi - x_3, \quad
    \ell_1(x) = x_1,\quad
    \ell_2(x) = x_2, \quad
    \ell_3(x) = x_3 - x_1 \quad
    \ell_4(x) = x_3 - x_2. 
\end{equation}
A computation shows that under these constraints the image of~$\nu = (x_2-x_1,x_3-x_1-x_2)$ is given by the lozenge
\begin{equation}
    \Diamond = \{(b_1,b_2) \in \R^2 \sth \pm b_1 \pm b_2 \leqslant 2\pi \}.
\end{equation}
Indeed, since~$\Diamond = \im (\nu)$ is the image of a linear projection of the convex polytope~$\Delta_{Q^3}$, it is itself a convex polytope spanned by the image of the vertices of~$\Delta_{Q^3}$. Now let~$(b_1,b_2) \in \Int (\Diamond)$ and let 
\begin{equation}
    \label{eq:nureduction}
    \pi_{(b_1,b_2)} \colon \nu^{-1}(b_1,b_2) \rightarrow S_{(b_1,b_2)}
\end{equation}
be the corresponding symplectic reduction by the~$T^2$-action generated by~$\nu$. We think of this as the reduction associated to the \emph{singular probe}
\begin{equation}
\label{eq:singular probe}
    P_{(b_1,b_2)} = \{(x_2 - x_1, x_3 - x_1 - x_2) = (b_1,b_2)\} \cap \Delta_{Q^3}.
\end{equation}
Recall that the intersection of a probe with the boundary of a toric moment polytope yields a smooth point in the reduced space if and only if this intersection happens in the interior of a facet and satisfies the \emph{integral transversality} condition. This condition asks that a lattice basis in the facet in question can be completed to a lattice basis of the full ambient lattice by the direction vector of the probe, see also \cite[\S 2.1]{McD11} or \cite[Defintion 2.3]{Bre23} for a generalization. The singular probes~$P_{(b_1,b_2)}$ turn out to be honest probes for all~$(b_1,b_2)$ with~$b_1,b_2 \neq 0$. This follows from the fact that their direction vector~$(1,1,2)$ intersects all four facets~$F_i = \{\ell_i = 0\}$ for~$i \in \{1,2,3,4\}$ integrally transversely. Here~$\ell_i$ is defined as in~\eqref{eq:ell}. Its intersection with~$F_0 = \{ \ell_0 = 0\}$ is not integrally transverse, leading to a~$\Z_2$-orbifold point in all reduced spaces (as in the case of~$\widehat{Z}$ in general). Let us discuss what the reduced spaces look like in more detail. 
\begin{enumerate}
\item In the case~$b_1=b_2=0$, we recover the original reduction~$\pi \colon \overline{\mu}_{1,1}^{-1}(0) \rightarrow \widehat{Z}$ used in the definition of~$\cp^{\Ch}$.  Hence according to the discussion surrounding \cref{def:YZZ}, the reduced space is a two-sphere containing a~$\Z_2$-orbifold point and a distinguished point coming from the intersection of~$P_{(0,0)}$ with the origin~$0 \in \Delta_{Q^3}$ whose fibre is the Lagrangian sphere~$\Sigma$.
\item In case only one of the~$b_i$ is zero, the reduced space is a two-sphere containing a~$\Z_2$-orbifold point and a distinguished point coming from the fact that the probe~$P_{(b_1,b_2)}$ intersects one of the (toric!) edges of~$\Delta_{Q^3}$, whose fibre is a circle. Its smooth part is symplectomorphic to a standard cylinder. 
\item In case~$b_1b_2 \neq 0$, the reduced space is a two-sphere containing only the~$\Z_2$-orbifold point coming from the upper facet. The complement of that point is symplectomorphic to a standard symplectic two-disk. Again, the smoothness in the interior of the disk is due to the fact that the probe~$P_{(b_1,b_2)}$ intersects one of the facets~$F_1,\ldots,F_4$ integrally transversely.
\end{enumerate}
\medskip

\noindent
{\bf Step 2: Versal deformation.} Let us now define the versal deformation of~$\cp$. We recall (for example from \cite[\S 3.3]{BreHauSch23}) that any Weinstein chart~$\varphi \colon T^*L \supset U \rightarrow X$ of a Lagrangian torus~$L \subset X$ induces a versal deformation~$v_L^{\varphi}(b) = \varphi(\{b\} \times T^n)$. Here we have identified~$T^*L = \R^n \times T^n$ and~$H^1(L;\R) = \R^n$. To make our versal deformation compatible with the symplectic reduction set up above, we choose a~$T^2$-equivariant Weinstein chart of~$\cp^{\Ch}$. On the model space~$T^*T^3 = \{(p_i,q_i)\}$, we pick the~$T^2$-action defined by standard rotation in the~$q_1,q_2$ components generated by the moment map given by the projection to~$(p_1,p_2)$. On~$Q^3$, we have the~$T^2$-action defined by~$\nu$ as in \eqref{eq:nu}. Since~$\cp^{\Ch}$ is invariant under the latter, we can choose a Weinstein chart~$\varphi$ which is~$T^2$-equivariant with respect to these two actions. Define a versal deformation 
\begin{equation}
    \label{eq:VDcpchekdef}
    v = v_{\cp^{\Ch}}^{\varphi} \colon H^1(T^3;\R) \cong \R^3 \dashrightarrow \cl_{\cp^{\Ch}} , \quad
    v(b) = \varphi(\{b\} \times T^3).
\end{equation}

\begin{lemma}
    \label{lem:VDlift}
    The Lagrangians appearing in the versal deformation \eqref{eq:VDcpchekdef} are lifts of curves from the family of symplectic reductions \eqref{eq:nureduction}. More specifically, there exists a family of curves~$\mathcal{C}_{(b_1,b_2)}(b_3) \subset S_{(b_1,b_2)}$ contained in the smooth part of the reduced spaces~$S_{(b_1,b_2)}$ and bounding a disk of symplectic area~$\frac{2\pi}{3} + b_3$ such that 
    \begin{equation*}
        \label{eq:VDlift}
        v(b) = \pi^{-1}_{(b_1,b_2)}(\mathcal{C}_{(b_1,b_2)}(b_3))
    \end{equation*}
    for all~$b = (b_1,b_2,b_3) \in H^1(T^3;\R) \cong \R^3$ contained in a small enough neighbourhood $V$ of the origin such that~$v$ is defined.
\end{lemma}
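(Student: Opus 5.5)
The plan is to exploit the $T^2$-equivariance of the Weinstein chart $\varphi$. Under $\varphi$, the $T^2$-action on $T^*T^3$ rotates the $q_1,q_2$ coordinates and has moment map $(p_1,p_2)$. On $Q^3$ the corresponding action is generated by $\nu$. Two equivariant symplectic embeddings intertwine moment maps up to an additive constant. Since $\cp^{\Ch}=\varphi(\{0\}\times T^3)$ lies in $\nu^{-1}(0,0)$, this constant vanishes, so
\[
\nu\circ\varphi(p,q)=(p_1,p_2).
\]
Consequently $v(b)=\varphi(\{b\}\times T^3)$ is contained in the level set $\nu^{-1}(b_1,b_2)$. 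If necessary one first adjusts $\varphi$ by an element of $\GL(2;\Z)$ acting on the first two coordinates, so that the identification $H^1(\cp^{\Ch};\R)\cong\R^3$ matches the chosen splitting.

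Next I check that $v(b)$ is $T^2$-invariant: the torus $\{b\}\times T^3$ is invariant under rotation of $q_1,q_2$, and equivariance transports this invariance. A $T^2$-invariant Lagrangian inside $\nu^{-1}(b_1,b_2)$ on which $T^2$ acts freely is the preimage of its image. That image is a closed embedded curve
\[
\mathcal C_{(b_1,b_2)}(b_3):=\pi_{(b_1,b_2)}(v(b))\subset S_{(b_1,b_2)},
\]
because $\pi_{(b_1,b_2)}(\{b\}\times T^3)$ is, in the model, the circle $q_3\mapsto(b_3,q_3)$ in the reduced model $T^*S^1$. For $b$ in a small neighbourhood $V$ the curve lies in the smooth part of $S_{(b_1,b_2)}$. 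Indeed, $\cp^{\Ch}$ is disjoint from $\Sigma$, from the non-free orbits and from the $\Z_2$-orbifold locus, and these conditions are open. The reduced spaces vary smoothly in $(b_1,b_2)$ away from these strata, via the model $T^*S^1$.

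The remaining and main point is the area. I would use the relative-disk argument of \cref{prop:COUproperties}, adapted via \cref{lem:maslov_reduction} and the identity $\omega|_{\nu^{-1}(c)}=\pi_c^*\omega_c$. The curve $\mathcal C_{(b_1,b_2)}(b_3)$ bounds a disk $D_b$ in the smooth part, obtained by continuity from the disk of area $\tfrac{2\pi}{3}$ bounded by $\Gamma(\tfrac{4\pi}{3})$, since the smooth part contains a disk for all relevant parameters. Lifting $D_b$ gives a class $\beta_b\in\pi_2(Q^3,v(b))$ with $\omega(\beta_b)=\mathrm{area}(D_b)$. Its boundary is the class $e_3$ dual to the $b_3$-direction, namely the $q_3$-circle in the chart, and these classes form a continuous family. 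By the flux property of the versal deformation,
\[
\omega(\beta_b)-\omega(\beta_0)=\langle b,\partial\beta_0\rangle.
\]
Hence the area equals $\tfrac{2\pi}{3}+b_3$, provided the $q_3$-loop is exactly the lift of $\partial D$ and the two $T^2$-orbit directions contribute nothing. The vanishing of those contributions holds because orbit loops bound cylinders inside the level set, on which $\omega$ restricts to zero through $\pi^*$.

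The subtle step is precisely this matching: one must ensure that $\partial\beta_0$ pairs with $b$ only through $b_3$. I would handle it by choosing $\varphi$ so that the $q_3$-direction is a lift of $\partial\Gamma$ that is horizontal with respect to the reduction. If needed, one corrects the basis by a shear, which amounts to redefining the coordinates $b_1,b_2$ on $H^1$.
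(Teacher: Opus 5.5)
Your proposal is correct and follows the paper's route: equivariance of $\varphi$ gives $\nu\circ\varphi=(p_1,p_2)$, hence $v(b)\subset\nu^{-1}(b_1,b_2)$, so $v(b)$ is the lift of a curve in $S_{(b_1,b_2)}$, and your flux argument with lifted disks is a self-contained version of the area computation that the paper defers to \cite[Proposition 4.4]{Bre23b}. One point to tidy up: the $b_1,b_2$-terms in $\langle b,\partial\beta_0\rangle$ vanish only because the chart is normalized so that $\partial\beta_0=e_3$, which you can arrange by an equivariant shear $q\mapsto Aq$ that fixes $(p_1,p_2)$ and changes only the coordinate $b_3$; the remark about orbit cylinders inside a fixed level set does not do this, since the flux on an orbit loop is genuinely $b_1$ or $b_2$.
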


\proof By equivariance of~$\varphi$, the tori~$v(b)$ are~$T^2$-equivariant with respect to the action generated by~$\nu$. Therefore, these tori are lifts of curves in the reduced spaces. The area of the disk bounded by the so obtained curves~$\mathcal{C}_{(b_1,b_2)}(b_3)$ is computed by the same arguments as in \cite[Section 4]{Bre23b}, see in particular the proof of Proposition 4.4 therein.
\proofend
\noindent
{\bf Step 3: Deforming the tori.} The crucial observation which makes this proof work is: whenever~$b_1,b_2 \neq 0$, the reduced space~$S_{(b_1,b_2)}$ is a sphere containing a single~$\Z_2$-orbifold point (and no distinguished points), see point (3) in the enumeration above. Therefore the curve~$\mathcal{C}_{(b_1,b_2)}(b_3) \subset S_{(b_1,b_2)}$ can be deformed by a Hamiltonian isotopy into a \emph{standard} curve in~$S_{(b_1,b_2)}$ bounding the same area. By \emph{standard} we mean an orbit of the residual~$S^1$-action on the reduced space, or, equivalently, the curve obtained as the image of the corresponding fibre~$T(x) = \mu^{-1}(x)$ under the reduction. This can be used to prove that~$v(b)$ is Hamiltonian isotopic to some~$T(x)$, and thus, by \cref{prop:Psi_fibres}, allows us to compute the~$\Psi$-invariant~$\Psi(v(b))$ whenever~$b_1,b_2 \neq 0$. More precisely, we will show the following. 

\begin{lemma}
\label{lem:bigPhi}
    We have
    \begin{equation*}
        v(b) \cong T(\Phi(b)) \quad
        \text{for all } b = (b_1,b_2,b_3) \text{ with } b_1,b_2 \neq 0 \text{ in } V
    \end{equation*}
    where~$\Phi \colon \R^3 \rightarrow \R^3$ is the continuous and bijective piecewise integral affine map defined by
    \begin{equation}
        \label{eq:bigPhi}
        \Phi(b_1,b_2,b_3) - \left( \textstyle\frac{2\pi}{3} , \textstyle\frac{2\pi}{3}, \textstyle\frac{4\pi}{3} \right)
        =
        \begin{cases}
            (b_3, b_1+b_3, b_1 +b_2 + 2b_3) & b_1 \geqslant 0, b_2 \geqslant 0 \\
            (b_3-b_1, b_3, b_2 - b_1 + 2b_3) & b_1 \leqslant 0, b_2 \geqslant 0 \\
            (b_3 -b_1 - b_2, b_3 - b_2, 2b_3 - b_1 - b_2) & b_1 \leqslant 0, b_2 \leqslant 0 \\
            (b_3 - b_2, b_1 - b_2 + b_3, b_1 - b_2 + 2b_3) & b_1 \geqslant 0, b_2 \leqslant 0.
        \end{cases}
    \end{equation}
\end{lemma}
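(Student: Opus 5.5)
The plan is to fix $b$ with $b_1b_2\neq 0$ and reduce everything to the reduced space $S_{(b_1,b_2)}$. By \cref{lem:VDlift}, $v(b)=\pi^{-1}_{(b_1,b_2)}(\mathcal{C}_{(b_1,b_2)}(b_3))$, where the curve bounds a disk of area $\frac{2\pi}{3}+b_3$ in the smooth part. By point (3) of Step 1, $S_{(b_1,b_2)}$ is a sphere whose smooth part is a standard disk, and the only singular point is the $\Z_2$-orbifold point coming from $F_0$. The residual circle action comes from the $T^3/T^2$-action generated by $\mu$, and its orbits are concentric circles around the orbifold point. These orbits are exactly the images of the fibres $T(x)$ with $x\in P_{(b_1,b_2)}$. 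Any embedded curve in the smooth part bounding area $A$ (on the side not containing the orbifold point) is Hamiltonian isotopic in $S_{(b_1,b_2)}$ to the standard orbit bounding the same area. Lifting such a Hamiltonian isotopy through $\pi_{(b_1,b_2)}$, as in \cref{prop:COUproperties}, yields a Hamiltonian isotopy of $Q^3$ taking $v(b)$ to $T(x)$. Here $x$ is the unique point of the probe $P_{(b_1,b_2)}$ whose fibre reduces to the circle bounding area $\frac{2\pi}{3}+b_3$ away from the orbifold point.

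It then remains to identify $x=\Phi(b)$. The probe $P_{(b_1,b_2)}$ is the segment $\{\nu=(b_1,b_2)\}\cap\Delta_{Q^3}$ with direction $(1,1,2)$. Its lower endpoint lies on one of the facets $F_1,\dots,F_4$, and which facet depends on the signs of $b_1,b_2$:
\begin{itemize}
\item for $b_1,b_2>0$ the endpoint lies on $F_1=\{x_1=0\}$;
\item for $b_1<0<b_2$ it lies on $F_2$;
\item for $b_1,b_2<0$ it lies on $F_3$;
\item for $b_1>0>b_2$ it lies on $F_4$.
\end{itemize}
For example, in the first case the endpoint solves $x_1=0$, $x_2=b_1$, $x_3=b_1+b_2$. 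The disk in the reduced space bounded by the orbit of $x=\text{endpoint}+t(1,1,2)$ has area $t$, which is the affine length along a primitive direction, using integral transversality at that facet. Setting $t=\frac{2\pi}{3}+b_3$ gives $x=(t,b_1+t,b_1+b_2+2t)$. This is $(\frac{2\pi}{3},\frac{2\pi}{3},\frac{4\pi}{3})+(b_3,b_1+b_3,b_1+b_2+2b_3)$, which matches the first line of \eqref{eq:bigPhi}. The other three cases are the same computation with the endpoint on $F_2,F_3,F_4$. Continuity across $b_1=0$ or $b_2=0$ and bijectivity then follow by inspection, since each branch is in $\GL(3;\Z)$ up to translation and the branches agree on the walls.

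The main obstacle is the normalisation in Lemma~\ref{lem:VDlift}. One must check that the area $\frac{2\pi}{3}+b_3$ is measured from the correct side, namely the facet side and not the orbifold side. One must also check that the identification $H^1\cong\R^3$ given by the equivariant Weinstein chart is compatible with the coordinates $(b_1,b_2)=\nu$ and with the orbit-area coordinate. At $b=0$ the curve bounds $\frac{2\pi}{3}$ in $\widehat Z$, which matches $t=\frac{2\pi}{3}$ with endpoint $0$, so this is consistent. If needed, I would fix signs by checking that $\Phi(0)=x_{\cp}$ and that $\Phi$ has the correct linear part on one chamber, comparing fluxes of the $T^2$-action with \cref{lem:VDlift} as in \cite[Proposition 4.4]{Bre23b}.
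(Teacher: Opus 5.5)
Your proposal is correct and follows essentially the same route as the paper. You isotope $\mathcal{C}_{(b_1,b_2)}(b_3)$ within the smooth disk of $S_{(b_1,b_2)}$ to an orbit of the residual circle action and lift this to a Hamiltonian isotopy from $v(b)$ to a fibre $T(x)$. You then pin down $x$ by $\nu(x)=(b_1,b_2)$ together with integral affine distance $\frac{2\pi}{3}+b_3$ to the entry facet, which you correctly identify as $F_1,F_2,F_3,F_4$ according to the signs of $b_1,b_2$ (the paper only writes out the first chamber).

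Two small remarks. Your concern about which side the area is measured on is already settled by \cref{lem:VDlift}: for $b_1b_2\neq 0$ the smooth part is a disk, so the only disk it contains is on the facet side. Bijectivity of $\Phi$ is cleanest to see from $\nu\circ\Phi(b)=(b_1,b_2)$ and $\partial_{b_3}\Phi=(1,1,2)$ on every branch, rather than from each branch being invertible.
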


\proof 
Note that the condition~$b_1,b_2 \neq 0$ separates~$\Int (\Diamond)$ into four disjoint open regions. Let us focus on the case~$b_1,b_2 > 0$. Since~$\nu = (x_2 - x_1, x_3 - x_1 -x_2)$, this implies~$x_2 > x_1$ and~$x_3 > x_1 + x_2$ in~$\Delta_{Q^3}$, meaning that this yields only probes~$P_{b_1,b_2}$ intersecting the facet~$F_1 = \{\ell_1 = 0\}$ on the lower and the facet~$F_0 = \{\ell_0 = 0\}$ on the upper end. As we have discussed, the lower intersection with~$\Delta_{Q^3}$ is integrally transverse whereas the upper one contributes a~$\Z_2$-orbifold point to the reduced space~$S_{(b_1,b_2)}$. By \cref{lem:VDlift}, we find that~$v(b)$ (for~$b_1,b_2 > 0$ and arbitrary~$b_3$ such that~$(b_1,b_2,b_3) \in V$) projects to a curve~$\mathcal{C}_{(b_1,b_2)}(b_3)$ bounding area~$\frac{2\pi}{3} + b_3$ in the reduced space~$S_{(b_1,b_2)}$. By a Hamiltonian isotopy we can map the curve~$\mathcal{C}_{(b_1,b_2)}(b_3)$ to the standard circle~$S^1(\frac{2\pi}{3} + b_3)$ bounding the same area in the disk obtained as the smooth part of~$S_{(b_1,b_2)}$. After reduction by~$T^2$, the~$T^3$-action on~$Q^3$ (we are away from~$\Sigma$) induces a residual~$S^1$-action on~$S_{(b_1,b_2)}$, which allows us to identify the smooth part of the reduced space~$S^1$-equivariantly with the standard symplectic disk carrying the standard~$S^1$-action by rotation. The standard circle~$S^1(\frac{2\pi}{3} + b_3)$ is an orbit of that residual~$S^1$-action. Its preimage~$\pi^{-1}_{(b_1,b_2)}(S^1(\frac{2\pi}{3} + b_3))$ under the reduction is a fibre torus~$T(x) = \mu^{-1}(x)$. By lifting this Hamiltonian isotopy, we find that~$v(b)$ is Hamiltonian isotopic to $T(x)$. 

Let us determine~$x = (x_1,x_2,x_3) =: \Phi(b)$ in terms of~$b$. Since by construction~$T(x)$ is contained in the level set~$\nu^{-1}(b_1,b_2)$, we find that~$x_2 - x_1 = b_1$ and~$x_3 - x_1 - x_2 = b_2$. This constrains~$x$ to lie on the probe~$P_{(b_1,b_2)}$ defined in \cref{eq:singular probe}. Since $T(x)$ projects to the circle~$S^1(\frac{2\pi}{3} + b_3)$ in the reduced space, the integral affine distance of~$x$ to the facet~$F_1 = \{\ell_1 = 0\}$ is~$\frac{2\pi}{3} + b_3$. Indeed, on the smooth part of the reduced space $S_{(b_1,b_2)}$, the residual $S^1$-action is Hamiltonian and identifies this space $S^1$-equivariantly with the standard symplectic disk of area equal to the affine length of the probe $P_{(b_1,b_2)}$. Under this identification, the moment map of the residual $S^1$-action is exactly the integral affine coordinate along the probe, normalized to vanish at the facet where the probe enters the polytope. Combining these facts, we find
\begin{equation}
    \Phi(b_1,b_2,b_3) 
    = 
    \left( \textstyle\frac{2\pi}{3} , \textstyle\frac{2\pi}{3}, \textstyle\frac{4\pi}{3} \right)
    + (b_3, b_1 + b_3, b_1 + b_2 + 2b_3). 
\end{equation}
The same kind of argument shows the formula in the three remaining cases. The map defined in \eqref{eq:bigPhi} is the unique continuous extension to the region~$b_1b_2 = 0$ of these partially defined maps.
\proofend

With this in hand, we can finish the proof of \cref{prop:germPchek}. Combining \cref{lem:bigPhi} and \cref{prop:Psi_fibres}, we find
\begin{equation}
    \label{eq:PSIcomp}
    \Psi(v(b))
    = \Psi(T(\Phi(b)))
    = d_{\rm IA}(\Phi(b),\pp \Delta_{Q^3})
    = \min_{i=0,\ldots,4} \{\ell_i(\Phi(b))\}. 
\end{equation}
Again, we only discuss the case~$b_1,b_2 > 0$, as the others are analogous. Using \eqref{eq:bigPhi}, we find that the functions~$\ell_i \circ \Phi$ are given by 
\begin{equation*}
    \begin{split}
    \ell_0(b) = 2\pi - &b_1 - b_2 - 2b_3, \quad
    \ell_1(b) = b_3,\quad
    \ell_2(b) = b_1 + b_3,\\
    \ell_3(b) &= b_1 + b_2 + b_3, \quad
    \ell_4(b) = b_2 + b_3. 
    \end{split}
\end{equation*}
In that case, we clearly have~$\ell_1 \leqslant \ell_2,\ell_3,\ell_4$, meaning that the minimum in \eqref{eq:PSIcomp} is attained by~$\ell_0$ or by~$\ell_1$. Note that those are indeed two of the expressions appearing in the minimum in \cref{prop:germPchek}. The remaining three cases yield~$b_3$ and the remaining three expressions in the minimum, in each case. 
\noindent
\\
{\bf Step 4: Extension.} The above determines $\psi_{\cp^{\Ch}}$ on an open dense subset of a neighbourhood of the origin. By Proposition~\ref{prop:min_functionals}, this determines $\psi_{\cp^{\Ch}}$ everywhere, and thus $\Delta^{\!\Psi}(\cp^{\Ch})$. This step does not work in the case of displacement energy.
\proofend

\begin{remark}
\label{rk:lagisotopy_11}
In light of this proof, the fact that~$\cp$ and~$\cp^{\Ch}$ are \emph{Lagrangian isotopic} becomes very evident. Indeed, both~$\cp$ and~$\cp^{\Ch}$ can be deformed into Lagrangian tori in a nearby reduced space with~$b_1,b_2 \neq 0$, which only has one orbifold point. Since their curves are isotopic there, so are the respective deformations. 
\end{remark}

\subsection{Lifted Vianna tori}

The quadric $Q^3$ contains infinitely many monotone Lagrangian tori which are not Hamiltonian isotopic (and not symplectomorphic, either) to one another. These can be constructed by lifting Vianna tori from the two-dimensional quadric $Q^2 = S^2 \times S^2$ to $Q^3$ by 
\begin{enumerate}
    \item viewing $Q^2 = S^2 \times S^2$ as a suitable symplectic quotient of $Q^3$ and lifting Vianna tori from that quotient, or, equivalently, by
    \item Biran's circle bundle construction.
\end{enumerate}
This is why we call them \emph{lifted Vianna tori}. Both the construction using Biran's approach and the fact that there are infinitely many tori in $Q^3$ are known to experts, though this result does not seem to have appeared in the literature. For example it can be seen as a variation of the case of $\CP^3$ which is treated in \cite{ChaHirWan24}, or as a special case of forthcoming work by Diogo--Tonkonog--Vianna--Wu \cite{DTVW26}, where similar ideas will be treated in much greater generality. We will use the first approach and work with symplectic reduction.

As we will see below, the Oakley--Usher torus $\cp \subset Q^3$ is the lift of the Clifford torus $S^1_{\rm eq} \times S^1_{\rm eq} \subset S^2 \times S^2$. What is of interest to us here is that the Chekanov Oakley--Usher Lagrangian is not of this form.

\begin{theorem}
    \label{thm:exoticCOU}
    The Chekanov Oakley--Usher Lagrangian $\cp \subset Q^3$ is not symplectomorphic to any of the lifted Vianna tori. 
\end{theorem}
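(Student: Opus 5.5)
The plan is to compare $\Psi$-polytopes, using \cref{prop:distinguish_tori} in its symplectomorphism-invariant form. $\Psi$ is invariant under arbitrary symplectomorphisms, since these map $J$-holomorphic disks to $J'$-holomorphic disks for the pushed-forward almost complex structure. The argument of \cref{prop:distinguish_tori} therefore goes through with $\chi = (\phi|_L^*)^{-1}$ for any symplectomorphism $\phi$, and the $\Psi$-polytopes of symplectomorphic tori differ by an element of $\GL(3;\Z)$. It thus suffices to compute $\Delta^{\!\Psi}$ of every lifted Vianna torus and show that none is $\GL(3;\Z)$-equivalent to $\Delta^{\!\Psi}(\cp^{\Ch})$ from \cref{prop:germPchek}.

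\textbf{Step 1: realizing $Q^2$ as a quotient.} We reduce $Q^3$ by a circle action with $Q^2 = S^2\times S^2$ as quotient. I would use the $S^1$ generated by $\mu_3$ in the Gelfand--Cetlin fibration of \cref{prop:GCfibration}, cut at the level of the monotone torus, i.e.\ $x_3 = \frac{4\pi}{3}$. The slice $\{x_3 = \tfrac{4\pi}{3}\}\cap\Delta_{Q^3}$ is a square $[0,\tfrac{4\pi}{3}]^2$, the moment polytope of $S^2\times S^2$ with equal factors. The fibre of $x_{\cp}$ sits over its centre, so $\cp$ is the lift of the Clifford torus.

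Alternatively, one can do Biran's construction with respect to the divisor $Q^2 = \{z_4=0\}$. A Vianna torus $V\subset S^2\times S^2$ lifts to a torus $\widetilde V$, the preimage of $V$ under the reduction map. Monotonicity of $\widetilde V$ should follow as in the proof for $\pkmch$, by using a Maslov index comparison as in \cref{lem:maslov_reduction}.

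\textbf{Step 2: computing $\Psi$ of the lifts.} I would produce a versal deformation of $\widetilde V$ by combining two pieces. The first is a versal deformation of $V$ inside varying reduced spaces (the squares at height $x_3 = \frac{4\pi}{3}+c$). The second is the $c$-direction itself. This mirrors \cref{lem:VDlift}.

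Disks on $\widetilde V$ come from two sources. There are lifts of disks on $V$, with area unchanged and Maslov index unchanged. There are also disks through the ``fibre directions'' hitting $\Sigma$ or the divisor at infinity. A Biran-type relation then gives
\[
\mathscr W_{\widetilde V} = \text{(lift of } \mathscr W_V) + (\text{extra term}).
\]
On the level of polytopes, $\Delta^{\!\Psi}(\widetilde V)$ is a \emph{cone/bipyramid} over $\Delta^{\!\Psi}(V)$, the dual of the Newton polytope of $\mathscr W_V$, which is the Vianna triangle-type polygon. The apex data is fixed by the extra facet and its opposite at $\pm$ integral affine distance determined by the monotonicity constant $\frac{2\pi}{3}$.

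I expect the justification that the disk classes realizing $\Psi$ are exactly of these two types to be the main obstacle. I would try to reduce it, via \cite[Theorem B]{SheTonVia24}, to the known superpotential of Vianna tori together with a degeneration argument. If that fails, I would fall back on only the combinatorial consequence: $\Delta^{\!\Psi}(\widetilde V)$ contains a $2$-dimensional slice $\GL(2;\Z)$-equivalent to a scaled $\Delta^{\!\Psi}(V)$, namely a polygon with at most four edges (a quadrilateral for Vianna tori of $S^2\times S^2$), plus facets of lattice-distance specific to the lift.

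\textbf{Step 3: comparison.} $\Delta^{\!\Psi}(\cp^{\Ch})$ is a pyramid over a square: one apex facet $b_3\ge -\frac{2\pi}{3}$ together with four slanted facets meeting at a four-valent vertex. The integral affine distance from that vertex to the opposite facet is $\pi$. For the lifted Vianna tori I would compute the corresponding invariant: the numbers of facets and vertices, their valencies, and the lattice distances from four-valent vertices to opposite facets.

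For Clifford ($\cp$) this distance is $2\pi$, as observed above. For the other Vianna tori the polytopes have non-unimodular vertices, indexed by Markov-type triples, so they are not even combinatorially or unimodularly a pyramid with a smooth base. Such vertex types are $\GL(3;\Z)$-invariants, and $\cp^{\Ch}$'s apex (checked via the four normals $(\pm1,\pm1,-2)$) gives a distinct invariant. This separates $\cp^{\Ch}$ from every lift and concludes the proof.
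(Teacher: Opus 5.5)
Your overall framework is the paper's: compare $\Psi$-polytopes up to $\GL(3;\Z)$, with $Q^2$ realized as the reduction at $\mu_3=\frac{4\pi}{3}$. Your remark that $\Psi$ is invariant under arbitrary symplectomorphisms is also right. There are, however, two genuine gaps.

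\textbf{Step 2 does not establish the key lemma.} The comparison needs the exact shape of $\Delta^{\!\Psi}(\widehat T^{\rm Via}_k)$. It is the cone $\operatorname{Cone}(\Delta_k\times\{2\pi\})$ of \cref{lem:psi_polytope_lifted}, with apex at the vertex over $\Sigma$ and base in the facet $\{x_3=2\pi\}$. You never establish this. You hedge between cone and bipyramid; a bipyramid is wrong, since the slices $[0,x_3]^2$ shrink to the point over $\Sigma$ as $x_3\to 0$. You propose a Biran/superpotential disk count that you yourself flag as unjustified. Your fallback, a $2$-dimensional slice equivalent to $\Delta_k$, gives no control over vertex valencies or lattice distances, which is exactly what Step 3 needs.

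No disk counting is required. Build a versal deformation of the lift from a versal deformation of $T^{\rm Via}_k$ in the reduced spaces $\mu_3^{-1}(c)/S^1$, together with the level $c$. On an open dense set its members are Hamiltonian isotopic downstairs (then lifted) to toric fibres of $S^2\times S^2$, as in \cite{Bre23b}. Their lifts are Gelfand--Cetlin fibres $T(x)$ with $\Psi(T(x))=d_{\rm IA}(x,\pp\Delta_{Q^3})$ by \cref{prop:Psi_fibres}. Since $\Delta_{Q^3}$ is itself the cone over its horizontal slices, this gives the cone on an open dense set, and \cref{prop:min_functionals} extends it everywhere.

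\textbf{Step 3 rests on false claims.} The base of $\Delta^{\!\Psi}(\cp^{\Ch})$ is the lozenge $\{|b_1|+|b_2|\le 2\pi\}$ at $b_3=-\frac{2\pi}{3}$, and it is not smooth. At the vertex $(2\pi,0,-\frac{2\pi}{3})$ the facet normals $(0,0,1),(-1,1,-2),(-1,-1,-2)$ have determinant $2$. So ``non-unimodular vertices'' cannot separate $\cp^{\Ch}$ from anything. Moreover, the non-Clifford $\Delta_k$ are not all Markov-type triangles: many are quadrilaterals. Their cones are combinatorially pyramids over a quadrilateral, just like $\Delta^{\!\Psi}(\cp^{\Ch})$. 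Your claim that the apex with normals $(\pm1,\pm1,-2)$ ``gives a distinct invariant'' is asserted without computing the corresponding apex data for the lifted quadrilateral tori.

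The invariant that works is the one you applied only to the Clifford torus, used uniformly once the cone lemma is known. A cone over a triangle has no four-valent vertex, whereas $\Delta^{\!\Psi}(\cp^{\Ch})$ has one. A cone over a quadrilateral has exactly one four-valent vertex, its apex, which lies at integral affine distance $2\pi$ from the opposite facet $\{x_3=2\pi\}$ for every $k$. In $\Delta^{\!\Psi}(\cp^{\Ch})$ that distance is $\pi$.
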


\begin{remark}
    Note that the case of $\CP^3$ is quite different, since the corresponding tori $\cp_{1,1}, \cp_{1,1}^{\rm Ch} \subset \CP^3$ are Hamiltonian isotopic. However it can be shown that both these tori \emph{are not} symplectomorphic to the ones treated in \cite{ChaHirWan24}. This follows from the computation of Newton polytopes of disk potentials which were carried out in \cite[Section 8]{OakUsh16} and in \cite[Theorem 1.1]{ChaHirWan24}.
\end{remark}

Let us now discuss the Vianna tori $T_k^{\rm Via} \subset S^2 \times S^2$. Let $\Delta_0 = [-\frac{2\pi}{3},\frac{2\pi}{3}] \times [-\frac{2\pi}{3},\frac{2\pi}{3}] \subset \R^2$ be the Delzant polytope of the standard toric structure on $S^2 \times S^2$ equipped with the monotone symplectic form which evaluates to $\frac{4\pi}{3}$ on both standard generators of $H_2(S^2 \times S^2)$. It will become clear below why we choose this normalization. 

Let $\{\Delta_k\}_{k \in \N}$ denote the set of all possible almost toric base diagrams obtained from ATF-mutations of $\Delta_0$. We take $T^{\rm Via}_k$ to be the fibre over $0 \in \Delta_k$, after nodal sliding the nodes close enough to the vertices such that $0$ does not lie on any branch cuts.

\begin{remark} Some of the $\Delta_k$ are triangles, and some are quadrilaterals. Here we consider them all, as opposed to \cite{Via17}, where only the triangular base diagrams are considered. The triangular ones have the advantage that they can be enumerated by the solutions of some Markov-type equation, as in the case of $\CP^2$, and thus the set they form has a rather simple description. We again refer to \cite{Via17} for details. The quadrilateral base diagrams on the other hand are ill-understood and, to our knowledge, it is not even known whether all of the corresponding tori can be distinguished. This is not important for us here; we will prove that none of the Vianna tori are Hamiltonian isotopic to $\cp^{\Ch}$.
\end{remark}

We can forget all of the almost toric decorations (nodes, branch cuts) of $\Delta_k$ to obtain an ordinary polytope $\Delta_k \subset \R^2$. 

\begin{proposition}
The $\Psi$-polytope of the Vianna tori coincides with its ATF base diagram, 
	\begin{equation}
    	\label{eq:delta_psi_downstairs}
    	\Delta^{\!\Psi}(T^{\rm Via}_k) = \Delta_k. 
	\end{equation}
\end{proposition}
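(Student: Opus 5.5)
We will deduce \eqref{eq:delta_psi_downstairs} from \cite[Theorem B]{SheTonVia24}, as in the Remark following the definition of the $\Psi$-polytope, with almost toric fibrations playing the role of Gelfand--Cetlin fibrations. The plan has three steps.

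First, we identify the superpotential. By Vianna's wall-crossing analysis \cite{Via17} (see also Pascaleff--Tonkonog), the superpotential $W_k$ of the monotone fibre $T^{\rm Via}_k$ is obtained from the Clifford potential $W_0 = T^{2\pi/3}(x + x^{-1} + y + y^{-1})$ by the sequence of algebraic mutations matching the ATF mutations $\Delta_0 \rightsquigarrow \Delta_k$. The key known fact is that the Newton polytope of $W_k$ is dual to $\Delta_k$, centred at the monotone point $0$. Here the normalization $\Delta_0 = [-\frac{2\pi}{3},\frac{2\pi}{3}]^2$ makes the integral affine distance from $0$ to every edge equal to $\frac{2\pi}{3} = \Psi$. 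Mutation preserves this property: each mutation acts on the base diagram by a piecewise $\GL(2;\Z)$ shear and on the Newton polytope by the dual combinatorial mutation.

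Second, we compute the $\Psi$-germ. Following \cite{SheTonVia24}, $\Psi(T^{\rm Via}_k)$ is the minimal area of a disk class contributing with nonzero count to $W_k$. Its germ is $\frac{2\pi}{3} + \min_i \langle a, \xi_i\rangle$, where the $\xi_i$ are the boundaries of the Maslov $2$ classes with nonzero count, i.e.\ the exponents of the monomials of $W_k$. This has exactly the form of \cref{prop:min_functionals}. We must check that the $\xi_i$ which are vertices of the Newton polytope indeed appear with nonzero coefficient; vertex coefficients are $\pm1$ by Vianna's computation, so this holds. Exponents lying inside the Newton polytope do not affect the minimum. Hence $\{\psi \geqslant 0\}$ is the polar dual of the Newton polytope, scaled by $\frac{2\pi}{3}$, which by step one is $\Delta_k$.

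As an alternative to the first two steps, we can argue as in the proof of \cref{prop:germPchek}. Nearby fibres $\mu_k^{-1}(a)$ of the ATF form a versal deformation, after nodal sliding so that a neighbourhood of $0$ avoids the branch cuts. The result of \cite[Proposition C]{SheTonVia24} on fibres (the analogue of \cref{prop:Psi_fibres}) would give $\Psi = d_{\rm IA}(a, \pp\Delta_k)$ directly. The extension via \cref{prop:min_functionals} is immediate here.

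The main obstacle is justifying that the $\Psi$ of almost toric fibres equals the integral affine distance to the boundary of the base diagram. Branch cuts make the affine structure non-global, so one must show that no wall-crossing contributions yield smaller areas. I would first try to invoke \cite[Theorem B]{SheTonVia24} for Newton polytopes, which only requires the potential and avoids this issue. The remaining care is checking that vertex monomials survive, which follows from the explicit mutation formulas.
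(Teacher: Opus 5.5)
Your main argument is correct, but it takes a different route from the paper.

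\textbf{The paper's route.} It follows the proof of \cite[Theorem 3.2]{Bre23b}. On an open dense subset of a neighbourhood of $0$, namely off the eigenlines of the nodes, which pass through the monotone point, the almost toric fibres $\pi_k^{-1}(y)$ are Hamiltonian isotopic to fibres of the standard toric fibration of $S^2\times S^2$. Tracking base points through the mutations and applying \cite[Proposition C]{SheTonVia24} to toric fibres gives $\Psi(\pi_k^{-1}(y)) = d_{\rm IA}(y,\pp\Delta_k)$ there. \cref{prop:min_functionals} then extends this to the whole germ. No holomorphic disks on the Vianna tori are ever counted, and the same soft mechanism is reused for the lifted tori in \cref{lem:psi_polytope_lifted}.

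\textbf{Your route.} You feed in enumerative input instead: the mutation (wall-crossing) formula for the potentials, giving $\operatorname{Newt}(W_k)$ dual to $\Delta_k$, plus the dictionary between the $\Psi$-germ of a monotone torus and the tropicalisation of its potential. Given these black boxes your proof is shorter, and it also yields the support of $W_k$.

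Two citations need care.
\begin{enumerate}
\item The paper invokes \cite[Theorem B]{SheTonVia24} only for Gelfand--Cetlin fibres, and almost toric fibrations are not of that type. What you actually need is the dictionary for an arbitrary monotone torus: the classes $\beta_i$ in \cref{prop:min_functionals} are exactly the Maslov-$2$ classes with nonzero disk count. Your check on vertex coefficients is then vacuous, since vertices of a Newton polytope carry nonzero coefficients by definition.
\item \cite{Via17} considers only the triangular diagrams. For quadrilateral $\Delta_k$, take the Newton polytope statement from \cite{PasTon20}.
\end{enumerate}

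\textbf{Your alternative.} This is the paper's route with its key step missing. \cite[Proposition C]{SheTonVia24} does not apply to fibres of an almost toric fibration. Since $0$ lies on the walls, what is needed is exactly the Hamiltonian isotopy to toric fibres off the walls, followed by the extension via \cref{prop:min_functionals}.
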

\proof
The proof is based on the same approach as the proof of \cite[Theorem 3.2]{Bre23b}, to which we refer for details. Let $\mu_{S^2\times S^2} \colon S^2 \times S^2 \rightarrow \square$ be the standard toric fibration of $S^2 \times S^2$ over the square $\square = [-a,a] \times [-a,a]$, whose size $a > 0$ depends on the normalization of the symplectic form. For every $y \in \square$, denote by $T_y \subset S^2 \times S^2$ the corresponding toric fibre. Then $\Psi(T_y) = d_{\rm IA}(y,\pp \square)$ by \cite[Proposition C]{SheTonVia24}. Since this agrees with the displacement energy, we can use the same arguments as in the proof of \cite[Theorem 3.2]{Bre23b}, to find that $\Psi(\pi_k^{-1}(y)) = d_{\rm IA}(y,\pp \Delta_k)$, for $y$ in an open dense subset of the origin. Here $\pi_k \colon S^2 \times S^2 \rightarrow \Delta_k$ is the almost toric projection map. Thus $\psi_{T^{\rm Via}_k}$ agrees with the integral affine distance to the boundary of $\Delta_k$ on an open dense subset and applying Proposition \ref{prop:min_functionals} again, we find that it does so everywhere. This proves the claim.
\proofend

We note that for the dual of the Newton polytope of the disk counting potential (instead of the $\Psi$-polytope), this result was proved by Vianna \cite[Theorem 5.1]{Via17}. It also follows from subsequent work by Pascaleff--Tonkonog~\cite{PasTon20}. 

Let us now discuss the relevant lifts of these tori using symplectic reduction. To that end, we perform symplectic reduction on $Q^3$ with respect to the third component $\mu_3 \colon Q^3 \rightarrow \Delta_{Q^3}$ of the Gelfand--Cetlin moment map $\mu$. More precisely we consider the symplectic quotient $M = \mu_3^{-1}\left(\frac{4\pi}{3}\right)/S^1$. Since $\mu$ is toric away from $\Sigma$ (see Proposition \ref{prop:GCfibration}), it is easy to check that the $S^1$-action on this level set is free and thus $M$ is an honest symplectic manifold. To identify $M$, we note that it carries a residual toric $T^2$-action which has the polytope 
\begin{equation*}
    \Delta_{Q^3} \cap \left\{b_3 = \textstyle\frac{4\pi}{3}\right\} = \left[0,\textstyle\frac{4\pi}{3}\right] \times \left[0,\textstyle\frac{4\pi}{3}\right] \times \left\{\textstyle\frac{4\pi}{3}\right\}
\end{equation*}
as its image. This polytope is a standard square and thus the reduced space is symplectomorphic to $Q^2 = S^2 \times S^2$ with an appropriately scaled monotone symplectic form. This is an instance of what is called \emph{toric reduction} in \cite[\S 2.2]{Bre23}, to which we refer for details.

Denote by $p \colon \mu_3^{-1}\left(\frac{4\pi}{3}\right) \rightarrow Q^2$ the reduction map. For any Lagrangian $L \subset Q^2$, we denote its lift via $p$ by $\widehat{L} = p^{-1}(L) \subset Q^3$. This lift is automatically a Lagrangian submanifold. 

\begin{proposition}
    The lift $\widehat{T}_{\rm Cl}$ of the Clifford torus $T_{\rm Cl} = S^1_{\rm eq} \times S^1_{\rm eq} \subset M$ coincides with the Oakley--Usher Lagrangian $\cp \subset Q^3$.
\end{proposition}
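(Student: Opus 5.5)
The plan is to check that both Lagrangians are the same fibre of the Gelfand--Cetlin moment map $\mu$ from \cref{prop:GCfibration}. By that proposition, $\cp = \mu^{-1}(x_{\cp})$ with $x_{\cp} = (\frac{2\pi}{3},\frac{2\pi}{3},\frac{4\pi}{3})$. In particular $\cp \subset \mu_3^{-1}(\frac{4\pi}{3})$, and $\cp$ lies in $Q^3\setminus\Sigma$, where $\mu$ is a genuine toric moment map. So it suffices to show that $p^{-1}(T_{\rm Cl}) = \mu^{-1}(x_{\cp})$.

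The key structural input is that $\mu = (\mu_1,\mu_2,\mu_3)$ generates a $T^3$-action on $Q^3 \setminus \Sigma$, so $\mu_1,\mu_2$ Poisson commute with $\mu_3$. Hence $\mu_1,\mu_2$ descend to functions $\bar\mu_1,\bar\mu_2$ on $M = \mu_3^{-1}(\frac{4\pi}{3})/S^1$, with $\bar\mu_i \circ p = \mu_i$. These generate the residual toric $T^2$-action, whose image is the square $[0,\frac{4\pi}{3}]^2$. As stated before the proposition, this makes $M$ the toric manifold $S^2\times S^2$ with the monotone form giving area $\frac{4\pi}{3}$ to both factors, and $(\bar\mu_1,\bar\mu_2)$ is its standard moment map, up to the identification. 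The Clifford torus $S^1_{\rm eq}\times S^1_{\rm eq}$ is, by definition, the product of equators, i.e.\ the monotone toric fibre over the barycentre $(\frac{2\pi}{3},\frac{2\pi}{3})$ of the square. Therefore
\begin{equation*}
p^{-1}(T_{\rm Cl}) = p^{-1}\big((\bar\mu_1,\bar\mu_2)^{-1}(\tfrac{2\pi}{3},\tfrac{2\pi}{3})\big) = \mu_3^{-1}(\tfrac{4\pi}{3}) \cap \mu_1^{-1}(\tfrac{2\pi}{3}) \cap \mu_2^{-1}(\tfrac{2\pi}{3}) = \mu^{-1}(x_{\cp}) = \cp.
\end{equation*}

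The main obstacle is justifying that the identification $M \cong S^2\times S^2$ sends the fibre of $(\bar\mu_1,\bar\mu_2)$ over the centre to the product of equators. Any two choices of toric structure on the monotone $S^2\times S^2$ with square moment polytope are related by an equivariant symplectomorphism (Delzant's theorem), composed with a $\GL(2;\Z)$ reparametrisation of the torus. Such a map preserves the barycentre of the polytope, and hence the monotone central fibre, and the standard product toric structure has the Clifford torus as that central fibre. One should also note that the level $\mu_3 = \frac{4\pi}{3}$ lies strictly between the vertex $0$, whose fibre is $\Sigma$, and the top facet. So the $S^1$-action is free there, as claimed before the proposition, and everything happens in the smooth toric region.

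As a sanity check, and an alternative description, one could instead use the original map $\mu_{1,1}$. There $\mu_3 = \frac12(2\pi|p|)\cdot 2/2$ corresponds to the geodesic Hamiltonian $2\pi|p| = \frac{4\pi}{3}$, and the residual moment map is $(q_1\wedge p_1, q_2\wedge p_2)$. The fibre over the centre corresponds to $q_1\wedge p_1 = q_2 \wedge p_2 = 0$, which recovers $\cp = \mu_{1,1}^{-1}(\frac{4\pi}{3},0,0)$ from \eqref{P as moment fibre} directly.
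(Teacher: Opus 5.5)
Your proof is correct and follows the paper's argument: the Clifford torus is the fibre over the centre $(\frac{2\pi}{3},\frac{2\pi}{3})$ of the square $\Delta_{Q^3}\cap\{x_3=\frac{4\pi}{3}\}$, so its preimage under the reduction map $p$ is $\mu^{-1}(x_{\cp})=\cp$ by \cref{prop:GCfibration}. Your Delzant-uniqueness remark and the $\mu_{1,1}$ check only spell out what the paper leaves to an external reference, with one slip: the third row of $M$ gives exactly $\mu_3=2\pi|p|$, not the garbled expression you wrote, although you do arrive at the correct level $2\pi|p|=\frac{4\pi}{3}$.
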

\proof
    The Clifford torus $T_{\rm Cl} = S^1_{\rm eq} \times S^1_{\rm eq}$ is the fibre over the central point $(\textstyle\frac{2\pi}{3},\textstyle\frac{2\pi}{3}) \subset [0,\frac{4\pi}{3}] \times [0,\frac{4\pi}{3}]$ in the moment polytope of $M$. Under the affine-linear isomorphism of this square to $\Delta_{Q^3} \cap \left\{b_3 = \textstyle\frac{4\pi}{3}\right\}$, the point $(\textstyle\frac{2\pi}{3},\textstyle\frac{2\pi}{3})$ maps to the point $(\textstyle\frac{2\pi}{3},\textstyle\frac{2\pi}{3},\textstyle\frac{4\pi}{3}) \in \Delta_{Q^3}$, the fibre of which is precisely $\cp$, which proves the claim. See also \cite[Proposition 2.8]{Bre23} for details. 
\proofend

The $\psi$-polytope of $\cp$ is (a translate of) $\Delta_{Q^3}$ itself by Corollary \ref{cor:germP}. Since the $\psi$-polytope of $T_{\rm Cl}$ is the square, this suggests to view $\Delta_{Q^3}$ as the (truncated) cone over the square as follows 
\begin{equation*}
    \Delta_{Q^3}
    = \operatorname{Cone}(S) 
    = \{ tx \sth t \in [0,1], x \in S\},
\end{equation*}
for $S = [0,2\pi] \times [0,2\pi] \times \{2\pi\}$. In other words, the $\psi$-polytope of the lifted Lagrangian is obtained by taking the cone over the $\psi$-polytope of the Lagrangian which is lifted.

\begin{lemma}
    \label{lem:psi_polytope_lifted}
    The $\psi$-polytope of the lifted Vianna torus $\widehat{T}^{\rm Via}_k \subset Q^3$ is a cone over the $\psi$-polytope $\Delta^{\!\Psi}(T^{\rm Via}_k) = \Delta_k$ (see \eqref{eq:delta_psi_downstairs}) of the Vianna torus $T^{\rm Via}_k \subset Q^2$,
    \begin{equation*}
        \label{eq:psi_polytope_lifted_Vianna}
        \Delta^{\!\Psi}(\widehat{T}^{\rm Via}_k)
        = \operatorname{Cone}(\Delta_k \times \{2\pi\})
        = \{tx \sth t \in [0,1], x \in \Delta_k \times \{2\pi\}\}.
    \end{equation*}
\end{lemma}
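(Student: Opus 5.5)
The plan is to repeat the strategy of the proof of \cref{prop:germPchek}. We first build a versal deformation of $\widehat{T}^{\rm Via}_k$ out of the reduction $p$ together with the residual $S^1$-direction. On an open dense set of parameters, we then identify its members with lifts of fibres of the almost toric fibration $\pi_k$ at varying levels of $\mu_3$. From there, \cref{prop:min_functionals} pins down the whole germ.

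\textbf{Step 1: a family of reductions.} For $c$ near $\frac{4\pi}{3}$ we reduce $\mu_3^{-1}(c)$ by the $S^1$-action generated by $\mu_3$. As for $c = \frac{4\pi}{3}$, the action is free away from $\Sigma$. The reduced space $M_c$ is toric with moment square $\Delta_{Q^3} \cap \{x_3 = c\} = [0,c]^2$, so $M_c \cong S^2 \times S^2$ with both factors of area $c$. Equivalently, $M_c$ is the rescaling of $M$ by $t = \frac{3c}{4\pi}$. We carry out the ATF mutations defining $\Delta_k$ in every $M_c$ simultaneously: the nodal trades are performed $S^1$-equivariantly in a neighbourhood of $\mu_3^{-1}(\frac{4\pi}{3})$, which is possible because the mutations only use the residual $T^2$-action on each level. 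This gives almost toric fibrations $\pi_{k,c} \colon M_c \to t\Delta_k$, with $t\Delta_k$ placed inside the slice $\{x_3 = c\}$. Lifting their fibres produces Lagrangian tori $\widehat{T}_{k}(y,c) = p_c^{-1}(\pi_{k,c}^{-1}(y))$.

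\textbf{Step 2: versal deformation.} The family $(y,c) \mapsto \widehat{T}_k(y,c)$, for $y$ near the monotone point and $c$ near $\frac{4\pi}{3}$, is a versal deformation of $\widehat{T}^{\rm Via}_k$. The flux in the $S^1$-fibre direction is $c - \frac{4\pi}{3}$. The flux in the two horizontal directions is the displacement of $y$, measured in the integral affine structure of the base, exactly as in \cref{lem:VDlift} and \cite[Proposition 3.15]{BreHauSch23}. This identifies $H^1(\widehat{T}^{\rm Via}_k;\R) \cong \R^3$, with the deformation corresponding to the point $(y,c)$ in the cone.

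\textbf{Step 3: computing $\Psi$.} Here we want $\Psi(\widehat{T}_k(y,c)) = d_{\rm IA}\bigl((y,c),\pp\operatorname{Cone}(\Delta_k \times \{2\pi\})\bigr)$ for $(y,c)$ in an open dense set. Once this holds, \cref{prop:min_functionals} extends it to the whole germ, and taking the superlevel set $\{\psi \geqslant 0\}$ gives the cone; a translation puts the cone apex at the origin, as in \cref{cor:germP}. We would try two routes for this identity.

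\emph{First route.} On the open dense set of $y$ away from the branch cuts used in the proof of \eqref{eq:delta_psi_downstairs}, the fibre $\pi_{k,c}^{-1}(y)$ is Hamiltonian isotopic in $M_c$ to a toric fibre $T_{y'}$, by the nodal-slide and mutation arguments of \cite[Theorem 3.2]{Bre23b}. Lifting that isotopy (the Hamiltonians are $S^1$-invariant, so the lift is again Hamiltonian) shows $\widehat{T}_k(y,c)$ is Hamiltonian isotopic to a Gelfand--Cetlin fibre $T(y',c)$. \cref{prop:Psi_fibres} then gives $\Psi$ as the distance of $(y',c)$ to $\pp\Delta_{Q^3}$, and we read it back through the piecewise integral affine mutation map, as in \cref{lem:bigPhi}. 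The drawback is that the isotopy must carry $y$ to a point $y'$ of the square, so this route only handles the region where the mutation map is the identity.

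\emph{Second route (the one we would pursue).} We compute $\Psi$ directly from the discs. The Maslov~$2$ holomorphic discs on $\widehat{T}_k$ split into two kinds. \emph{Horizontal} discs are lifts of Maslov~$2$ discs on $T^{\rm Via}_k$ in $M_c$, obtained by Biran's circle bundle correspondence; their areas are the affine distances from $y$ to the facets of $t\Delta_k$. \emph{Vertical} discs hit the cut divisor $\{x_3 = 2\pi\}$, with area $2\pi - c$ up to the horizontal correction. Written out, these areas are exactly the distances to the side facets of the cone and to its top facet.

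\textbf{Main obstacle.} The hardest point is justifying this correspondence, namely that the lifted disc counts are nonzero and that no other classes of smaller area contribute, uniformly for all mutations $k$. I would first try to reduce this to the structure already used in \cite[Theorem B]{SheTonVia24}, combined with the Biran lifting of disc potentials as in \cite{ChaHirWan24, DTVW26}. In that picture, the potential of $\widehat{T}_k$ is $W_k(x_1,x_2)\cdot x_3 + 1/x_3$ up to a change of basis, and its Newton polytope is the dual of the cone, as one can already see for the Clifford case in \cref{thm:quadric_non_displ}.
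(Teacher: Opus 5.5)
There is a genuine gap. The route you commit to, the second one, depends on a step you leave open. You need that Maslov~$2$ disc counts on $T^{\rm Via}_k$ lift to nonzero counts on $\widehat{T}^{\rm Via}_k$, and that nothing else of minimal area contributes, uniformly in $k$. That is a Biran-type lifting theorem for disc potentials from $Q^2$ to $Q^3$. Nothing in the paper supplies it: \cite{DTVW26} is forthcoming, and \cite{ChaHirWan24} treats $\CP^3$. You would then still need \cite[Theorem B]{SheTonVia24} to pass from Newton polytopes to $\Psi$-polytopes.

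The formula you aim for is also wrong. For $k=0$, \cref{thm:quadric_non_displ} gives $x_1+x_2+\frac{x_3}{x_1}+\frac{x_3}{x_2}+\frac{1}{x_3}=s\,W_0(u,v)+s^{-2}$, with $W_0=u+u^{-1}+v+v^{-1}$, $s=x_3^{1/2}$, $u=x_1s^{-1}$, $v=x_2s^{-1}$. So the lifted potential lives on a lattice containing $\Z^3$ with index two, not on $\Z^3$ itself. Its Newton polytope has volume $1$. By contrast, $\operatorname{conv}\{(\pm1,0,1),(0,\pm1,1),(0,0,-1)\}$, the Newton polytope of $W_0(x_1,x_2)\,x_3+x_3^{-1}$, has volume $\frac{4}{3}$. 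Hence no element of $\GL(3;\Z)$ relates them, and the dual of the latter is not $\Delta^{\!\Psi}(\cp)$ from \cref{cor:germP}. Your remark that this is ``already visible for the Clifford case'' is therefore false.

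The first route, which you discard, is the paper's proof, and your objection to it is unfounded. Reversing the mutations by nodal slides shows that $\pi_{k,c}^{-1}(y)$ is Hamiltonian isotopic in $M_c$ to the toric fibre $T_{\Phi_c(y)}$ for \emph{every} $y$ off the finitely many eigenlines through the monotone point. Here $\Phi_c$ is the continuous, piecewise integral affine inverse-mutation map from the slice copy of $t\Delta_k$ onto $[0,c]^2$. It sends \emph{each} chamber into the square, not only the chamber where it is the identity. This is exactly the mechanism of \cref{lem:bigPhi}, where $\Phi$ is the identity on no chamber at all.

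Lifting via $S^1$-invariant extensions of the pulled-back Hamiltonians, $\widehat{T}_k(y,c)$ is Hamiltonian isotopic to the Gelfand--Cetlin fibre $\mu^{-1}(\Phi_c(y),c)$. So $\Psi(\widehat{T}_k(y,c))=\min_i \ell_i(\Phi_c(y),c)$ by \cref{prop:Psi_fibres}. Chamberwise, $\ell_0=2\pi-x_3$ is unchanged, because $\Phi_c$ preserves $x_3$. Also $\min_{1\leqslant i\leqslant 4}\ell_i(\Phi_c(y),c)$ is the slicewise distance to the boundary of the copy of $t\Delta_k$, by the argument of \cite[Theorem 3.2]{Bre23b} used for \eqref{eq:delta_psi_downstairs}. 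There, each shear fixes its eigenline, and by convexity of both polygons the facet functions added or removed by a mutation never realise the minimum on the relevant side. Together these are the facet functions of the cone. This determines $\psi_{\widehat{T}^{\rm Via}_k}$ on an open dense set near the origin, and \cref{prop:min_functionals} extends it everywhere.
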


\proof A similar statement was proved in \cite{Bre23b} for the displacement energy of the lift to $\C^3$ of Vianna tori in $\CP^2$. Since the proof is a straightforward adaptation of the proof of \cite[Proposition 1.6]{Bre23b}, we only give an outline here and refer the reader to \cite{Bre23b} for details.

An explicit versal deformation of $\widehat{T}^{\rm Via}_k \subset Q^3$ can be constructed from a versal deformation of the Vianna torus $T^{\rm Via}_k$ downstairs. On an open dense subset, the members of this deformation downstairs are Hamiltonian isotopic to ordinary toric fibres (see \cite[\S 3.3]{Bre23b}). The lifts of these toric fibres are fibres of the Gelfand--Cetlin moment map $\mu \colon Q^3 \rightarrow \Delta_{Q^3}$, of which we know the $\Psi$-invariant. Tracking which member of the versal deformation is Hamiltonian isotopic to which $\mu$-fibre yields the result on an open dense subset of a neighbourhood of the origin. We again use \cref{prop:min_functionals} to deduce that the result holds everywhere. 
\proofend

We can now prove the main theorem of this subsection. \smallskip

\proofof{\cref{thm:exoticCOU}}
It suffices to prove that $\Delta^{\!\Psi}(\cp^{\rm Ch})$ is not integral affine equivalent to any of the $\Delta^{\!\Psi}(\widehat{T}^{\rm Via}_k)$. By Lemma~\ref{lem:psi_polytope_lifted}, we know that the latter $\psi$-polytopes are cones over the $\Delta_k$, meaning cones over triangles or over quadrilaterals. 
\begin{enumerate}
	\item If $\Delta_k$ is a triangle, then its cone cannot be integral equivalent to $\Delta^{\!\Psi}(\cp^{\rm Ch})$, since the latter contains a four-valent vertex;
	\item If $\Delta_k$ is a quadrilateral, then the integral affine distance of its four-valent vertex to the opposite facet is $2\pi$. The integral affine distance of the four-valent vertex in $\Delta^{\!\Psi}(\cp^{\rm Ch})$ to its opposite facet is $\pi$. Since this is an integral affine invariant, the claim follows.
\end{enumerate}
\proofend

\section{Enumerative properties of the OU-torus in three-dimensional quadric}
\label{sec:enumerative}

In this section, we use a toric degeneration of the quadric $Q^3$ to study the disk-counting potential of the Oakley--Usher torus $\cp \subset Q^3$. Additionally, we apply this to prove a split-generation result for the monotone Fukaya category of $Q^3$ and compare it to Abouzaid--Diogo~\cite{AboDio23}.

\subsection{Superpotentials and non-displaceability}
In \cite{OakUsh16}, the question whether the monotone OU-Lagrangian~$\pkm(r_*)\subset Q^n$ is displaceable for~$k\geqslant 1$ is raised. In this subsection, we give a negative answer for the monotone toric case~$\mathcal{P}= \cp_{1,1}(\frac{4\pi}{3}) \subset Q^3$. To show this, we use a method to reconstruct the superpotential of $\mathcal{P}$ from the versal deformation of its $\Psi$-invariant, and use it to show that this Lagrangian supports critical local systems, in the sense of Biran-Cornea \cite{BirCor12}.
\\
\\
Throughout this section, we work with the universal Novikov field over $\mathbb C$ with real exponents, defined as follows:
\begin{equation}
\label{eq:NovikovField}
  \Lambda
:=
\left\{
\sum_{j=0}^{\infty} a_j T^{\lambda_j}
\ \middle|\
a_j\in \mathbb C,\ \lambda_j\in\mathbb R,\ 
\lambda_j\to +\infty
\right\}.  
\end{equation}
We use multiplicative notation for Novikov weights: a holomorphic disk class $\beta$ contributes the factor $T^{\omega(\beta)}$.

Note that~$\mathcal{P}$ is a spin manifold: any~$n$-torus is parallelizable (as all Lie groups are), and therefore its second Stiefel--Whitney class~$w_2(\mathcal{P}) \in H^2(\mathcal{P};\Z_2)$ vanishes. Denote by~$\Spin(\mathcal{P})$ the set of its spin structures. Let~$\partial: \pi_2(Q^3,\mathcal{P})\to \pi_1(\mathcal{P})$ be the connecting homomorphism from the homotopy long exact sequence of the pair~$(Q^3,\mathcal{P})$. Let $S \in \Spin(\cp)$. We define the superpotential of~$(\mathcal{P},S)$ to be the map~$\mathscr{W}_{(\mathcal{P}, S)} : \Hom(\pi_1(\mathcal{P}),\Lambda^{\times}) \to \Lambda$,
\begin{equation}
\label{eq:superpotential}
\mathscr{W}_{(\mathcal{P}, S)}(\chi) := \sum_{\substack{A \in \pi_2(Q^3,\mathcal{P})\\ m(A) = 2}} \nu(A)\,T^{\langle\omega ,A\rangle}\,\chi(\partial A) , 
\end{equation}
where~$ \nu(A)\in \Z$ are open Gromov--Witten-type invariants: for generic compatible almost complex structures~$J$, they count the number of~$J$-holomorphic disks representing the class~$A$, and whose boundary lies on~$\mathcal{P}$ and passes through a generic point of~$\mathcal{P}$. The choice of spin structure determines the orientations of the moduli spaces of Maslov-2 disks, and hence the signs of the coefficients~$\nu(A)$.

\begin{remark}
Two remarks about the definition of the superpotential are in order. 
First, the superpotential is often defined as a map~$\mathscr{W}_{(\mathcal{P}, S)} : \Hom(H_1(\mathcal{P}),\Lambda^{\times}) \to \Lambda$ in the literature. Here, we choose the homotopical version because in Section \ref{algebra section} we gave the definition of the Maslov class in terms of relative~$\pi_2$ and not relative~$H_2$. This difference plays no role here: note that~$\pi_1(Q^3) = \pi_0(\mathcal{P}) = 0$, so by the homotopy long exact sequence of the pair,~$\pi_1(Q^3,\mathcal{P})=0$. Therefore,~$\pi_2(Q^3,\mathcal{P}) \cong H_2(Q^3,\mathcal{P})$ from the relative version of the Hurewicz theorem. 
Second, the map~$\partial: \pi_2(Q^3,\mathcal{P})\to \pi_1(\mathcal{P})$ is surjective but not injective (see the exact sequence in Section \ref{algebra section}). However, given~$\gamma \in \pi_1(\mathcal{P})$, there is a unique~$A \in \partial^{-1}(\gamma) \subset \pi_2(Q^3,\mathcal{P})$ such that~$m(A) = 2$. Therefore, the sum in \eqref{eq:superpotential} is finite and the expression is well-defined.

\end{remark}
Choose a basis~$\gamma_1,\gamma_2,\gamma_3 \in \pi_1(\mathcal{P})\cong \mathbb Z^3$. The moduli space of rank-one complex local systems on~$\mathcal{\mathcal{P}}$ is the character torus
\[
\mathcal M(\mathcal{P})
:=
\Hom(\pi_1(\mathcal{P}),\Lambda^{\times})
\cong
(\Lambda^{\times})^3,
\]
where the last identification is given by the holonomy coordinates~$x_i:=\chi(\gamma_i)$. Therefore, in what follows we think of the superpotential as a Laurent polynomial with coefficients in the Novikov field:
\[
\mathscr{W}_{(\mathcal{P}, S)} \in \Lambda[x_1^{\pm 1}, x_2^{\pm 1},x_3^{\pm 1}].
\]
The cohomology ring of~$\mathcal{P}\cong T^3$ is the exterior algebra~$H^*(\mathcal{P};\Z) = \Lambda^*(H^1(\mathcal{P};\Z))$, in particular~$H^*(\mathcal{P};\Z)$ is ring-generated by~$H^1(\mathcal{P};\Z)$ by the usual cup product. Therefore, the Biran--Cornea criterion obtained in \cite[Proposition 3.3.1]{BirCor12} applies: the existence of spin structure $S$ and a critical point~\(\chi\in (\Lambda^{\times})^3\) of~$\mathscr{W}_{(\mathcal{P}, S)}$, that is, a point satisfying 
\[
\frac{\partial  }{\partial x_i}\mathscr{W}_{(\mathcal{P}, S)}(x_1,x_2,x_3)=0, \qquad i=1,2,3,
\]
implies that the twisted Lagrangian Floer homology of the monotone brane~$(\mathcal{P}, S,\chi)$, denoted as~$HF^*(\mathcal{P},\chi)$, does not vanish, in fact
\begin{equation}
    HF^*(\mathcal{P},\chi) \cong H^*(\mathcal{P};\Lambda)
\end{equation}
as graded vector spaces.
In particular,~$(\mathcal{P},S,\chi)$ is a nonzero object of the monotone Fukaya category, and $\mathcal{P}$ is nondisplaceable as a Lagrangian submanifold. Thus, to prove nondisplaceability, it suffices to choose a spin structure and exhibit a critical point of the corresponding superpotential. In the remainder of the proof, we do exactly this.

\begin{remark}
\label{lemma:support_potential}
	For any pair $(\cp, S)$, we can read the form of the superpotential from the versal deformation of its $\Psi$-invariant. By \cite[Theorem B]{SheTonVia24}, the dual of the Newton polytope of~$\mathscr{W}_{(\mathcal{P}, S)}$ agrees (up to scaling) with 
\begin{equation}
	\label{eq:Psi_polytope_2}
    \Delta^{\!\Psi}(\cp) = \{a \in \R^3 \sth a_1,a_2,a_3-a_1,a_3-a_2,-a_3 \geqslant -\textstyle\frac{2\pi}{3} \}.
\end{equation}
By \eqref{eq:Psi_polytope_2}, the Newton polytope of the potential (with respect to the variables $x_i$ introduced above) is the convex hull of the vectors
\begin{equation}
	\label{eq:exponents}
	(1,0,0),\quad (0,1,0),\quad (-1,0,1),\quad (0,-1,1),\quad (0,0,-1).
\end{equation}
This convex hull does not contain any lattice points besides the vertices and the origin $(0,0,0) \in \Z^3$. The origin would give a constant term  corresponding to a class $\beta\in \pi_2(Q^3,\mathcal{P})$ with $\partial\beta=0$. By exactness, such a class lies in the image of $\pi_2(Q^3)$. Its Maslov index is then
\[
m(\beta)=2c_1(Q^3)(\beta)\in 2N_{Q^3}\mathbb Z=6\mathbb Z,
\]
Therefore, the constant term vanishes, and the superpotential of the Oakley--Usher torus is of the form
\begin{equation*}
		\mathscr{W}_{(\mathcal{P}, S)}(x_1,x_2,x_3)
			=
		c_1x_1+c_2x_2+c_3\frac{x_3}{x_1}+c_4\frac{x_3}{x_2}+c_5\frac{1}{x_3},
	\end{equation*}
    for non-vanishing coefficients $c_j \in \Lambda^\times$. We compute the full potential in \cref{prop:superpotential of P} using methods which are independent of \cref{sec:tori_quadric}.
\end{remark}
\vspace{1 em}

We now introduce the notion and techniques of toric degenerations. They will be used to compute the superpotential of $\mathcal{P}$ exactly, not just its support. 
\begin{definition}
    A toric degeneration of a symplectic manifold $(X,\omega)$ is a flat family $\Pi: \mathcal X \to \C$, whose fibres $X_t := \Pi^{-1}(t)$ satisfy the following:
    \begin{enumerate}
        \item For $t \neq 0$, the fibre $X_t$ is smooth and symplectomorphic to $X$
        \item $X_0$ is a singular toric variety
        \item The fibres $X_t$ are projective subvarieties of the same projective space, i.e. there is a morphism $\varphi: \mathcal{X}\to \CP^N$, such that for every $t\in \C$, $f|_{X_t}: X_t \to \CP^N$ is an embedding.
    \end{enumerate}
\end{definition}
Toric degenerations have the following nice feature: Any vector field on the base $\C$ lifts to a symplectic vector field on $\mathcal{X}$ which is transverse to the fibres. This lift is due to Ruan \cite{Rua01} and is called the \emph{gradient-Hamiltonian vector}. Integrating the gradient-Hamiltonian vector field yields symplectomorphisms between the regular fibres $X_t$ of $\Pi$. For a path ending on the singular variety $X_0$, it yields a symplectomorphism on an open dense subset, see for example \cite{NisNohUed10} or \cite{HarKav15}.

We now fix a toric degeneration $\Pi: \mathcal X \to \C$ by setting
\[
\mathcal X =
\left\{
\bigl([z_0:z_1:z_2:z_3:z_4],t\bigr)\in \mathbb{C}P^4\times \mathbb{C}
\:|\:
tz_0^2+z_1^2+z_2^2+z_3^2+z_4^2=0
\right\},
\]
and taking $\Pi$ to be the projection to the $\C$-component. The fibre $X_1 = \Pi^{-1}(1)$ is by definition just the smooth quadric threefold $Q^3$. Using the gradient-Hamiltonian vector field and a path avoiding the origin, we find that all fibres $X_{t \neq 0}$ are symplectomorphic to $Q^3$. The central fibre is
\[
X_0 =
\left\{
[z_0:z_1:z_2:z_3:z_4]\in \mathbb{C}P^4
\:|\:
z_1^2+z_2^2+z_3^2+z^2_4=0
\right\},
\]
which we can think of as the cone in $\CP^4$ over the two-dimensional quadric in $\{z_0 = 0\} \subset \CP^4$. We will use the gradient-Hamiltonian parallel transport for the path $\gamma \colon [0,1] \rightarrow \C$ defined by $\gamma(t) = 1 - t$. By \cite[Theorem A]{HarKav15}, we find that there is a continuous map $\phi \colon Q^3 = X_1 \rightarrow X_0$ which restricts to a symplectomorphism 
\begin{equation}
    \label{eq:Harada--Kaveh-map}
    \phi\vert_{X_1 \setminus V} \colon X_1 \setminus V \rightarrow X_0 \setminus \{p\},
\end{equation}
where $V = \phi^{-1}(p)$ is the \emph{vanishing locus} of the degeneration $\Pi$ with respect to the path $\gamma$. Let us compute the vanishing locus. Recall from \eqref{eq:sigma_notation} that we have denoted by $\Sigma \subset Q^3$ the Lagrangian sphere obtained as the image of the zero section in $T^*S^3$ under the symplectic cut. It is explicitly given by $\Sigma = \{ [1,iq] \in \CP^4 \sth q \in S^3\} \subset Q^3$. In the Gelfand--Cetlin fibration introduced in \cref{prop:GCfibration}, it appears as the singular fibre~$\Sigma = \mu^{-1}(0)$ over the origin. 

\begin{lemma}
    \label{lem:vanishingl}
    The vanishing locus $V$ of the degeneration $\Pi$ is $\Sigma \subset Q^3$. 
\end{lemma}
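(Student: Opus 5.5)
We will identify the vanishing locus directly from the structure of the gradient-Hamiltonian flow. The plan is to show two inclusions: first that $\phi(\Sigma)=\{p\}$, and second that every point of $Q^3\setminus\Sigma$ is transported to a point different from the cone point $p=[1:0:0:0:0]$ of $X_0$.

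The key tool is symmetry. The group $\SO(4)$ acting on the coordinates $(z_1,\ldots,z_4)$ acts on every fibre $X_t$ and preserves the Fubini--Study form on $\CP^4\times\C$, as well as $\Pi$. Hence the gradient-Hamiltonian vector field, which is built from $\nabla \operatorname{Re}\Pi$ and the symplectic form, is $\SO(4)$-equivariant. Consequently the transport map $\phi$ is $\SO(4)$-equivariant. On $X_t$ consider the fixed locus of the antiholomorphic involution $\sigma(z_0,z)=(\bar z_0,-\bar z)$. For real $t>0$ it is preserved by $\Pi$ and (being an antisymplectic anti-holomorphic involution commuting with $\Pi$ restricted to real $t$) the gradient-Hamiltonian field along the real path $\gamma$ is $\sigma$-equivariant. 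Thus $\phi$ maps $\operatorname{Fix}(\sigma)\cap X_1$ into $\operatorname{Fix}(\sigma)\cap X_t$. On $X_t$ this real locus is $\{[1:iq]\sth |q|^2=t\}$, a round three-sphere that shrinks to $p$ as $t\to 0$. Since $\Sigma=\{[1:iq]\sth q\in S^3\}$ is exactly this real locus for $t=1$, we get $\phi(\Sigma)=\{p\}$, i.e. $\Sigma\subset V$.

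For the reverse inclusion, we use that $\phi\vert_{X_1\setminus V}$ is a symplectomorphism onto $X_0\setminus\{p\}$ by \cite[Theorem A]{HarKav15}, and compare sizes. Concretely, consider the function $f=|z_0|^2/\|z\|^2$-type quantity, or better the Hamiltonian of the $S^1$-action $z\mapsto (z_0, e^{i\theta}z)$, which is a symmetry of the whole family: it generates an $S^1$-action on $\mathcal X$ preserving $\Pi$, hence commuting with the transport, so its moment map $h=\|z\|^2/(|z_0|^2+\|z\|^2)$ (up to normalisation) is preserved by $\phi$. On $X_0$ one has $h^{-1}(0)=\{p\}$, while on $X_1$, $h=0$ forces $z=0$ and $z_0=0$, impossible; so this alone does not separate. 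Instead we argue: $V$ is a closed $\SO(4)\times\Z_2$-invariant set containing $\Sigma$, and $X_1\setminus V$ must have the same symplectic volume as $X_0\setminus\{p\}$, which equals the volume of $X_1$; hence $V$ has measure zero. To conclude $V=\Sigma$, we use that the $\SO(4)$-action combined with the geodesic $S^1$-action (commuting with the family, as it is the circle action $H=2\pi|p|$ extended by the residual torus) acts on $Q^3\setminus\Sigma$ with orbits of the Gelfand--Cetlin fibres; $V$ is a union of such invariant sets, and a fibre $T(x)$, $x\neq 0$, lying in $V$ would give a positive-dimensional family collapsing to $p$ of nonzero symplectic area cycles, contradicting that $\phi$ is continuous and symplectic nearby (areas of invariant disks bounded by orbits are preserved along the transport and do not tend to $0$).

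The main obstacle is this last step, excluding other points from $V$; I expect the cleanest route is to use an equivariant function preserved by the flow, e.g. $\mu_3$ (the $2\pi|p|$-Hamiltonian, which should commute with the degeneration since it comes from an $S^1$-action on $\mathcal X$ preserving $\Pi$) showing $\phi$ maps $\{\mu_3=c\}$ into its counterpart on $X_0$, which avoids $p$ for $c>0$; since $\mu_3^{-1}(0)=\Sigma$, this gives $V=\Sigma$.
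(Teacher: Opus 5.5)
Your first inclusion, $\Sigma\subset V$, is exactly the paper's argument. Since $\sigma$ is an antiholomorphic isometry with $\operatorname{Re}\Pi\circ\sigma=\operatorname{Re}\Pi$, transport along $[0,1]$ commutes with $\sigma$. It therefore carries $\Fix\sigma\cap X_1=\Sigma$ into the spheres $\{[1:iq]\sth \vert q\vert^2=t\}$, which collapse to $\{p\}=\Fix\sigma\cap X_0$.

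The gap is the reverse inclusion $V\subset\Sigma$; none of the tools you propose for it works.

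\emph{The rotation $z\mapsto(z_0,e^{i\theta}z)$.} It does not preserve $X_t$ for $t\neq0$, since $tz_0^2+e^{2i\theta}\sum z_j^2\neq0$ in general. Indeed, if its moment map $h$ were preserved by $\phi$, your own observation ($h>0$ on $X_1$, $h(p)=0$) would give $V=\emptyset$, contradicting your first step.

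\emph{The geodesic circle action.} It is generated by $\mu_3=2\pi\vert p\vert$, is only defined on $Q^3\setminus\Sigma$, and its Hamiltonian is not smooth along $\Sigma$. So it is not the restriction of a holomorphic isometric action on $\mathcal X$ preserving $\Pi$, and there is no reason for the gradient-Hamiltonian transport to preserve $\mu_3$. In the paper, the toric structure on $X_0\setminus\{p\}$ is obtained by pushing the Gelfand--Cetlin structure forward through $\phi$, which already presupposes the lemma.

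\emph{The invariant-disk argument.} It gives no contradiction: transported disks keep their area but need not shrink with the torus, and $\phi$ is not symplectic near $V$.

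\emph{Measure zero.} This alone does not exclude anything, because the $5$-dimensional $\SO(4)$-orbits $\{\vert p\vert=\rho\}$ also have measure zero.

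The missing idea is the local structure of the degeneration at $p$. The total space $\mathcal X$ is smooth. In the chart $z_0=1$ it is the graph $t=-\sum_j z_j^2$, and along $\{z_0=0\}$ the map $\Pi$ is a submersion. So $(p,0)$ is the only critical point of $\Pi$, and it is nondegenerate (Lefschetz). Hence $V$ is the set of points lying on the stable manifold (thimble) of this critical point, i.e.\ the vanishing cycle, which is a smoothly embedded $3$-sphere. Since $V$ contains the $3$-sphere $\Sigma$, invariance of domain gives $V=\Sigma$. This is the standard input behind the paper's one-line conclusion $V=\Fix\sigma\cap X_1$.

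Alternatively, you can finish with your own $\SO(4)$-symmetry. The set $V$ is closed and $\SO(4)$-invariant, and the $\SO(4)$-orbits in $Q^3$ are $\Sigma$, the hypersurfaces $\{\vert p\vert=\rho\}$ for $0<\rho<1$, and $Q^2_0=\{z_0=0\}$.
\begin{itemize}
\item Along $\{z_0=0\}\times\C$ one has $T\mathcal X=TX_t\oplus\C$ with $TX_t$ independent of $t$. So the transport fixes $Q^2_0$ pointwise, and $Q^2_0\cap V=\emptyset$.
\item Suppose some $\{\vert p\vert=\rho\}$ were contained in $V$. Then $X_1\setminus V$ would meet both $\{\vert p\vert<\rho\}$ (by measure zero) and $\{\vert p\vert>\rho\}\supset Q^2_0$, so it would be disconnected. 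This contradicts that it is homeomorphic via $\phi$ to the connected space $X_0\setminus\{p\}$.
\end{itemize}
Hence $V=\Sigma$.
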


\proof
Equip $\CP^4 \times \C$, and thus by restriction $\mathcal{X}$, with a product Kähler form 
\begin{equation*}
    \omega_{\mathcal{X}}:= \omega_{FS}\oplus \frac{ic}{2} dt\wedge d\overline{t},\qquad c>0.
\end{equation*}
We use an antisymplectic involution $\sigma$ on $\mathcal{X}$ which preserves the fibres of $\mathcal{X}$ over the real locus and whose fixed locus in the central fibre $X_0$ consists of the singular point $p\in X_0$. The fixed locus is then preserved by symplectic parallel transport along the path $[0,1]\subset\mathbb C$. Then $V = \Fix \sigma \cap X_1$. This strategy of proof is standard, see for example \cite[Lemma 1.20]{Eva24}. In our setup, the antisymplectic involution is given by 
\begin{equation*}
    \sigma \colon \mathcal{X} \rightarrow \mathcal{X}, \quad
    ([z_0 : \ldots : z_4],t) \mapsto ([\overline{z_0} : - \overline{z_1} : \ldots : - \overline{z_4}],\overline{t}). 
\end{equation*}
It is straightforward to check that $\Fix \sigma \cap X_0 = \{p\}$ and that $\Fix \sigma \cap X_1 = \Sigma$, and hence the claim follows. 
\proofend

Let us point out that the antisymplectic involution $\sigma$ is very natural. It extends the antisymplectic involution induced on $Q^3$, after the symplectic cut, by the standard antisymplectic involution $(q,p) \mapsto (q,-p)$ on $T^*S^3$. We are now in a position to prove the first part of \cref{thm:quadric_non_displ} by computing the superpotential of the Oakley--Usher torus. 

\begin{proposition}
\label{prop:superpotential of P}
There exists a reference spin structure \(S_0\in \operatorname{Spin}(\mathcal{P})\) such that the five Maslov--\(2\) disk classes contributing to the superpotential all have algebraic count \(+1\). With respect to this spin structure,
\[
\mathscr{W}_{(\mathcal{P},S_0)}(\mathcal{\chi)}
=
T^{2\pi/3}
\left(
x_1+x_2+\frac{x_3}{x_1}+\frac{x_3}{x_2}+\frac{1}{x_3}
\right).
\]
\end{proposition}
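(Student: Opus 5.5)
The plan is to use the toric degeneration $\Pi\colon\mathcal X\to\C$ and the Harada--Kaveh map $\phi$ from \eqref{eq:Harada--Kaveh-map}. By \cref{lem:vanishingl}, $\phi$ restricts to a symplectomorphism $Q^3\setminus\Sigma\to X_0\setminus\{p\}$. The Oakley--Usher torus $\cp$ is disjoint from $\Sigma$, since it is a regular fibre of the Gelfand--Cetlin fibration of \cref{prop:GCfibration} and $\Sigma=\mu^{-1}(0)$. Its image $\phi(\cp)$ should therefore be a Lagrangian torus in the smooth part of the singular toric variety $X_0$.

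First, I would identify $X_0$ as a toric variety. It is the projective cone over $Q^2=\CP^1\times\CP^1$, so its moment polytope is the cone over the square. I would check, using the antiholomorphic involution and the $T^3$-equivariance of the family, that this polytope coincides with $\Delta_{Q^3}$. The family $\mathcal X$ is invariant under the diagonal $T^2\subset \SO(2)\times\SO(2)$ acting on $(z_1,\ldots,z_4)$. In addition, the circle rotating $z_0$ acts on $X_0$. If I can arrange that the gradient-Hamiltonian flow is equivariant for the $T^2$-part and intertwines the remaining circle with the geodesic $S^1$-action generated by $\mu_3$, then $\phi$ intertwines the Gelfand--Cetlin map $\mu$ with the toric moment map of $X_0$. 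It then follows that $\phi(\cp)$ is the toric fibre over the barycentric point $x_\cp=(\frac{2\pi}{3},\frac{2\pi}{3},\frac{4\pi}{3})$. This identification of moment maps is exactly what \cite{NisNohUed10,NisNohUed12} do for Gelfand--Cetlin systems, so I would follow their argument.

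Next, I would invoke \cite[Theorem 1.2]{NisNohUed12}. It states that if the central fibre is a toric variety admitting a small resolution, then the Maslov-index-two disks on a Lagrangian torus carried to a toric fibre correspond bijectively to the Cho--Oh basic disks of the facets of the polytope. Each of these disks has count one for the standard (toric) spin structure, and its area equals the integral affine distance to the facet. The cone over $\CP^1\times\CP^1$ has the conifold singularity, which admits a small resolution, so the hypothesis holds. The five facets $\ell_0,\ldots,\ell_4$ of $\Delta_{Q^3}$ then give five disk classes with boundaries equal to the inward normals. These normals are $(0,0,-1)$, $(1,0,0)$, $(0,1,0)$, $(-1,0,1)$ and $(0,-1,1)$. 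Evaluating the $\ell_i$ at $x_\cp$ shows that all five areas equal $\frac{2\pi}{3}$. With holonomy coordinates $x_i$ dual to the toric basis, we obtain
\begin{equation*}
\mathscr W=T^{2\pi/3}\left(x_1+x_2+\frac{x_3}{x_1}+\frac{x_3}{x_2}+\frac1{x_3}\right).
\end{equation*}
This agrees with the support predicted in \cref{lemma:support_potential}. The spin structure $S_0$ is taken to be the pullback under $\phi$ of the toric one, and signs are $+1$ by Cho--Oh.

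I expect the main obstacle to be justifying that the counts computed in $X_0$ transfer back to $Q^3$. For this, I would rely on the hypotheses of the Nishinou--Nohara--Ueda theorem. Concretely, Maslov-two disks on $\cp$ must avoid the vanishing sphere $\Sigma$ under degeneration. This follows by a Maslov-index and area argument: a disk passing through $\Sigma$ would limit to a disk through the conifold point $p$, which has codimension three. The argument also requires checking that the lattice identification matches the change of lattice by $M$ from \eqref{eq:basechange_lattice}.
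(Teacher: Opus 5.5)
Your proposal follows the paper's proof essentially step for step: the Harada--Kaveh map with vanishing locus $\Sigma$, the identification of $X_0$ as a singular toric variety with polytope $\Delta_{Q^3}$ and of $\phi(\mathcal{P})$ as the fibre over $x_{\mathcal{P}}$, the small resolution of the conifold point, and \cite[Theorem 1.2]{NisNohUed12} applied to the five facets. The one step you leave conditional (which the paper handles simply by transporting the toric structure of $Q^3\setminus\Sigma$ through $\phi$) does hold, but not via a $T^3$-equivariance of the family, since the $z_0$-circle does not act on $X_1$; instead, $\mu_3=2\pi\vert p\vert$ is a constant multiple of the norm of the $\SO(4)$-moment map, which the $\SO(4)$-equivariant gradient-Hamiltonian flow preserves and which on $X_0$ is an affine function of the moment map of the $z_0$-rotation.
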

\proof
To prove this result, we prove that $\Pi \colon \mathcal{X} \rightarrow \C$ is a toric degeneration and compute its moment polytope. Since $\cp$ will turn out to be the monotone fibre of the toric degeneration, we can apply Nishinou--Nohara--Ueda \cite[Theorem 1.2]{NisNohUed12} to compute its superpotential.

Recall that by \cite[Theorem A]{HarKav15} and \cref{lem:vanishingl}, we have found the symplectomorphism \eqref{eq:Harada--Kaveh-map}. Since $Q^3 \setminus \Sigma = X_1 \setminus \Sigma$ is toric by \cref{prop:GCfibration}, we can transport its toric structure by the map $\phi$ to find that $X_0 \setminus \{p\}$ is toric. This implies that $X_0$ is singular toric, since it is obtained from $X_0 \setminus \{p\}$ by adding the point $p$. By construction of the toric system, its moment polytope is the Gelfand--Cetlin polytope $\Delta_{Q^3}$. The fibration $X_0 \rightarrow \Delta_{Q^3}$ is smooth toric everywhere except for the fibre over the origin, which is just the singleton $\{p\}$. This follows from the fact that the fibre over the origin of the Gelfand--Cetlin fibration in \cref{prop:GCfibration} is $\Sigma$. By construction, the parallel transport $\phi(\cp)$ of the Oakley--Usher torus is the monotone fibre over the point $x_{\cp}$, for $x_{\cp} = (\frac{2\pi}{3},\frac{2\pi}{3},\frac{4\pi}{3})$ as in \cref{prop:GCfibration}, of the singular toric fibration. 

The singularity of $X_0$ at $p$ is a \emph{conifold} or \emph{ordinary double point} singularity. This follows from the toric normal form of $\Delta_{Q^3}$ at the origin, see \cite[Example 4.32]{Eva23}. In the same example, Evans discusses why this singularity admits a small resolution. Therefore, can now apply \cite[Theorem 1.2]{NisNohUed12}. Since the central fibre $X_0$ is toric and admits a small resolution, the Maslov--2 potential of a Gelfand--Cetlin torus fibre
in the smooth fibre is determined by the toric facets of the polytope associated to $X_0$. More precisely, if $L(y)=\mu^{-1}(y)$ for $ y\in \operatorname{Int}(\Delta_{Q^3})$, then, with respect to the reference spin structure $S_0$,
\[
\mathscr W_{(L(y),S_0)}(x)
=
\sum_{j=1}^5 T^{\ell_j(x)} x^{v_j},
\]
where $v_j$ are the primitive inward normals to the five facets of
$\Delta_{Q^3}$, and $\ell_j(y)$ are the corresponding affine distances to the facets. For the monotone fibre $\mathcal{P}$,
all five affine distances are equal to $2\pi/3$. The corresponding monomials
are
\[
(1,0,0),\quad (0,1,0),\quad (-1,0,1),\quad (0,-1,1),\quad (0,0,-1),
\]
from which the desired expression of $\mathscr W_{(\mathcal{P},S_0)}$ follows.
\proofend

For completeness, we discuss the effect that changing spin structure has on the superpotential. The set~$\Spin(\mathcal{P})$ is an~$H^1(\mathcal{P}; \Z_2)$-torsor: for any given~$S_{0} \in \Spin(\mathcal{P})$, any other spin structure is of the form 
\[
S_0 + \eta, \qquad \eta \in H^1(\mathcal{P}; \Z_2) \cong (\Z_2)^3.
\]
Therefore, there are eight inequivalent spin structures on~$\mathcal{P}$.

\begin{corollary}
    For a general spin structure \(S=S_0+\eta\), with $\eta=(\eta_1,\eta_2,\eta_3)\in H^1(P;\mathbb Z_2)$, the superpotential is
\[
\mathscr{W}_{(P,S)}(\mathcal{\chi)}
=
T^{2\pi/3}
\left(
(-1)^{\eta_1}x_1
+
(-1)^{\eta_2}x_2
+
(-1)^{\eta_1+\eta_3}\frac{x_3}{x_1}
+
(-1)^{\eta_2+\eta_3}\frac{x_3}{x_2}
+
(-1)^{\eta_3}\frac{1}{x_3}
\right).
\]
\end{corollary}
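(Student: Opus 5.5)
The plan is to use the standard rule for how the signs of Maslov--$2$ disk counts depend on the spin structure. Fix the reference spin structure $S_0$ from \cref{prop:superpotential of P} and let $S=S_0+\eta$ with $\eta\in H^1(\mathcal{P};\Z_2)$. Twisting the spin structure by $\eta$ changes the orientation of the moduli space of disks in a class $A$ by the sign $(-1)^{\langle \eta,\partial A\rangle}$. This is the usual statement, see for instance Fukaya--Oh--Ohta--Ono or Cho's work on toric fibres: the orientation of the one-pointed moduli space of disks with boundary on $\mathcal{P}$ depends on the trivialisation of $TL$ along the boundary loop, and changing the spin structure by $\eta$ changes that trivialisation by $\eta(\partial A)$. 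Hence
\[
\nu_{S}(A)=(-1)^{\langle \eta,\partial A\rangle}\,\nu_{S_0}(A).
\]
Substituting this into \eqref{eq:superpotential}, the potential for $S$ is obtained from $\mathscr{W}_{(\mathcal{P},S_0)}$ by multiplying each monomial $x^{v}$ by $(-1)^{\langle\eta,v\rangle}$. Equivalently, it is obtained by pulling back along the substitution $x_i\mapsto(-1)^{\eta_i}x_i$ on the character torus.

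Next I identify the five boundary classes. By \cref{prop:superpotential of P}, the Maslov--$2$ classes have boundaries given by the exponent vectors
\[
\partial A_1=(1,0,0),\quad \partial A_2=(0,1,0),\quad \partial A_3=(-1,0,1),\quad \partial A_4=(0,-1,1),\quad \partial A_5=(0,0,-1),
\]
written in the basis $\gamma_1,\gamma_2,\gamma_3$. I write $\eta=(\eta_1,\eta_2,\eta_3)$ with $\eta_i=\eta(\gamma_i)$. Pairing $\eta$ with each vector modulo $2$ gives the signs:
\begin{itemize}
\item $(-1)^{\eta_1}$ for $A_1$;
\item $(-1)^{\eta_2}$ for $A_2$;
\item $(-1)^{-\eta_1+\eta_3}=(-1)^{\eta_1+\eta_3}$ for $A_3$;
\item $(-1)^{\eta_2+\eta_3}$ for $A_4$;
\item $(-1)^{-\eta_3}=(-1)^{\eta_3}$ for $A_5$.
\end{itemize}
Multiplying these into the reference potential gives exactly the formula in the statement. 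The areas $T^{2\pi/3}$ are unchanged, since the spin structure only affects orientations.

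The main obstacle is justifying the sign-change rule in this setting. The counts $\nu(A)$ are computed via Nishinou--Nohara--Ueda's degeneration rather than directly. I would argue as follows. The rule is a local statement about orientations of moduli spaces of disks with boundary on a fixed Lagrangian, so it holds for any regular $J$ on $Q^3$, independently of how the counts are computed. Alternatively, one can note that it is compatible with the toric description on $X_0\setminus\{p\}$, where Cho's computation of the signs for toric fibres applies directly. With the rule in hand, the remaining steps are routine bookkeeping.
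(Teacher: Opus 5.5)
Your proposal is correct and follows the same route as the paper: the paper's proof consists exactly of the rule that passing from $S_0$ to $S_0+\eta$ multiplies the contribution of each disk class $\beta$ by $(-1)^{\langle \eta,\partial\beta\rangle}$. Your pairing of $\eta$ with the five boundary vectors $(1,0,0)$, $(0,1,0)$, $(-1,0,1)$, $(0,-1,1)$, $(0,0,-1)$ is the bookkeeping the paper leaves implicit, and your signs agree with the stated formula.
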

\proof
It follows immediately from the fact that, if we choose the spin structure $S=S_0+\eta$, then the contribution of a disk class $\beta$ is multiplied by $(-1)^{\langle \eta,\partial\beta\rangle}$.
\proofend

For the remainder of the paper, we work with the spin structure $S_0$.

For our superpotential~$\mathscr{W}_{(\mathcal{P}, S_{0})}$, we consider its Jacobian ring:
\[
\textnormal{Jac}(\mathscr{W}_{(\mathcal{P}, S_{0})}) = \frac{\Lambda[x_1^{\pm 1}, x_2^{\pm 1},x_3^{\pm 1}]}{\langle x_i\partial_{x_i}\mathscr{W}_{(\mathcal{P}, S_{0})}\:|\: i = 1,2,3\rangle }
\]

\begin{proposition}
The critical locus of~\(\mathscr{W}_{(\mathcal{P},S_{0})}\) consists of exactly three points. In
particular,
\begin{equation}
\textnormal{Jac}(\mathscr{W}_{(\mathcal{P},S_{0})})\cong \Lambda\oplus \Lambda\oplus \Lambda
\end{equation}
is semi-simple.
\end{proposition}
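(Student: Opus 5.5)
Factor out the common $T^{2\pi/3}$ and work with $W = x_1+x_2+\frac{x_3}{x_1}+\frac{x_3}{x_2}+\frac{1}{x_3}$. Since $T^{2\pi/3}$ is a unit in $\Lambda$, the Jacobian ideal is unchanged. The logarithmic derivatives are
\begin{equation*}
x_1\partial_{x_1}W = x_1 - \frac{x_3}{x_1},\qquad
x_2\partial_{x_2}W = x_2 - \frac{x_3}{x_2},\qquad
x_3\partial_{x_3}W = \frac{x_3}{x_1}+\frac{x_3}{x_2}-\frac{1}{x_3}.
\end{equation*}
On the torus $(\Lambda^\times)^3$ the critical points of $W$ are exactly the common zeros of these three functions. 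The first two equations give $x_1^2 = x_3 = x_2^2$, so $x_2 = \pm x_1$.

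\emph{Case $x_2=-x_1$.} Then $\frac{x_3}{x_1}+\frac{x_3}{x_2}=0$, and the third equation becomes $-1/x_3=0$, which is impossible.

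\emph{Case $x_2=x_1=:x$.} Then $x_3=x^2$, and the third equation reads $2x - x^{-2}=0$, i.e.\ $x^3=\tfrac12$. Since $\Lambda$ is algebraically closed (it contains $\mathbb C$, and the equation has constant coefficients anyway), this yields exactly three solutions
\begin{equation*}
x = \zeta\, 2^{-1/3},\qquad x_3 = x^2,\qquad \zeta^3=1 .
\end{equation*}
These are three distinct points of $(\Lambda^\times)^3$. The corresponding critical values are $W = 2x+2x+\frac{1}{2x}\cdot\ldots$; more precisely $W = 3x + 1/x^2 \cdot\ldots$, and after simplification $W = 5x$ up to the factor, which is not needed here.

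It remains to show that the Jacobian ring is $\Lambda^3$ and not merely supported at three points. Being supported at three points only says the ring is a finite product of local Artinian algebras; we need each critical point to be nondegenerate, so that each local factor is $\Lambda$. I would compute the Hessian of $W$ in logarithmic coordinates $x_i=e^{u_i}$; this is the Jacobian matrix of the map $(x_i\partial_{x_i}W)_i$ with respect to the $u_j$. At a critical point, using $x_3 = x^2$ and $x_3/x_1 = x = x_1$, it is
\begin{equation*}
\begin{pmatrix} 2x & 0 & -x\\ 0 & 2x & -x \\ -x & -x & 2x+\tfrac{1}{x^2}\end{pmatrix}
=x\begin{pmatrix} 2 & 0 & -1\\ 0 & 2 & -1 \\ -1 & -1 & 4\end{pmatrix},
\end{equation*}
where we used $1/x^3 = 2$. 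The determinant is $x^3(16-2-2)=12x^3=6\neq 0$, so every critical point is nondegenerate, i.e.\ Morse.

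The ideal is generated by regular functions on the affine torus. Because its zero locus is finite and reduced (the nonvanishing determinant shows it is locally a complete intersection with invertible Jacobian), the Chinese remainder theorem gives $\mathrm{Jac}(W)\cong\prod_{3\text{ points}}\Lambda\cong\Lambda^3$, which is semisimple. To rule out zeros "at infinity" being forgotten, note that none are relevant because we work on the torus itself.

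The only step that needs care is this last passage from "finite reduced zero set" to the algebra isomorphism. Scheme-theoretically it is standard: a zero-dimensional scheme with nondegenerate Jacobian at each point is étale over $\Lambda$. So I expect no real obstacle beyond checking the Hessian arithmetic correctly.
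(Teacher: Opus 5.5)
Your proof is correct. The first half (solving the logarithmic critical-point equations, discarding $x_2=-x_1$, and obtaining $x_3=x_1^2$, $x_1^3=\tfrac12$) is the same as the paper's. The routes diverge in how you get from three points to $\textnormal{Jac}\cong\Lambda^3$.

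The paper uses an explicit presentation. It rewrites the generators as $1-x_3/x_1^2$, $1-x_3/x_2^2$, $x_1^{-1}+x_2^{-1}-x_3^{-2}$, reduces the quotient to $\Lambda[x_1]/\langle x_1^3-\tfrac12\rangle$, and applies the Chinese remainder theorem. Implicitly this needs $x_1-x_2$ to lie in the ideal. That holds because $x_1+x_2\equiv x_2x_1^{-3}$ is a unit modulo the ideal, which is the ring-level version of your case split.

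You instead check that each critical point is nondegenerate; your log-Hessian and its determinant $12x^3=6$ are right. You then invoke the general fact that a zero-dimensional complete intersection with invertible Jacobian at every point splits as a product of copies of the ground field. This follows from the Artinian decomposition plus Nakayama, since $\mathfrak m=\mathfrak m^2+I$ locally at each point.

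The paper's route is more elementary and gives the ring explicitly. Yours needs a little commutative algebra but proves slightly more: the three critical points are Morse, which is the property that matters for the Floer theory of $(\mathcal P,\chi_\lambda)$.

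For the finiteness step you do not need algebraic closedness of $\Lambda$ at all. That property is true, but not because $\Lambda\supset\mathbb C$. The ideal is defined over $\mathbb Q$, so the Jacobian ring is the base change of its $\mathbb C$-version, where your point count applies directly.

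One slip to delete: the critical value is $W=4x+x^{-2}=6x$, not $5x$. This matches the paper's later value $108^{1/3}\lambda T^{2\pi/3}$, and it plays no role in this proposition.
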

\proof
The critical points equations are
\begin{equation*}
\begin{cases}
x_1-\frac{x_3}{x_1}=0,\\
x_2-\frac{x_3}{x_2} = 0,\\
\frac{x_3}{x_1}+\frac{x_3}{x_2}-\frac{1}{x_3} = 0.
\end{cases}
\end{equation*}
The first two equations imply~$x_1^2 = x_2^2$. If~$x_1 = -x_2$, the third equation would imply~$\frac{1}{x^2_3} = 0$, which is impossible. Therefore,~$x_1=x_2$. The first equation gives~$x_3=x_1^2$. Substituting this into the third equation, we obtain $x_1^3=1/2$.
Moreover, 
\begin{equation*}
\begin{split}
\textnormal{Jac}(\mathscr{W}_{(\mathcal{P}, S_{0})}) & = \Lambda[x_1^{\pm 1}, x_2^{\pm 1},x_3^{\pm 1}]\bigg/\bigg\langle 1-\frac{x_3}{x^2_1},\, 1-\frac{x_3}{x^2_2}, \,\bigg(\frac{1}{x_1}+\frac{1}{x_2}\bigg)-\frac{1}{x^2_3} \bigg \rangle\\
& \cong \Lambda[x_1]\Big/\Big\langle x^3_1-\frac{1}{2}\Big\rangle\\
& \cong \Lambda\oplus \Lambda\oplus \Lambda,
\end{split}
\end{equation*}
where the last step follows the Chinese remainder theorem and the fact that $\C \subset \Lambda$. Hence, the Jacobian ring is semi-simple of rank 3.
\proofend

Hence there exist three local systems~$\chi_{\lambda}\in \mathcal{M}(\mathcal{P})$, parametrized by 
\begin{equation*}
\chi_{\lambda} \mapsto (x_1,x_2,x_3) = \left(2^{-1/3}\lambda,2^{-1/3}\lambda,2^{-2/3}\lambda^2\right)\in ( \C^{\times})^3, \qquad \lambda^3=1.
\end{equation*}
such that~\(HF^*(\mathcal{P},\chi_{\lambda})\neq 0\). In particular,~\(\mathcal{P}\subset Q^3\) is non-displaceable, proving the first claim in \cref{thm:quadric_non_displ}.

\subsection{Mirror symmetry and split-generation}
The preceding superpotential computations may appear ad hoc, but they admit a natural explanation via the mirror symmetry picture. In this subsection, we make this precise by relating our construction to the standard Landau–Ginzburg mirror of the quadric. After that, we show that the OU-Lagrangian $\mathcal{P}$ with its three critical local systems and the Lagrangian sphere $\Sigma$ of Proposition \ref{prop:GCfibration} split-generate the monotone Fukaya category of $Q^3$. Throughout, we denote by $\mathcal F(M)$ the monotone Fukaya category of $M$, and by $D^\pi\mathcal F(M)$ its split-closed derived category.

We begin on the quantum cohomology side (or A-side) of the correspondence. By Beauville's computation of the quantum cohomology of complete intersections
\cite{Bea95}, applied to the quadric hypersurface $Q^3\subset \mathbb CP^4$,
we have 
\begin{equation}
QH^*(Q^3) \cong  \frac{\Lambda[h]}{\langle h^4 -4qh \rangle}, \qquad |h| = 2.
\end{equation}
Here $h \in H^2(Q^3; \Z)$ is the hyperplane class, and $q:=T^{\langle \omega,A \rangle}$ is the quantum parameter associated to the generator $A$ of $H_2(Q^3)$.

We now define the \textit{quantum multiplication operator} $(c_1(Q^3)\,\star -) \in \operatorname{End}(QH^*(Q^3))$, given by 
\[
\alpha \mapsto  c_1(Q^3) \star\alpha = 3(h \star \alpha).
\]
From the ring relation, this operator satisfies $c_1(Q^3)^4=108\,q\,c_1(Q^3)$, so its spectrum is 
\[
\operatorname{Spec}(c_1(Q^3)\,\star -) = \{0\}\sqcup \big\{(108 q)^{1/3} \lambda\: |\: \lambda^3 =1\big\}.
\]
It follows that~$QH^*(Q^3)$ is semi-simple and splits into four eigensummands
\[
QH^*(Q^3) \cong \Lambda e_0 \oplus \bigoplus_{ \lambda^3 =1}\Lambda e_\lambda,
\]
where 
\begin{equation}
    e_0 := 1-\frac{h^3}{4q}, \qquad e_\lambda := \frac{h^3+(4q)^{1/3}\lambda h^2+(4q)^{2/3}\lambda^2 h}{12q}.
\end{equation}
It follows from direct calculation that these eigenfunctions of the quantum multiplication operator are idempotent and 
\[
e_0 +\sum_{\lambda^3=1} e_\lambda = 1.
\]

Let us recall the categorical meaning of these eigenvalues and eigenfunctions. For a monotone symplectic manifold $M$, its monotone Fukaya category
decomposes as a direct sum of summands indexed by the possible values of $m_0$, the curvature term of the \(A_\infty\)-algebra:
\[
\mathcal F(M)=\bigoplus_{\lambda}\mathcal F(M)_\lambda.
\]
The allowed values of \(\lambda\) are precisely the eigenvalues of quantum multiplication by \(c_1(M)\). Indeed, for an object \((L,\xi)\), the (length-zero) closed-open string map is a unital algebra homomorphism
\[
\mathcal{CO}^0_{(L,\xi)}:
QH^*(Q^3)
\longrightarrow
HF^*(L,\xi),
\]
satisfying
\[
\mathcal{CO}^0_{(L,\xi)}(c_1(Q^3))
=
m_0(L,\xi)\,1_{(L,\xi)}.
\]
Thus, if $L$ is a non-trival object of $\mathcal{F}(M)$ (that is, if $HF^*(L,\xi)\neq0$), the scalar $m_0(L,\xi)$ must lie in the spectrum of $c_1(Q^3)\,\star -$.

We phrase the mirror-symmetry picture in terms of the closed-open string map. The local system chart associated to the monotone torus
\((\mathcal P,S_0)\) gives a Landau--Ginzburg chart with superpotential
\(\mathscr W_{(\mathcal P,S_0)}\). On the B-side, the Hochschild cohomology of the category of matrix factorizations of $\mathscr W_{(\mathcal P,S_0)}$ is
identified with its Jacobian ring $\operatorname{Jac}(\mathscr W_{(\mathcal P,S_0)})$.
By considering all critical local systems together, we can define the closed-open string map of $\mathcal{P}$ as the unital algebra homomorphism
\[
\mathcal{CO}^0_{\mathcal P}:
QH^*(Q^3)
\longrightarrow
\operatorname{Jac}(\mathscr W_{(\mathcal P,S_0)}),
\]
satisfying
\[
\mathcal{CO}^0_{\mathcal P}(c_1(Q^3))
=
[\mathscr W_{(\mathcal P,S_0)}],
\]
where $[\mathscr W_{(\mathcal P,S_0)}] := \diag(\mathscr W_{(\mathcal P,S_0)}(\chi_\lambda))$.  Note that we used the same notation as for the previous closed-open string map, but removed the local system $\xi$ in the subscript. Since \(c_1(Q^3)=3h\), we obtain
\[
\mathcal{CO}^0_{\mathcal P}(h)
=
\frac{1}{3}[\mathscr W_{(\mathcal P,S_0)}].
\]
On the other hand, the critical values of the superpotential are
\[
\mathscr W_{(\mathcal P,S_0)}(\chi_\lambda)=(108)^{1/3}\,\lambda\, T^{2\pi/3},
\qquad
\lambda^3=1,
\]
and hence $\mathscr W_{(\mathcal P,S_0)}(\chi_\lambda)^3=108\, q$. Therefore $
[\mathscr W_{(\mathcal P,S_0)}]^3 =108\,q\, 1_{\mathscr W}$ in $\operatorname{Jac}(\mathscr W_{(\mathcal P,S_0)})$. We obtain
\begin{equation}
\label{eq:cube of CO}
    \mathcal{CO}^0_{\mathcal P}(h)^3
=
\left(\frac{[\mathscr W_{(\mathcal P,S_0)}]}{3}\right)^3
=
4q,
\end{equation}
or equivalently, $\mathcal{CO}^0_{\mathcal P}(h^3-4q)=0$, showing that $\mathcal{CO}^0_{\mathcal P}$ is well-defined. This quotient is precisely the direct sum of the three eigensummands of quantum
multiplication by~\(c_1(Q^3)=3h\) corresponding to the nonzero eigenvalues.
\begin{proposition}
    The following holds:
    \begin{itemize}
        \item For all $\zeta \in \operatorname{Spec}(c_1(Q^3)\,\star -)\setminus\{0\}$, 
        \begin{equation}
            \mathcal{CO}^0_{(\mathcal P,\chi_{\lambda})}(e_{\zeta})=\delta_{\lambda,\zeta}\,1_{(\mathcal P,\chi_{\lambda})},
        \end{equation}
        In particular, $\mathcal{CO}^0_{\mathcal P}$ is surjective.
        \item $\ker(\mathcal{CO}^0_{\mathcal P})=\Lambda\,e_0$.
    \end{itemize}
    Consequently, $\mathcal{CO}^0_{\mathcal P}$ descends to an isomorphism of rings
    \begin{equation}
        QH^*(Q^3)/\Lambda\,e_0 \cong \operatorname{Jac}(\mathscr W_{(\mathcal P,S_0)}).
    \end{equation}
\end{proposition}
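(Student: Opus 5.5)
The proof is pure algebra once we use that $\mathcal{CO}^0_{\mathcal P}$ is a unital ring homomorphism. I will identify $\operatorname{Jac}(\mathscr W_{(\mathcal P,S_0)})\cong\Lambda^3$ via evaluation at the three critical points $\chi_\lambda$, $\lambda^3=1$. The $\lambda$-component of $\mathcal{CO}^0_{\mathcal P}$ is then $\mathcal{CO}^0_{(\mathcal P,\chi_\lambda)}$. By the computation preceding the statement, this component sends $h$ to
\[
w_\lambda:=\tfrac13\mathscr W_{(\mathcal P,S_0)}(\chi_\lambda)=\tfrac13(108)^{1/3}\lambda T^{2\pi/3}=(4q)^{1/3}\lambda,
\]
using $q=T^{2\pi}$ (the line has area $2\pi$). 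A unital homomorphism out of $\Lambda[h]/\langle h^4-4qh\rangle$ is determined by the image of $h$. So $\mathcal{CO}^0_{(\mathcal P,\chi_\lambda)}$ is simply evaluation of polynomials in $h$ at $h=w_\lambda$, which is a root of $h^3-4q$.

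\textbf{Step 1.} Evaluate the explicit idempotents at $h=w_\zeta$.
\begin{itemize}
\item For $e_0=1-h^3/(4q)$ we get $1-w_\zeta^3/(4q)=0$.
\item For $e_\lambda$, set $a=(4q)^{1/3}$ and $w_\zeta=a\zeta$. Then
\[
e_\lambda(w_\zeta)=\frac{a^3\bigl(1+\lambda\zeta^2+\lambda^2\zeta\bigr)}{12q}=\frac{1+\lambda\zeta^2+\lambda^2\zeta}{3}.
\]
Write $\rho=\lambda\zeta^2$. Since $\zeta^3=1$, we have $\lambda^2\zeta=\rho^2\zeta^{-3}=\rho^2$. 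So the numerator is $1+\rho+\rho^2$, which equals $3$ if $\rho=1$ and $0$ otherwise.
\end{itemize}
The condition $\rho=1$ means $\lambda=\zeta^{-2}=\zeta$. Hence $e_\lambda(w_\zeta)=\delta_{\lambda\zeta}$, which gives the displayed formula. Here the nonzero eigenvalues $3a\zeta$ of $c_1\star$ are indexed by $\zeta$, as in the statement. Surjectivity follows because the images of the three $e_\zeta$ are the three standard idempotents of $\Lambda^3$.

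\textbf{Step 2: kernel.} By Step 1, $e_0\in\ker$. The four idempotents $e_0,e_\lambda$ form a $\Lambda$-basis of the 4-dimensional space $QH^*(Q^3)$, and the three $e_\lambda$ map to linearly independent elements. Therefore the kernel is exactly $\Lambda e_0$.

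\textbf{Step 3.} The first isomorphism theorem gives $QH^*(Q^3)/\Lambda e_0\cong\operatorname{Jac}(\mathscr W_{(\mathcal P,S_0)})$. Both sides are $\Lambda^3$, so dimensions agree, as a check.

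The only delicate point is a bookkeeping one: matching the cube-root branches so that $w_\lambda=(4q)^{1/3}\lambda$ uses the same $(4q)^{1/3}$ as in the formula for $e_\lambda$. This requires $T^{2\pi/3}$ to equal $q^{1/3}$ under the paper's normalization. I would verify that the line class has area $2\pi$, so $q^{1/3}=T^{2\pi/3}$ and $(108)^{1/3}T^{2\pi/3}=3(4q)^{1/3}$, before running Step 1.
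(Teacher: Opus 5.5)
Your proposal is correct and follows the paper's proof: both compute $\mathcal{CO}^0_{(\mathcal P,\chi_\lambda)}(h)=(4q)^{1/3}\lambda$, evaluate the explicit idempotents $e_0,e_\zeta$ at this value, and conclude with the first isomorphism theorem. Your substitution $\rho=\lambda\zeta^2$ and your linear-independence argument for $\ker(\mathcal{CO}^0_{\mathcal P})=\Lambda e_0$ spell out two steps that the paper states only briefly.
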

\proof
Recall that 
\[
\mathcal{CO}^0_{(\mathcal P,\chi_{\lambda})}(h) = \frac{1}{3}\mathscr W_{(\mathcal P,S_0)}(\chi_\lambda) = (4\,q)^{1/3}\,\lambda, 
\]
therefore 
\[
\mathcal{CO}^0_{(\mathcal P,\chi_{\lambda})}(e_\zeta) = \frac{1}{3}(1+\lambda \zeta^2 +\lambda^2\zeta) = \begin{cases}
    1, \qquad \qquad\qquad\:\:\:\;\;\;\quad\:\:\,\lambda = \zeta,\\
    \frac{1}{3}(1+ \zeta +\zeta^2) =0, \qquad \lambda \neq \zeta.
\end{cases}
\]
For the second bullet, notice that $\mathcal{CO}^0_{\mathcal P}(e_0) = 0$ immediately follows from \cref{eq:cube of CO}. As $\mathcal{CO}^0_{\mathcal P}$ is a map of rings, the result follows from the first isomorphism theorem for rings. 
\proofend

We see that the image of the closed-open string map $\mathcal{P}$ detects exactly the nonzero part of quantum cohomology, while its kernel detects the remaining zero eigensummand. Thus $\mathscr W_{(\mathcal P,S_0)}$ should not be regarded as the full global Landau--Ginzburg mirror potential of $Q^3$, but rather as the potential on the Laurent
chart associated to $\mathcal P$. See \cite{Smi25} for related discussions. With our construction, we can give a simple geometric representative of the missing summand. Namely, the Gelfand--Cetlin fibre over the vertex $(0,0,0)\in \Delta_{Q^3}$ is the Lagrangian sphere
$\Sigma=\mu^{-1}(0)\cong S^3$. The next Proposition shows that the
rank-one summand not seen by the Laurent chart of $\mathcal{P}$ is naturally accounted for by the singular Gelfand--Cetlin sphere fibre, and that these pieces split-generate the monotone Fukaya category of $Q^3$.

\begin{proposition}
The Lagrangian sphere $\Sigma$ split-generates the summand $D^\pi\mathcal F(Q^3)_0$, and each $(\mathcal{P},\chi_\lambda)$ split-generates the summand $D^\pi\mathcal F(Q^3)_\lambda$.
Consequently,
\[
D^\pi\mathcal F(Q^3)
=
\langle \Sigma\rangle_{\mathrm{split}}
\oplus
\bigoplus_{\lambda^3=1}
\langle (\mathcal{P},\chi_\lambda)\rangle_{\mathrm{split}}.
\]
\end{proposition}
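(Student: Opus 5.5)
The plan is to use Abouzaid's split-generation criterion in the monotone setting, as extended by Sheridan: an object $L$ of $\mathcal F(M)_\zeta$ split-generates $D^\pi\mathcal F(M)_\zeta$ whenever the closed-open map
\[
\mathcal{CO}_L\colon QH^*(M)_\zeta \to HH^*(CF^*(L,L))
\]
is injective on the generalized eigensummand $QH^*(M)_\zeta$ of $c_1(M)\star-$. Since the preceding discussion shows that $QH^*(Q^3)$ is semisimple, with four one-dimensional summands $\Lambda e_0$ and $\Lambda e_\lambda$ for $\lambda^3=1$, each summand is a field. For such a summand, injectivity of $\mathcal{CO}$ follows as soon as the length-zero part $\mathcal{CO}^0_L(e_\zeta)$ is nonzero. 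The reason is that $\mathcal{CO}$ is a unital ring map, so it sends the idempotent $e_\zeta$ to the unit of $HH^*$ of $L$ whenever $L$ is a nonzero object of $\mathcal F(M)_\zeta$. This is the version I would invoke, following Sheridan's treatment of the semisimple case, and it reduces everything to checking nonvanishing of $HF^*$ and that each object lies in the correct summand.

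For the torus summands, the previous proposition gives
\[
\mathcal{CO}^0_{(\mathcal P,\chi_\lambda)}(e_\zeta)=\delta_{\lambda\zeta}\,1_{(\mathcal P,\chi_\lambda)},
\]
and the Biran--Cornea criterion gives $HF^*(\mathcal P,\chi_\lambda)\cong H^*(T^3;\Lambda)\neq 0$. Hence $(\mathcal P,\chi_\lambda)$ is a nonzero object of $\mathcal F(Q^3)_\lambda$ whose closed-open map is injective on $\Lambda e_\lambda$. So $(\mathcal P,\chi_\lambda)$ split-generates $D^\pi\mathcal F(Q^3)_\lambda$, where the eigenvalue is $m_0=\mathscr W(\chi_\lambda)=(108q)^{1/3}\lambda$.

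For the sphere, $\Sigma\cong S^3$ is simply connected, so it is monotone, and since $\pi_1(\Sigma)=0$ it bounds no Maslov-$2$ disks with nonzero boundary class. More precisely, $\pi_2(Q^3,\Sigma)\cong\pi_2(Q^3)$, so every disk class has Maslov index in $2N_{Q^3}\Z=6\Z$. Thus $m_0(\Sigma)=0$, and $\Sigma$ lies in $\mathcal F(Q^3)_0$. Next I need $HF^*(\Sigma,\Sigma)\neq 0$. The minimal Maslov number is $6>\dim\Sigma+1=4$, so the Oh spectral sequence degenerates and $HF^*(\Sigma)\cong H^*(S^3;\Lambda)$. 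Alternatively, $\Sigma$ is the vanishing cycle of the degeneration in \cref{lem:vanishingl}. The closed-open map is then unital and sends $e_0$ to $1_\Sigma\neq0$, and Abouzaid's criterion gives that $\Sigma$ split-generates $D^\pi\mathcal F(Q^3)_0$. Orientability and spin of $S^3$ are automatic.

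The main obstacle is justifying the generation criterion in the form used. Abouzaid's criterion asks for the unit of $QH^*$ to lie in the image of the open-closed map from $HH_*$, and I would use Sheridan's lemma that, in the monotone case with semisimple eigensummand and a nonzero object, injectivity of $\mathcal{CO}$ on that summand is equivalent. Some care is also needed to confirm that $\mathcal F(Q^3)_0$ is genuinely governed by $\Lambda e_0$: any object with $m_0=0$ lies there, and all of $QH^*(Q^3)_0$ is spanned by $e_0$. Finally, the decomposition $D^\pi\mathcal F(Q^3)=\bigoplus_\zeta D^\pi\mathcal F(Q^3)_\zeta$ is orthogonal, because objects with different $m_0$ have vanishing Floer cohomology with each other. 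Combining the four summands then yields the stated decomposition.
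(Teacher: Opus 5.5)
Your proposal is correct and follows essentially the paper's route. Both arguments show that $\Sigma$ is a nonzero object with $m_0=0$ (minimal Maslov number $6>\dim\Sigma+1$, so Oh's spectral sequence collapses) and that each $(\mathcal P,\chi_\lambda)$ is a nonzero object of the $\lambda$-summand (Biran--Cornea together with the closed--open computation), and both then use that any nonzero object split-generates a rank-one summand of the semisimple $QH^*(Q^3)$. The paper simply cites this last fact as \cite[Corollary 1.12]{She16}, whereas you unpack it as injectivity of the unital map $\mathcal{CO}$ on the field $\Lambda e_\zeta$ combined with Abouzaid's criterion.
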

\proof
We first show that $\Sigma$ defines a nonzero object in the zero eigensummand. 
Since $\pi_1(\Sigma)=\pi_2(\Sigma)=0$, the long exact sequence of the pair gives $\pi_2(Q^3,\Sigma)\cong \pi_2(Q^3)$.
Thus $\Sigma$ has Maslov index $N_{\Sigma}=2N_{Q^3}=6$. In particular, there are no Maslov-2 disks, therefore $m_0(\Sigma)=0$. Since $6>\dim\Sigma+1=4$, Oh's monotone spectral sequence collapses \cite{Oh96}. Thus, as graded vector spaces,
\[
HF^*(\Sigma)\cong H^*(\Sigma; \Lambda)\neq0.
\]
Therefore $\Sigma$ is a nonzero object in the zero eigensummand $\mathcal{F}(Q^3)_0$.

For split-generation, recall that each piece of the semi-simple decomposition of $QH^*(Q^3)$ has rank 1. As $\Sigma$ is a non-trivial object of $\mathcal{F}(Q^3)_0$, and each $(\mathcal{P},\chi_{\lambda})$ is a nontrivial object of $\mathcal{F}(Q^3)_{\lambda}$, the result follows immediately from \cite[Corollary 1.12]{She16}.
\proofend
\begin{remark}
This result is the quadric analogue of the Abouzaid–Diogo generation theorem for the compact-monotone Fukaya category of $T^*S^3$ \cite[Theorem 1.4]{AboDio23}. In the next subsection, we recall their results and discuss how the compactification changes the categorical picture.
\end{remark}

\begin{remark}
    This remark was pointed out to us by David Keren Yaar. Since $Q^3$ contains two disjoint monotone spin Lagrangians with non-vanishing Floer cohomology and minimal Maslov number at least two, a result of Alizadeh--Atallah--Cant--Shang \cite[Corollary 21]{AACS25} implies that the Hofer diameter of $Q^3$ is infinite. This was already known by other methods \cite[Theorem A]{Kaw24}.
\end{remark}

\subsection{Monotone Fukaya categories under compactification}

In the beginning of the subsection, we recall the setup and split-generation result of \cite[Theorem~1.4]{AboDio23} for the compact-monotone Fukaya category of
$T^*S^n$ (denoted $\mathcal{F}(T^*S^n)$), and its special case $n=3$. We slightly adapt the notation and terminology for consistency with our previous exposition. We denote its derived category by $D^\pi\mathcal{F}(T^*S^n)$.
After that, we compare with our result for $D^\pi\mathcal{F}(Q^3)$.

Throughout this subsection, we use the same Novikov field $\Lambda$ as defined in \eqref{eq:NovikovField}. It carries the non-Archimedean valuation $\nu: \Lambda \to [0,+\infty]$, defined by
\[
\nu\left(\sum_{j=0}^{\infty}a_jT^{\lambda_j}\right)
=
\min\{\lambda_j\mid a_j\neq 0\},
\qquad
\nu(0)=+\infty.
\]
Following \cite{AboDio23}, we call an element
\emph{unitary} if it is invertible and has zero valuation, and denote the group of unitary Novikov units by
\[
U_\Lambda^*
:=
\{U\in\Lambda^\times\mid \nu(U)=0\}.
\]
Thus a rank-one unitary local system on a Lagrangian $L$ is equivalently
a holonomy homomorphism
\[
\chi:H_1(L)\to U_\Lambda^*.
\]
Let  $F\subset T^*S^n$ be a cotangent fiber, and consider its wrapped $A_\infty$-algebra $\mathcal{A}:= CW^*(F,F;\Lambda)$. The $A_\infty$-Yoneda functor associates to every compact monotone brane
$L$ in $\mathcal{F}$ the right $A_\infty$-module
\[
   L \longrightarrow  CW^*(F,L;\Lambda),
\]
over $\mathcal{A}$. The cohomology algebra of $\mathcal{A}$ is 
\[
A:= H^*(\mathcal A)=HW^*(F,F;\Lambda)\cong H_{-\ast}(\Omega S^n,\Lambda) \cong \Lambda[u], \qquad |u| = 1-n.
\]
Abouzaid--Diogo prove that $A=\Lambda[u]$ is intrinsically formal \cite[Proposition~5.2]{AboDio23}, and moreover that every graded right $A$-module is intrinsically formal
\cite[Proposition~5.4]{AboDio23}. As a consequence, the passage to cohomology induces a quasi-equivalence between the relevant category of $A_\infty$-modules and the category of ordinary $A$-modules \cite[Corollary~5.5]{AboDio23}.

In particular, the Yoneda image of a
compact monotone brane $L$ may be studied through the ordinary $A$-module
\[
    Y(L):=HW^*(F,L;\Lambda).
\]
Let $\operatorname{mod}_{\mathrm{pr}}(A)$ denote the category of proper right $A$-modules, namely those which are finite-dimensional over
$\Lambda$, with morphisms between two objects $M,N$ given by $\text{Ext}_A(M,N)$. The Yoneda functor restricts to a cohomologically full and faithful
functor
\[
    Y_c:
    \mathcal{F}_{\mathrm{mon}}(T^*S^n)
    \longrightarrow
    \operatorname{mod}_{\mathrm{pr}}(A),
    \qquad
    L\longmapsto HW^*(F,L;\Lambda),
\]
and Abouzaid--Diogo prove that, after extending to the split-closure, this becomes a quasi-equivalence
\[
    D^\pi\mathcal{F}(T^*S^n)
    \simeq
    \operatorname{mod}_{\mathrm{pr}}(A).
\]
Thus, the geometric split-generation problem may be studied entirely on
the module side.

Since $\Lambda[u]$ is a PID and $\Lambda$ is algebraically closed, the simple proper $\Lambda[u]$-modules are precisely 
\[ 
S_a:=\Lambda[u]/(u-a),\qquad a\in\Lambda. 
\]
Every proper $A$-module has finite length and is built from these simple
modules by extensions. Equivalently, the modules $S_a$ generate $\operatorname{mod}_{\mathrm{pr}}(A)$
\cite[Corollary~6.5]{AboDio23}. Therefore, in order to split-generate
$D^\pi\mathcal{F}(T^*S^3)$, it suffices to
realize all the modules $S_a$ geometrically, possibly as direct summands of Yoneda images.

The zero-section contributes the uncountable collection of objects $(0_{S^3},\alpha[\mathrm{pt}])$, where $\alpha[\mathrm{pt}]$ are all the bounding cochains on $0_{S^n}$ such that $\nu(\alpha)\geq 0$. These objects have modules
\begin{equation}
    HW^*(F,(0_{S^n},\alpha[\mathrm{pt}]),\Lambda) \cong S_\alpha,
\end{equation}
thus producing all the nonnegative-valuation modules. Second, the negative valuation part is realized by the Oakley--Usher family $\cp_{0,n-1}(r) \subset T^*S^n$, indexed by the area-parameter $r \in [0,+\infty)$. Since $H_1(\mathcal{P}_{0,n-1}(r)) \cong H_1(S^1\times S^2)\cong \mathbb Z$, a rank-one local system is determined by one holonomy $U\in U_\Lambda^*$. Let $\alpha:=T^{-r}\,U^{-1} \in \Lambda$. On the level of $\Lambda[u]$-modules, the brane $(\cp_{0,n-1}(r), U)$ produces the rank-two module
\[
\Lambda[u]/(u^2-\alpha) \cong S_{\sqrt{\alpha}}\oplus S_{-\sqrt{\alpha}}.
\]
Since $U$ is unitary, we have $\nu(\alpha) = -r <0$. Thus the zero-section sector covers $\nu(a)\geq 0$, while the $\mathcal{P}_{0,n-1}(r)$-sector covers $\nu(a)<0$. Together they produce all simple modules $S_a$.

For $n=3$, the family $\mathcal{P}_{0,2}(r)$ can be replaced by the torus family $\mathcal{P}_{1,1}(r) \cong T^3$. In a suitable basis of $H_1(T^3) \cong \Z^3$ (described in \cite[Lemma 2.13]{AboDio23}), their superpotential is given by
\begin{equation}
\label{eq:open potential}
\mathcal{W}^{\text{open}}_{\cp_{1,1}(r)}(z_1,z_2,z_3)=T^{r/2}z_1(1+z_2)(1+z_3).
\end{equation}

This superpotential admits critical local systems, given by 
\[
\chi_U = (U, -1,-1), \qquad U\in U_\Lambda^*.
\]
The objects $(\cp_{1,1}(r), \chi_U)$ have modules 
\[
HW^*(F,(\cp_{1,1}(r), \chi_U),\Lambda) \cong S_{\beta}\oplus S_\beta, \qquad \beta := T^{-r/2}\,U^{-1}
\]
It holds $\nu(\beta) = -\frac{r}{2}<0$. Thus varying $\tau>0$ and $U\in U_\Lambda^*$ realizes all parameters with $\nu(b)<0$. In this sense, the $S^1\times S^2$-family and the $T^3$-family describe the same negative-valuation part of the module-theoretic picture.

Our compactification result has a different form. In $Q^3$, the monotone Fukaya category decomposes according to the finite spectrum of quantum multiplication by $c_1(Q^3)$:
\[
\operatorname{Spec}(c_1(Q^3)\star-)
=
\{0\}
\sqcup
\{(108q)^{1/3}\lambda\mid \lambda^3=1\}.
\]
The singular Gelfand--Cetlin fibre $\Sigma\simeq S^3$ has
$m_0(\Sigma)=0$ and generates the zero eigensummand. The monotone
Oakley--Usher torus $\mathcal P$, equipped with its three critical local systems
\[
(x_1,x_2,x_3)
=
\bigl(
2^{-1/3}\lambda,\,
2^{-1/3}\lambda,\,
2^{-2/3}\lambda^2
\bigr),
\qquad \lambda^3=1,
\]
generates the three nonzero eigensummands. We compare the torus family before and after compactification. In $Q^3$, only the torus $r = \frac{4\pi}{3}$ is monotone, thus relevant for the monotone Fukaya category $\mathcal{F}(Q^3)$. Indeed, the four disk classes visible in $T^*S^3$ have area $\frac{r}{2}$, while the compactifying disk has area $2\pi-r$, and monotonicity after compactification forces $r=\frac{4\pi}{3}$.

This allows us to study how the superpotential is affected by compactification. For the tori $\cp_{1,1}(r)\subset T^*S^3$, the superpotential is given by \eqref{eq:open potential}. The change of basis 
\[
B=
\begin{pmatrix}
1&0&0\\
-1&1&0\\
-1&-1&1
\end{pmatrix}
\in \GL(3;\mathbb Z)
\]
induces the following monomial change of coordinates on the holonomy variables:
\[(z_1,z_2,z_3) = \left(  x_1,\frac{x_2}{x_1},\frac{x_3}{x_1x_2}\right).
\]
It these variables, the superpotential of $\mathcal{P}$ becomes
\begin{equation*}
    \begin{split}
\mathscr{W}_{\mathcal{P}}(z_1,z_2,z_3) &
=
T^{2\pi/3}
\left(
z_1(1+z_2)(1+z_3)
+
\frac{1}{z_1^2z_2z_3}
\right)\\
& = \mathcal{W}^{\text{open}}_{\cp_{1,1}(4\pi/3)}(z_1,z_2,z_3)+T^{2\pi/3}\frac{1}{z_1^2z_2z_3}.
    \end{split}
\end{equation*}
We see that compactification adds exactly a term to the open superpotential. As a result, the critical equations change. The family of unitary critical local systems $\chi_U$ is replaced by three critical local systems, which in these coordinates are 
\[
\widetilde{\chi_\lambda}=(2^{-1/3}\lambda, 1,1), \qquad \lambda^3=1.
\]
Since these coordinates are in $\C^\times \subset \Lambda^\times$, they have valuation zero and thus $\widetilde{\chi_\lambda}$ are still unitary. Moreover, compactification changes the role of the $(S^1\times S^2)$-generators. The monotone element of this family in $Q^3$ is $\mathcal{P}_{0,2}(\frac{4\pi}{3}) \subset Q^3$, and recall from \cref{cor:COU_displaceable} that it is displaceable. Therefore $\mathcal{P}_{0,2}(\frac{4\pi}{3}) \simeq 0$ in $\mathcal{F}(Q^3)$. This OU-Lagrangian becomes the zero object instead of a generator after compactification.

\bibliographystyle{abbrv}
\bibliography{mybibfile}

\end{document}